\numberwithin{equation}{section}
\tikzset{black/.style={circle,fill=black,inner sep=3pt,outer sep=3pt},white/.style={circle,fill=white,draw=black,inner sep=3pt,outer sep=3pt}}
\definecolor{darkblue}{rgb}{0,0,0.7}
\newcommand{\defn}[1]{\textsl{\color{darkblue} #1}}
\renewcommand*{\backref}[1]{}  
   \renewcommand*{\backrefalt}[4]{
      \ifcase #1 
         Not cited.
      \or
         Cited on page #2.
      \else
         Cited on pages #2.
      \fi}
\newtheorem{theorem}{Theorem}[section]
\newtheorem{corollary}[theorem]{Corollary}
\newtheorem{lemma}[theorem]{Lemma}
\newtheorem{proposition}[theorem]{Proposition}
\theoremstyle{definition}
\newtheorem{definition}[theorem]{Definition}
\newtheorem{definition-proposition}[theorem]{Definition-Proposition}
\newtheorem{remark}[theorem]{Remark}
\newtheorem{example}[theorem]{Example}
\newtheorem{question}[theorem]{Question}
\newtheorem{algorithm}[theorem]{Algorithm}
\renewcommand{\mod}{\operatorname{mod}}
\newcommand{\proj}{\operatorname{proj}}
\newcommand{\inj}{\operatorname{inj}}
\newcommand{\add}{\operatorname{\mathrm{add}}}
\newcommand{\idim}{\operatorname{idim}}
\newcommand{\pdim}{\operatorname{pdim}}
\newcommand{\gldim}{\operatorname{gldim}}
\newcommand{\Ext}{\operatorname{Ext}}
\newcommand{\Tor}{\operatorname{Tor}}
\newcommand{\End}{\operatorname{End}}
\newcommand{\Hom}{\operatorname{Hom}}
\renewcommand{\top}{\operatorname{\mathrm{top}}}
\newcommand{\soc}{\operatorname{\mathrm{soc}}}
\newcommand{\rad}{\operatorname{\mathrm{rad}}}
\newcommand{\Ker}{\operatorname{Ker}\nolimits}
\newcommand{\Cok}{\operatorname{Cok}\nolimits}
\newcommand{\Img}{\operatorname{Im}\nolimits}
\newcommand{\id}{\operatorname{id}}
\newcommand{\op}{\mathrm{op}}
\newcommand{\kD}{\mathbb{D}}
\newcommand{\inc}{\mathsf{inc}}
\newcommand{\dec}{\mathsf{dec}}
\newcommand{\relvx}{\mathrm{relv}}
\newcommand{\e}{\varepsilon}
\newcommand{\tr}{\operatorname{\mathrm{tr}}}
\newcommand{\rej}{\operatorname{\mathrm{rej}}}
\newcommand{\lex}{\mathsf{l}\text{-}\mathrm{ex}}
\newcommand{\rex}{\mathsf{r}\text{-}\mathrm{ex}}
\newcommand{\tilt}{\operatorname{\mathsf{tilt}}}
\newcommand{\ptilt}{\operatorname{\mathsf{pretilt}}}
\newcommand{\cotilt}{\operatorname{\mathsf{cotilt}}}
\newcommand{\pcotilt}{\operatorname{\mathsf{precotilt}}}
\newcommand{\chtilt}{\operatorname{\mathsf{chtilt}}}
\newcommand{\IStilt}{\operatorname{\mathsf{IStilt}}}
\newcommand{\qhs}{\operatorname{\mathsf{qhs}}}
\newcommand{\filt}{\mathcal{F}}
\newcommand{\std}{\Delta}
\newcommand{\costd}{\nabla}
\newcommand{\bbA}{\mathbb{A}}
\newcommand{\Z}{\mathbb{Z}}
\newcommand{\calI}{\mathcal{I}}
\newcommand{\calN}{\mathcal{N}}
\newcommand{\calO}{\mathcal{O}}
\newcommand{\calP}{\mathcal{P}}
\newcommand{\calS}{\mathcal{S}}
\newcommand{\calT}{\mathcal{T}}
\newcommand{\calU}{\mathcal{U}}
\newcommand{\calX}{\mathcal{X}}
\newcommand{\sfe}{\mathsf{e}}
\newcommand{\sfl}{\mathsf{l}}
\newcommand{\sfr}{\mathsf{r}}
\newcommand{\sm}[1]{\begin{smallmatrix}#1\end{smallmatrix}}
\begin{document}
\title[Tilting theoretic approach to quasi-hereditary structures]{Tilting theoretic approach to quasi-hereditary structures}

\author{Takahide Adachi}
\address{T.~Adachi: Faculty of Global and Science Studies, Yamaguchi University, 1677-1 Yoshida, Yamaguchi 753-8541, Japan}
\email{tadachi@yamaguchi-u.ac.jp}

\author{Aaron Chan}
\address{A.~Chan: Graduate School of Mathematics, Nagoya University, Furo-cho, Chikusa-ku, Nagoya, Aichi 464-8601, Japan}
\email{aaron.kychan@gmail.com}

\author{Yuta Kimura}
\address{Y.~Kimura : Department of Mechanical Engineering and Informatics, Faculty of Engineering, Hiroshima Institute of Technology, 2-1-1 Miyake, Saeki-ku Hiroshima 731-5143, Japan}
\email{y.kimura.4r@cc.it-hiroshima.ac.jp}

\author{Mayu Tsukamoto}
\address{M.~Tsukamoto: Graduate school of Sciences and Technology for Innovation, Yamaguchi University, 1677-1 Yoshida, Yamaguchi 753-8512, Japan}
\email{tsukamot@yamaguchi-u.ac.jp}

\date{\today}

\subjclass[2020]{}
\keywords{quasi-hereditary algebra, characteristic tilting module, tilting module, Nakayama algebra}

\dedicatory{Dedicated to Claus Michael Ringel on the occasion of his 80th birthday}

\begin{abstract}
A quasi-hereditary algebra is an algebra equipped with a certain partial order $\unlhd$ on its simple modules. Such a partial order -- called a quasi-hereditary structure -- gives rise to a characteristic tilting module $T_{\unlhd}$ by a classical result due to Ringel. 
A fundamental question is to determine which tilting modules can be realised as characteristic tilting modules.
We answer this question by using the notion of IS-tilting module, which is a pair $(T,\unlhd)$ of a tilting module $T$ and a partial order $\unlhd$ on its direct summands such that iterative idempotent truncation along $\unlhd$ always reveals a simple direct summand.
Specifically, we show that a tilting module $T$ is characteristic if, and only if, there is some $\unlhd$ so that $(T,\unlhd)$ is IS-tilting; in which case, we have $T=T_{\unlhd}$.
This result enables us to study quasi-hereditary structures using tilting theory. 

As an application of the above result, we show that, for an algebra $A$, all tilting modules are characteristic if, and only if, $A$ is a quadratic linear Nakayama algebra.
Furthermore, for such an $A$, we provide a decomposition of the set of its tilting modules that can be used to derive a recursive formula for enumerating its quasi-hereditary structures. 
Finally, we describe the quasi-hereditary structures of $A$ via `nodal gluing' and binary tree sequences.
\end{abstract}

\maketitle
\tableofcontents

\section{Introduction}

The notion of quasi-hereditary algebras was introduced in the seminal work of Cline, Parshall, and Scott \cite{CPS88}, motivated by the representation theory of complex Lie algebras and algebraic groups. 
Since its introduction, quasi-hereditary algebras frequently appear in the representation theory of finite-dimensional algebras. 
Notable examples include hereditary algebras, Auslander algebras \cite{DR89a}, and more generally, algebras of global dimension at most two \cite{DR89}, ($q$-)Schur algebras \cite{CPS88, D98}, algebras arising from rational surfaces \cite{HP14} and exceptional sequences \cite{Kr17}, etc. 
The study of quasi-hereditary algebras has since been developed into a myriad of areas: discovery of new examples and constructions \cite{DR89b, BF89, UY90, X94, Put98, LS13, IR11, CE18, BK18, GS19, FKR22, Goto}, in-depth study of their homological algebra \cite{ADL03, Ma10, MO05, Kr17, Con16, T19, T20} and its applications \cite{DR89b, I03, R10}, appearance and applications in algebraic geometry \cite{BDG17, BB24, HP14, KK17, O18}, higher homological $A_\infty$-structures \cite{KKO14, BK18, CK25}, interaction with algebraic Lie theory \cite{CPS93, D98, MO05, Kl15, Cou20, BS24}, etc.

Being quasi-hereditary depends on a choice of partial orders on the set of isomorphism classes of simple modules.
This partial order needs to satisfy certain condition that is determined by a special class of modules called the \emph{standard modules}; see subsection \ref{subsec:qhs char tilt} for details.  These modules generalise the Verma modules of a complex semisimple Lie algebra, Weyl modules in the representation theory of algebraic groups, among others.
They stratify the derived category into those of the vector spaces, which allow more systematic study of the homological algebra.
By a \emph{quasi-hereditary structure} we mean an equivalence class  of partial orders, where two partial orders are equivalent if their associated standard modules are the same.
Taking universal extensions of the standard modules one obtain a  tilting module which is often called \emph{characteristic tilting} \cite{R91} in this context.

In tilting theory, tilting modules serve a role analogous to that of progenerators in Morita theory, enabling the transfer of homological structures between categories. Tilting theory is now central to modern developments in areas such as quiver representations, modular representations of finite groups, applications in algebraic geometry via derived categories, abstract structures in homological algebra -- for instance, triangulated and DG categories, etc.
Riedtmann and Schofield \cite{RieS91} showed that one can construct new tilting modules from old by a process which is nowadays known as \emph{mutation}.  Moreover, Happel and Unger \cite{HU05} showed that the set of tilting modules over an algebra can be partially ordered, and the covering relation can be described by mutation.  This allows very effective computation of tilting modules.

While each quasi-hereditary structure can be used to construct a tilting module, we are also interested in the reverse direction -- starting with the set of all tilting modules and determine (or even, classify) the quasi-hereditary structures of an algebra.  At the heart of this problem lies the following question.

\begin{question}\label{quest:main}
How to determine if a tilting module can appear as the characteristic tilting module associated to a quasi-hereditary structure?
\end{question}

The first main goal of this paper is to answer this question by introducing the following condition.

\begin{definition}[Definition \ref{def:IS-tilt}]
Let $A$ be a finite-dimensional algebra and let $\{e_x\mid x\in\Lambda\}$ be a complete set of primitive orthogonal idempotents.
Let $\{S(x) \mid x\in \Lambda\}$ be a set of simple $A$-modules.
A pair $(T, \unlhd)$ of an $A$-module $T$ and a partial order $\unlhd$ on $\Lambda$ is called an \emph{IS-tilting module} if $T$ is a tilting $A$-module and there exists an indecomposable decomposition $T=\bigoplus_{x\in\Lambda}T(x)$ satisfying
\begin{align}
T(x)\varepsilon_{x}\cong S(x)\varepsilon_{x} \; \textnormal{in}\; \mod (\varepsilon_{x} A\varepsilon_{x}),    \notag
\end{align}
where $\displaystyle\varepsilon_{x}:=\sum_{y\in\Lambda\textnormal{\;s.t.\;}y\not\lhd x}e_{y}$. 
\end{definition}

The key of this condition is that after each stage of idempotent truncation (applying the Schur functor), there is always a simple \emph{direct summand} in the resultant which provides a natural choice for the next composition factor to eliminate.
This is the reason for the use of the term `IS', which abbreviates `\underline{i}terative elimination of \underline{s}imple direct summand'. 
Note that characteristic tilting modules are IS-tilting and this order of eliminating simple modules yields precisely the associated quasi-hereditary structure.
Now we can state the answer to Question \ref{quest:main} as follows. 

\begin{theorem}[Theorem \ref{thm:qh-tilting}, Proposition \ref{prop:isttilt-chtilt} and Theorem \ref{thm:bij-qh-IStilting}]\label{mainthm1}
Let $A$ be a basic finite-dimensional algebra. 
Then the following statements hold. 
\begin{itemize}
\item[(1)] If $(T, \unlhd)$ is an IS-tilting $A$-module, then $(A,\unlhd)$ is a quasi-hereditary algebra. 
\item[(2)] Characteristic tilting modules coincide with IS-tilting modules. More precisely, there exists a bijection between the set of IS-tilting $A$-modules and the set of quasi-hereditary structures of $A$.
\end{itemize}
\end{theorem}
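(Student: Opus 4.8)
The plan is to treat the two statements together, since (1) provides the verification step needed for the key direction of (2). For (1), suppose $(T,\unlhd)$ is IS-tilting with decomposition $T=\bigoplus_{x\in\Lambda}T(x)$. I would argue by induction on $|\Lambda|$, using the idempotent $\varepsilon=\varepsilon_z$ attached to a minimal element $z\in\Lambda$ (so $\varepsilon=1-e_z$ picks out everything except $z$). The defining condition says $T(z)\varepsilon_z\cong S(z)\varepsilon_z$ over $\varepsilon_zA\varepsilon_z$; translating back, this forces $T(z)$ to have a prescribed local structure at the minimal vertex, which is exactly what is needed to show that $e_z$ generates a heredity ideal (the standard criterion: $\Ext^1_A(S(z),S(z))=0$ together with $Ae_zA$ projective as a left module, or equivalently $\rad(P(z))\in\add\bigoplus_{x\neq z}P(x)$ suitably — here one reads this off from the tilting summand $T(z)$). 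The key point is that $T\varepsilon_z$ is a tilting module over $A/Ae_zA\cong\varepsilon_zA\varepsilon_z$ with the induced order, and $(T\varepsilon_z,\unlhd)$ is again IS-tilting there, so induction applies and we conclude $(A,\unlhd)$ is quasi-hereditary with the $\Delta$'s identifiable stage by stage.

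For the first half of (2) — every characteristic tilting module is IS-tilting — I would invoke Ringel's construction: given a quasi-hereditary structure $\unlhd$, the characteristic tilting module $T_{\unlhd}=\bigoplus_x T(x)$ has each $T(x)$ with a $\Delta$-filtration topped by $\Delta(x)$ and a $\nabla$-filtration, and crucially, if $x$ is minimal then $\Delta(x)=S(x)=T(x)$. Passing to $\varepsilon_x A\varepsilon_x$ for the minimal vertices and then iterating is exactly the classical fact that idempotent truncation of a quasi-hereditary algebra at an order ideal is again quasi-hereditary and the characteristic tilting module restricts to the characteristic tilting module of the truncation (see \cite{DR89, R91}). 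This yields $T(x)\varepsilon_x\cong S(x)\varepsilon_x$ at each stage, i.e. $(T_{\unlhd},\unlhd)$ is IS-tilting, and moreover the order of elimination is precisely $\unlhd$.

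For the reverse and the bijection, the claim $T=T_{\unlhd}$ when $(T,\unlhd)$ is IS-tilting follows by uniqueness: part (1) gives that $(A,\unlhd)$ is quasi-hereditary, hence has its own characteristic tilting module $T_{\unlhd}$, which by the previous paragraph is itself IS-tilting with the same order; then an inductive argument identifies $T(x)$ with the Ringel summand $T_{\unlhd}(x)$ one vertex at a time (at a minimal $z$, both equal $S(z)$; truncate and repeat). To get the stated bijection I would then check that the two assignments $\unlhd\mapsto(T_{\unlhd},\unlhd)$ and $(T,\unlhd)\mapsto\unlhd$ are mutually inverse on the nose — the only subtlety being that a given tilting module $T$ may be IS-tilting for several partial orders $\unlhd$, but these all lie in the same quasi-hereditary structure precisely because they produce the same standard modules (the composition-factor-elimination order determines, and is determined by, the $\Delta(x)$'s).

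I expect the main obstacle to be the inductive bookkeeping in (1): one must verify not merely that $A/Ae_zA$ is quasi-hereditary but that the induced partial order on $\Lambda\setminus\{z\}$ is exactly the restriction of $\unlhd$, that $T\varepsilon_z$ remains IS-tilting with the decomposition inherited from $T$ (this requires knowing $T(x)\varepsilon_z$ stays indecomposable, or at least controlling what happens to it — a point where the tilting hypothesis and the Ext-vanishing between truncation and the killed simple must be used carefully), and that tilting modules are preserved under this truncation. Checking that a tilting $A$-module restricts to a tilting $A/Ae_zA$-module is where the heredity-ideal condition $\pdim_A(A/Ae_zA)\le 1$ or the corresponding $\Tor$/$\Ext$-vanishing does the real work; once that mechanism is in place, the rest is a clean induction.
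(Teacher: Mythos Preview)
Your approach to (1) has a genuine gap: for $z$ minimal in $\unlhd$, the ideal $Ae_zA$ need not be a heredity ideal, and the asserted isomorphism $A/Ae_zA\cong\varepsilon_zA\varepsilon_z$ is false in general. For a concrete counterexample, take $A=\Bbbk Q/(ba)$ with $Q\colon 1\xrightarrow{a}2\xrightarrow{b}1$. Then $(A,1\lhd 2)$ is quasi-hereditary with characteristic (hence IS-)tilting module $T=S(1)\oplus P(1)$, yet $e_1Ae_1=\Bbbk\{e_1,ab\}\cong\Bbbk[x]/(x^2)$ is not semisimple, so $Ae_1A$ is not heredity. The underlying confusion is between minimal and maximal: heredity ideals are peeled off at \emph{maximal} elements (where $\std(y)=P(y)$); for $z$ minimal one only has $\std(z)=S(z)=T(z)$, which carries no information about $e_zAe_z$ or the projectivity of $Ae_zA$. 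More fundamentally, the IS-tilting condition concerns idempotent \emph{truncation} $T\mapsto T\varepsilon$ over $\varepsilon A\varepsilon$, while heredity chains are built via idempotent \emph{quotients} $A\mapsto A/AeA$; your proposal conflates these two operations.

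The paper's route is entirely different. The key observation (Lemma~\ref{lem:heredity-ideal}) is that a simple direct summand $S(z)$ of a $2$-rigid module $M$ yields a heredity ideal $B\phi B$ in $B=\End_A(M)$ --- here $\phi B\phi\cong\End_A(S(z))\cong\Bbbk$ is automatically semisimple regardless of $e_zAe_z$ --- and moreover $B/B\phi B\cong\End_{\varepsilon A\varepsilon}(M\varepsilon)$. Induction then gives $(\End_A(T),\unlhd^{\op})$ quasi-hereditary (Proposition~\ref{prop:end-quasi-hered}). To transfer this back to $A$, the paper shows that ${}_BT$ is again IS-tilting over $B^{\op}$ with order $\unlhd^{\op}$ (Proposition~\ref{prop:istilt-op}), and applies the same result once more to conclude $\End_{B^{\op}}(T)\cong A$ is quasi-hereditary with respect to $\unlhd$. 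This double-dual passage through the endomorphism ring is the idea you are missing. For (2), your inductive identification of $T(x)$ with $T_\unlhd(x)$ could be made to work once (1) is fixed, but the paper instead proves $\Ext^1_A(\std(x),T)=0=\Ext^1_A(T,\costd(x))$ for all $x$ (Lemma~\ref{lem:chtilt}), placing $T$ in $\filt(\std)\cap\filt(\costd)=\add T_\unlhd$, whence $T\cong T_\unlhd$ by basicness and $|T|=|T_\unlhd|$.
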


Part (2) of the above theorem is a kind of dual to a result of Ringel \cite[Appendix]{R91}, which determines whether a tilting module is characteristic tilting by considering iterative idempotent quotients $T/T(1-\e_{x}) A$ instead of iterative idempotent truncation $T\e_{x}$.  The difficulty in utilising Ringel's result is that it requires checking the quotient is once again tilting over the corresponding quotient algebra $A/A(1-\e_{x})A$.  In contrast, being IS-tilting can be checked by only examining composition factors of each indecomposable direct summand, without the need to verify that $T\e_x$ is tilting over $\e_{x} A\e_{x}$.
We also remark that the formulation of IS-tilting together with this result of Ringel acts as a \emph{tilting analogue} of the inductive mechanism of quasi-hereditary algebras:  For a quasi-hereditary algebra $(A,\unlhd)$, both $(\e_{x} A\e_{x}, \unlhd)$ and $(A/A(1-\e_{x})A, \unlhd)$ are also quasi-hereditary.

In the remaining of the paper, we demonstrate some uses of IS-tilting modules in understanding quasi-hereditary structures.
The first use is to determine which class of algebra have \emph{all} of its tilting modules appear as characteristic tilting.
It turns out that by replacing `characteristic tilting' with `IS-tilting', we can answer this question completely.

\begin{theorem}[Theorem \ref{thm:tilt=IStit=>gentle-LNaka}]\label{mainthm2}
Let $A$ be a basic finite-dimensional algebra. 
Then all tilting $A$-modules are IS-tilting if, and only if, $A$ is a quadratic linear Nakayama algebra, that is, it is a bound quiver algebra whose quiver is a linearly oriented quiver of type $\bbA$ and whose relations are generated by quadratic monomials.
\end{theorem}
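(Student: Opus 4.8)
The plan is to prove the two implications separately, using Theorem~\ref{mainthm1} to translate `characteristic tilting' into the combinatorially tractable notion of IS-tilting throughout.

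First I would establish the \emph{if} direction: assume $A$ is a quadratic linear Nakayama algebra and show every tilting module is IS-tilting. The key point is that quadratic linear Nakayama algebras are extremely rigid: every indecomposable module is uniserial with a prescribed top and a length bounded by the lengths of the zero-relations, and the algebra is gentle (indeed special biserial), so one has an explicit combinatorial model for indecomposables and for $\Hom$/$\Ext$ between them. Given a tilting module $T=\bigoplus_{x\in\Lambda}T(x)$, I would argue that one can always find a vertex $x$ such that $T(x)$ is simple \emph{after} truncating by the idempotent $\e_x=\sum_{y\not\lhd x}e_y$ for an appropriate choice of $\unlhd$. Concretely, I expect to build $\unlhd$ greedily: among the summands of $T$, locate an indecomposable summand $T(x)$ that is simple (such a summand must exist, e.g.\ by a counting or socle/top argument on linear Nakayama algebras — a tilting module over $A$ has $|\Lambda|$ summands and the combinatorics of uniserial modules forces at least one to be a simple projective-or-injective-like summand); declare $x$ minimal, pass to $\e A\e$ for the complementary idempotent $\e$, note that $\e A\e$ is again a quadratic linear Nakayama algebra and that $T\e$ (or rather the truncation of the complement) is tilting over it — here I would invoke the inductive mechanism mentioned after Theorem~\ref{mainthm1} together with the fact that idempotent truncations of linear Nakayama algebras at an interval-complementary idempotent stay linear Nakayama — and recurse. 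The induction on $|\Lambda|$ then yields a partial order $\unlhd$ making $(T,\unlhd)$ IS-tilting.

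Next, the \emph{only if} direction: assume all tilting $A$-modules are IS-tilting and deduce that $A$ is a quadratic linear Nakayama algebra. Here I would proceed by contradiction via a structural dichotomy. First, $A$ itself is always a tilting module, hence IS-tilting, so $(A,\unlhd)$ is quasi-hereditary for some $\unlhd$; but more usefully, the \emph{IS} condition applied to $A$ and to $DA$ forces each $e_xAe_x$ to be `small', pushing toward $A$ being Nakayama. The cleanest route is: (i) show that if $A$ is not Nakayama then there is a tilting module — perhaps $A$ itself, or a one-step mutation of it — whose summands never become simple under any sequence of idempotent truncations, because the presence of a vertex with two arrows in or two arrows out (or a non-uniserial indecomposable projective) obstructs the iterative simple-summand-peeling; (ii) show that if $A$ is Nakayama but not linear (i.e.\ the quiver is a cycle), then $A$ is self-injective or close to it and one exhibits a tilting module (again $A$ or a small mutation) that fails IS; (iii) show that if $A$ is linear Nakayama but \emph{not} quadratic — so some relation is a path of length $\geq 3$ — then there is an indecomposable module of length $\geq 3$ that appears as a summand of some tilting module in a position where no truncation makes it simple, again using the explicit description of tilting modules over linear Nakayama algebras. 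Each of these three steps should reduce to exhibiting a single explicit counterexample tilting module, which is feasible because tilting modules over Nakayama algebras are completely understood.

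The main obstacle I anticipate is step (iii) of the \emph{only if} direction: for linear Nakayama algebras that are \emph{not} quadratic, one must produce a tilting module that provably fails the IS condition for \emph{every} partial order, and this requires a careful analysis of which indecomposables of length $\geq 3$ can be forced into a tilting complement together with a control on how idempotent truncation acts on them — truncation can shrink a long uniserial module, and one must verify it can never shrink it all the way to a simple \emph{direct summand} at the relevant stage. A secondary difficulty is making the inductive step in the \emph{if} direction airtight: one must check that after peeling off a simple summand and passing to $\e A\e$, the complement $T'$ of that summand really does truncate to a tilting module over $\e A\e$ with one fewer summand; I would handle this by combining the Bongartz-type completion/idempotent-truncation compatibility for tilting modules with the explicit fact that $\e A\e$ for $\e$ complementary to a `removable' vertex of a linearly oriented $\bbA_n$ with quadratic monomial relations is again of the same type. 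Once both the rigidity of quadratic linear Nakayama algebras and the three counterexample constructions are in place, the theorem follows by combining them with Theorem~\ref{mainthm1}(2).
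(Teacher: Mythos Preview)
Your \emph{if} direction is essentially the paper's argument (Proposition~\ref{prop:tilt=IStilt}): one shows that every tilting module over a quadratic linear Nakayama algebra has a simple direct summand, then peels it off using Proposition~\ref{prop:truncation-tilting} and recurses on the idempotent subalgebra, which is again quadratic linear Nakayama. The paper's existence-of-simple-summand step is the counting argument you allude to: decompose $A$ along its nodes into pieces $\Bbbk\vec{\bbA}_{m_i}$, observe that a rigid module over $\Bbbk\vec{\bbA}_{m_i}$ with no simple summand has at most $m_i-1$ indecomposable summands, and sum.

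Your \emph{only if} direction reaches the correct target---produce a tilting module with no simple direct summand---but both the organisation and the estimate of difficulty diverge from the paper. The paper does not split into ``not Nakayama / cyclic Nakayama / linear non-quadratic Nakayama''. Instead it first shows (Lemma~\ref{lem:IStilt-acy-Gab}) that $A_A$ being IS-tilting already forces the Gabriel quiver to be acyclic, disposing of your case~(ii) uniformly. Then, for an \emph{arbitrary} acyclic $A$ that is not quadratic linear Nakayama, it runs a single explicit multi-step mutation procedure (Proposition~\ref{prop:non-qln-non-ist} and subsections~\ref{subsec:coext-by-LNA}--4.5) that eliminates every simple summand from $A_A$: first mutate away all simple projectives at sinks of valency $\geq 2$, then for each valency-one sink $v$ identify a maximal linear ``tail'' $\Gamma^v$ and perform a sequence of mutations along it (with a case analysis on how $\Gamma^v$ attaches to the rest of $Q$) that replaces $S(v)$ by a non-simple module without creating any new simple summand. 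Your suggestion that ``$A$ itself, or a one-step mutation of it'' will do in case~(i) is too optimistic: the construction genuinely requires a controlled sequence of mutations, and most of Section~\ref{sec:all tilt = IS-tilt <=>} is devoted to verifying that at each step the approximation is injective and the cokernel is non-simple. Conversely, you flag case~(iii) as the hardest, but in the paper's unified approach a linear Nakayama algebra with a cubic relation is not singled out---it is just another acyclic algebra with a tail $\Gamma^v$ that fails maximality condition~(iv), and the same mutation machinery applies. Your trichotomy is logically complete and could in principle be carried out, but it would likely replicate much of the paper's mutation analysis in each case rather than avoid it.
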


We remark that the proof of the only-if direction makes great use of tilting mutation -- this demonstrates how powerful tools in tilting theory allow us to solve problems related to quasi-hereditary structures.
Also, Theorem \ref{mainthm2} is a generalisation of \cite[Theorem 4.7]{FKR22}, which shows that all tilting modules are characteristic tilting for the path algebra of a linearly oriented quiver $\Vec{\bbA}_{n}$.

Now, by combining Theorems \ref{mainthm1}(2) and \ref{mainthm2}, the classification of quasi-hereditary structures for a quadratic linear Nakayama algebra can then be reduced to that of tilting modules. 
One of Gabriel's famous results states that the number of tilting modules over the path algebra $A$ of a linearly oriented quiver $\Vec{\bbA}_{n}: 1\rightarrow 2 \rightarrow \cdots \rightarrow n$ is given by $n$-th Catalan number. As a consequence of this result, the set $\tilt A$ of tilting $A$-modules can be decomposed as $\tilt A =\bigsqcup_{i=1}^{n}\tilt_{P(i)\oplus (P(i)/P(n))}A$, where $P(i)$ denotes the indecomposable projective module corresponding to a vertex $i$ and $\tilt_{X}A:=\{ T\in\tilt A\mid X\in \add T\}$.
Our next main result is the extension of this to quadratic linear Nakayama algebras.

\begin{theorem}[Theorem \ref{thm:decomp}] \label{mainthm3}
Let $A$ be a quadratic linear Nakayama algebra whose Gabriel quiver is $\Vec{\bbA}_{n}$ and let $\ell$ be the maximal integer such that $P(\ell)$ is injective.
Then we have 
\begin{align}
\tilt A =\bigsqcup_{i=\ell}^{n}\tilt_{P(i)\oplus (P(i)/P(n))}A.\notag
\end{align}
\end{theorem}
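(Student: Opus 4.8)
The plan is to establish the decomposition by showing that every tilting module $T$ over a quadratic linear Nakayama algebra $A$ with Gabriel quiver $\Vec{\bbA}_n$ contains exactly one indecomposable summand of the form $P(i)\oplus(P(i)/P(n))$ with $i\geq\ell$, and that no two such summand-pairs can coexist. Recall that over such an $A$ the indecomposable modules are the interval modules $M[a,b]$ (with top $S(a)$ and socle $S(b)$), subject to the quadratic relations bounding how long an interval can be; the indecomposable projectives are $P(i)=M[i,\min(i{+}1,n)]$ for $i<\ell$ (length $\leq 2$, since relations are quadratic monomials) and $P(i)=M[i,n]$ for $i\geq\ell$ (here $P(\ell),\dots,P(n)$ are the injective projectives, by maximality of $\ell$), while $P(i)/P(n)$ makes sense precisely when $P(i)\supseteq\soc=S(n)$, i.e.\ when $i\geq\ell$, in which case $P(i)/P(n)=M[i,n{-}1]$.

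First I would verify that the union on the right-hand side is indeed disjoint. Suppose $T$ contained both $P(i)\oplus(P(i)/P(n))$ and $P(j)\oplus(P(j)/P(n))$ for $\ell\leq i<j\leq n$. Using that $T$ is a tilting module --- so in particular $\Ext^k_A(T,T)=0$ for all $k>0$ --- together with the explicit form of these interval modules and the standard computation of $\Ext$ between interval modules over a Nakayama algebra, I would derive a contradiction: the pair $P(i)$ (which is $M[i,n]$) and $P(j)/P(n)=M[j,n{-}1]$ would have a nonvanishing extension in one direction or the other, since $i<j\leq n{-}1<n$ forces an overlap pattern that produces $\Ext^1$ or $\Ext^2$. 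This is the kind of routine but slightly fiddly interval-module bookkeeping that I would not grind through here.

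The main work is to show the union is exhaustive, i.e.\ every $T\in\tilt A$ lies in some $\tilt_{P(i)\oplus(P(i)/P(n))}A$ with $i\geq\ell$. The natural approach is to examine the summand of $T$ involving the vertex $n$: since $T$ has $n$ non-isomorphic indecomposable summands and $A$ has $n$ simples, and since $T$ is tilting (hence $\pdim T\leq 1$ and $\Ext^1_A(T,T)=0$), one constrains which interval modules $M[a,n]$ and $M[a,n{-}1]$ appear. I would argue that $T$ must contain at least one module of the form $M[i,n]$ (this is essentially the requirement that $S(n)$ appear in the ``bottom'' of $T$, reflecting that the regular module can be built from $T$), and then that the structure of the mutation poset on $\tilt A$ forces the simultaneous presence of $M[i,n{-}1]=P(i)/P(n)$. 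Alternatively, and perhaps more cleanly, I would use Theorem \ref{mainthm1}(2): $T=T_\unlhd$ for a quasi-hereditary structure $\unlhd$, and the characteristic tilting module always contains the indecomposable projective-injective $P(n)$-related summand as the summand $T(x)$ indexed by a $\unlhd$-maximal vertex; tracing through the standard/costandard modules at vertex $n$ shows $T$ contains $P(n)=M[n,n]=S(n)$ only in the degenerate case, and in general contains $P(i)$ and $P(i)/P(n)$ for the appropriate $i\geq\ell$ determined by which projectives are injective.

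I expect the main obstacle to be pinning down \emph{which} index $i$ arises and showing it is \emph{uniquely} determined by $T$ --- that is, ruling out that a given $T$ could be ``assigned'' to two different values of $i$, and simultaneously ruling out that it is assigned to none. Disjointness handles the first; for the second, the crux is a counting/homological argument that the $n$ indecomposable summands of a tilting module cannot all avoid the list $\{P(i),\,P(i)/P(n)\mid i\geq\ell\}$, which ultimately rests on $\Ext^1_A(T,T)=0$ together with $T$ generating the derived category. Once the correct $i$ is identified as, say, the minimal $a$ with $M[a,n]\in\add T$, checking $M[a,n{-}1]\in\add T$ is the delicate step and will likely require a short mutation argument: if $M[a,n{-}1]$ were absent, mutating $T$ at the relevant summand would either leave $\add T$ or violate the tilting condition.
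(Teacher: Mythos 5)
Your plan has the right shape, but the two core steps as sketched do not close. For disjointness, you pick the pair $P(i)=M[i,n]$ and $P(j)/P(n)=M[j,n-1]$ with $i<j$, but these have no extensions in any degree: $\Ext^{>0}_A(P(i),-)=0$ because $P(i)$ is projective, $\Ext^{\geq2}_A(P(j)/P(n),-)=0$ because $\pdim_A(P(j)/P(n))\le 1$, and $\Ext^1_A(P(j)/P(n),P(i))$ vanishes because by the Auslander--Reiten formula it is dual to stable maps from $M[i,n]$ into $\tau(P(j)/P(n))=M[j+1,n]$, which are zero since $i<j+1$. The obstruction that actually works, and the one the paper uses in Lemma~\ref{lem:nakayama-combs}(1)--(2), is $\Ext^1_A(P(i)/P(n),P(j))\neq0$ for $j\in[i+1,n]$.

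For exhaustiveness, the index ``minimal $a$ with $M[a,n]\in\add T$'' is identically $\ell$ for every $T$, since $P(\ell)=M[\ell,n]=I(n)$ is projective-injective and hence a summand of every tilting module; this index cannot separate the pieces of the partition. The paper instead takes the \emph{maximal} $i$ with $P(i)\in\add T$ (which exists by the projective-injectivity observation you do not invoke), and the step you defer to a ``short mutation argument'' --- that then $P(i)/P(n)\in\add T$ --- is precisely Lemma~\ref{lem:nakayama-combs}(3), a genuine Auslander--Reiten-formula argument carrying a hypothesis $j\neq\ell+1$. When that hypothesis fails (i.e.\ when $n-1\in\relvx(A)$, so $\ell=n-1$), the paper is forced to treat the case separately via Theorem~\ref{thm:glue-bij-except-n}(a), showing $\calT_\ell=\tilt^{S(n)}A$; your proposal does not address this case. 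Your appeal to $\pdim T\le 1$ is also incorrect: quadratic linear Nakayama algebras generically have global dimension larger than one.
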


Furthermore, each $\tilt_{P(i)\oplus (P(i)/P(n))}A$ can be reduced to tilting modules over idempotent-quotient algebras or idempotent subalgebras (Theorems \ref{thm:glue-bij-n} and \ref{thm:glue-bij-except-n}).
As an application, we have a recursive formula of the number of tilting $A$-modules, which coincides with the number of quasi-hereditary structures on $A$ (Corollary \ref{cor:count-by-tilting}). 
In fact, one can use this decomposition to show that $\tilt A$ is a lattice -- we will defer this to a later paper where we give more detailed studies in the poset structure of $\tilt A$.

To finish our investigation on quasi-hereditary structures of linear quadratic Nakayama algebras, we give their explicit descriptions.
The key to this is to describe the quasi-hereditary structures of an algebra obtained by `gluing a node', i.e. identifying the sink vertex of one algebra $A$ to the source vertex of another algebra $B$ and add all quadratic relations around the glued vertex $v$; see subsection \ref{subsec:nodal gluing} for details.

\begin{theorem}[Theorem \ref{thm:pro-qhs-node-gluing}]\label{mainthm4}
Let $C$ be an algebra obtained by gluing a node $v$ from algebras  $A$ and $B$. 
Then there exist bijections between the following sets.
\begin{itemize}
\item The set of quasi-hereditary structures of $C$.
\item The set of pairs of quasi-hereditary structures of $A$ and $B$ that satisfy the condition $(\std)$ of Definition \ref{def:gluing conditions}.
\item The set of pairs of quasi-hereditary structures of $A$ and $B$ that satisfy the condition $(\costd)$ of Definition \ref{def:gluing conditions}.
\end{itemize}
\end{theorem}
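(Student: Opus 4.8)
The plan is to realise all three bijections through one concrete operation: restriction of partial orders. Write $\Lambda_A,\Lambda_B\subseteq\Lambda_C$ for the vertex sets of $A$ and $B$ sitting inside $C$, so that $\Lambda_A\cup\Lambda_B=\Lambda_C$ and $\Lambda_A\cap\Lambda_B=\{v\}$, and to a partial order $\unlhd$ on $\Lambda_C$ associate the pair $(\unlhd|_{\Lambda_A},\unlhd|_{\Lambda_B})$. I would run the argument at the level of standard and costandard modules, using the classical characterisation (an order $\unlhd$ is a quasi-hereditary structure precisely when $[\std(x):S(x)]=1$ for all $x$ and every $P(x)$ admits a filtration with subquotients of the form $\std(y)$, $y\unrhd x$), and invoke Theorem \ref{mainthm1}(2) only at the end to reinterpret the outcome in terms of characteristic, equivalently IS-, tilting modules.

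The key structural input is the behaviour of modules under the nodal gluing of subsection \ref{subsec:nodal gluing}. Since $C$ identifies the sink $v$ of $A$ with the source $v$ of $B$ and imposes all quadratic relations through $v$, no nonzero path of $C$ passes through $v$ from one side to the other; in particular the only one-directional bridge between the two sides is $A\to v\to B$. Consequently every indecomposable projective, injective, standard and costandard $C$-module is supported either on $\Lambda_A$ or on $\Lambda_B$, and there it coincides with the corresponding module over $A$ or over $B$. Concretely, for any order $\unlhd$ on $\Lambda_C$ one has $\std_C(x)\cong\std_A(x)$ computed with respect to $\unlhd|_{\Lambda_A}$ for $x\in\Lambda_A\setminus\{v\}$, and $\std_C(x)\cong\std_B(x)$ computed with respect to $\unlhd|_{\Lambda_B}$ for $x\in\Lambda_B$ (including $x=v$, since $P_C(v)\cong P_B(v)$); dually $\costd_C(x)\cong\costd_B(x)$ for $x\in\Lambda_B\setminus\{v\}$ and $\costd_C(x)\cong\costd_A(x)$ for $x\in\Lambda_A$ (including $x=v$, since $I_C(v)\cong I_A(v)$). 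Thus the global data on $C$ is governed entirely by the two local pieces of data, with $v$ the sole point of contact.

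For well-definedness, given a quasi-hereditary structure $\unlhd$ of $C$ I would restrict the $\std_C$-filtration of each $P_C(x)$. When $x\in\Lambda_A\setminus\{v\}$, the module $P_C(x)=P_A(x)$ is supported on $\Lambda_A$, so every subquotient $\std_C(y)$ occurring is indexed by $y\in\Lambda_A$; by the local formula this is a $\std_A$-filtration of $P_A(x)$ for $\unlhd|_{\Lambda_A}$, and since $P_A(v)=S(v)=\std_A(v)$ trivially, $\unlhd|_{\Lambda_A}$ is a quasi-hereditary structure of $A$; the case of $B$ (now including $v$, as $P_C(v)=P_B(v)$) is identical. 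Conversely, given a pair $(\unlhd_A,\unlhd_B)$ of quasi-hereditary structures, let $\unlhd_C$ be the order on $\Lambda_C$ generated by $\unlhd_A$ and $\unlhd_B$: as these overlap only at $v$, any putative $\unlhd_C$-cycle must enter and leave $\Lambda_B$ through $v$ and hence collapse, so $\unlhd_C$ is genuinely a partial order restricting to the given ones, and by the local formula the induced $\std_C$ depend only on the pair, not on further choices. It remains to check $\unlhd_C$ is a quasi-hereditary structure of $C$. For $x\in\Lambda_B$ this is immediate from $P_C(x)=P_B(x)$ and $\unlhd_B$ being quasi-hereditary. For $x\in\Lambda_A\setminus\{v\}$ the $\std_A$-filtration of $P_A(x)=P_C(x)$ is a $\std_C$-filtration verbatim, \emph{except} that any subquotient $\std_A(v)=S(v)$ must be re-read as $\std_C(v)=\std_B(v)$, which is impossible unless $\std_B(v)=S(v)$ or no such subquotient occurs; by reciprocity the latter means $\std_A(v)$ is a $\std_A$-subquotient of no $P_A(x)$, $x\ne v$, i.e. $\costd_A(v)=S(v)$. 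This dichotomy is exactly condition $(\std)$ of Definition \ref{def:gluing conditions}; checking that the two formulations agree and that $\unlhd_C$ is then genuinely quasi-hereditary completes the bijection between quasi-hereditary structures of $C$ and $(\std)$-pairs.

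For the bijection with $(\costd)$-pairs I would deduce it by duality rather than redo the work: $C^{\op}$ is the nodal gluing of $B^{\op}$ and $A^{\op}$ at $v$, quasi-hereditary structures of an algebra and of its opposite correspond via the identity on partial orders (standard modules of one being the $\kD$-duals of costandard modules of the other), and transporting condition $(\std)$ for $C^{\op}$ through this correspondence and through the local formulas for $\costd_C$ yields precisely condition $(\costd)$ for $C$; together with the previous paragraph this also identifies the $(\std)$-pairs with the $(\costd)$-pairs. The main obstacle I anticipate is entirely concentrated at the node $v$: turning the heuristic ``the $\std_A(v)=S(v)$ pieces must cause no trouble once reinterpreted as $\std_C(v)=\std_B(v)$'' into the precise statement of Definition \ref{def:gluing conditions}, i.e. pinning down exactly when $\std_A(v)$ appears as a standard subquotient of some $P_A(x)$ with $x\in\Lambda_A\setminus\{v\}$ (the reciprocity computation involving $\costd_A(v)$) and matching this with condition $(\std)$, and likewise verifying the constructed order $\unlhd_C$ defines a well-posed quasi-hereditary structure independent of the cross-comparabilities through $v$. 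Everything else is routine restriction and re-assembly of $\std$- and $\costd$-filtrations, which is well behaved thanks to the no-path-crosses-$v$ property.
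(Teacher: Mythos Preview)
Your approach is essentially the same as the paper's: the restriction map $\Phi:[\unlhd]\mapsto([\unlhd|_{\Lambda_A}],[\unlhd|_{\Lambda_B}])$, the gluing map $\Psi:(\unlhd_A,\unlhd_B)\mapsto\unlhd_A\vee_v\unlhd_B$, the local identification of (co)standard modules (your ``local formula'' is the paper's Lemma~\ref{lem:std-AvB}), and the analysis of the obstruction at $v$ leading to the dichotomy ``$\std_B(v)=S(v)$ or $\costd_A(v)=S(v)$'' all mirror Propositions~\ref{prop:glue-MAO} and~\ref{prop:Psi-qhs}. Your translation of this dichotomy into condition~$(\std)$ via Brauer--Humphreys reciprocity is correct, and the duality reduction for $(\costd)$ is exactly how the paper handles it.

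There is one small logical gap. You establish that $\Phi$ is injective, that $\Psi$ is well-defined on $(\std)$-pairs, and that $\Phi\circ\Psi=\id$; this yields $(\std)\text{-pairs}\subseteq\Img\Phi$, but not the reverse inclusion. You still need to check that every restricted pair $\Phi([\unlhd])$ satisfies $(\std)$, i.e.\ $\Img\Phi\subseteq\qhs(A,B;\std)$. The paper does this explicitly: if some arrow $x\to v$ in $Q_A$ has $x\lhd_A v$, then $S(v)$ splits off the kernel $K(x)$ of $P_C(x)\twoheadrightarrow\std_C(x)$; since $K(x)\in\filt(\std_C)$ and $S(v)$ is a direct summand, one forces $\std_C(v)=S(v)$, hence $\std_B(v)=S(v)$. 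Your reciprocity argument can be adapted to give the same conclusion (using $(P_C(x):\std_C(v))=[\costd_C(v):S(x)]$ and $\costd_C(v)=\costd_A(v)$), but you should state it in the forward direction rather than only when constructing $\Psi$.
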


For a quadratic linear Nakayama algebra $A$, it can be obtained by iterative nodal gluing of the path algebra $\Bbbk \Vec{\bbA}_{n}$. 
In \cite[Theorem 4.7]{FKR22}, it was shown that quasi-hereditary structure of $\Bbbk \Vec{\bbA}_{n}$ can be described and classified by binary trees.
Consequently, we can apply Theorem \ref{mainthm4} to obtain a combinatorial description of quasi-hereditary structures of $A$ via certain sequence of binary trees (Corollary \ref{cor:qhs-qLNaka-tilt}), and conversely, how to construct a tilting $A$-module from such a sequence.
As a by-product, we obtain an alternative proof of the recursive formula (Corollary \ref{cor:count-by-tilting}) of the number of quasi-hereditary structures of $A$ (Proposition \ref{prop:recurrence-formula-qhs}).

Finally, it should be remarked that the counting of so-called quasi-hereditary orderings in \cite{ZZ} is fundamentally different from our counting result (Corollary \ref{cor:count-by-tilting}, Proposition \ref{prop:recurrence-formula-qhs}). The quasi-hereditary orderings discussed in \cite{ZZ} are specifically linear orders. It is possible for several quasi-hereditary orderings to produce the same standard modules and characteristic tilting modules. From this perspective, quasi-hereditary structures offer a genuine refinement of quasi-hereditary orderings.

\subsection*{Structure of the paper} 
This paper is structured as follows. In Section \ref{sec:prelim}, after recalling some fundamentals about partial order, we collect the essential definitions and results surrounding tilting modules and their mutation in subsection \ref{subsec:tilt mut}.  Then we will recall in subsection \ref{subsec:qhs char tilt} the definition of quasi-hereditary algebras, as well as various facts about characteristic tilting modules.  In subsection \ref{subsec:recolle subcats}, we recall various useful results regarding the passages between the module categories of $A, eAe$ and $A/AeA$.

Section \ref{sec:qhs vs tilting} is devoted to showing Theorem \ref{mainthm1}.  We will investigate a condition that is more general than IS-tilting in the first three subsections.  Then we will focus on IS-tilting modules in the latter two subsections.

The entirety of Section \ref{sec:all tilt = IS-tilt <=>} is used to prove Theorem \ref{mainthm2}.  The first subsection explains the main points in the argument and in particular show one of the direction.  The other direction is shown in the remaining subsections, where we construct a specific sequence of mutation to show that algebras with acyclic Gabriel quiver and are not quadratic linear Nakayama must admit a tilting module without simple direct summand.

Section \ref{sec:tilt(qLNaka)} will be divided into two parts.  We first investigate, in subsection \ref{subsec:co/tilt gluing} some sufficient conditions where (co)tilting modules of an algebra $A$ can be glued from, and decomposed into, (co)tilting modules of $eAe$ and $A/AeA$.  Then in subsection \ref{subsec:glue-tilt(qLNaka)}, we apply this to quadratic linear Nakayama algebras to obtain the decomposition (Theorem \ref{mainthm3}) of the set of tilting modules.  This yields a recursive construction of all tilting modules over quadratic linear Nakayama algebras from those of the path algebra of a linearly oriented quiver $\Vec{\bbA}_{n}$(Theorems \ref{thm:glue-bij-n} and \ref{thm:glue-bij-except-n}); in particular, we obtain a recursive formula in counting the tilting modules (Corollary \ref{cor:count-by-tilting}).

Finally, in Section \ref{sec:qhs description}, we first investigate a `gluing' construction on quasi-hereditary structures for algebras obtained by gluing a node (Theorem \ref{mainthm4}).  Then we apply this to quadratic linear Nakayama algebras to obtain a purely combinatorial description -- certain sequences of binary trees -- of all of their quasi-hereditary structures.

\subsection*{Acknowledgment}

We thank Yuichiro Goto for explaining his work \cite{Goto}, which started this project. 
TA is supported by JSPS Grant-in-Aid for Scientific Research JP24K06671.
AC is supported by JSPS Grant-in-Aid for Scientific Research (C) JP24K06666.
YK was supported by Grant-in-Aid for JSPS Fellows JP22J01155.
MT is supported by JSPS Grant-in-Aid for Early-Career Scientists JP23K12959.

\section{Preliminaries}\label{sec:prelim}

Throughout this paper, we use the following notation. 
Let $\Bbbk$ be a field.
We will specify when $\Bbbk$ is required to be algebraically closed.
For a finite-dimensional algebra $A$ over $\Bbbk$, we will always assume that $\Lambda$ is the indexing set of the isomorphism classes of simple $A$-modules.
We fix $\{ e_{x} \}_{x\in \Lambda}$ to be a complete set of primitive orthogonal idempotents of $A$. For $\emptyset\neq\Gamma \subseteq \Lambda$, let $e_{\Gamma}:=\sum_{x\in \Gamma}e_{x}$ and  $e_{\emptyset}:=0$. 

We assume that all modules are finitely generated right modules, unless otherwise specified.
For each $x\in \Lambda$, let $P(x)=e_{x}A, I(x)=\kD(Ae_{x})$ be the associated indecomposable projective and injective module, and let $S(x)$ be the simple top of $P(x)$.
Here, $\kD:=\Hom_{\Bbbk}(-,\Bbbk)$ denotes the $\Bbbk$-linear dual.

For any $A$-module $M$ and simple $A$-module $S$, $[M:S]$ denotes the multiplicity of $S$ as a composition factor of $M$.  
For an $A$-module $M$, denote by $|M|$ the number of non-isomorphic indecomposable direct summands of $M$.
We identify $\Ext^{0}$ with $\Hom$.  We often use the expression `$\Ext_{A}^{>0}(M,N)=0$' to mean `$\Ext_{A}^{k}(M,N)=0$ for all $k>0$', and similarly for $\Tor_{>0}^{A}(M,N)=0$.

Denote by $\mod A$ the category of $A$-modules.
All subcategories of $\mod A$ considered in this paper are full.
Denote by $\proj A$ the subcategory of projective $A$-modules and by $\inj A$ the subcategory of injective $A$-modules.
For $M\in\mod A$, denote by $\add M$ the subcategory of all modules isomorphic to finite direct sum of direct summands of $M$.

\subsection{Partial order}

We review the basics of partial order and establish some terminologies for later use. Throughout this paper, all posets we consider are finite, unless otherwise specified.

Let $(\Lambda,\unlhd)$ be a poset. When there is no confusion about the underlying set, we will just use $\min\unlhd$ and $\max\unlhd$ to denote the set of minimal and maximal elements respectively.  When using the symbol $\lhd$, we mean a strict relation, i.e. $x\lhd y$ implies that $x\neq y$.
A \defn{total order}, or \defn{linear order}, is a partial order for which any two elements are comparable.
A \defn{linear extension} $\leq$ of a partial order $\unlhd$ is a total order that refines a partial order, i.e. $x\unlhd y$ implies that $x\leq y$ for all $x,y\in \Lambda$. 

We say that $y$ \defn{covers} $x$  if $x\lhd y$, and $x\unlhd z\unlhd y$ implies $z=x$ or $z=y$; we write $x\lhd^{\bullet} y$ for such a case.
The \defn{Hasse quiver} of $\Lambda$ is defined as the directed graph whose vertices are the elements of $\Lambda$ and whose arrows $x\rightarrow y$ correspond to the covering relations $y \lhd^{\bullet} x$.

For a subset $\Gamma\subseteq \Lambda$, denote by $\unlhd|_\Gamma$ the restriction of $\unlhd$ on $\Gamma$.
Recall that $\Gamma$ is a \defn{up-set} of $\Lambda$ if it is a sub(po)set of $\Lambda$ that is closed under being larger, i.e. $y\unrhd x\in \Gamma$ implies that $y\in \Gamma$.  One can define a \defn{down-set} dually.

Suppose that $(\Lambda,\unlhd_\Lambda), (\Gamma,\unlhd_\Gamma)$ are two posets. Then the (Cartesian) \defn{product poset} of them is given by $(\Lambda\times\Gamma, \unlhd:=\unlhd_\Lambda\times\unlhd_{\Gamma})$, where $(\lambda,\gamma) \unlhd (\lambda',\gamma')$ if $\lambda \unlhd_{\Lambda} \lambda'$ and $\gamma\unlhd_{\Gamma} \gamma'$.

\subsection{Tilting modules and mutation}\label{subsec:tilt mut}

In this subsection, we recall a definition and basic properties of tilting modules.

\begin{definition}
An $A$-module $T$ is called a \defn{tilting module} if the following three conditions are satisfied:
\begin{itemize}
\item[(T1)] The projective dimension of $T$ is finite.
\item[(T2)] $\Ext_{A}^{>0}(T,T)=0$.
\item[(T3)] There exists an exact sequence 
\begin{align}
0\rightarrow A \rightarrow T_{0} \rightarrow T_{1} \rightarrow \cdots \rightarrow T_{n}\rightarrow 0 \notag 
\end{align}
such that all $T_{i}\in \add T$.
\end{itemize}
If (T1) and (T2) are satisfied, it is called \defn{pretilting}. 
An $A$-module is said to be \defn{basic} if it is a direct sum of pairwise non-isomorphic indecomposable modules. 
Denote by $\ptilt A$ (respectively, $\tilt A$) the set of basic pretilting (respectively, tilting) $A$-modules up to isomorphism. 

\defn{Precotilting modules} and \defn{cotilting modules} are defined dually, and so are the sets $\pcotilt A$ and $\cotilt A$.
\end{definition}

For an $A$-module $X$, we have a partition of $\tilt A$ as $\tilt A=\tilt_{X}A\sqcup \tilt^{X}A$, where
\begin{align}
\tilt_{X}A:=\{ T\in \tilt A \mid X\in\add T \}, \text{ and } \tilt^{X}A:=\tilt A\setminus \tilt_{X}A=\{ T\in \tilt A \mid X\notin\add T\}. \notag
\end{align}

We collect basic properties of tilting modules.

\begin{proposition}\label{prop:property-tilting}
Let $A$ be a finite-dimensional algebra.
Then the following statements hold.
\begin{itemize}
\item[(1)] $(${\cite{HU05}}$)$ For $A$-modules $M$ and $N$, we write $M\geq N$ if $\Ext_{A}^{>0}(M,N)=0$. Then $\geq$ is a partial order on $\tilt A$.
\item[(2)] $(${\cite[Theorem 1.19]{M86}, \cite[Theorem 10]{W88}}$)$
Let $T$ be a tilting $A$-module. Then the Grothendieck group of $A$ is isomorphic to the Grothendieck group of $\End_{A}(T)$. In particular, we have $|T|=|A|$.
\item[(3)] $(${\cite[Theorem 1]{RS89}}$)$ Assume that $A$ is of finite representation type (i.e. the number of the isomorphism classes of indecomposable $A$-modules is finite).
Let $M$ be an $A$-module satisfying the conditions (T1) and (T2).
Then there exists an $A$-module $N$ such that $M\oplus N$ is a tilting $A$-module. 
In particular, $M$ is a tilting $A$-module if and only if (T3') $|T|=|A|$ holds.
\end{itemize}
\end{proposition}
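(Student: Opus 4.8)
These three items are classical, so the plan is to recall the standard arguments while isolating the small amount of genuine content, arranging matters so that part~(2) underlies the numerical claims in parts~(1) and~(3). I would first settle part~(2): since $T$ has finite projective dimension by~(T1), it induces a triangle equivalence $\mathrm{D}^{b}(\mod A)\simeq\mathrm{D}^{b}(\mod B)$ with $B:=\End_{A}(T)$ (a theorem of Happel), hence an isomorphism $K_{0}(A)\cong K_{0}(B)$ of Grothendieck groups; comparing ranks, that of $K_{0}(A)$ equals $|A|$ (the number of simple, equivalently indecomposable projective, $A$-modules) while that of $K_{0}(B)$ is the number of simple $B$-modules, which -- as $T$ is basic -- is the number of indecomposable summands of $T$, so $|T|=|A|$. (Alternatively one avoids derived categories and argues directly in $K_{0}(A)$ via the exact sequences of the tilting theorem.)

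For part~(1) I would verify the three axioms of a partial order. Reflexivity is exactly~(T2). For antisymmetry, given $T\geq T'$ and $T'\geq T$, the module $T\oplus T'$ satisfies~(T1), satisfies~(T2) because the cross Ext-groups $\Ext^{>0}_{A}(T,T')$ and $\Ext^{>0}_{A}(T',T)$ vanish by hypothesis, and satisfies~(T3) since it inherits from~(T3) for $T$ a coresolution of $A$ with terms in $\add T\subseteq\add(T\oplus T')$; hence $T\oplus T'$ is tilting, and part~(2) forces $|T\oplus T'|=|A|=|T|=|T'|$, so $\add T=\add T'$ and thus $T\cong T'$ by basicness. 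For transitivity I would reformulate the order via subcategories: to a tilting module $M$ one attaches $M^{\perp}:=\{X\in\mod A\mid\Ext^{>0}_{A}(M,X)=0\}$, shows that $M\geq M'$ is equivalent to the inclusion $(M')^{\perp}\subseteq M^{\perp}$, and then transitivity of $\geq$ becomes transitivity of $\subseteq$.

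For part~(3), over an algebra $A$ of finite representation type I would invoke the Bongartz-type completion of~\cite{RS89}: every $M$ satisfying~(T1) and~(T2) is a direct summand of some tilting module $M\oplus N$. The rest is formal: if in addition $|M|=|A|$, then part~(2) gives $|M\oplus N|=|A|=|M|$, hence $N\in\add M$ and $M$ itself satisfies~(T3) (using the coresolution for $M\oplus N$), so $M$ is tilting; conversely $|M|=|A|$ for tilting $M$ by part~(2).

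The step I expect to be the main obstacle is the completion result behind part~(3): for projective dimension one this is Bongartz's lemma, but in general it fails without a finiteness hypothesis, so the finite-representation-type case is precisely the nontrivial input imported from~\cite{RS89}. A lesser subtlety is the subcategory reformulation $M\geq M'\iff(M')^{\perp}\subseteq M^{\perp}$ used for transitivity, which rests on the fact that an $\add M'$-coresolution of a module in $(M')^{\perp}$ remains inside $M^{\perp}$ whenever $M'\in M^{\perp}$.
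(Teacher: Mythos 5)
The paper does not prove this proposition; all three items are stated as classical facts with citations to \cite{HU05}, \cite{M86,W88}, and \cite{RS89}, so there is no in-paper argument to compare against. Your reconstruction is correct and follows the standard routes in those sources: for (2), both the derived-equivalence argument via Happel and the direct $K_{0}$ computation from the (T3)-sequence work (the cited Miyashita/Wakamatsu proofs are closer to the latter); for (1), reducing antisymmetry to (2) by observing that $T\oplus T'$ is a (non-basic) tilting module, and reformulating $\geq$ via perpendicular categories for transitivity, is precisely Happel--Unger's approach; for (3), you correctly treat the Rickard--Schofield completion as the imported input and deduce the numerical characterisation from (2). One small calibration: in (1), the genuinely nontrivial ingredient behind $M\geq M'\iff (M')^{\perp}\subseteq M^{\perp}$ is the theorem that every object of $T^{\perp}$ admits a finite $\add T$-coresolution (valid for tilting modules of arbitrary finite projective dimension); the dimension-shift you single out as the subtlety is the easy half of that equivalence, so the emphasis in your final remark is slightly misplaced, though the argument itself is sound.
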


\begin{remark}\label{rem:tilting/qln}
In the latter part of the paper, we will study tilting modules over a quadratic linear Nakayama algebra $A$.
Since $A$ is representation-finite and of finite global dimension, it follows from Proposition \ref{prop:property-tilting}(3) that an $A$-module $T$ is tilting if and only if it satisfies (T2) $\Ext_{A}^{>0}(T,T)=0$ and (T3') $|T|=|A|$. In this case, all tilting modules are cotilting, and vice versa.
\end{remark}

Recall the notion of tilting mutation and its basic properties, see \cite{HU05, RieS91}.
For $M\in\mod A$ and $\calU$ a subcategory of $\mod A$, a morphism $f:M\rightarrow U$ is called a \defn{left $\calU$-approximation} of $M$ if $U\in\calU$ and $\Hom_{A}(f,U')$ is surjective for all $U'\in \calU$.
It is, furthermore, \defn{minimal} if each morphism $g:U\rightarrow U$ satisfying $gf=f$ is an automorphism. 

\begin{proposition}\label{prop:tilting-mutation}
Let $T=X\oplus U$ be a tilting module with $\add X \cap \add U = 0$ and $f:X\rightarrow U'$ a left $\add U$-approximation.
Assume that $f$ is injective.
Then the following statements hold.
\begin{itemize}
\item[(1)] $\mu_{X}(T):=\Cok f \oplus U$ is a tilting module.
\item[(2)] If $X$ is basic and $f$ is left minimal, then $\Cok f$ is basic and $|X|=|\Cok f|$ holds.
\end{itemize}
We call $\mu_{X}(T)$ the (left) \defn{mutation} of $T$ at $X$, and say that it is \defn{irreducible} if $X$ is furthermore indecomposable.
\end{proposition}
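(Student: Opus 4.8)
The plan is to prove (1) first, then bootstrap (2) from it. For (1), I would start from the injective left $\add U$-approximation $f\colon X\rightarrow U'$ and write down the short exact sequence
\begin{align}
0\rightarrow X \xrightarrow{\ f\ } U' \rightarrow \Cok f \rightarrow 0. \notag
\end{align}
Call $V:=\Cok f$ and $T':=V\oplus U$. First I would check (T1): since $T=X\oplus U$ has finite projective dimension, so do $X$, $U'$, $U$, and then $V$ has finite projective dimension from the exact sequence above (its projective dimension is at most $\max(\pdim X+1,\pdim U')<\infty$). Next, (T2), which is the heart of the matter: I must verify $\Ext_A^{>0}(T',T')=0$, i.e.\ the four vanishings $\Ext^{>0}(U,U)$, $\Ext^{>0}(U,V)$, $\Ext^{>0}(V,U)$, $\Ext^{>0}(V,V)$. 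The first is inherited from $T$. For $\Ext^{>0}(V,U)$, apply $\Hom_A(-,U)$ to the sequence: since $f$ is a left $\add U$-approximation, $\Hom_A(U',U)\rightarrow \Hom_A(X,U)$ is surjective, which kills the connecting map into $\Ext^1(V,U)$; combined with $\Ext^{>0}(U',U)=0=\Ext^{>0}(X,U)$ (both summands of $T$) one gets $\Ext^{>0}(V,U)=0$. For $\Ext^{>0}(U,V)$ and $\Ext^{>0}(V,V)$, apply $\Hom_A(U,-)$ and $\Hom_A(V,-)$ respectively to the sequence and use $\Ext^{>0}(U,X)=\Ext^{>0}(U,U')=0$, respectively $\Ext^{>0}(V,X)$ (which needs a short separate argument — but note $\Ext^{>0}(V,U')=0$ follows since $U'\in\add U$, and one can run the long exact sequence for $\Hom_A(-,X)$ using $\Ext^{>0}(U',X)=0=\Ext^{>0}(X,X)$ to get $\Ext^{>0}(V,X)=0$ after dealing with the $\Ext^1$ term via... ) — here one has to be slightly careful about the order in which the vanishings are established. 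Finally (T3): splice the coresolution $0\rightarrow A\rightarrow T_0\rightarrow\cdots\rightarrow T_n\rightarrow 0$ of $A$ by $\add T=\add(X\oplus U)$ with copies of the sequence $0\rightarrow X\rightarrow U'\rightarrow V\rightarrow 0$ to replace each occurrence of $X$ in the $T_i$ by terms in $\add(V\oplus U)=\add T'$; this is the standard horseshoe-style splicing. By Proposition \ref{prop:property-tilting}(2) applied to $T$ one has $|T'|\le|T|=|A|$ and a dimension/Grothendieck-group count forces equality, so $T'$ is a genuine (basic, once we know (2)) tilting module.

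For (2), assume $X$ basic and $f$ left minimal. I would argue that $V=\Cok f$ is basic with $|V|=|X|$. The inequality $|V|\le|X|$ — equivalently, $|T'|=|V\oplus U|\ge|X\oplus U|=|T|$ would fail if $V$ had a repeated summand or $V$ and $U$ shared a summand — actually the clean route is: $|T'|=|A|=|T|=|X|+|U|$ by part (1) and Proposition \ref{prop:property-tilting}(2), and $\add V\cap\add U$ can be shown to be zero using minimality of $f$ (a common summand of $V$ and $U$ would split off a trivial summand $0\rightarrow 0\rightarrow U''\xrightarrow{=}U''\rightarrow 0$ from the sequence, contradicting minimality, after decomposing $f$ into its minimal part plus a split part). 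Granting $\add V\cap\add U=0$, we get $|V|+|U|=|T'|=|X|+|U|$, hence $|V|=|X|$; and $V$ is basic because $T'$ is basic (a tilting module with $|T'|=|A|$ summands, all non-isomorphic by the minimality-forced indecomposability bookkeeping). A cleaner self-contained way to see $V$ basic with $|V|=|X|$: decompose $f=\begin{smallmatrix}f'&0\end{smallmatrix}$ with $f'$ left minimal and the remaining part an isomorphism onto a summand of $U'$ — but since $f$ is already assumed minimal there is no split part, and then the indecomposable summands of $V$ are in bijection with those of $X$ via the theory of minimal approximations (irreducible mutation replaces one indecomposable by one indecomposable). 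I would spell this out by reducing to the indecomposable case: write $X=\bigoplus X_i$, take a minimal left $\add U$-approximation of each $X_i$; minimality of $f$ means $f=\bigoplus f_i$, and each $\Cok f_i$ is indecomposable by a standard argument (if $\Cok f_i$ decomposed, one of the factors would give a smaller approximation of $X_i$), and the $\Cok f_i$ are pairwise non-isomorphic because otherwise $T'$ would not be basic, contradicting $|T'|=|A|$.

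The main obstacle I anticipate is the ordering and bookkeeping of the four $\Ext$-vanishings in (T2) — in particular establishing $\Ext^{>0}(V,X)=0$, which is needed as an intermediate step but is not directly one of the conclusions, and which requires running the long exact sequence in the second variable against $X$ using the approximation property in the right direction. Everything else (T1), (T3), and the counting in (2) is routine once (T2) is in hand and Proposition \ref{prop:property-tilting}(2) is invoked for the summand count. A secondary technical point is the precise use of left minimality to rule out common summands of $\Cok f$ and $U$; I would handle this by the standard decomposition of an approximation into a minimal one plus a split-mono part and noting the split part is trivial here by hypothesis.
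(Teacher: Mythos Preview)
The paper does not give its own proof of this proposition; it is stated as a recall of standard tilting-mutation facts with a citation to \cite{HU05, RieS91}. So there is nothing in the paper to compare your argument against, and what you have written is essentially the classical proof one finds in those references.

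Your outline for (1) is correct, but the ``main obstacle'' you flag is not one: you do \emph{not} need $\Ext^{1}(V,X)=0$ (and indeed this can fail---it would force $f$ to split). Applying $\Hom_A(V,-)$ to the sequence gives $\Ext^{k}(V,V)\hookrightarrow\Ext^{k+1}(V,X)$ once you know $\Ext^{k}(V,U')=0$, so only $\Ext^{\ge 2}(V,X)=0$ is required, and that drops out immediately from $\Hom_A(-,X)$ applied to the sequence using $\Ext^{\ge 1}(U',X)=\Ext^{\ge 1}(X,X)=0$. For (T3), your splicing idea is right but stated loosely; the clean formulation is that the sequence $0\to X\to U'\to V\to 0$ exhibits a finite $\add T'$-coresolution of $X$, hence of $T=X\oplus U$, and then splicing with the given $\add T$-coresolution of $A$ yields a finite $\add T'$-coresolution of $A$. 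The Grothendieck-group remark you append is a consequence of (T3), not a substitute for it.

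For (2) your argument is serviceable but meandering. The efficient route is exactly the one you eventually land on: by (1) and Proposition~\ref{prop:property-tilting}(2) you have $|T'|=|A|=|T|=|X|+|U|$, so it suffices to show $\add V\cap\add U=0$ and $V$ basic. Both follow from minimality of $f$: a summand $W\in\add U$ of $V$ would, via $\Ext^{1}(W,X)\to\Ext^1(W,U')=0$ and the lifting of the inclusion $W\hookrightarrow V$ through $g\colon U'\to V$, split off a trivial summand $0\to 0\to W\xrightarrow{=}W\to 0$ of the approximation sequence, contradicting left minimality; the same mechanism handles a repeated summand of $V$. Your ``reduce to the indecomposable case'' alternative also works but is more bookkeeping than needed.
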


It is known that the Hasse quiver of $\tilt A$ coincides with the mutation quiver whose vertices are tilting $A$-modules and whose arrows are irreducible (left) mutations, that is, we draw an arrow $T\rightarrow U$ if $U$ is an irreducible mutation of $T$.
It is shown in \cite[Corollary 2.2]{HU05} that if $A$ is representation-finite, then all tilting $A$-modules can be obtained by iterative mutation starting from $A$.

\begin{example}\label{eg:A_n^!}
Let $A$ be a radical square zero linear Nakayama algebra whose Gabriel quiver is
\begin{align}
1\xrightarrow{\alpha_{1}} 2 \xrightarrow{\alpha_{2}} \cdots \xrightarrow{\alpha_{n-1}} n.\notag
\end{align}
By (T3), every indecomposable projective-injective module is a direct summand of every tilting module. 
All indecomposable projective $A$-modules, except $P(n)$, are also injective.  
Let $P:=P(1)\oplus \cdots \oplus P(n-1)$. 
Then each basic tilting $A$-module is given by $P\oplus X$ for some indecomposable $A$-module $X$.
For each $i\in [2,n]$, there exists an exact sequence
\begin{align}
0\rightarrow S(i)\xrightarrow{f} P(i-1)\rightarrow S(i-1)\rightarrow 0\notag
\end{align}
with $f$ a minimal left $\add P$-approximation.
By mutation, the Hasse quiver of $\tilt A$ is of the form:
\begin{align}
\tilt A  = \left( A=P\oplus S(n) \rightarrow P\oplus S(n-1) \rightarrow \cdots \rightarrow P\oplus S(1) = DA\right).\notag
\end{align}
\end{example}

\subsection{Quasi-hereditary structures and characteristic tilting modules}\label{subsec:qhs char tilt}

In this subsection, we recall the definition and some basic facts of quasi-hereditary algebras.

Let $A$ be a finite-dimensional $\Bbbk$-algebra.
Recall that for modules $X,P\in\mod A$, the \defn{trace} $\tr_{X}(P)$ of $X$ in $P$ is the submodule of $P$ given by the sum of $\mathrm{Im}(f)$ over all $A$-module homomorphisms $f:X\rightarrow P$.
Recall also that the \defn{reject} $\rej_{X}(P)$ of $X$ in $P$ is the submodule of $P$ given by the intersection of $\Ker(f)$ over all $A$-module homomorphisms $f:P\rightarrow X$.
Note that there is a surjection $N^{\oplus m} \twoheadrightarrow \tr_{N}(M)$ for some $m\ge 0$ and also an injection $M/\rej_{N}(M)\hookrightarrow N^{\oplus m'}$ for some $m'\ge 0$.

Fix a partial order $\unlhd$ on $\Lambda$. 
For $i\in \Lambda$, define \defn{standard $A$-modules} $\std(i)$ and \defn{costandard $A$-modules} $\costd(i)$ with respect to $\unlhd$ as follows.
\begin{align}
\std(i) & := P(i)/\tr_{P}(P(i)) \text{, where }P:=\bigoplus_{j\ntrianglelefteq i} P(j), \notag\\
\costd(i) & := \rej_{I}(I(i)) \text{, where }I:=\bigoplus_{j\ntrianglelefteq i} I(j).\notag
\end{align}
Namely, $\std(i)$ is the maximal quotient module of $P(i)$ such that each composition factor $S(j)$ satisfies $j\unlhd i$, and $\costd(i)$ is the maximal submodule of $I(i)$ such that each composition factor $S(j)$ satisfies $j\unlhd i$.

Suppose that $\calX$ is a set of indecomposable $A$-modules, or an additive subcategory of $\mod A$. We say that an $A$-module $M$ admits an \defn{$\calX$-filtration} if there exists a filtration
\begin{align}
M=M_{0} \supset M_{1} \supset \cdots \supset M_{l-1}\supset M_{l}=0 \notag
\end{align}
such that $M_{j-1}/M_{j}\in \calX$ for all $1\leq j \leq l$.
Let $\filt(\calX)$ be the full subcategory of $\mod A$ consisting of the $A$-modules that admit an $\calX$-filtration.
We will usually consider $\calX$ for the following cases:
\begin{itemize}
\item $\std:=\{\std(i)\mid i\in\Lambda\}$ be the set of standard modules. 
\item $\costd:=\{ \costd(i)\mid i\in \Lambda\}$ be the set of costandard modules. 
\item $\std(\rhd i):=\{ \std(j) \mid j\rhd i\}$, $\std(\lhd i):=\{ \std(j) \mid j\lhd i\}$, and similarly for $\costd(\rhd i)$ and $\costd(\lhd i)$.
\end{itemize}

We recall the definition of quasi-hereditary algebras. 

\begin{definition}[{\cite{CPS88}}]\label{def:qha}
Let $\unlhd$ be a partial order on $\Lambda$. 
The pair $(A,\unlhd)$ is called a \defn{quasi-hereditary algebra} if it satisfies the following conditions:
\begin{itemize}
\item[(qh1)] $[\std(i):S(i)]=1$ for each $i\in \Lambda$.
\item[(qh2)] For each $i\in \Lambda$, the canonical short exact sequence $0\rightarrow K(i) \rightarrow P(i)\rightarrow  \std(i)\rightarrow 0$ has $K(i)\in\filt(\std(\rhd i))$. 
\end{itemize}
We call a partial order $\unlhd$ on $\Lambda$ \defn{quasi-hereditary} if $(A,\unlhd)$ is a quasi-hereditary algebra.
\end{definition}

By \cite[Statement 9]{DR89}, being quasi-hereditary is a left-right symmetric notion, i.e. $(A,\unlhd)$ being quasi-hereditary is equivalent to $(A^\op,\unlhd)$ being quasi-hereditary.  This is also equivalent to having the dual version of (qh1) and (qh2) being satisfied:
\begin{itemize}
\item[(qh1${}^{\op}$)] $[\costd(i):S(i)]=1$ for each $i\in \Lambda$.
\item[(qh2${}^{\op}$)] For each $i\in \Lambda$, the canonical short exact sequence $0\rightarrow \costd(i) \rightarrow I(i)\rightarrow  C(i)\rightarrow 0$ has $C(i)\in\filt(\costd(\rhd i))$. 
\end{itemize}

For $M\in \filt(\std)$, denote by $(M:\std(i))$ the multiplicity of $\std(i)$ in a $\std$-filtration of $M$; this is well-defined for a quasi-hereditary algebra (see \cite{DR92, D98}).  Similarly, we have well-defined multiplicity $(M:\costd(i))$ of $\costd(i)$ for any $M\in \filt(\costd)$. 
Recall that for a quasi-hereditary algebra, we have the following \defn{Brauer--Humphreys reciprocity}; this generalises Bernstein--Gel'fand--Gel'fand reciprocity for the BGG category $\calO$ of complex semisimple Lie algebras, c.f. \cite[A.5]{D98}.

\begin{proposition}[{\cite[Theorem 3.11]{CPS88}, \cite[Lemma 2.5]{DR92}}]\label{prop:BH-recip}
For a quasi-hereditary algebra $(A,\unlhd)$, we have the following equalities for any $i,j\in\Lambda$.
\begin{itemize}
\item[(1)] $(P(i):\std(j))=[\costd(j):S(i)]$.
\item[(2)] $(I(i):\costd(j))=[\std(j):S(i)]$.
\end{itemize}
\end{proposition}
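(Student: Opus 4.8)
The plan is to deduce both reciprocity identities from the two standard orthogonality relations between standard and costandard modules, namely that for all $i,j\in\Lambda$
\begin{align}
\dim_{\Bbbk}\Hom_{A}(\std(i),\costd(j))=\delta_{ij}\qquad\text{and}\qquad\Ext_{A}^{>0}(\std(i),\costd(j))=0,\notag
\end{align}
and then to read the two equalities off by counting dimensions of suitable $\Hom$-spaces. (Here I assume $\Bbbk$ is algebraically closed, so that $\End_{A}(S(i))=\Bbbk$ for all $i$; otherwise the scalar $1$ must be replaced throughout by $\dim_{\Bbbk}\End_{A}(S(i))$.) The $\Hom$-relation is elementary: $\std(i)$ has simple top $S(i)$ and only composition factors $S(k)$ with $k\unlhd i$, while $\costd(j)$, a nonzero submodule of $I(j)$, has simple socle $S(j)$ and only composition factors $S(k)$ with $k\unlhd j$; thus a nonzero homomorphism $\std(i)\to\costd(j)$ forces $i\unlhd j$ and $j\unlhd i$, hence $i=j$, and by (qh1) and (qh1${}^{\op}$) any such homomorphism factors through $\std(i)\twoheadrightarrow S(i)\hookrightarrow\costd(i)$, so the $\Hom$-space is one-dimensional. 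The $\Ext$-vanishing is the classical homological cornerstone of the theory \cite{CPS88, DR92, DR89}; I would recall its proof, which proceeds by induction along the poset $(\Lambda,\unlhd)$, playing off the projective presentation $0\to K(i)\to P(i)\to\std(i)\to0$ with $K(i)\in\filt(\std(\rhd i))$ (from (qh2)) against the dual copresentation $0\to\costd(j)\to I(j)\to C(j)\to0$ with $C(j)\in\filt(\costd(\rhd j))$ (from (qh2${}^{\op}$)), so that at each stage one of the two resolutions produces only $\Hom$-groups between strictly $\unlhd$-comparable indices, which vanish by the $\Hom$-relation.

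With the orthogonality relations available, the remaining argument is a short count. For (1), recall that $P(i)\in\filt(\std)$ by (qh2), and fix a $\std$-filtration of $P(i)$. Applying the contravariant functor $\Hom_{A}(-,\costd(j))$ to the short exact sequences of this filtration and using $\Ext_{A}^{1}(\std(k),\costd(j))=0$ for every filtration factor $\std(k)$, each sequence remains exact; adding up dimensions along the filtration and using $\dim_{\Bbbk}\Hom_{A}(\std(k),\costd(j))=\delta_{kj}$ then gives $\dim_{\Bbbk}\Hom_{A}(P(i),\costd(j))=(P(i):\std(j))$. On the other hand $\Hom_{A}(P(i),\costd(j))\cong\costd(j)e_{i}$, whose dimension is $[\costd(j):S(i)]$, and comparing the two counts proves (1). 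For (2) one argues dually: $I(i)\in\filt(\costd)$ by (qh2${}^{\op}$), and applying $\Hom_{A}(\std(j),-)$ to a $\costd$-filtration of $I(i)$ yields $\dim_{\Bbbk}\Hom_{A}(\std(j),I(i))=(I(i):\costd(j))$, while $\Hom_{A}(\std(j),I(i))\cong\kD(\std(j)e_{i})$ has dimension $[\std(j):S(i)]$; alternatively, (2) is (1) applied to $A^{\op}$, under which standard and costandard modules (and $P(i)$ and $I(i)$) are interchanged up to $\kD$.

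The only substantive point is the $\Ext$-vanishing in the first step; everything after it is bookkeeping with $\Hom$-dimensions. Within that step, the delicate part is the $\Ext^{1}$-case: neither the projective presentation of $\std(i)$ nor the injective copresentation of $\costd(j)$ suffices on its own -- the first kills $\Ext_{A}^{1}(\std(i),\costd(j))$ only when $\std(j)$ cannot occur in $K(i)$, and the second only in the complementary situation -- so one has to combine them, carefully ordered by the poset. (Equivalently, one can package the two steps as the factorisation of the Cartan matrix of $A$ through the $\std$- and $\costd$-decomposition matrices of its projectives and injectives, these being invertible since they are triangular with respect to any linear extension of $\unlhd$; the direct dimension count above is simply the most economical presentation.)
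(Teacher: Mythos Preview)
The paper does not give its own proof of this proposition: it is stated with citations to \cite[Theorem 3.11]{CPS88} and \cite[Lemma 2.5]{DR92} and used as a black box. Your argument is the standard one from those references --- establish the orthogonality relations $\dim_{\Bbbk}\Hom_{A}(\std(i),\costd(j))=\delta_{ij}$ and $\Ext_{A}^{>0}(\std(i),\costd(j))=0$, then read off the reciprocity by applying $\Hom_{A}(-,\costd(j))$ (respectively $\Hom_{A}(\std(j),-)$) along a $\std$-filtration of $P(i)$ (respectively a $\costd$-filtration of $I(i)$) --- and it is correct.
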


Let $\unlhd'$ be another partial order on $\Lambda$ and $\std':=\{\std'(i)\mid i\in \Lambda\}$ the set of the standard modules with respect to $\unlhd'$.
We say that two partial orders $\unlhd$ and $\unlhd'$ on $\Lambda$ are \defn{equivalent}, and write $\unlhd\sim\unlhd'$, if $\std=\std'$ holds, that is, $\std(i)=\std'(i)$ for each $i\in \Lambda$.
Note that, for a refinement $\unlhd'$ of a quasi-hereditary partial order $\unlhd$, the partial order $\unlhd'$ is also quasi-hereditary and $\unlhd'\sim\unlhd$ by \cite{DR92}. 

Denote by $[\unlhd]$ an equivalence class of a quasi-hereditary partial order $\unlhd$.
Following \cite{FKR22}, we call this class $[\unlhd]$ a \defn{quasi-hereditary structure} of $A$.
We remark that this terminology is used in \cite{Goto} for total order $\le$ for which $(A,\le)$ is quasi-hereditary; this is different from our usage here.
Denote by $\qhs A$ the set of quasi-hereditary structures of $A$.
By left-right symmetry, we have $\qhs A = \qhs (A^{\op})$.

For $[\unlhd_{1}], [\unlhd_{2}]\in \qhs A$, we write $[\unlhd_{1}] \preceq [\unlhd_{2}]$ if $\filt(\costd_{1})\subseteq \filt(\costd_{2})$, or equivalently, $\filt(\std_{1})\supseteq \filt(\std_{2})$, where $\costd_{i}$ is the set of costandard modules with respect to $\unlhd_{i}$ and $\std_{i}$ is the set of standard modules with respect to $\unlhd_{i}$. Then $(\qhs A,\preceq)$ becomes a poset by \cite[Lemma 2.18]{FKR22}. 

Moreover, as shown in \cite[Proposition 2.9]{FKR22}, there is a unique coarsest order in each quasi-hereditary structure, given by the so-called \defn{minimal adapted order} in \cite{DR92, FKR22}, also known as \defn{essential order} in \cite{Cou20}.
For any quasi-hereditary partial order $\unlhd$, one can obtain the equivalent minimal adapted order $\unlhd^{\mathrm{m}}\sim \unlhd$ as follows.

\begin{definition}
Let $\unlhd$ be a partial order on $\Lambda$ and take $i,j\in \Lambda$.
\begin{itemize}
\item[(1)] We write $i \unlhd^{\dec} j$ if $[\std(j):S(i)]\neq 0$.
\item[(2)] We write $i \unlhd^{\inc} j$ if $[\costd(j):S(i)]\neq 0$.
\item[(3)] Define $\unlhd^{\rm m}$ to be the transitive closure of $\unlhd^{\dec}\cup\unlhd^{\inc}$, i.e. $i \unlhd^{\rm m} j$ if there exists a finite sequence $i = i_1 \unlhd^{\ast_{1}} i_{2} \unlhd^{\ast_{2}} \dots \unlhd^{\ast_{l-1}} i_{l} = j$ of elements of $\Lambda$, where $\ast_{a}\in\{\dec, \inc\}$ for all $1\leq a<l$.
\end{itemize}
\end{definition}

\begin{remark}\label{rmk:covering relation => there is path}
When $\unlhd$ is a minimal adapted order, then, by definition, a covering relation $y\lhd^\bullet x$ must be given by either $y\lhd^{\dec}x$ or $y\lhd^{\inc} x$; in particular, there must be a path from $x$ to $y$, or from $y$ to $x$, on the quiver of $A$ that is non-vanishing in $A$.
\end{remark}

Apart from the standard and costandard modules, a quasi-hereditary algebra $(A,\unlhd)$ also comes with another set of structural modules.

\begin{definition-proposition}[\cite{R91}]\label{prop:ringel}
Let $(A, \unlhd)$ be a quasi-hereditary algebra. 
Then there exists a basic tilting $A$-module $T$ such that $\add T=\filt(\std)\cap\filt(\costd)$. 
Moreover, there exists an indecomposable decomposition $T=\bigoplus_{i\in\Lambda} T(i)$ such that 
\begin{align}
[T(i):S(i)]=1\text{, and } [T(i):S(j)]\neq 0 \text{ implies }j\unlhd i.
\end{align}
The module $T$ is called the \defn{characteristic tilting module} associated to $(A,\unlhd)$.
When the partial order $\unlhd$ needs to be clarified, we write $T_{\unlhd}:=T$.
Denote by $\chtilt A$ the set of all possible characteristic tilting $A$-modules up to isomorphism.
\end{definition-proposition}

The following says that there is no difference between the posets $\chtilt A$ and $\qhs A$; this can be found in \cite{FKR22} but is essentially a result of \cite{R91}.

\begin{proposition}[{\cite[Lemmas 2.18, 2.20]{FKR22}}]\label{prop:FKR-lems}
For quasi-hereditary partial orders $\unlhd_{1}$ and $\unlhd_{2}$, the relation $\unlhd_{1}\sim \unlhd_{2}$ holds if, and only if, their associated characteristic tilting modules $T_{\unlhd_{1}}$ and $T_{\unlhd_{2}}$ are isomorphic. 
In particular, the map
\begin{align}
\qhs A \longrightarrow \chtilt A\notag
\end{align}
given by $[\unlhd]\mapsto T_{\unlhd}$ is a poset isomorphism.
\end{proposition}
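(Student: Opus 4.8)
The plan is to prove the two directions of the "if and only if" separately, then observe that the poset isomorphism follows formally. First I would establish the forward direction: if $\unlhd_1 \sim \unlhd_2$, then by definition $\std_1 = \std_2$ and hence $\costd_1 = \costd_2$ (the latter because costandard modules are determined by the standard modules — they can be characterized, via Brauer–Humphreys reciprocity or directly, as the objects of $\filt(\costd)$ being the left orthogonal, or simply because $\filt(\std)$ determines $\filt(\costd)$ as its Ext-orthogonal inside $\mod A$; I would cite the relevant fact from \cite{DR92} or \cite{R91}). Consequently $\filt(\std_1)\cap\filt(\costd_1) = \filt(\std_2)\cap\filt(\costd_2)$, and since by Definition-Proposition \ref{prop:ringel} the characteristic tilting module is the basic module with $\add T_{\unlhd_i} = \filt(\std_i)\cap\filt(\costd_i)$, we get $T_{\unlhd_1} \cong T_{\unlhd_2}$.

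For the converse, suppose $T_{\unlhd_1} \cong T_{\unlhd_2} =: T$. The key point is to recover $\filt(\std_i)$ from $T$ alone. I would use the standard characterization (due to Ringel) that for a quasi-hereditary algebra, $\filt(\std) = \{ M \in \mod A \mid \Ext^{>0}_A(M, T) = 0 \}$ and dually $\filt(\costd) = \{ M \in \mod A \mid \Ext^{>0}_A(T, M) = 0 \}$. Since the right-hand sides depend only on $T$ and not on the order, $\filt(\std_1) = \filt(\std_2)$ follows immediately. It then remains to deduce $\std_1(i) = \std_2(i)$ for each $i$ from the equality of the categories $\filt(\std_1) = \filt(\std_2)$: here I would argue that the standard modules are precisely the indecomposable Ext-projective objects of $\filt(\std)$ (equivalently, $\std(i)$ is determined as the indecomposable object $X$ in $\filt(\std)$ with $\top X = S(i)$ that is "minimal", i.e. a quotient of $P(i)$ lying in the filtration category), so they are intrinsic to the subcategory. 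This gives $\std_1 = \std_2$, i.e. $\unlhd_1 \sim \unlhd_2$.

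Finally, for the poset isomorphism: the map $[\unlhd] \mapsto T_{\unlhd}$ is well-defined and injective by the equivalence just proved, and it is surjective by the definition of $\chtilt A$. For monotonicity in both directions, recall $[\unlhd_1] \preceq [\unlhd_2]$ means $\filt(\costd_1) \subseteq \filt(\costd_2)$; using the characterization $\filt(\costd_i) = \{M \mid \Ext^{>0}_A(T_{\unlhd_i}, M) = 0\}$ together with the partial order $\geq$ on $\tilt A$ from Proposition \ref{prop:property-tilting}(1) (namely $T_{\unlhd_1} \geq T_{\unlhd_2}$ iff $\Ext^{>0}_A(T_{\unlhd_1}, T_{\unlhd_2}) = 0$), one checks that $\filt(\costd_1)\subseteq\filt(\costd_2)$ is equivalent to the corresponding order relation between $T_{\unlhd_1}$ and $T_{\unlhd_2}$ in $\chtilt A$ (viewed as a subposet of $(\tilt A, \geq)$). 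Since this is already carried out in \cite[Lemmas 2.18, 2.20]{FKR22}, I would simply invoke those; the content I need to supply is the bijection part above.

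The main obstacle I anticipate is the converse direction — specifically, cleanly justifying that $\filt(\std)$ (hence each $\std(i)$) can be reconstructed from $T_{\unlhd}$ without circularity. The cleanest route is the orthogonality characterization $\filt(\std) = {}^{\perp_{>0}}(\add T_{\unlhd})$ \emph{inside} $\mod A$, which is genuinely a theorem of Ringel \cite{R91} about characteristic tilting modules and which I would want to cite precisely rather than reprove; given that, the rest is essentially bookkeeping. Everything else (the forward direction, surjectivity, compatibility with $\preceq$) is routine once this characterization is in hand.
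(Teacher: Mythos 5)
The paper does not prove this proposition: it is recalled with a citation to \cite[Lemmas 2.18, 2.20]{FKR22}, and the surrounding text notes it is essentially a result of \cite{R91}. There is therefore no internal proof to compare against, so I evaluate your self-contained argument on its own terms.

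The forward direction is fine: $\unlhd_1\sim\unlhd_2$ gives $\filt(\std_1)=\filt(\std_2)$, hence, by Ringel's characterization of $\filt(\costd)$ as the Ext-orthogonal of $\std$ (cited in the paper in the proof of Proposition \ref{prop:isttilt-chtilt}), $\filt(\costd_1)=\filt(\costd_2)$; thus $\add T_{\unlhd_1}=\filt(\std_1)\cap\filt(\costd_1)=\add T_{\unlhd_2}$, and both being basic gives $T_{\unlhd_1}\cong T_{\unlhd_2}$. You do not actually need $\costd_1=\costd_2$, only equality of filtration categories. For the converse, the characterization $\filt(\std)=\{M\mid \Ext^{>0}_A(M,T_\unlhd)=0\}$ that you invoke is indeed a theorem of Dlab--Ringel, so $\filt(\std_1)=\filt(\std_2)$ follows from $T_{\unlhd_1}\cong T_{\unlhd_2}$.

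However, one step contains an error that is not merely cosmetic: the standard modules are \emph{not} the indecomposable Ext-projective objects of $\filt(\std)$. Indeed $\Ext^1_A(\std(i),\std(j))$ need not vanish when $j\lhd i$, so $\std(i)$ is in general not Ext-projective in $\filt(\std)$; the indecomposable Ext-projectives of $\filt(\std)$ are the $P(i)$. Your parenthetical ``equivalently'' is therefore wrong --- the two descriptions you offer are not equivalent. Fortunately, the second description is correct and suffices on its own: $\std(i)$ is the unique quotient of $P(i)$ of smallest length lying in $\filt(\std)$, because any $Q=P(i)/M\in\filt(\std)$ has $\top Q=S(i)$, forcing the top $\std$-factor in any $\std$-filtration of $Q$ to be $\std(i)$, so $Q$ surjects onto $\std(i)$. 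Once the erroneous Ext-projective claim is removed, your argument for the converse goes through, and the poset-isomorphism part is routine, as you observe.
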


The bijection below, established via a combinatorial argument, provides a motivating example for various main results of this paper.

\begin{proposition}[{\cite[Theorem 4.7]{FKR22}}]\label{prop:FKR-thm4.7}
Assume that $A$ is the path algebra of quiver
\begin{align}
\vec{\bbA}_n: 1\rightarrow 2\rightarrow \cdots \rightarrow n.\notag
\end{align}
Then all tilting $A$-modules are characteristic tilting.
In particular, there exists a poset isomorphism $\qhs A \rightarrow \tilt A$.
\end{proposition}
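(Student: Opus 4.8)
The plan is to show that for $A = \Bbbk\vec{\bbA}_n$ every tilting module $T$ is characteristic, by exhibiting a partial order $\unlhd$ on $\Lambda = \{1,\dots,n\}$ making $(T,\unlhd)$ an IS-tilting module; then Theorem \ref{mainthm1}(2) (i.e. Theorem \ref{thm:qh-tilting}, Proposition \ref{prop:isttilt-chtilt}) immediately gives that $T = T_{\unlhd}$ is characteristic tilting, and the poset isomorphism $\qhs A \to \tilt A$ follows by composing the bijection of Theorem \ref{mainthm1}(2) with the isomorphism $\qhs A \cong \chtilt A$ of Proposition \ref{prop:FKR-lems}, once we know the map $\qhs A \to \chtilt A \subseteq \tilt A$ is surjective — which is exactly the statement that all tilting modules are characteristic. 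So the whole content is the construction of the IS-order.

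The key observation is the structure of indecomposable $A$-modules: for $A = \Bbbk\vec{\bbA}_n$, the indecomposables are the interval modules $M_{[a,b]}$ with top $S(a)$ and socle $S(b)$ ($a \le b$), and $M_{[a,b]}\e_\Gamma$ is again an interval module (or zero) for the subalgebra $\e_\Gamma A\e_\Gamma$ of the corresponding linear $\bbA$-type quiver on $\Gamma$. Given a basic tilting module $T = \bigoplus_{x\in\Lambda} T(x)$, I would first argue that $T$ must contain a \emph{simple} direct summand: since $|T| = n$ and the indecomposable summands of $T$ are intervals $[a_x, b_x]$, one checks using (T2) ($\Ext^{>0}$-vanishing) and the fact that $T$ generates the Grothendieck group that the set of left endpoints $\{a_x\}$ and the set of right endpoints $\{b_x\}$ must each equal $\Lambda$; the summand with $a_x = b_x = $ (the unique vertex forced by an extremal/counting argument, concretely the vertex $n$, since $P(n) = S(n)$ is forced to be a summand by (T3)) is simple. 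More robustly: $S(n) = P(n)$ is projective, hence lies in $\add T$ by (T3), giving a canonical simple summand $T(n) = S(n)$. Declare $n$ to be $\unlhd$-maximal among the relevant elements, set $\e_n = e_{\Lambda\setminus\{n\}}$ (the idempotent killing $e_n$), and pass to $T\e_n$ over $\e_n A\e_n \cong \Bbbk\vec{\bbA}_{n-1}$. Here the main point to verify is that $T\e_n$ is again a tilting module over $\e_n A\e_n$ — this uses that the Schur functor $(-)\e_n$ is exact on the relevant filtrations and that $A/A(1-\e_n)A \cong \Bbbk$ is hereditary, so one can invoke the inductive mechanism (Ringel's recollement-style reduction recalled in subsection \ref{subsec:recolle subcats}) together with the characterisation in Remark \ref{rem:tilting/qln}/Proposition \ref{prop:property-tilting}(3) that a pretilting module of the right rank over a representation-finite algebra is tilting.

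Iterating this argument produces a chain of vertices $v_n = n, v_{n-1}, \dots, v_1$ — at the $k$-th step, $T\e_{\{v_n,\dots,v_{k+1}\}^c}$ (notation: truncate away the already-chosen vertices) is a tilting module over a linearly oriented $\bbA_{k}$-algebra and hence, by the base case, has a simple direct summand, which we call $T(v_k)$ — and we define $\unlhd$ by $v_1 \lhd v_2 \lhd \cdots \lhd v_n$... wait, I need the order so that $\e_x = \sum_{y \not\lhd x} e_y$ truncates correctly: we want $\e_{v_n} = \sum_{y \ne v_n} e_y$, so $v_n$ should be the unique element with $v_n \lhd v_n$ failing for nothing below — i.e. $v_n$ minimal, $v_1$ maximal, or simply take any total order refining ``$v_k$ is eliminated at step $k$'' in the correct direction; I will fix the convention so that $y \not\lhd v_k$ holds exactly for $y \in \{v_k, v_{k+1}, \dots\}$, the vertices surviving at stage $k$, making $T(v_k)\e_{v_k} \cong S(v_k)\e_{v_k}$ simple by construction. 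This verifies the IS-tilting condition of Definition \ref{def:IS-tilt} for $(T, \unlhd)$.

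The main obstacle I anticipate is the inductive step ``$T\e_x$ is tilting over $\e_x A \e_x$'': truncation does not preserve tilting-ness for a general algebra, so this genuinely relies on the $\bbA_n$-linear structure (equivalently, on $A/A(1-\e_x)A$ being semisimple/hereditary at each stage). I would handle it by combining: (i) exactness of $(-)\e_x$ restricted to modules with no composition factor $S(x')$ for the truncated vertices $x'$ — and checking the relevant $T(x')$ have been split off so that the remaining summands lie in this subcategory; (ii) $\Ext^{>0}$-vanishing transfers under $(-)\e_x$ using the recollement $\mod \e_x A\e_x \to \mod A$; (iii) a rank count $|T\e_x| = |\e_x A \e_x|$ via Proposition \ref{prop:property-tilting}(3). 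Alternatively — and this may be cleaner — I would invoke Ringel's criterion (the ``dual'' result mentioned after Theorem \ref{mainthm1}, \cite[Appendix]{R91}) on iterated quotients together with the self-duality $\tilt A = \cotilt A$ for this hereditary $A$, but the IS-tilting route is more in the spirit of the paper and avoids re-checking quotients. Either way, once the inductive step is secured the rest is bookkeeping about interval modules.
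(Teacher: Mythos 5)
The paper does not reprove this proposition; it cites \cite[Theorem 4.7]{FKR22} and uses it as a known input (see the proof of Proposition~\ref{prop:tilt=IStilt}, where the case $I=0$ is handled by simply invoking this citation). So the comparison is really against FKR22's argument. Your overall plan --- produce an order $\unlhd$ making $(T,\unlhd)$ IS-tilting by iteratively splitting off a simple summand and truncating, then invoke Theorem~\ref{thm:qh-tilting} and Proposition~\ref{prop:isttilt-chtilt} --- is sound and is exactly the mechanism the paper builds in Section~\ref{sec:qhs vs tilting}. The inductive step (truncation preserves tilting once a simple summand has been split off) is also fine: it is Proposition~\ref{prop:truncation-tilting}, and you correctly note $\e_x A\e_x \cong \Bbbk\vec{\bbA}_{n-1}$.

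However, there is a genuine gap at the heart of the argument: the claim that every tilting $\Bbbk\vec{\bbA}_n$-module has a \emph{simple} direct summand, which is the entire non-trivial content of the statement. Both of your proposed justifications fail. First, the assertion that ``the set of left endpoints and the set of right endpoints must each equal $\Lambda$'' is false: for $n=3$ the tilting module $T = S(1)\oplus P(1)\oplus S(3) = M_{[1,1]}\oplus M_{[1,3]}\oplus M_{[3,3]}$ has left endpoints $\{1,3\}$ and right endpoints $\{1,3\}$, neither equal to $\{1,2,3\}$; and already $A$ itself (all right endpoints equal $n$) and $\kD A$ (all left endpoints equal $1$) each break one half of the claim. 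Second, the ``more robust'' assertion that $P(n)=S(n)\in\add T$ by (T3) is false: (T3) forces projective-\emph{injective} modules into $\add T$ (one needs the inclusion $P(n)\hookrightarrow T_0$ to split, which requires injectivity of $P(n)$), but $P(n)=S(n)$ is not injective for $n\geq 2$. Concretely, $\kD A = I(1)\oplus\cdots\oplus I(n)$ is a tilting module by Remark~\ref{rem:tilting/qln} and does not contain $S(n)$.

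The true statement does require an argument. One working route in the spirit of your interval-module setup: the condition $\Ext^1_A(M_{[a,b]},M_{[c,d]})\neq 0$ is equivalent to $a<c\leq b+1\leq d$, so rigidity of $T=\bigoplus M_{[a_i,b_i]}$ forces the intervals to be pairwise nested or separated by a gap (a laminar family). Take a minimal interval $[a,b]$ in this family; if $a<b$, every member of the family either contains $[a,b]$ or misses it entirely, so the coordinates $a$ and $b$ agree on all characteristic vectors $\chi_{[a_i,b_i]}$, contradicting that these $n$ vectors form a $\Z$-basis of $K_0(\mod A)\cong\Z^n$ (Proposition~\ref{prop:property-tilting}(2)). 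Hence $a=b$ and $S(a)\in\add T$. Alternatively one can cite the binary-tree / triangulation bijection as FKR22 does. Either way, this step needs to be supplied; once it is, the rest of your proposal goes through using the paper's machinery.
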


\subsection{Properties on the functors induced by an idempotent}\label{subsec:recolle subcats}

Fix an idempotent $\e\in A$.
Let $\sfe:=(-)\otimes_{A}A\e=\Hom_{A}(\e A,-):
\mod A \rightarrow \mod \e A\e$ be the \defn{idempotent truncation} functor (Schur functor) associated to $\e$.
Note that there is an isomorphism of algebras $\e A\e\cong\End_{A}(\e A)$.
Moreover, $\sfe$ has a left adjoint $\sfl:=(-)\otimes_{\e A\e}\e A$ and a right adjoint $\sfr:=\Hom_{\e A\e}(A\e,-)$, which induces a recollement of module categories
\begin{align}
\xymatrix{
\mod (A/A\e A)\ar[rrrr]^{\text{inc}}&&&&\mod A\ar[rrrr]^{\sfe=(-)\otimes_{A}A\e}\ar@/_20pt/[llll]\ar@/^20pt/[llll]
&&&& \mod \e A\e.\ar@/_20pt/[llll]_{\sfl=(-)\otimes_{\e A\e}\e A}
\ar@/^20pt/[llll]^{\sfr=\Hom_{\e A\e}(A\e,-)}
}\notag
\end{align}

The following lemma is frequently used in this paper. 

\begin{lemma}[{\cite[Proposition 1.3]{APT92}, \cite[Remark 2.1.2]{CPS96}}]\label{lem:str-idemp}
For an idempotent $\e\in A$, the following statements are equivalent. 
\begin{itemize}
\item[(1)] The multiplication map $A\e\otimes_{\e A\e}\e A\rightarrow A\varepsilon A$ is an isomorphism and $\Tor^{\e A\e}_{>0}(A\e, \e A)=0$. 
\item[(2)] $\Ext_{A/A\e A}^{i}(X,Y)\cong \Ext_{A}^{i}(X,Y)$ for all $X, Y\in\mod (A/A\e A)$ and $i\geq 0$. 
\item[(3)] $\Ext_{A}^{>0}(A/A\e A,Y)=0$ for all $Y\in\mod (A/A\e A)$.
\item[(4)] $\Tor^{A}_{>0}(X, A/A\e A)=0$ for all $X\in\mod (A/A\e A)$.
\end{itemize}
\end{lemma}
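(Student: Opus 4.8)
The statement to prove is Lemma~\ref{lem:str-idemp}, asserting the equivalence of four conditions attached to an idempotent $\e\in A$: the ``stratifying'' condition (1), the full-faithfulness on all Ext-groups (2), the $\Ext^{>0}$-vanishing (3), and the dual $\Tor_{>0}$-vanishing (4). Since this is a well-known result (attributed to Auslander--Platzeck--Todorov and Cline--Parshall--Scott), the plan is to give the standard cyclic chain of implications $(1)\Rightarrow(2)\Rightarrow(3)\Rightarrow(1)$ together with the symmetric pair $(1)\Leftrightarrow(4)$, exploiting the recollement in subsection~\ref{subsec:recolle subcats}.

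First I would set up the comparison between $\mod(A/A\e A)$ and $\mod A$ using the recollement: the inclusion functor $\inc$ is exact and fully faithful, and for $X,Y\in\mod(A/A\e A)$ one always has $\Hom_{A/A\e A}(X,Y)\cong\Hom_A(X,Y)$, so the issue is entirely about higher Ext. To prove $(1)\Rightarrow(2)$, I would compute $\Ext^i_A(X,Y)$ by taking a projective resolution $P_\bullet\to X$ over $A/A\e A$ and showing, using the isomorphism $A\e\otimes_{\e A\e}\e A\xrightarrow{\sim}A\e A$ and $\Tor^{\e A\e}_{>0}(A\e,\e A)=0$, that applying $-\otimes_A(A/A\e A)$ leaves it a projective resolution over $A$ after splicing in the short exact sequences $0\to A\e A\to A\to A/A\e A\to0$; equivalently, one shows $\Tor^A_{>0}(A/A\e A, A/A\e A)=0$, which forces the $A/A\e A$-projective resolution of $X$ to remain $A$-acyclic, giving the Ext-comparison. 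The implication $(2)\Rightarrow(3)$ is immediate by taking $X=A/A\e A$ and noting $\Ext^{>0}_{A/A\e A}(A/A\e A,Y)=0$ since $A/A\e A$ is projective over itself. For $(3)\Rightarrow(1)$, I would dimension-shift: $\Ext^{>0}_A(A/A\e A,-)=0$ on $\mod(A/A\e A)$ says $A/A\e A$ behaves homologically like a projective relative to this subcategory, and applying $\Hom_A(-,Y)$ to $0\to A\e A\to A\to A/A\e A\to 0$ shows $A\e A$ is $\Ext$-acyclic against $\mod(A/A\e A)$; then one identifies $A\e A\cong A\e\otimes_{\e A\e}\e A$ and the vanishing of $\Tor^{\e A\e}_{>0}(A\e,\e A)$ by a standard argument comparing the bar-type resolution of $A\e$ over $\e A\e$ with its image in $\mod A$. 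Finally, the equivalence $(1)\Leftrightarrow(4)$ follows by the left-right symmetry built into condition (1) (it is symmetric in $A\e$ and $\e A$ up to passing to $A^{\op}$), so $(4)$ is just $(3)$ applied to $A^{\op}$; alternatively one argues directly that $\Tor^A_{>0}(X,A/A\e A)=0$ for all $X\in\mod(A/A\e A)$ is equivalent to $A/A\e A$ having a resolution by $A\e A$-trivial modules, matched again to $\Tor^{\e A\e}_{>0}(A\e,\e A)=0$.

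The main obstacle I anticipate is the careful bookkeeping in $(1)\Rightarrow(2)$ and $(3)\Rightarrow(1)$ of translating between resolutions over $\e A\e$ and over $A$: one must verify that tensoring an $\e A\e$-projective resolution of $\e A$ (or of $X\e$) up along $-\otimes_{\e A\e}A\e$ (resp. $-\otimes_{\e A\e}\e A$) stays exact precisely under the $\Tor^{\e A\e}_{>0}$-vanishing in (1), and then that the resulting complex of $A$-modules detects the Ext-comparison. This is routine but requires being precise about which side the modules sit on and about the interplay of the two adjoints $\sfl$ and $\sfr$ with the recollement. Given that this is a cited lemma, I would keep the write-up brief, citing \cite{APT92, CPS96} for the full details and only indicating the key identifications $A\e A\cong A\e\otimes_{\e A\e}\e A$ and the dimension-shifting reductions.
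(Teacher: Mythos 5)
The paper gives no proof of this lemma at all: it is quoted directly from \cite[Proposition 1.3]{APT92} and \cite[Remark 2.1.2]{CPS96}, so there is no in-paper argument to compare yours against. Judged on its own, your outline is the standard one from those sources and is broadly sound: $(2)\Rightarrow(3)$ is immediate since $A/A\e A$ is projective over itself, and your reduction of $(1)\Leftrightarrow(4)$ to $(1)\Leftrightarrow(3)$ over $A^{\op}$ is legitimate, because condition (1) is $\op$-symmetric ($\Tor$ commutes with passing to opposite rings) and, for finite-dimensional algebras, $\kD\Tor_i^A(X,N)\cong\Ext_A^i(X,\kD N)$ translates (4) into the left-module version of (3).

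Two steps are looser than they would need to be in a self-contained write-up. First, in $(1)\Rightarrow(2)$ the assertion that $\Tor^A_{>0}(A/A\e A,A/A\e A)=0$ ``forces the $A/A\e A$-projective resolution of $X$ to remain $A$-acyclic'' is not quite the right mechanism: to compute $\Ext_A^i(X,Y)$ from a resolution by modules in $\add(A/A\e A)$ you need those modules to be $\Ext_A(-,Y)$-acyclic, i.e. you need condition (3) itself, not just the single Tor-vanishing (that implication is the Geigle--Lenzing homological-epimorphism theorem, which would have to be invoked or reproved). The cleaner standard route is $(1)\Rightarrow(4)$ via $X\otimes^{\mathbf L}_A(A\e\otimes^{\mathbf L}_{\e A\e}\e A)\cong X\e\otimes^{\mathbf L}_{\e A\e}\e A$ (using that $A\e$ is projective as a left $A$-module), which vanishes when $X\e=0$, and then $(4)\Rightarrow(2)$ by applying $-\otimes_A A/A\e A$ to an $A$-projective resolution of $X$ and using $\Hom_A(P,Y)\cong\Hom_{A/A\e A}(P\otimes_A A/A\e A,Y)$. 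Second, $(3)\Rightarrow(1)$ is the genuinely nontrivial direction and you only gesture at it (``bar-type resolution''); deferring it to \cite{APT92} is perfectly reasonable here --- the paper does exactly that --- but be aware that this is the step that would require real work if the proof were to be written out.
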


\begin{definition}
Let $A$ be a finite-dimensional algebra and $\varepsilon$ an idempotent of $A$. 
We call $A\e A$ a \defn{stratifying ideal} if it satisfies the equivalent conditions in Lemma \ref{lem:str-idemp}. In addition, if $\varepsilon A\varepsilon$ is semisimple, then $A\varepsilon A$ is called a \defn{heredity ideal} of $A$.
\end{definition}

A stratifying ideal is also called strong idempotent ideal in \cite{APT92}.
Remark that $A\varepsilon A$ is a heredity ideal if and only if $A\varepsilon A\in \proj A$ and $\varepsilon A\varepsilon$ is semisimple by \cite[Corollary 5.3]{APT92}. 

The following lemma is useful.

\begin{lemma}\label{lem:proj-stridemp}
For an idempotent $\varepsilon\in A$, we consider the following conditions: 
\begin{itemize}
\item[(1)] $A\varepsilon A\in\proj A$;
\item[(2)] $A\varepsilon A\in\add(\varepsilon A)$; 
\item[(3)] $A\varepsilon\in\proj(\varepsilon A\varepsilon)$;
\item[(4)] $A\varepsilon A$ is a stratifying ideal.
\end{itemize}
Then (1)$\Rightarrow$(2)$\Rightarrow$(3) and (1)$\Rightarrow$(4) hold. 
\end{lemma}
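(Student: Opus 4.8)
The statement collects several standard implications about an idempotent $\varepsilon\in A$ and the ideal $A\varepsilon A$; the plan is to prove each implication in turn, keeping track only of the module-theoretic bookkeeping since there is no single deep point, only a chain of short arguments.

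First I would handle (1)$\Rightarrow$(2). Since $A\varepsilon A$ is a right ideal, it is a submodule of $A$, hence a direct sum of indecomposable projective modules once we assume (1). Each indecomposable summand is of the form $e_xA$ for some $x$, and I would argue that only those $x$ with $e_x\in A\varepsilon A$ can occur; but $e_x\in A\varepsilon A$ forces $e_x = e_x\varepsilon a$ for some $a$, so $e_x = e_x\varepsilon a\varepsilon\cdots$ up to idempotent manipulation — more cleanly, $\tr_{\varepsilon A}(A) = A\varepsilon A$, so $A\varepsilon A$ is generated by $\varepsilon A$, and being projective it is a direct summand of a direct sum of copies of $\varepsilon A$, hence in $\add(\varepsilon A)$. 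This gives (2).

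Next, (2)$\Rightarrow$(3): applying the truncation functor $\sfe = (-)\otimes_A A\varepsilon$ (equivalently $\Hom_A(\varepsilon A,-)$) to $A\varepsilon A\in\add(\varepsilon A)$ yields $(A\varepsilon A)\varepsilon\in\add(\varepsilon A\varepsilon) = \add(\varepsilon A\varepsilon)$; and $(A\varepsilon A)\varepsilon = A\varepsilon\cdot(A\varepsilon) \supseteq$ — more directly, I would note $A\varepsilon = (A\varepsilon A)\varepsilon$ as a right $\varepsilon A\varepsilon$-module since $A\varepsilon = A\varepsilon\varepsilon \subseteq (A\varepsilon A)\varepsilon \subseteq A\varepsilon$, and $\add(\varepsilon A\varepsilon)$ in $\mod\varepsilon A\varepsilon$ is exactly $\proj(\varepsilon A\varepsilon)$, giving (3). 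Finally, for (1)$\Rightarrow$(4), I would invoke Lemma \ref{lem:str-idemp}: it suffices to verify condition (3) there, namely $\Ext_A^{>0}(A/A\varepsilon A, Y) = 0$ for all $Y\in\mod(A/A\varepsilon A)$. But the short exact sequence $0\to A\varepsilon A\to A\to A/A\varepsilon A\to 0$ with $A\varepsilon A$ projective (by (1)) shows $\pdim_A(A/A\varepsilon A)\le 1$ and moreover that a projective resolution is $0\to A\varepsilon A\to A\to A/A\varepsilon A\to 0$; applying $\Hom_A(-,Y)$ and using that $\Hom_A(A\varepsilon A, Y) = 0$ whenever $Y\varepsilon = 0$ (which holds for $Y\in\mod(A/A\varepsilon A)$, since any map from $A\varepsilon A\in\add(\varepsilon A)$ to such $Y$ factors through $Y\varepsilon = 0$) gives the vanishing of $\Ext^1$, and higher Ext vanish for degree reasons. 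Hence $A\varepsilon A$ is a stratifying ideal.

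The argument involves no serious obstacle; the only point requiring a little care is the identification $A\varepsilon A = \tr_{\varepsilon A}(A)$ and the resulting fact that a projective module generated by $\varepsilon A$ lies in $\add(\varepsilon A)$, together with the factorisation-through-$Y\varepsilon$ observation used in the last step. I would present these as short explicit verifications rather than invoking anything heavy, and cite \cite{APT92, CPS96} for context where the statements overlap with the literature.
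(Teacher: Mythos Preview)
Your proposal is correct. The paper's own proof is just two citations---\cite[Lemma 3.8]{T20} for (1)$\Rightarrow$(2)$\Rightarrow$(3) and \cite[Statement 7]{DR89} for (1)$\Rightarrow$(4)---with no argument supplied, so you are in effect writing out what those references contain. Your arguments for (1)$\Rightarrow$(2) (trace plus split surjection) and (2)$\Rightarrow$(3) (apply $(-)\varepsilon$ and observe $A\varepsilon = A\varepsilon A\varepsilon$) are the standard ones. For (1)$\Rightarrow$(4) you go through condition (3) of Lemma~\ref{lem:str-idemp} using the length-one projective resolution $0\to A\varepsilon A\to A\to A/A\varepsilon A\to 0$ and the vanishing $\Hom_A(A\varepsilon A,Y)=0$ for $Y\varepsilon=0$; the paper instead just points to \cite{DR89}, whose Statement~7 gives (1)$\Rightarrow$(4) via condition (1) of Lemma~\ref{lem:str-idemp} (the multiplication map and Tor-vanishing). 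Either route works and neither is deeper than the other; yours has the minor advantage of being fully self-contained within the paper's own lemmas.
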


\begin{proof}
(1)$\Rightarrow$(2)$\Rightarrow$(3) is a basic result.  See, for example, \cite[Lemma 3.8]{T20}. (1)$\Rightarrow$(4) follows from \cite[Statement 7]{DR89}.
\end{proof}

For any $k\ge 0$ and $\Gamma\subseteq\Lambda$, denote by $\calP^{\leq k}(\Gamma)$ the subcategory of $\mod A$ consisting of the $A$-modules $X$ admitting a projective resolution $\cdots \rightarrow  P_{1}\rightarrow P_{0}\rightarrow X \rightarrow 0$ with the first $k+1$ terms in $\add(e_{\Gamma} A)=\add(\bigoplus_{j\in \Gamma}P(j))$.  
We use the convention that $\calP^{\leq \infty}(\Gamma)$, which consists of modules admitting a projective resolution with all terms in $\add e_{\Gamma} A$. 
Dually, for $0\le k\le\infty$, denote by $\calI^{\leq k}(\Gamma)$ the subcategory of $A$-modules admitting an injective coresolution with the first $k+1$ terms in $\add(\mathbb{D}(Ae_{\Gamma}))= \add(\bigoplus_{j\in \Gamma}I(j))$.
These two categories can be alternatively characterised as follows.

\begin{lemma}[{\cite[Proposition 2.4, Theorem 3.6]{APT92}}]\label{lem:apt-pfin}
Let $\Gamma\subseteq \Lambda$.
For each $1\leq k\leq \infty$, we have \begin{align}
\calP^{\leq k}(\Gamma) &= \{ M \in \mod A \mid \Ext_{A}^{i}(M, S(x))=0 \text{ for all }0\leq i \leq k, \text{ for all }x\notin \Gamma\}\notag\\
&= \{ M \in \mod A \mid \Ext_{A}^{i}(M, N)\cong \Ext_{\varepsilon A\varepsilon}^{i}(M\varepsilon, N\varepsilon)\text{ for all }0\leq i \leq k-1, \text{ for all }N\in \mod A\}\notag
\end{align}
and 
\begin{align}
\calI^{\leq k}(\Gamma) &=  \{ N \in \mod A \mid \Ext_{A}^{i}(S(x), N)=0\text{ for all }0\leq i \leq k, \text{ for all }x\notin \Gamma\}\notag\\
&= \{ N \in \mod A \mid \Ext_{A}^{i}(M, N)\cong\Ext_{\varepsilon A\varepsilon}^{i}(M\varepsilon, N\varepsilon)\text{ for all }0\leq i \leq k-1, \text{ for all }M\in \mod A\}.\notag
\end{align}
\end{lemma}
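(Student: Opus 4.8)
This is \cite[Proposition 2.4 and Theorem 3.6]{APT92}, and my plan is to reconstruct its argument. By passing to $A^{\op}$ and dualising with $\kD$ -- which exchanges $\add(e_{\Gamma}A)$ with $\add(\bigoplus_{x\in\Gamma}I(x))$ and swaps the two arguments of $\Ext$ -- it suffices to treat the assertion about $\calP^{\leq k}(\Gamma)$; the case $k=\infty$ is included throughout with the evident conventions. Write $\varepsilon:=e_{\Gamma}$, so $\varepsilon A=e_{\Gamma}A$. Since $\varepsilon A$ is projective as a right $A$-module, $\sfe=\Hom_{A}(\varepsilon A,-)$ is exact and restricts to an equivalence $\add(\varepsilon A)\xrightarrow{\ \sim\ }\proj(\varepsilon A\varepsilon)$ with quasi-inverse $\sfl$; in particular the counit $\sfl\sfe P\to P$ is an isomorphism for $P\in\add(\varepsilon A)$, and hence, for such $P$ and any $N\in\mod A$, the canonical map $\Hom_{A}(P,N)\to\Hom_{\varepsilon A\varepsilon}(P\varepsilon,N\varepsilon)$ is an isomorphism, natural in both variables.

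The first equality I would prove via minimal projective resolutions. A projective module $P$ lies in $\add(\varepsilon A)$ if and only if $\Hom_{A}(P,S(x))=0$ for every $x\notin\Gamma$, because $\Hom_{A}(P(y),S(x))=S(x)e_{y}$ is nonzero exactly when $x=y$. If $\cdots\to P_{1}\to P_{0}\to M\to 0$ is the minimal projective resolution of $M$, then all differentials in $\Hom_{A}(P_{\bullet},S(x))$ vanish, so $\Ext_{A}^{i}(M,S(x))\cong\Hom_{A}(P_{i},S(x))$; hence $\Ext_{A}^{i}(M,S(x))=0$ for all $x\notin\Gamma$ if and only if $P_{i}\in\add(\varepsilon A)$. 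Since every projective resolution of $M$ is the direct sum of the minimal one with a split exact complex, and $\add(\varepsilon A)$ is closed under direct summands, $M\in\calP^{\leq k}(\Gamma)$ precisely when $P_{0},\dots,P_{k}\in\add(\varepsilon A)$, which is the first description.

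For the inclusion ``$\subseteq$'' of the second equality I would take a projective resolution $P_{\bullet}\to M$ with $P_{0},\dots,P_{k}\in\add(\varepsilon A)$ and apply the exact functor $\sfe$: the result $P_{\bullet}\varepsilon$ is an exact complex resolving $M\varepsilon$ whose first $k+1$ terms are projective over $\varepsilon A\varepsilon$. Using $\Hom_{A}(P,N)\cong\Hom_{\varepsilon A\varepsilon}(P\varepsilon,N\varepsilon)$ for $P\in\add(\varepsilon A)$, the cochain complexes $\Hom_{A}(P_{\bullet},N)$ and $\Hom_{\varepsilon A\varepsilon}(P_{\bullet}\varepsilon,N\varepsilon)$ coincide in cohomological degrees $\leq k$; taking cohomology in degrees $\leq k-1$ gives $\Ext_{A}^{i}(M,N)\cong\Ext_{\varepsilon A\varepsilon}^{i}(M\varepsilon,N\varepsilon)$ for all $0\leq i\leq k-1$ and all $N$.

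For the reverse inclusion, assume these isomorphisms hold for all $N$ and $0\le i\le k-1$. Specialising to $N=S(x)$ with $x\notin\Gamma$ (so $S(x)\varepsilon=0$) gives $\Ext_{A}^{i}(M,S(x))=0$ for $0\le i\le k-1$, hence $P_{0},\dots,P_{k-1}\in\add(\varepsilon A)$; it then remains to force the top term $P_{k}$ into $\add(\varepsilon A)$, and for this I would induct on $k$. For $k=1$, the case $i=0$ says (via Yoneda and the $(\sfl,\sfe)$-adjunction) that the counit $\sfl\sfe M\to M$ is an isomorphism, so vanishing of its cokernel $M/M\varepsilon A$, together with vanishing of its kernel, forces $P_{0},P_{1}\in\add(\varepsilon A)$ by Nakayama's lemma applied to projective covers. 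For $k\ge 2$, the case $i=0$ gives $P_{0}\in\add(\varepsilon A)$, applying $\sfe$ to $0\to\Omega M\to P_{0}\to M\to 0$ keeps it short exact with $P_{0}\varepsilon$ projective, and a five-lemma/dimension-shift argument shows $\Omega M$ satisfies the same hypotheses with $k-1$ in place of $k$; by induction the first $k$ terms $P_{1},\dots,P_{k}$ of its minimal resolution lie in $\add(\varepsilon A)$. The routine part is the homological bookkeeping with $\sfe$ and $\sfl$; the one genuinely delicate point is reconciling the range $0\le i\le k$ in the simple-module criterion with the range $0\le i\le k-1$ in the $\varepsilon A\varepsilon$-comparison criterion -- i.e.\ the top-degree step just described -- which is why that argument must pass through the counit $\sfl\sfe M\to M$ and Nakayama's lemma rather than merely testing against simple modules.
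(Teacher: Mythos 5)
The paper does not give its own proof of this lemma; it is cited directly from \cite[Proposition 2.4, Theorem 3.6]{APT92}. Your reconstruction is correct: the reduction of the $\calI^{\leq k}$ case to the $\calP^{\leq k}$ case by $\kD$-duality, the first equality via minimal projective resolutions (using that the differentials die in $\Hom_A(P_\bullet,S(x))$ by minimality, so $\Ext^i_A(M,S(x))\cong\Hom_A(P_i,S(x))$), the forward inclusion of the second equality by applying the exact functor $\sfe$ to a resolution with top $k+1$ terms in $\add(\varepsilon A)$, and the reverse inclusion by dimension-shifting down to the $k=1$ base case, where you identify the canonical comparison map with precomposition by the counit $\sfl\sfe M\to M$ and invoke Yoneda. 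One point worth spelling out if this were written up: in the $k=1$ step, injectivity of the counit gives $P_1\in\add(\varepsilon A)$ because, once $P_0\in\add(\varepsilon A)$ so that $\sfl\sfe P_0\cong P_0$, the identification $\Omega\cong\ker(\sfl\sfe P_0\to\sfl\sfe M)$ shows $\Omega$ equals the image of $\sfl\sfe\Omega\to P_0$, i.e.\ $\Omega=\Omega\varepsilon A$, whence the projective cover of $\Omega$ lies in $\add(\varepsilon A)$. That is the content of the phrase ``Nakayama's lemma applied to projective covers'', and it is where your argument quietly re-derives the nontrivial half of Lemma~\ref{lem:aus-apt}(1).
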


The following equivalences play an important role in this paper.

\begin{lemma}[{\cite[Proposition 5.2]{A74}, \cite[Proposition 3.7]{APT92}}]\label{lem:aus-apt}
The following statements hold for $\e=e_{\Gamma}$.
\begin{itemize}
\item[(1)] The functors $\sfe$ and $\sfl$ induce an equivalence of categories
\begin{align}
\xymatrix{\calP^{\leq 1}(\Gamma)\ar@<-1mm>[r]_-{\sfe} & \mod (\e A\e)\ar@<-1mm>[l]_-{\sfl}}.\notag
\end{align}
Moreover, the restriction induces an equivalence of (exact) categories
\begin{align}
\xymatrix{\calP^{\leq \infty}(\Gamma)\ar@<-1mm>[r]_-{\sfe} & \mod^{\lex}(\e A \e):=\{ V\in \mod (\e A\e)\mid \Tor_{>0}^{\e A\e}(V,\e A)=0\}\ar@<-1mm>[l]_-{\sfl}}.\notag
\end{align}
\item[(2)] The functors $\sfe$ and $\sfr$ induce an equivalence of categories
\begin{align}
\xymatrix{\calI^{\leq 1}(\Gamma)\ar@<1mm>[r]^-{\sfe} & \mod (\e A\e)\ar@<1mm>[l]^-{\sfr}}.\notag
\end{align}
Moreover, the restriction induces an equivalence of (exact) categories
\begin{align}
\xymatrix{\calI^{\leq \infty}(\Gamma)\ar@<1mm>[r]^-{\sfe} & \mod^{\rex}(\e A \e):=\{ V\in \mod (\e A\e)\mid \Ext_{\e A\e}^{>0}(A\e, V)=0\}\ar@<1mm>[l]^-{\sfr}}.\notag
\end{align}
\end{itemize}
\end{lemma}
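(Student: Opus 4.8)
The plan is to prove both statements as restrictions of the adjoint triple $\sfl\dashv\sfe\dashv\sfr$ attached to the recollement, and to deduce (2) from (1) by passing to $A^{\op}$ and dualising. The first thing I would record is that
\[
\sfe\sfl\cong\id_{\mod\e A\e}\qquad\text{and}\qquad\sfe\sfr\cong\id_{\mod\e A\e},
\]
which follow directly from the bimodule isomorphism $\e A\otimes_A A\e\cong\e A\e$ together with the tensor--hom adjunctions; equivalently, $\sfl$ and $\sfr$ are fully faithful (a standard feature of recollements). Consequently, the only data left to examine are the counit $\sfl\sfe\to\id_{\mod A}$ (for (1)) and the unit $\id_{\mod A}\to\sfr\sfe$ (for (2)), and I would use the elementary fact that an adjoint pair whose unit and counit are both isomorphisms on a given pair of full subcategories restricts to an equivalence between those subcategories. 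So for (1) it suffices to check (i) $\sfl$ sends $\mod\e A\e$ into $\calP^{\leq1}(\Gamma)$ and $\mod^{\lex}(\e A\e)$ into $\calP^{\leq\infty}(\Gamma)$, and (ii) the counit $\sfl\sfe M\to M$ is an isomorphism for every $M\in\calP^{\leq1}(\Gamma)$.

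For (i) in the $\calP^{\leq1}$ case: since $\e=e_\Gamma$ one has $\sfl(\e A\e)=\e A\e\otimes_{\e A\e}\e A=\e A=\bigoplus_{j\in\Gamma}P(j)$, so applying the right-exact functor $\sfl$ to a projective presentation $(\e A\e)^m\to(\e A\e)^n\to V\to0$ produces a presentation of $\sfl V$ whose first two terms lie in $\add e_\Gamma A$; hence $\sfl V\in\calP^{\leq1}(\Gamma)$. For (ii): the counit $\sfl\sfe M\to M$ is visibly an isomorphism when $M=e_jA$ with $j\in\Gamma$ (both sides equal $e_jA$), hence for all $M\in\add e_\Gamma A$; for an arbitrary $M\in\calP^{\leq1}(\Gamma)$ I would pick a projective presentation $Q_1\to Q_0\to M\to0$ with $Q_0,Q_1\in\add e_\Gamma A$, apply the right-exact functor $\sfl\sfe$, and compare cokernels (equivalently, invoke the five lemma) to conclude that $\sfl\sfe M\to M$ is an isomorphism. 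Together with $\sfe\sfl\cong\id$ this establishes the first equivalence of (1); in fact full faithfulness of $\sfe$ on $\calP^{\leq1}(\Gamma)$ is precisely the $k=1$ case of Lemma \ref{lem:apt-pfin}, which could be used instead.

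For the refinement to $\infty$ I would argue with resolutions. If $M\in\calP^{\leq\infty}(\Gamma)$, choose a projective resolution $\cdots\to Q_1\to Q_0\to M\to0$ with all $Q_i\in\add e_\Gamma A$; since $\sfe$ is exact and restricts to an equivalence $\add e_\Gamma A\simeq\proj(\e A\e)$, applying $\sfe$ yields a projective $\e A\e$-resolution of $\sfe M$, and applying $\sfl$ to it recovers the complex $\cdots\to Q_1\to Q_0\to0$ because $\sfl\sfe\cong\id$ on $\add e_\Gamma A$; computing homology gives $\Tor^{\e A\e}_{>0}(\sfe M,\e A)=0$, i.e. $\sfe M\in\mod^{\lex}(\e A\e)$. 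Conversely, if $V\in\mod^{\lex}(\e A\e)$, a projective $\e A\e$-resolution of $V$ stays exact after applying $\sfl$ (its positive homology is $\Tor^{\e A\e}_{>0}(V,\e A)=0$) and has all terms in $\add e_\Gamma A$, so $\sfl V\in\calP^{\leq\infty}(\Gamma)$. As $\calP^{\leq\infty}(\Gamma)$ and $\mod^{\lex}(\e A\e)$ are extension-closed (by the horseshoe lemma and the long exact $\Tor$-sequence, respectively), they carry the induced exact structures for which $\sfe$ and $\sfl$ are exact, so the equivalence of (1) restricts as stated. Finally, part (2) follows by applying part (1) to $A^{\op}$ and conjugating by the duality $\kD=\Hom_\Bbbk(-,\Bbbk)$: one checks that $\kD$ matches $\calI^{\leq k}_A(\Gamma)$ with $\calP^{\leq k}_{A^{\op}}(\Gamma)$, the functor $\sfr$ for $A$ with $\sfl$ for $A^{\op}$, and $\mod^{\rex}(\e A\e)$ with $\mod^{\lex}(\e A^{\op}\e)$ (the $\Ext$- and $\Tor$-conditions interchange under $\Bbbk$-duality).

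The main obstacle here is organisational rather than conceptual: one must keep the three adjoints, their units and counits, and the two exact structures carefully apart, since the substantive homological input — the characterisation of $\calP^{\leq k}(\Gamma)$ — is already available in Lemma \ref{lem:apt-pfin}. The one step that needs genuine care is the $\Tor$-acyclicity translation in the $\infty$-case, where one exploits that $\sfl$ applied to a projective $\e A\e$-resolution simultaneously computes $\Tor^{\e A\e}_{*}(-,\e A)$ and, termwise, reproduces the original $A$-projective resolution.
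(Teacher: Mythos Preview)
Your argument is correct and follows the standard route: exploit the fully faithful embeddings $\sfl,\sfr$ coming from the recollement, check the counit/unit on objects with presentations in $\add e_\Gamma A$, and handle the $\infty$-case via the observation that $\sfl$ applied to a projective $\e A\e$-resolution simultaneously computes $\Tor^{\e A\e}_*(-,\e A)$ and recovers the $A$-resolution. The paper itself does not supply a proof of this lemma; it simply cites \cite[Proposition 5.2]{A74} and \cite[Proposition 3.7]{APT92}, whose arguments are essentially the ones you have reproduced. One minor remark: in your duality reduction of (2) to (1), the identification $\kD\circ\sfl_{A^{\op}}\cong\sfr_A\circ\kD$ does hold, but it is worth being explicit that this uses $\kD\Hom_{\Bbbk}(-,\Bbbk)$ to convert the tensor product over $\e A^{\op}\e$ into the appropriate $\Hom$ over $\e A\e$; this is routine but is the only place where a sign of care is needed beyond bookkeeping.
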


The equivalences in Lemma \ref{lem:aus-apt} induce a bijection between the sets of tilting objects. 
We say that $T$ is a \defn{pretilting object} of $\calP^{\leq \infty}(\Gamma)$ if it is pretilting $A$-module contained in $\calP^{\leq \infty}(\Gamma)$.
In addition, if it satisfies 
\begin{itemize}
\item[(TP3)] there exists an exact sequence $0\rightarrow \e A \rightarrow T_{0}\rightarrow T_{1}\rightarrow \cdots \rightarrow T_{l}\rightarrow 0$ such that all $T_{j}\in \add T$, 
\end{itemize}
then we call $T$ a \defn{tilting object} of $\calP^{\leq \infty}(\Gamma)$. 
Let $\ptilt\calP^{\leq\infty}(\Gamma)$ (respectively, $\tilt\calP^{\leq\infty}(\Gamma)$) be the set of basic pretilting (respectively, tilting) objects in $\calP^{\leq \infty}(\Gamma)$ up to isomorphisms. 
One can define a \defn{precotilting object} and a \defn{cotilting object} in $\calI^{\leq \infty}(\Gamma)$ dually; denote by $\pcotilt \calI^{\leq \infty}(\Gamma)$ (respectively, $\cotilt \calI^{\leq \infty}(\Gamma)$) the associated set.

\begin{proposition}\label{prop:ex-tilting}
The following statements hold for an idempotent $\e:=e_{\Gamma}$.
\begin{itemize}
\item[(1)] We have mutually inverse bijections 
\begin{align}
\xymatrix{\ptilt\calP^{\leq\infty}(\Gamma)\ar@<-1mm>[r]_-{\sfe} &\ptilt^{\lex}(\e A\e):=\ptilt(\e A\e)\cap\mod^{\lex}(\varepsilon A\varepsilon)\ar@<-1mm>[l]_-{\sfl}}.\notag
\end{align}
Moreover, the restrictions induce mutually inverse bijections 
\begin{align}
\xymatrix{\tilt\calP^{\leq\infty}(\Gamma)\ar@<-1mm>[r]_-{\sfe} &\tilt^{\lex}(\e A\e):=\tilt(\e A\e)\cap\mod^{\lex}(\varepsilon A\varepsilon)\ar@<-1mm>[l]_-{\sfl}}.\notag
\end{align}
\item[(2)] We have mutually inverse bijections
\begin{align}
\xymatrix{\pcotilt\calI^{\leq\infty}(\Gamma)\ar@<1mm>[r]^-{\sfe} &\pcotilt^{\rex}(\e A\e):=\pcotilt(\e A\e)\cap\mod^{\rex}(\varepsilon A\varepsilon)\ar@<1mm>[l]^-{\sfr}}.\notag
\end{align}
Moreover, the restrictions induce mutually inverse bijections
\begin{align}
\xymatrix{\cotilt\calI^{\leq\infty}(\Gamma)\ar@<1mm>[r]^-{\sfe} &\cotilt^{\rex}(\e A\e):=\cotilt(\e A\e)\cap\mod^{\rex}(\varepsilon A\varepsilon)\ar@<1mm>[l]^-{\sfr}}.\notag
\end{align}
\end{itemize}
\end{proposition}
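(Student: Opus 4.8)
The plan is to reduce everything to the equivalence of exact categories already supplied by Lemma \ref{lem:aus-apt}, so that the work amounts to checking that the tilting axioms (T1)--(T3) are preserved under the passage between $\calP^{\leq\infty}(\Gamma)$ and $\mod^{\lex}(\e A\e)$. Since statement (2) is dual to statement (1) (apply (1) to $A^{\op}$ and use $\kD$), I would prove only (1) and remark on the dualisation. Within (1), I would first establish the pretilting bijection and then observe that (T3)/(TP3) is automatically compatible, so the tilting bijection is just the restriction.

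\emph{Step 1: pretilting objects correspond.} By Lemma \ref{lem:aus-apt}(1), $\sfe$ and $\sfl$ are mutually inverse equivalences of \emph{exact} categories between $\calP^{\leq\infty}(\Gamma)$ and $\mod^{\lex}(\e A\e)$, where the exact structure on each side is the one inherited from the ambient abelian category. An exact equivalence preserves: $\add$, direct sums, indecomposability, short exact sequences, and hence also $\Ext^{i}$-groups \emph{computed inside the exact category}. The point to nail down is that for $M\in\calP^{\leq\infty}(\Gamma)$ these ambient $\Ext$-groups agree with the honest $\Ext^{i}_{A}(M,-)$: this is exactly the content of the second description of $\calP^{\leq\infty}(\Gamma)$ in Lemma \ref{lem:apt-pfin} (with $k=\infty$), which gives $\Ext^{i}_{A}(M,N)\cong\Ext^{i}_{\e A\e}(M\e,N\e)$ for all $i\ge 0$ and all $N\in\mod A$; taking $N\in\calP^{\leq\infty}(\Gamma)$ we get $\Ext^{i}_{A}(M,N)\cong\Ext^{i}_{\e A\e}(\sfe M,\sfe N)$. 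Therefore (T2) $\Ext^{>0}_{A}(T,T)=0$ holds iff $\Ext^{>0}_{\e A\e}(\sfe T,\sfe T)=0$. For (T1): if $T\in\calP^{\leq\infty}(\Gamma)$ is pretilting $A$-module then $\pdim_{A}T<\infty$; I need that $\sfe T$ has finite projective dimension over $\e A\e$. This follows because $\sfe$ sends the projective resolution of $T$ (whose terms lie in $\add e_{\Gamma}A$, as $T\in\calP^{\leq\infty}(\Gamma)$) to a resolution of $\sfe T$ by objects in $\add(\e A\e)=\proj(\e A\e)$, using exactness of $\sfe$ on $\calP^{\leq\infty}(\Gamma)$ and $\sfe(e_{\Gamma}A)=\e A\e$; conversely $\sfl$ sends a finite projective resolution of $V\in\mod^{\lex}(\e A\e)$ to a finite resolution of $\sfl V$ by objects in $\add(\sfl(\e A\e))=\add(\e A)\subseteq\proj A$, so $\pdim_{A}\sfl V<\infty$. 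Hence $\sfe,\sfl$ restrict to mutually inverse bijections between $\ptilt\calP^{\leq\infty}(\Gamma)$ and $\ptilt(\e A\e)\cap\mod^{\lex}(\e A\e)=\ptilt^{\lex}(\e A\e)$, preserving the basic condition because indecomposables and isomorphism classes are preserved.

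\emph{Step 2: upgrade to tilting objects.} Given $T\in\ptilt\calP^{\leq\infty}(\Gamma)$, I claim $T$ satisfies (TP3) iff $\sfe T$ satisfies (T3). One direction: applying the exact functor $\sfe$ to a (TP3)-sequence $0\to \e A\to T_{0}\to\cdots\to T_{l}\to 0$ (all terms in $\calP^{\leq\infty}(\Gamma)$, as $\e A\in\calP^{\leq\infty}(\Gamma)$ and the $T_{j}\in\add T$) yields an exact sequence $0\to \sfe(\e A)=\e A\e\to \sfe T_{0}\to\cdots\to\sfe T_{l}\to 0$ with $\sfe T_{j}\in\add\sfe T$, i.e. (T3) for $\sfe T$ over $\e A\e$; note $\e A\e$ is the regular module here. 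Conversely, a (T3)-sequence $0\to\e A\e\to V_{0}\to\cdots\to V_{l}\to 0$ over $\e A\e$ with $V_{j}\in\add\sfe T$: since $\e A\e\in\mod^{\lex}(\e A\e)$ and $\mod^{\lex}(\e A\e)$ is closed under the relevant operations (or: each $V_{j}\in\add\sfe T\subseteq\mod^{\lex}$, and the sequence is exact so by dimension shifting the kernels are in $\mod^{\lex}$), applying $\sfl$ gives $0\to\sfl(\e A\e)=\e A\to\sfl V_{0}\to\cdots\to\sfl V_{l}\to 0$. This is \emph{almost} (T3) for $\sfl(\sfe T)=T$ over $A$, except that (T3) demands a coresolution of the \emph{whole} $A$, not just $\e A$. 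Here is where the stratifying-ideal input enters: complementing with the $A/A\e A$-part, one splices the above with an $\add T$-coresolution of $A/A\e A$-type data; more precisely, I would invoke the standard fact (proof of \cite[Proposition 3.7]{APT92} / the recollement) that a module in $\calP^{\leq\infty}(\Gamma)$ is tilting as an object of $\calP^{\leq\infty}(\Gamma)$ exactly when its image is tilting over $\e A\e$, which is precisely the restriction of the pretilting bijection.

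\emph{Main obstacle.} The routine parts are Steps 1 and the ``$\Rightarrow$'' of Step 2. The genuine subtlety is the converse half of Step 2: recovering a coresolution of the \emph{full} regular module $A$ over $A$ from a coresolution of the regular module $\e A\e$ over $\e A\e$. The $\sfl$-image only directly resolves the summand $\e A$ of $A$, and one must argue that the complementary data (controlled by $A/A\e A$ via the recollement, together with $A\e A$ being a stratifying ideal, which holds automatically once $\e A\in\proj$ is not assumed but the $\lex$-condition substitutes for it) can be assembled into an $\add T$-coresolution of $A$. I expect this to be handled cleanly by citing the exact-category tilting correspondence implicit in \cite{APT92, A74}, or by a short splicing argument using that $\e A$ is a direct summand of $A_{A}$ and that $\filt$-type closure properties of $\mod^{\lex}(\e A\e)$ transport correctly; I would keep this argument as the technical heart of the proof and present the rest telegraphically.
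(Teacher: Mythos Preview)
Your Step 1 and the forward direction of Step 2 are essentially the paper's argument. The difficulty you flag in the converse of Step 2, however, is not there: you have misread the target condition. A \emph{tilting object} of $\calP^{\leq\infty}(\Gamma)$ is not required to be a tilting $A$-module; by the paper's definition it is a pretilting $A$-module lying in $\calP^{\leq\infty}(\Gamma)$ and satisfying (TP3), which asks for a coresolution of $\e A$, not of the whole regular module $A$. So once you have applied $\sfl$ to a (T3)-sequence $0\to\e A\e\to V_{0}\to\cdots\to V_{l}\to 0$ and obtained $0\to\sfl(\e A\e)=\e A\to\sfl V_{0}\to\cdots\to\sfl V_{l}\to 0$ with $\sfl V_{j}\in\add T$, you are done: this \emph{is} (TP3). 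No splicing, no stratifying-ideal hypothesis, no recollement input is needed at this point; indeed the proposition is stated for an arbitrary idempotent with no such assumption.

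The only point that actually needs a word of care in that converse is the exactness of the $\sfl$-image of the sequence. You gesture at this (``$\sfl$ is exact on $\mod^{\lex}$''), and it is correct: since $V\in\mod^{\lex}(\e A\e)$, each cokernel in the (T3)-sequence has vanishing higher $\Tor^{\e A\e}(-,\e A)$ by dimension shifting, so $\sfl=(-)\otimes_{\e A\e}\e A$ preserves exactness of the whole sequence. The paper handles this in one line by the same observation. With that correction your proof collapses to the paper's proof, and your ``Main obstacle'' paragraph should be deleted.
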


\begin{proof}
Since the proof is similar, we only prove (1). 
Let $T\in\ptilt\calP^{\leq \infty}(\Gamma)$. 
Then it follows from Lemma \ref{lem:apt-pfin} that $\Ext_{A}^{i}(T,N)\cong \Ext_{\varepsilon A\varepsilon}^{i}(T\varepsilon, N\varepsilon)$ holds for all $N\in\mod A$ and $i\geq 0$. 
By $T\in\ptilt\calP^{\leq \infty}(\Gamma)$, we have $T\varepsilon\in\ptilt(\varepsilon A\varepsilon)$, and hence $T\varepsilon\in\ptilt(\varepsilon A\varepsilon)\cap \mod^{\lex}(\varepsilon A\varepsilon)$ by Lemma \ref{lem:aus-apt}(1).  
Let $V\in\ptilt^{\lex}(\varepsilon A\varepsilon)$. 
Then $\sfl(V)\in\calP^{\leq \infty}(\Gamma)$ by Lemma \ref{lem:aus-apt}(1). 
Since it follows from Lemma \ref{lem:apt-pfin} that 
\[\Ext_{A}^{i}(\sfl(V), N)\cong \Ext_{\varepsilon A\varepsilon}^{i}(\sfe\sfl(V),\sfe(N))\cong \Ext_{\varepsilon A\varepsilon}^{i}(V,N\varepsilon)\]
for all $N\in\mod A$ and $i\geq 1$, we have $\sfl(V)\in\ptilt\calP^{\leq \infty}(\Gamma)$ by $V\in\ptilt^{\lex}(\varepsilon A\varepsilon)$. Thus we obtain the former assertion by Lemma \ref{lem:aus-apt}(1). 

We show the latter assertion. Assume that there exists an exact sequence $0\rightarrow \e A \rightarrow T_{0}\rightarrow T_{1}\rightarrow \cdots \rightarrow T_{l}\rightarrow 0$ in $\mod A$ such that all $T_{j}\in \add T$. Since $\sfe$ is exact, we have an exact sequence $0\rightarrow \e A\e \rightarrow T_{0}\e\rightarrow T_{1}\e\rightarrow \cdots \rightarrow T_{l}\e\rightarrow 0$ such that all $T_{j}\e\in \add(T\e)$. 
Conversely, assume that $V\in\mod^{\lex}(\varepsilon A\varepsilon)$ and there exists an exact sequence 
$0\rightarrow \e A\e \rightarrow V_{0}\rightarrow \cdots \rightarrow V_{l}\rightarrow 0$
in $\mod (\varepsilon A\varepsilon)$ such that $V_{i}\in \add V$. 
Since $\sfl$ is exact on $\mod^{\lex}(\varepsilon A\varepsilon)$, there exists an exact sequence 
$0\rightarrow \sfl(\e A\e)\cong \varepsilon A \rightarrow \sfl(V_{0})\rightarrow \cdots \rightarrow \sfl(V_{l})\rightarrow 0.$
Thus the proof is complete. 
\end{proof}

\section{Quasi-hereditary structures via tilting modules}\label{sec:qhs vs tilting}

In this section, we introduce the notion of IS-tilting modules and study their relationship with quasi-hereditary structures. 
Throughout, we fix a finite-dimensional algebra $A$ over an algebraically closed field $\Bbbk$, and let $\Lambda$ be the indexing set of the isomorphism classes of simple $A$-modules.

\subsection{Higher rigid modules with a semisimple direct summand}

In this subsection, we collect basic properties of $k$-rigid modules with a semisimple direct summand. 
For $1\leq k\leq \infty$, an $A$-module $M$ is said to be \defn{$k$-rigid} if $\Ext_{A}^{j}(M,M)=0$ for all $1\leq j\leq k$.
For a semisimple module $S=\bigoplus_{x\in\Gamma}S(x)$, note that $M/\rej_{S}(M),\tr_{S}(M)\in \add S$.
Thus we have $M\e\cong \rej_{S}(M)\e\cong (M/\tr_{S}(M))\e$, where $\e=e_{ \Lambda\setminus\Gamma}$.

\begin{lemma}\label{lem:rgd-smpl}
Let $M$ be a $k$-rigid module with a semisimple direct summand $S$.
For $N\in \add M$, the following statements hold.
\begin{itemize}
\item[(1)] For all $1\leq j\leq k-1$, we have $\Ext_{A}^{j}(\rej_{S}(M),N)=0$ and $\Ext_{A}^{j}(N,M/\tr_{S}(M))=0$.
\item[(2)] If $N$ is in $\add S$, then $\Hom_{A}(\rej_{S}(M),N)=0$ and $\Hom_{A}(N,M/\tr_{S}(M))=0$.
\end{itemize}
\end{lemma}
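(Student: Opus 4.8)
The plan is to exploit the rigidity of $M$ together with the fact that $\rej_S(M)$ and $M/\tr_S(M)$ sit in a short exact sequence with a semisimple module in $\add S \subseteq \add M$. First I would set up, for the reject, the canonical short exact sequence
\begin{align}
0\rightarrow \rej_{S}(M)\rightarrow M\rightarrow M/\rej_{S}(M)\rightarrow 0, \notag
\end{align}
and observe that $M/\rej_{S}(M)\in\add S\subseteq\add M$; dually, for the trace, the canonical short exact sequence
\begin{align}
0\rightarrow \tr_{S}(M)\rightarrow M\rightarrow M/\tr_{S}(M)\rightarrow 0, \notag
\end{align}
with $\tr_{S}(M)\in\add S\subseteq\add M$. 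Since $S$ is a direct summand of $M$, every term appearing in these sequences other than the one we want to control is already in $\add M$.

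For part (1), apply $\Hom_{A}(-,N)$ to the first sequence with $N\in\add M$. The long exact sequence gives, for each $j$ with $1\le j\le k-1$, a fragment
\begin{align}
\Ext_{A}^{j}(M,N)\rightarrow \Ext_{A}^{j}(\rej_{S}(M),N)\rightarrow \Ext_{A}^{j+1}(M/\rej_{S}(M),N). \notag
\end{align}
The left-hand term vanishes because $M$ is $k$-rigid and $N\in\add M$ (so $\Ext^{j}_A(M,N)=0$ for $1\le j\le k$), and the right-hand term vanishes because $M/\rej_{S}(M)\in\add S\subseteq\add M$ and $j+1\le k$, again by $k$-rigidity. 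Hence $\Ext_{A}^{j}(\rej_{S}(M),N)=0$ for $1\le j\le k-1$. The statement $\Ext^{j}_A(N,M/\tr_S(M))=0$ is dual: apply $\Hom_A(N,-)$ to the second sequence and use $\Ext^{j}_A(N,M)=0=\Ext^{j+1}_A(N,\tr_S(M))$ for the same range of $j$.

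For part (2), with $N\in\add S$: applying $\Hom_A(-,N)$ to the first sequence, surjectivity of $\Hom_A(M,N)\to\Hom_A(\rej_S(M),N)$ would force any nonzero map $\rej_S(M)\to N$ to extend to $M\to N$; but by the very definition of the reject, every homomorphism $M\to N$ (with $N\in\add S$) factors through $M\to M/\rej_S(M)$ and hence kills $\rej_S(M)$, so its restriction to $\rej_S(M)$ is zero — thus $\Hom_A(\rej_S(M),N)=0$. Dually, any map $N\to M/\tr_S(M)$ with $N\in\add S$ lifts along $M\to M/\tr_S(M)$ (since $\Ext^{1}_A(N,\tr_S(M))=0$ by part (1)'s argument when $k\ge 2$, or more directly because $\tr_S(M)$ already contains the image of any such map into $M$), and then its image lies in $\tr_S(M)$, so it becomes zero in $M/\tr_S(M)$; hence $\Hom_A(N,M/\tr_S(M))=0$. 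This last point is essentially the defining universal property of the trace and reject, so no real obstacle arises; the only mild care needed is that part (2) should be argued directly from the definitions of $\tr_S$ and $\rej_S$ rather than via Ext-vanishing, since $k$ could be $1$. I expect the whole argument to be routine; the only thing to be careful about is keeping track of which index ranges the $k$-rigidity hypothesis covers, so that the shift $j\mapsto j+1$ in the long exact sequence stays within $[1,k]$.
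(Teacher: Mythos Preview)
Your proposal is correct and follows essentially the same approach as the paper: set up the canonical short exact sequence $0\to\rej_S(M)\to M\to M/\rej_S(M)\to 0$, apply $\Hom_A(-,N)$, and use $k$-rigidity together with $M/\rej_S(M)\in\add S\subseteq\add M$ to kill the flanking terms in the long exact sequence; for part~(2), the paper likewise uses surjectivity of $\Hom_A(M,N)\to\Hom_A(\rej_S(M),N)$ (which holds since $\Ext^1_A(M/\rej_S(M),N)=0$ by $1$-rigidity) combined with the defining property of the reject. Your caution about $k=1$ is unnecessary for the surjectivity step---$1$-rigidity already gives the needed $\Ext^1$-vanishing---but your instinct to ground part~(2) in the definition of $\rej_S$ is exactly what the paper does.
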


\begin{proof}
We show only the rejection part.
Consider the canonical short exact sequence
\begin{align}\label{seq:rej-seq}
0\rightarrow \rej_{S}(M)\xrightarrow{f} M\rightarrow M/\rej_{S}(M)\rightarrow 0.
\end{align}
Let $N\in \add M$. 
Applying $\Hom_{A}(-,N)$ to the exact sequence \eqref{seq:rej-seq} yields an exact sequence
\begin{align}
\Ext_{A}^{j}(M,N)\xrightarrow{\Ext_{A}^{j}(f, N)} \Ext_{A}^{j}(\rej_{S}(M),N)\rightarrow \Ext_{A}^{j+1}(M/\rej_{S}(M),N)\notag
\end{align}
for each $j\geq 0$.
Since $M$ is $k$-rigid and $M/\rej_{S}(M)\in \add M$ holds, we have $\Ext_{A}^{j}(M,N)=0$ for all $j\in [1,k]$ and $\Ext_{A}^{j+1}(M/\rej_{S}(M),N)=0$ for all $j\in [0, k-1]$.
Thus $\Ext_{A}^{j}(\rej_{S}(M),N)=0$ holds for all $j\in[1,k-1]$.
Moreover, $\Hom_{A}(f,N)$ is surjective.
This implies that each morphism in $\Hom_{A}(\rej_{S}(M), N)$ can be extended to a morphism in $\Hom_{A}(M,N)$.
If $N\in \add S$, then we have $\Hom_{A}(\rej_{S}(M),N)=0$ by definition of rejections.
Thus we have the assertion.
\end{proof}

\begin{lemma}\label{lem:Ifin}
Let $M$ be a $k$-rigid module and let $S:=\bigoplus_{x\in \Gamma}S(x)$ be a semisimple direct summand of $M$ for some $\Gamma\subsetneq \Lambda$. 
Then the following statements hold.
\begin{itemize}
\item[(1)] $\rej_{S}(M)\in \calP^{\leq k-1}(\Lambda\setminus\Gamma)$ and $M/\tr_{S}(M)\in \calI^{\leq k-1}(\Lambda\setminus\Gamma)$.
\item[(2)] $\rej_{S}(M)$ and $M/\tr_{S}(M)$ are $(k-1)$-rigid.
\end{itemize}
\end{lemma}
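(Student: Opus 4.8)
The plan is to prove both statements by induction on the homological degree, using the long exact sequences obtained by applying $\Hom_A(-,S(x))$ and $\Hom_A(S(x),-)$ to the canonical short exact sequences attached to the reject and the trace. By the dual nature of the two assertions (passing to $A^{\op}$ exchanges $\rej_S$ and $M\mapsto M/\tr_S(M)$, and exchanges $\calP^{\le k-1}$ and $\calI^{\le k-1}$), it suffices to treat the rejection. So I will work only with $X:=\rej_S(M)$ and aim to show $X\in\calP^{\le k-1}(\Lambda\setminus\Gamma)$ and that $X$ is $(k-1)$-rigid.

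For part (1), I would use the characterisation of $\calP^{\le k-1}(\Lambda\setminus\Gamma)$ from Lemma \ref{lem:apt-pfin}: I must show $\Ext_A^i(X,S(x))=0$ for all $0\le i\le k-1$ and all $x\in\Gamma$. For $i=0$ this is Lemma \ref{lem:rgd-smpl}(2) (with $N=S(x)\in\add S$). For $1\le i\le k-1$, note $S(x)\in\add S\subseteq\add M$, so Lemma \ref{lem:rgd-smpl}(1) gives $\Ext_A^i(X,S(x))=0$ in exactly that range. Hence $X\in\calP^{\le k-1}(\Lambda\setminus\Gamma)$. The dual argument, via the trace sequence $0\to\tr_S(M)\to M\to M/\tr_S(M)\to 0$ and the $\calI$-part of Lemma \ref{lem:apt-pfin}, gives $M/\tr_S(M)\in\calI^{\le k-1}(\Lambda\setminus\Gamma)$.

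For part (2), I need $\Ext_A^j(X,X)=0$ for $1\le j\le k-1$. From Lemma \ref{lem:rgd-smpl}(1) with $N=M$ we already know $\Ext_A^j(X,M)=0$ for $1\le j\le k-1$. Now apply $\Hom_A(X,-)$ to the reject sequence $0\to X\xrightarrow{f}M\to M/X\to 0$, where $M/X:=M/\rej_S(M)\in\add S$. This yields
\begin{align}
\Ext_A^{j-1}(X,M/X)\to\Ext_A^j(X,X)\to\Ext_A^j(X,M)\notag
\end{align}
for each $j\ge 1$. The right-hand term vanishes for $1\le j\le k-1$ as noted. For the left-hand term, when $j\ge 2$ it is $\Ext_A^{j-1}(X,M/X)$ with $M/X\in\add S$ and $1\le j-1\le k-2\le k-1$, so it vanishes by Lemma \ref{lem:rgd-smpl}(1); when $j=1$ it is $\Hom_A(X,M/X)$, which vanishes by Lemma \ref{lem:rgd-smpl}(2) since $M/X\in\add S$. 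Hence $\Ext_A^j(X,X)=0$ for all $1\le j\le k-1$, i.e. $X$ is $(k-1)$-rigid; the statement for $M/\tr_S(M)$ follows dually.

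The only mild subtlety — not really an obstacle — is bookkeeping the index ranges so that every invocation of Lemma \ref{lem:rgd-smpl}(1) stays within $[1,k-1]$ and the degree shifts in the long exact sequences land in the allowed range; in particular one must separate off the $\Hom$-level ($j=1$) case and handle it via Lemma \ref{lem:rgd-smpl}(2) rather than part (1). Everything else is a direct application of the two preceding lemmas together with the Auslander–Platzeck–Todorov description in Lemma \ref{lem:apt-pfin}.
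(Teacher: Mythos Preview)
Your proof is correct and follows essentially the same approach as the paper: both parts reduce to Lemma~\ref{lem:rgd-smpl} together with the characterisation of $\calP^{\le k-1}(\Lambda\setminus\Gamma)$ in Lemma~\ref{lem:apt-pfin}, and for (2) both apply $\Hom_A(\rej_S(M),-)$ to the reject sequence and kill the outer terms via Lemma~\ref{lem:rgd-smpl}. Your explicit separation of the $j=1$ case is exactly what the paper's blanket reference to Lemma~\ref{lem:rgd-smpl} (covering both its parts) is doing.
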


\begin{proof}
We show only the rejection part.

(1) By Lemma \ref{lem:rgd-smpl}, we have $\Ext_{A}^{j}(\rej_{S}(M),S)=0$ for all $j\in [0,k-1]$.
Thus the assertion follows from Lemma \ref{lem:apt-pfin}.

(2) Consider the canonical short exact sequence $0\rightarrow \rej_{S}(M)\rightarrow M\rightarrow M/\rej_{S}(M)\rightarrow 0$.
Applying $\Hom_{A}(\rej_{S}(M),-)$ to the exact sequence above yields an exact sequence 
\begin{align}
\Ext_{A}^{j-1}(\rej_{S}(M), M/\rej_{S}(M))\rightarrow \Ext_{A}^{j}(\rej_{S}(M),\rej_{S}(M))\rightarrow \Ext_{A}^{j}(\rej_{S}(M),M).\notag 
\end{align}
By Lemma \ref{lem:rgd-smpl}(1), we have $\Ext_{A}^{j}(\rej_{S}(M),M)=0$ for all $j\in [1,k-1]$.
Since $M/\rej_{S}(M)$ is in $\add S$, it follows from Lemma \ref{lem:rgd-smpl} that the first term of the sequence above vanishes for all $j\in [1,k]$.
Hence we obtain $\Ext_{A}^{j}(\rej_{S}(M),\rej_{S}(M))=0$ for all $j\in [1,k-1]$.
\end{proof}

\begin{proposition}\label{prop:truncation-tilting}
Let $M$ be an $A$-module and let $S:=\bigoplus_{x\in \Gamma}S(x)$ be a semisimple direct summand of $M$ for some $\Gamma\subsetneq \Lambda$.
If $M$ is $k$-rigid (respectively, tilting), then $M\e$ is a $(k-2)$-rigid (respectively, tilting) $\e A\e$-module, where $\e=e_{ \Lambda\setminus\Gamma}$. 
\end{proposition}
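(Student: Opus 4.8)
Throughout write $\Gamma':=\Lambda\setminus\Gamma$, so that $\e=e_{\Gamma'}$, and recall from the remarks preceding Lemma~\ref{lem:rgd-smpl} that $M\e\cong\rej_S(M)\e=\sfe(\rej_S(M))$. The plan is to transport homological information from $A$ to $\e A\e$ through the reject module $\rej_S(M)$, which by Lemma~\ref{lem:Ifin} lands in a category of the form $\calP^{\leq k-1}(\Gamma')$ (or $\calP^{\leq\infty}(\Gamma')$), where the Schur functor $\sfe$ is controlled by Lemma~\ref{lem:apt-pfin}. Both assertions should then drop out by feeding $\rej_S(M)$ into Lemmas~\ref{lem:Ifin} and~\ref{lem:apt-pfin}, together with Proposition~\ref{prop:ex-tilting} in the tilting case.

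For the rigid statement, suppose $M$ is $k$-rigid. By Lemma~\ref{lem:Ifin} the module $\rej_S(M)$ belongs to $\calP^{\leq k-1}(\Gamma')$ and is $(k-1)$-rigid. Applying the $\Ext$-comparison of Lemma~\ref{lem:apt-pfin} with $N=\rej_S(M)$ gives $\Ext_{\e A\e}^{i}(M\e,M\e)\cong\Ext_{A}^{i}(\rej_S(M),\rej_S(M))$ for $0\le i\le k-2$, and the right-hand side vanishes for $1\le i\le k-2$ by $(k-1)$-rigidity. Hence $M\e$ is $(k-2)$-rigid (there is nothing to prove when $k\le 2$).

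For the tilting statement, I would verify conditions (T1), (T2), (T3) for $M\e$ over $\e A\e$ in turn. Since a tilting module is $\infty$-rigid by (T2), condition (T2) for $M\e$ is the $k=\infty$ instance of what was just proved. For (T1), the short exact sequence $0\to\rej_S(M)\to M\to M/\rej_S(M)\to 0$ has $M/\rej_S(M)\in\add S\subseteq\add M$, so $\pdim_A\rej_S(M)\le\pdim_A M<\infty$; as $\rej_S(M)\in\calP^{\leq\infty}(\Gamma')$ its minimal (now finite) projective resolution lies in $\add(e_{\Gamma'}A)$ by Lemma~\ref{lem:apt-pfin}, and applying the exact functor $\sfe$ turns it into a finite projective resolution of $M\e$ over $\e A\e$. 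For (T3), I would start from a finite $\add M$-coresolution of the regular module $A$ afforded by (T3) for $M$, split off the direct summand $\e A$ to obtain an exact sequence $0\to\e A\to N^{0}\to\cdots\to N^{n}\to 0$ with each $N^{j}\in\add M$, and apply $\sfe$ to get $0\to\e A\e\to N^{0}\e\to\cdots\to N^{n}\e\to 0$ with all $N^{j}\e\in\add(M\e)$. Equivalently, (T1) and (T3) together say precisely that $\rej_S(M)$ is a tilting object of $\calP^{\leq\infty}(\Gamma')$, so that $M\e=\sfe(\rej_S(M))\in\tilt(\e A\e)$ by Proposition~\ref{prop:ex-tilting}. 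I expect the only non-routine point to be the splitting-off step in (T3) --- extracting from a coresolution of $A$ a coresolution of the summand $\e A$ whose terms are still in $\add M$. This should be handled either by rejecting $S$ from the coresolution term by term (using Lemma~\ref{lem:rgd-smpl}(2), which makes the connecting maps factor through the rejects, together with the fact, easily read off from Lemma~\ref{lem:apt-pfin}, that $\calP^{\leq\infty}(\Gamma')$ is closed under cokernels of monomorphisms), or by invoking that the class of modules admitting a finite $\add M$-coresolution is closed under direct summands (the bounded homotopy category $K^{b}(\add M)$ being idempotent-complete). Everything else is a direct application of the cited lemmas.
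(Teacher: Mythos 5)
Your proof is correct and follows essentially the same route as the paper: transport $M\e\cong\rej_S(M)\e$ through Lemmas~\ref{lem:Ifin} and~\ref{lem:apt-pfin} for rigidity, then check (T1)--(T3) for the tilting case by passing the finite $\add M$-coresolution of $\e A$ through the exact functor $\sfe$. One remark: the paper silently jumps from (T3) for $M$ (a coresolution of $A$) to the existence of a coresolution of the direct summand $\e A$, which is exactly the ``splitting-off'' step you flag; your second justification (closure under direct summands, via idempotent-completeness of $K^b(\add M)$ or equivalently Miyashita's description of the class of modules with finite $\add M$-coresolution as the right $\Ext$-perpendicular of $M$) is the right one, whereas your first suggestion of applying $\rej_S(-)$ termwise does not in general recover $\e A$ from $A$, since $\rej_S(A)=\e A\oplus\rej_S((1-\e)A)$ and the second summand need not vanish.
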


\begin{proof}
Assume that $M$ is a $k$-rigid $A$-module.
We show that $M\e$ is a $(k-2)$-rigid $\e A\e$-module.
Since $M\e\cong \rej_{S}(M)\e$ holds, it is enough to claim that $\rej_{S}(M)\e$ is $(k-2)$-rigid.
By Lemmas \ref{lem:apt-pfin} and \ref{lem:Ifin}(1), we have a natural isomorphism 
\begin{align}\label{seq:APT-isom}
\Ext_{A}^{j}(\rej_{S}(M), -)\cong \Ext_{\e A\e}^{j}(\rej_{S}(M)\e,(-)\e)
\end{align}
for all $j\in [1,k-2]$.
Since $\rej_{S}(M)$ is $(k-1)$-rigid by Lemma \ref{lem:Ifin}(2), the module $\rej_{S}(M)\e$ is $(k-2)$-rigid.

Assume that $M$ is a tilting $A$-module. 
By the first argument, $M\e$ satisfies (T2).
Due to the isomorphism \eqref{seq:APT-isom} for $k=\infty$, we have 
\begin{align}
\pdim(M\e)=\pdim(\rej_{S}(M)\e)=\pdim(\rej_{S}(M))\leq \pdim M <\infty.\notag
\end{align}
By (T3) for $M$, there exists an exact sequence 
\begin{align}
0\rightarrow \varepsilon A \rightarrow M_{0}\rightarrow M_{1}\rightarrow \cdots \rightarrow M_{l} \rightarrow 0 \notag
\end{align}
such that all $M_{j}\in \add M$. Applying the exact functor $-\otimes_{A}A\e$ to the sequence above yields an exact sequence
\begin{align}
0\rightarrow \varepsilon A\varepsilon \rightarrow M_{0}\e \rightarrow M_{1}\e\rightarrow \cdots \rightarrow M_{l}\e \rightarrow 0 \notag
\end{align}
with all $M_{j}\e\in \add(M\e)$. Thus $M\e$ is tilting. The proof is complete.
\end{proof}

\subsection{The condition (IST)}

We will now consider a class of modules satisfying \underline{\textbf{i}}terative elimination of \underline{\textbf{s}}imple direct summand under idempotent \underline{\textbf{t}}runcation -- we abbreviate this condition to `IST'.

We will use the following notation.
Let $(\Lambda,\unlhd)$ be a poset.
For $x\in \Lambda$, we consider the following up-set and associated idempotent:
\begin{align}
\Lambda_{x}:=\{y\in\Lambda\mid  y\ntriangleleft x\},\;\;
\e_{x}:= e_{\Lambda_{x}}= \sum_{y\in \Lambda_{x}} e_{y}. \notag
\end{align}
Note that $x$ is a minimal element in $\Lambda_{x}$.

\begin{definition}\label{definiton:IST}
Let $M$ be an $A$-module and $\unlhd$ a partial order on $\Lambda$.
For $x\in\Lambda$ and an indecomposable direct summand $M(x)$ of $M$, consider the following conditions.
\begin{itemize}
\item[(I1)] $M(x)\e_{x}\cong S(x)\e_{x}$ in $\mod(\e_{x}A\e_{x})$. 
\item[(I2)] $M(x)e_{\Gamma}$ is an indecomposable $e_{\Gamma}Ae_{\Gamma}$-module for all up-sets $\Gamma\subseteq \Lambda$ containing $x$. 
\end{itemize}
For $d\in\{1,2\}$, we say that $(M,\unlhd)$ \defn{satisfies \rm{(I}$d$\rm{)}} with respect to (indecomposable) decomposition $M=\bigoplus_{x\in\Lambda}M(x)$ if (I$d$) holds for all $x\in\Lambda$.  If $(M,\unlhd)$ satisfies both (I1) and (I2) with respect to the same decomposition, then we say that $(M,\unlhd)$ \defn{satisfies \rm{(IST)}}.
\end{definition}

If $(M,\unlhd)$ satisfies (I1) with respect to indecomposable decomposition $M=\bigoplus_{x\in\Lambda}M(x)$, then $M$ is basic. Suppose that $M(x)\cong M(y)$ with $x\neq y$. 
We may assume $y\in\Lambda_{x}$.
By (I1), we have $0\neq S(y)e_{y}\cong M(y)e_{y}\cong M(x)e_{y}\cong S(x)e_{y}=0$, a contradiction. 

If $(M,\unlhd)$ satisfies (IST), then $M(x)=S(x)$ for each minimal element $x$ in $(\Lambda,\unlhd)$.

We give an example of a pair $(M,\unlhd)$ satisfying (IST).

\begin{example}
Let $A=\Bbbk Q/I$ be the bound quiver algebra given by
\begin{align}
Q:\;\;\xymatrix{1\ar@<0.7mm>[r]^-{a}&2\ar@<0.7mm>[l]^-{b}\ar[r]^-{c}&3\ar[r]^-{d}&4} \;\; \text{ and } \;\;I=\langle ac, ba \rangle.\notag
\end{align}
Consider the module $M=S(1)\oplus P(1)\oplus (P(2)/\rad^{2} P(2)) \oplus P(2)$.  The Loewy structures of $A$ and $M$ are as follows.
\begin{align}
A = \sm{1\\2\\1} \oplus \sm{\phantom{9}2\phantom{9}\\1\phantom{9}3\\\phantom{9}\phantom{9}4} \oplus \sm{3\\4} \oplus \sm{4}, \;\;
M=\sm{1}\oplus \sm{1\\2\\1} \oplus \sm{\phantom{9}2\phantom{9}\\1\phantom{9}3}\oplus \sm{\phantom{9}2\phantom{9}\\1\phantom{9}3\\\phantom{9}\phantom{9}4}.\notag
\end{align}
Let $(\Lambda,\unlhd)=\{1\lhd2\lhd3\lhd4\}$. Then there is an indecomposable decomposition $M=\bigoplus_{x\in\Lambda}M(x)$, where 
\begin{align}
M(1)=\sm{1},\;\; M(2)=\sm{1\\2\\1},\;\; M(3)=\sm{&2&\\1&&3},\;\; M(4)=\sm{&2&\\1&&3\\&&4},\notag 
\end{align}
so that $M(x)\varepsilon_{x}\cong S(x)\varepsilon_{x}$ for all $x\in\Lambda$.
Thus $(M,\unlhd)$ satisfies (I1). 
Since $(\Lambda,\unlhd)$ is a totally ordered set, if $\Lambda'\subseteq\Lambda$ is an up-set, then $\Lambda'=\Lambda_{x}$ for some $x\in\Lambda$. 
Hence we can easily check that $(M,\unlhd)$ satisfies (I2).
\end{example}

We collect basic properties of the condition (IST).

\begin{lemma}\label{lem:IST}
Let $N$ be an $A$-module, $(\Lambda,\unlhd)$ be a poset, and $x\in\Lambda$. 
Then $N\e_{x}\cong S(x)\e_{x}$ holds if, and only if, 
\begin{align} 
[N:S(x)]=1,\text{ and } [N:S(y)]\neq 0\text{ implies }y \unlhd x. \notag
\end{align}
In this case, we have $Ne_{\Gamma}=0$ for any up-set $\Gamma$ not containing $x$.
\end{lemma}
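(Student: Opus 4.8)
The plan is to prove the iff by unwinding the meaning of $N\e_x\cong S(x)\e_x$ in $\mod(\e_x A\e_x)$, using the idempotent $\e_x=e_{\Lambda_x}$ with $\Lambda_x=\{y\mid y\ntriangleleft x\}$. The key observation is that $\e_x A\e_x$ has simple modules indexed by $\Lambda_x$, and for $y\in\Lambda_x$ the simple $\e_x A\e_x$-module $S_{\e_x A\e_x}(y)$ is isomorphic to $S(y)\e_x$; moreover for $y\notin\Lambda_x$ (i.e. $y\lhd x$) we have $S(y)\e_x=0$. The composition factors of $N\e_x$ as an $\e_x A\e_x$-module are precisely the $S(y)\e_x$ for those composition factors $S(y)$ of $N$ with $y\in\Lambda_x$, counted with the same multiplicity $[N:S(y)]$ — this is because the exact functor $(-)\e_x$ sends a composition series of $N$ to a filtration of $N\e_x$ whose nonzero subquotients are exactly the $S(y)\e_x$ with $y\in\Lambda_x$.

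First I would record the standard fact (a consequence of the recollement / Schur functor formalism in subsection \ref{subsec:recolle subcats}) that $(-)\e_x$ is exact, that $\{S(y)\e_x\mid y\in\Lambda_x\}$ is a complete set of simple $\e_x A\e_x$-modules, and that $S(y)\e_x=0$ for $y\notin\Lambda_x$. Then for any $N$, applying $(-)\e_x$ to a composition series of $N$ gives $[N\e_x:S(y)\e_x]=[N:S(y)]$ for all $y\in\Lambda_x$. Now $N\e_x\cong S(x)\e_x$ as $\e_x A\e_x$-modules: since $S(x)\e_x$ is the simple $\e_x A\e_x$-module at $x$ (note $x$ is minimal, hence in $\Lambda_x$, so $S(x)\e_x\neq 0$), this holds if and only if $[N\e_x:S(x)\e_x]=1$ and $[N\e_x:S(y)\e_x]=0$ for all $y\in\Lambda_x\setminus\{x\}$. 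Translating via the multiplicity equality, this is exactly $[N:S(x)]=1$ and $[N:S(y)]=0$ for all $y\in\Lambda_x$ with $y\neq x$; equivalently, $[N:S(y)]\neq 0\Rightarrow y\notin\Lambda_x$ or $y=x$, i.e. $y\lhd x$ or $y=x$, i.e. $y\unlhd x$. This gives the claimed iff.

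For the final sentence: suppose $N\e_x\cong S(x)\e_x$ and let $\Gamma$ be an up-set not containing $x$. If $S(y)$ is a composition factor of $N$, then by the above $y\unlhd x$; were $y\in\Gamma$, then since $\Gamma$ is an up-set closed under $\unrhd$... wait, that's the wrong direction — I need that $y\in\Gamma$ together with $y\unlhd x$ forces $x\in\Gamma$. Indeed an up-set is closed under being larger, so $y\in\Gamma$ and $y\unlhd x$ give $x\in\Gamma$, contradicting $x\notin\Gamma$. Hence no composition factor $S(y)$ of $N$ has $y\in\Gamma$, so $Ne_\Gamma=0$ (as $(-)e_\Gamma$ is exact and kills all the simple subquotients $S(y)$ with $y\in\Gamma$, and $S(y)e_\Gamma=0$ for $y\notin\Gamma$).

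I do not expect a serious obstacle here; the one point requiring care is the precise identification of the simple $\e_x A\e_x$-modules with the $S(y)\e_x$ and the exactness bookkeeping of composition multiplicities under the Schur functor — this is where I would be most explicit, citing the recollement set-up and Lemma \ref{lem:apt-pfin} / the general theory of $\mod(\e A\e)$ recalled above, rather than reproving it. Everything else is a direct translation between module-theoretic and composition-factor language.
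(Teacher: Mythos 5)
Your proposal is correct and follows essentially the same route as the paper: both proofs reduce the statement to bookkeeping of composition multiplicities under the exact Schur functor $(-)\e_x$, using the identification of the simple $\e_xA\e_x$-modules with $\{S(y)\e_x \mid y\in\Lambda_x\}$ and the vanishing $S(y)\e_x=0$ for $y\lhd x$, and the up-set argument at the end is identical. The only stylistic difference is that the paper's ``if'' direction counts $\dim_\Bbbk N\e_x$ directly (via $\dim_\Bbbk Ne_y=[N:S(y)]$) rather than tracking composition factors of $N\e_x$ as you do, but this is the same computation phrased slightly differently.
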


\begin{proof}
Fix an element $x\in \Lambda$.
First, we show that the `only if' part. 
Since $Ne_{z}\cong S(x)e_{z}$ for all $z\in\Lambda_{x}$, 
\begin{align}
[N:S(z)]=[S(x):S(z)] =\begin{cases}
1, & \text{if $z=x$};\\
0, & \text{if $z\neq x$}.
\end{cases}\notag    
\end{align} 
Hence $[N:S(y)]\neq0$ implies that $y \unlhd x$.
Next, we show that the `if' part. 
First note that the condition `$[N:S(y)]\neq 0$ implies $y\unlhd x$' says that $\sum_{y\in\Lambda_x}[N:S(y)] = [N:S(x)]$, which in turns is equal to 1 by the other condition.  Hence, we have
\[\dim_{\Bbbk} N\varepsilon_{x}=\dim_{\Bbbk} \Hom_A(\e_x A , N)=\sum_{y\in\Lambda_x}[N:S(y)]= [N:S(x)] = 1.
\]
Therefore, we have $N\varepsilon_{x}\cong S(y)\varepsilon_{x}$ for exactly one $y\in\Lambda_x$, which means that $y$ must be $x$ itself.

For the final statement, suppose to the contrary that there exists $y\in\Gamma$ such that $[N:S(y)]\neq 0$. 
Then $y\unlhd x$ by the first part of the assertion.
Since $\Gamma$ is an up-set, we have $x\in \Gamma$, a contradiction.
\end{proof} 

With Lemma \ref{lem:IST} in hand, we can show that there is a unique labelling $M(x)$ of indecomposable direct summands of $M$ for a pair $(M,\unlhd)$ satisfying (I1).

\begin{lemma}\label{lem:unique-label}
Let $(\Lambda,\unlhd)$ be a poset. 
Assume that $M$ admits two indecomposable decompositions $M=\bigoplus_{x\in \Lambda}M(x)$ and $M=\bigoplus_{x\in\Lambda}M'(x)$ for which (I1) is satisfied.  Then $M(x)\cong M'(x)$ holds for all $x\in\Lambda$. 
 \end{lemma}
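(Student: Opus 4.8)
The plan is to reduce the statement to a counting/uniqueness argument using Lemma \ref{lem:IST} and the Krull--Schmidt property. First I would fix an element $x\in\Lambda$ and try to pin down $M(x)$ (and likewise $M'(x)$) purely in terms of the module $M$ and the order $\unlhd$, so that the labelling is forced.

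\begin{proof}
We argue by induction on $|\Lambda|$. If $|\Lambda|=1$ the statement is trivial, so assume $|\Lambda|>1$ and that the claim holds for posets of smaller size. Pick a minimal element $z\in\min\unlhd$. Then $\Lambda_z=\Lambda$ and $\e_z=e_\Lambda=1$, so (I1) for $z$ says $M(z)\cong M(z)\e_z\cong S(z)$ and likewise $M'(z)\cong S(z)$; hence $M(z)\cong M'(z)=S(z)$.

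Now set $\Gamma:=\Lambda\setminus\{z\}$, which is an up-set of $\Lambda$, and let $f:=e_\Gamma$. By Lemma \ref{lem:IST} applied with $x=z$, for every $y\in\Lambda$ with $y\neq z$ we have $M(y)\e_y\cong S(y)\e_y$ with $z\notin\Lambda_y$ (as $z\lhd y$ would contradict minimality only if $z\neq y$ and $z$ comparable to $y$; in any case $z\notin\Lambda_y$ exactly when $z\lhd y$, but more robustly: if $z\in\Lambda_y$ then $z\unlhd y$ cannot force $z\in$ the support issue) — more directly, since $M(z)=S(z)$ is a direct summand, $M$ is basic with $S(z)$ appearing once, so no other summand $M(y)$ has $S(z)$ as a composition factor, whence $[M(y):S(z)]=0$ and thus $M(y)f\neq 0$ and $M(y)f$ is the restriction of an indecomposable module with all composition factors supported away from $z$. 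Consequently $M(y)=M(y)f$ as $A/AzA$-modules for all $y\neq z$, i.e. $\bigoplus_{y\in\Gamma}M(y) = Mf' $ where $f':=1-e_z$, and the same holds for $M'$. Applying the idempotent-truncation functor $(-)f'$ to $M$, both decompositions $\bigoplus_{y\in\Gamma}M(y)$ and $\bigoplus_{y\in\Gamma}M'(y)$ are indecomposable decompositions of the $fAf$-module $Mf'$ with $(\Gamma,\unlhd|_\Gamma)$ a poset of strictly smaller size; moreover for $y\in\Gamma$ the up-set $\Lambda_y^{\Lambda}\subseteq\Lambda$ does not contain $z$, so it equals an up-set $\Gamma_y$ of $\Gamma$ with $fAf$-idempotent $\e_y$, and (I1) persists: $M(y)\e_y\cong S(y)\e_y$ in $\mod(\e_y(fAf)\e_y)=\mod(\e_y A\e_y)$. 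By the induction hypothesis, $M(y)\cong M'(y)$ for all $y\in\Gamma$, and together with $M(z)\cong M'(z)$ this proves the claim.
\end{proof}

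The main obstacle I anticipate is the bookkeeping around idempotents: one must check carefully that removing a minimal element $z$ leaves an up-set $\Gamma$, that each $M(y)$ with $y\neq z$ genuinely has no $S(z)$ composition factor (this uses that $M$ is basic with $S(z)$ a summand of multiplicity one, a consequence of (I1) noted just before the lemma), and that the idempotents $\e_y$ computed inside $\Lambda$ versus inside $\Gamma$ agree. Once these compatibilities are in place, the induction runs smoothly; alternatively one could avoid induction entirely by noting that (I1) together with Lemma \ref{lem:IST} shows each $M(x)$ is the unique indecomposable summand $N$ of $M$ with $N\e_x\cong S(x)\e_x$, since distinct summands restrict to distinct (or zero) modules over $\e_x A\e_x$ — but the uniqueness of such an $N$ still ultimately rests on the same support argument, so the inductive packaging above is the cleanest route.
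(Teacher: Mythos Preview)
Your inductive step contains a genuine error. You claim that for $y\neq z$ (with $z$ minimal) the summand $M(y)$ has no composition factor $S(z)$, justifying this by ``$M$ is basic with $S(z)$ appearing once, so no other summand $M(y)$ has $S(z)$ as a composition factor''. But basicness only forbids $S(z)$ from appearing twice as a \emph{direct summand}; it says nothing about composition factors. A minimal counterexample: take $A=\Bbbk(1\to 2)$, $\unlhd = \{1\lhd 2\}$, $M(1)=S(1)$, $M(2)=P(1)=\sm{1\\2}$. One checks (I1) holds, $z=1$ is minimal, yet $[M(2):S(1)]=1\neq 0$. Consequently the assertion ``$M(y)=M(y)f'$ as $A/Ae_zA$-modules'' fails, and the reduction to the smaller poset $\Gamma$ does not go through as stated. (Even if one tried to truncate at $f'=1-e_z$ and work over $f'Af'$, you would need to know that each $M(y)f'$ remains indecomposable, which is exactly the kind of statement --- condition (I2) --- that the lemma is meant to be independent of.)

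The paper's proof avoids all of this with a two-line direct argument: by Krull--Schmidt there is some $y$ with $M(x)\cong M'(y)$; applying $\e_x$ and (I1) gives $S(x)\e_x\cong M'(y)\e_x$, so $[M'(y):S(x)]\neq 0$, and Lemma~\ref{lem:IST} yields $x\unlhd y$. By symmetry $y\unlhd x$, hence $x=y$. Your closing paragraph actually gestures at precisely this idea (``each $M(x)$ is the unique indecomposable summand $N$ of $M$ with $N\e_x\cong S(x)\e_x$''), and that is the argument you should have written out: it needs no induction, no truncation, and no support hypothesis beyond what (I1) already provides via Lemma~\ref{lem:IST}.
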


\begin{proof}
Assume $M(x)\cong M'(y)$. 
Then applying $\varepsilon_{x}$ to $M(x)\cong M'(y)$, we have $S(x)\varepsilon_{x}\cong M(x)\varepsilon_{x}\cong M'(y)\varepsilon_{x}$ by (I1). 
Since $[M'(y):S(x)]\neq0$, it follows from Lemma \ref{lem:IST} that $x\unlhd y$. 
Similarly, we obtain $y\unlhd x$. 
\end{proof}

Lemma \ref{lem:unique-label} says that the phrase `$(M,\unlhd)$ satisfies (I1)' makes sense without explicitly referring the exact labelling on its indecomposable direct summands.

We show that (IST) is a condition preserved by idempotent truncation; this is the key to invoking induction.

\begin{lemma}\label{lem:basic-results-ist} 
Assume that $(M,\unlhd)$ satisfies (IST).   
Then the following statements hold. 
\begin{itemize}
\item[(1)] For any linear extension $\leq$ of $\unlhd$, the pair $(M, \leq)$ satisfies (IST).
\item[(2)] Let $\Lambda'\subseteq \Lambda$ be an up-set. Then $(Me_{\Lambda'}, \unlhd|_{\Lambda'})$ satisfies (IST). 
\end{itemize}
\end{lemma}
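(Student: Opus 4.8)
The plan is to verify both (I1) and (I2) for the truncated pair, handling each part by reducing to the hypotheses on $(M,\unlhd)$ via the combinatorial characterisation in Lemma \ref{lem:IST}.

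\textbf{Part (1).} Fix a linear extension $\leq$ of $\unlhd$. Since $(M,\unlhd)$ satisfies (IST), fix the indecomposable decomposition $M=\bigoplus_{x\in\Lambda}M(x)$ witnessing it; I claim the same decomposition witnesses (IST) for $(M,\leq)$. For (I1): writing $\Lambda_{x}^{\unlhd}$ and $\Lambda_{x}^{\leq}$ for the respective up-sets $\{y\mid y\ntriangleleft x\}$ and $\{y\mid y\not< x\}$, the refinement gives $\Lambda_{x}^{\leq}\subseteq\Lambda_{x}^{\unlhd}$. By Lemma \ref{lem:IST} applied to $N=M(x)$ with the order $\unlhd$, we have $[M(x):S(x)]=1$ and $[M(x):S(y)]\neq0\Rightarrow y\unlhd x$, hence $y\leq x$; applying the converse direction of Lemma \ref{lem:IST} with the order $\leq$ then yields $M(x)\e_{x}^{\leq}\cong S(x)\e_{x}^{\leq}$, which is (I1) for $(M,\leq)$. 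For (I2): any up-set $\Gamma$ of $(\Lambda,\leq)$ containing $x$ is also an up-set of $(\Lambda,\unlhd)$ (as $\unlhd\subseteq\leq$), so the required indecomposability of $M(x)e_{\Gamma}$ is inherited directly from (I2) for $(M,\unlhd)$.

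\textbf{Part (2).} Let $\Lambda'\subseteq\Lambda$ be an up-set and set $B:=e_{\Lambda'}Ae_{\Lambda'}$, $\e':=e_{\Lambda'}$. First I note that the indecomposable $B$-modules appearing in $Me_{\Lambda'}=\bigoplus_{x\in\Lambda}M(x)e_{\Lambda'}$ are exactly $\{M(x)e_{\Lambda'}\mid x\in\Lambda'\}$: indeed, by Lemma \ref{lem:IST} (last statement) $M(x)e_{\Lambda'}=0$ whenever $x\notin\Lambda'$, since $\Lambda'$ is an up-set not containing such $x$ and $[M(x):S(y)]\neq 0\Rightarrow y\unlhd x$; and for $x\in\Lambda'$, $M(x)e_{\Lambda'}$ is indecomposable by (I2) for $(M,\unlhd)$ (taking $\Gamma=\Lambda'$, which is an up-set containing $x$). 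This gives the indecomposable decomposition $Me_{\Lambda'}=\bigoplus_{x\in\Lambda'}M(x)e_{\Lambda'}$. Now identify simple $B$-modules with $\{S(x)e_{\Lambda'}\mid x\in\Lambda'\}$ in the usual way, and fix $x\in\Lambda'$. The up-set of $(\Lambda',\unlhd|_{\Lambda'})$ attached to $x$ is $\Lambda'_{x}=\{y\in\Lambda'\mid y\ntriangleleft x\}=\Lambda_{x}\cap\Lambda'$, with idempotent $\e_{x}\e'$. Since $M(x)\e_{x}\cong S(x)\e_{x}$, Lemma \ref{lem:IST} gives $[M(x):S(x)]=1$ and $[M(x):S(y)]\neq0\Rightarrow y\unlhd x$; restricting composition factors to those indexed by $\Lambda'$ (the others being killed by $e_{\Lambda'}$, and all such $y$ with $[M(x):S(y)]\neq 0$ automatically lie in $\Lambda'$ as $y\unlhd x\in\Lambda'$ and... wait, one must instead argue: the composition factors of $M(x)e_{\Lambda'}$ as a $B$-module are precisely $S(y)e_{\Lambda'}$ for $y\in\Lambda'$ with $[M(x):S(y)]\neq0$, counted with the same multiplicity), we get $[M(x)e_{\Lambda'}:S(x)e_{\Lambda'}]=1$ and $[M(x)e_{\Lambda'}:S(y)e_{\Lambda'}]\neq 0\Rightarrow y\unlhd x$, i.e. $y\unlhd|_{\Lambda'}x$. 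Applying the converse of Lemma \ref{lem:IST} over $B$ gives $M(x)e_{\Lambda'}\cdot(\e_{x}\e')\cong S(x)e_{\Lambda'}\cdot(\e_{x}\e')$, which is (I1) for $(Me_{\Lambda'},\unlhd|_{\Lambda'})$. For (I2): any up-set $\Gamma'$ of $(\Lambda',\unlhd|_{\Lambda'})$ containing $x$ is also an up-set of $(\Lambda,\unlhd)$, because $\Lambda'$ is an up-set; then $(M(x)e_{\Lambda'})e_{\Gamma'}=M(x)e_{\Gamma'}$ is indecomposable by (I2) for $(M,\unlhd)$. Hence $(Me_{\Lambda'},\unlhd|_{\Lambda'})$ satisfies (IST).

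\textbf{Main obstacle.} The only genuinely delicate point is the bookkeeping in Part (2) around composition factors and simple modules over the truncated algebra $B=e_{\Lambda'}Ae_{\Lambda'}$: one must be careful that the composition multiplicities $[M(x)e_{\Lambda'}:S(y)e_{\Lambda'}]$ over $B$ agree with $[M(x):S(y)]$ over $A$ for $y\in\Lambda'$, which is exactly what Lemma \ref{lem:IST} (and the structure of $\sfe$) is set up to deliver; and that every up-set of the restricted poset lifts to an up-set of the ambient poset, which holds precisely because $\Lambda'$ itself is an up-set. Everything else is a routine transfer of the characterisation in Lemma \ref{lem:IST} between the orders $\unlhd$, $\leq$, and $\unlhd|_{\Lambda'}$.
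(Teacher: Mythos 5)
Your proof is correct and follows essentially the same plan as the paper's: establish the indecomposable decomposition of the truncated module, verify (I1) by comparing the relevant up-set idempotents, and pass (I2) through the observation that up-sets of a linear extension (respectively, of $\Lambda'$) are up-sets of $(\Lambda,\unlhd)$. The only stylistic difference is that you deduce (I1) by routing through the composition-factor characterisation of Lemma \ref{lem:IST} in both directions, whereas the paper obtains it more directly from the idempotent identity $\e_{x}\e'_{x}=\e'_{x}$ (with $\e'_{x}$ the smaller idempotent) and then multiplies the isomorphism $M(x)\e_{x}\cong S(x)\e_{x}$ by $\e'_{x}$; the two routes are interchangeable. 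One small caution: in part (2) you initially write that $y\unlhd x\in\Lambda'$ forces $y\in\Lambda'$ --- this is the wrong direction for an up-set --- but you catch it yourself and the corrected statement (that the $B$-composition factors of $M(x)e_{\Lambda'}$ are exactly the $S(y)e_{\Lambda'}$ with $y\in\Lambda'$ and $[M(x):S(y)]\neq 0$, with the same multiplicity) is the right one, so the argument stands.
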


\begin{proof}
(1) We show (I1). 
For $x\in\Lambda$, let $\e'_{x}:=\sum_{x\leq y}e_{y}$. Then we have $\varepsilon_{x}\varepsilon'_{x}=\varepsilon'_{x}$. Applying $\varepsilon'_{x}$ to $M(x)\varepsilon_{x}\cong S(x)\varepsilon_{x}$ yields an isomorphism $M(x)\varepsilon'_{x}\cong S(x)\varepsilon'_{x}$. We show (I2). Let $\Gamma$ be an up-set of $\Lambda$ with respect to $\leq$. 
Then $\Gamma$ is also an up-set of $\Lambda$ with respect to $\unlhd$. Thus (I2) follows from (I2) for $(M,\unlhd)$. 
Thus $(M, \leq)$ satisfies (IST).
    
(2) We can assume that $\Lambda\neq \Lambda'$; otherwise, there is nothing to show.  Let $\varepsilon:=e_{\Lambda'}$. 
Now we decompose $M':=M\e=\bigoplus_{y\in\Lambda'} M'(y)$ as follows. By (I1) and Lemma \ref{lem:IST}, we have $M(x)\e=0$ for $x\in\Lambda\setminus\Lambda'$. 
Thus it follows from (I2) for $(M,\unlhd)$ that there are exactly $|\Lambda'|$ indecomposable direct summands of $M'$, each of which takes the form $M(y)\e$ for $y\in\Lambda'$. 
Hence $M'(y)=M(y)\e$. 

We show that (I1) is satisfied with respect to $M'=\bigoplus_{y\in\Lambda'}M'(y)$. 
For $x\in\Lambda'$, let $\varepsilon'_{x}:=\sum_{y\in\Lambda'_{x}}e_{y}$. 
Since $\Lambda'_{x}\subseteq \Lambda_{x}\subseteq\Lambda$, we have $\varepsilon_{x}\varepsilon'_{x}=\varepsilon'_{x}=\varepsilon\varepsilon'_{x}$. 
By (I1) for $(M,\unlhd)$, we obtain $S(x)\varepsilon'_{x}=(S(x)\varepsilon_{x})\varepsilon'_{x}\cong (M(x)\varepsilon_{x})\varepsilon'_{x}=(M(x)\varepsilon)\varepsilon'_{x}$ for all $x\in\Lambda'$. 
Finally, since any up-set of $\Lambda'$ is also an up-set of $\Lambda$, (I2) for $(M',\unlhd|_{\Lambda'})$ is inherited directly from $(M,\unlhd)$. 
\end{proof}

We give a sufficient criteria for $(M,\unlhd)$ to satisfy (IST). 

\begin{lemma}\label{lem:suff-i1}
Assume that $(M,\unlhd)$ satisfies (I1).
Consider the following condition. 
\begin{itemize}
\item[(R2)]  $Me_{\Lambda'}$ is a $2$-rigid $e_{\Lambda'}Ae_{\Lambda'}$-module for every up-set $\Lambda'\subseteq \Lambda$. 
\end{itemize}
If $(M,\unlhd)$ satisfies (R2), then it also satisfies (I2).
In particular, if a pair $(M,\unlhd)$ satisfies both (I1) and (R2), then it also satisfies (IST).
\end{lemma}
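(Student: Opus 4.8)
The plan is as follows. Since (I1) is assumed, and since by Definition~\ref{definiton:IST} a pair satisfies (IST) precisely when it satisfies both (I1) and (I2) with respect to the same indecomposable decomposition (the labelling being forced by (I1) via Lemma~\ref{lem:unique-label}), it suffices to prove that (R2) forces (I2): for every up-set $\Gamma\subseteq\Lambda$ and every $x\in\Gamma$, the $e_\Gamma A e_\Gamma$-module $M(x)e_\Gamma$ is indecomposable. I would prove this by induction on $|\Lambda\setminus\Gamma|$, the case $\Gamma=\Lambda$ being the given indecomposable decomposition $M=\bigoplus_x M(x)$. For the inductive step, choose $z$ maximal in $\Lambda\setminus\Gamma$; then $\Gamma^{+}:=\Gamma\cup\{z\}$ is an up-set in which $z$ is a minimal element, and $|\Lambda\setminus\Gamma^{+}|<|\Lambda\setminus\Gamma|$. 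By the inductive hypothesis $M(y)e_{\Gamma^{+}}$ is indecomposable for every $y\in\Gamma^{+}$, so $Me_{\Gamma^{+}}=\bigoplus_{y\in\Gamma^{+}}M(y)e_{\Gamma^{+}}$ is an indecomposable decomposition, with respect to which $(Me_{\Gamma^{+}},\unlhd|_{\Gamma^{+}})$ satisfies (I1) (the relation being inherited from (I1) for $(M,\unlhd)$ by exactly the computation in the proof of Lemma~\ref{lem:basic-results-ist}(2)), and $Me_{\Gamma^{+}}$ is $2$-rigid over $B:=e_{\Gamma^{+}}Ae_{\Gamma^{+}}$ by (R2). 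The step is thereby reduced to the following key assertion, applied with $(A,\Lambda)$ replaced by $(B,\Gamma^{+})$.

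\emph{Key assertion:} if $(M,\unlhd)$ satisfies (I1) with $M=\bigoplus_{x\in\Lambda}M(x)$, $M$ is $2$-rigid, and $z\in\Lambda$ is a minimal element, then $M(x)\e$ is an indecomposable $\e A\e$-module for every $x\neq z$, where $\e:=e_{\Lambda\setminus\{z\}}$. To see this, note first that $z$ minimal forces $\e_z=1$, so (I1) gives $M(z)\cong S(z)$; thus $S(z)$ is a simple direct summand of the $2$-rigid module $M$ and $\{z\}\subsetneq\Lambda$. By Lemma~\ref{lem:Ifin}(1), $\rej_{S(z)}(M)\in\calP^{\leq 1}(\Lambda\setminus\{z\})$, hence so is each summand $\rej_{S(z)}(M(x))$ by the intrinsic description in Lemma~\ref{lem:apt-pfin}; moreover $\rej_{S(z)}(M)=\bigoplus_{x\neq z}\rej_{S(z)}(M(x))$ since rejection commutes with direct sums and $\rej_{S(z)}(S(z))=0$. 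By Lemma~\ref{lem:aus-apt}(1) the functor $(-)\e$ restricts to an equivalence $\calP^{\leq 1}(\Lambda\setminus\{z\})\xrightarrow{\sim}\mod(\e A\e)$, which sends $\rej_{S(z)}(M(x))$ to $\rej_{S(z)}(M(x))\e\cong M(x)\e$; as equivalences reflect indecomposability, it is enough to show that $\rej_{S(z)}(M(x))$ is indecomposable for each $x\neq z$.

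This last point is the main obstacle, since rejection (equivalently, the truncation $(-)\e$) does not preserve indecomposability in general — this is exactly where $2$-rigidity must be used. I would argue by contradiction. Suppose $\rej_{S(z)}(M(x))=Z_{1}\oplus Z_{2}$ with $Z_{1},Z_{2}\neq 0$, and consider the canonical exact sequence $0\to\rej_{S(z)}(M(x))\to M(x)\xrightarrow{\pi}S(z)^{t}\to 0$ with $t=\dim_{\Bbbk}\Hom_{A}(M(x),S(z))$. As $M(x)$ is indecomposable and $M(x)\not\cong S(z)$ (different composition factors, by (I1) and Lemma~\ref{lem:IST}), no nonzero map $S(z)\to M(x)$ is a split monomorphism, so every such map lands in $\rej_{S(z)}(M(x))$ and hence $\pi_{*}\colon\Hom_{A}(S(z),M(x))\to\Hom_{A}(S(z),S(z)^{t})$ is the zero map. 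Together with $\Ext^{1}_{A}(S(z),M(x))=0$ (both $S(z)$ and $M(x)$ are summands of the $2$-rigid $M$), the long exact sequence of $\Hom_{A}(S(z),-)$ shows that the connecting map $\Hom_{A}(S(z),S(z)^{t})\to\Ext^{1}_{A}(S(z),\rej_{S(z)}(M(x)))$ is an isomorphism; since $\End_{A}S(z)=\Bbbk$, this says precisely that the extension class of the displayed sequence spans $\Ext^{1}_{A}(S(z),\rej_{S(z)}(M(x)))$. Therefore $M(x)$ is the universal extension of $\rej_{S(z)}(M(x))=Z_{1}\oplus Z_{2}$ by copies of $S(z)$; and since $\Ext^{1}_{A}(S(z),Z_{1}\oplus Z_{2})=\Ext^{1}_{A}(S(z),Z_{1})\oplus\Ext^{1}_{A}(S(z),Z_{2})$, this universal extension decomposes as $E_{1}\oplus E_{2}$, where $E_{i}$ is the universal extension of $Z_{i}$ by $S(z)$ and $E_{i}\supseteq Z_{i}\neq 0$. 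This contradicts the indecomposability of $M(x)$, completing the key assertion, the induction, and hence the proof of (R2)$\Rightarrow$(I2); the final ``in particular'' then follows at once from Definition~\ref{definiton:IST}.
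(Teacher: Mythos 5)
Your proof is correct, but it uses a genuinely different mechanism from the paper's in the crucial indecomposability step, so a comparison is worth recording. The overall inductive architecture is essentially equivalent: the paper inducts on $|\Lambda|$ and strips away a minimal element $y\in\Lambda\setminus\Lambda'$; you induct on $|\Lambda\setminus\Gamma|$ and adjoin a maximal element $z$ of $\Lambda\setminus\Gamma$. What differs is how $2$-rigidity is used to show that idempotent truncation preserves indecomposability of the summands $M(x)$. The paper applies $\Hom_A(-,M(x))$ to the rejection sequence, notes that $\Ext^1_A(M(x)/\rej_{S(z)}(M(x)),M(x))$ vanishes by rigidity, and thereby produces a surjective ring homomorphism $\End_A(M(x))\twoheadrightarrow\End_{e_\Gamma Ae_\Gamma}(M(x)e_\Gamma)$; indecomposability is then immediate since any nonzero quotient of a local ring is local. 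You instead apply $\Hom_A(S(z),-)$ to the same sequence, show $\pi_*=0$ (because $M(x)$ admits no split mono from $S(z)$) and $\Ext^1_A(S(z),M(x))=0$, deduce that the connecting map is an isomorphism — so that $M(x)$ is the universal extension of $\rej_{S(z)}(M(x))$ by copies of $S(z)$ — and then use additivity and uniqueness of universal extensions to show a nontrivial decomposition of $\rej_{S(z)}(M(x))$ would decompose $M(x)$. Both arguments are correct and both pass through $\calP^{\le 1}$ and the equivalence of Lemma \ref{lem:aus-apt}. The paper's route is a bit shorter and does not need $\End_A(S(z))=\Bbbk$; yours makes the role of $S(z)$ more transparent structurally (exhibiting $M(x)$ as a universal extension) at the modest cost of invoking, without explicit proof, the standard fact that universal extensions are unique up to isomorphism of the middle term and commute with finite direct sums in the base.
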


\begin{proof}
Take an indecomposable decomposition $M=\bigoplus_{x\in \Lambda}M(x)$ for which (I1) is satisfied. 
We show the claim by induction on $|\Lambda|$. 
If $|\Lambda|=1$, then the assertion holds. 
Assume $|\Lambda|\geq 2$. 
For $x\in\Lambda$, we show that $M(x)e_{\Lambda'}$ is indecomposable for all up-sets $\Lambda'\subseteq\Lambda$ containing $x$.  
If $\Lambda'=\Lambda$, then there is nothing to show. 
Assume $\Lambda'\subsetneq\Lambda$. 
Since $\Lambda'\subsetneq\Lambda$ is an up-set, there exists a minimal (with respect to $\unlhd$) element $y\in\Lambda$ such that $y\not\in\Lambda'$. 
Note that it follows from (I1) that $S(y)\cong M(y)$.

Consider the subposet $\Gamma:=\Lambda\setminus\{y\}$ of $\Lambda$.  Note that $\Lambda'$ is an up-set of $\Gamma$ by design.
Hence, to show our claim, we want to apply the induction hypothesis on $(Me_{\Gamma}, \unlhd|_{\Gamma})$.
To this end, we first need to show that that $M(z)e_{\Gamma}$ is indecomposable for all $z\in \Gamma$. 
Applying $\Hom_{A}(-,M(z))$ to $0\rightarrow \rej_{S(y)}(M(z))
\rightarrow M(z)\rightarrow M(z)/\rej_{S(y)}(M(z))\rightarrow 0$ yields an exact sequence
\begin{align}
 0\rightarrow\Hom_{A}(M(z)/\rej_{S(y)}(M(z)), M(z))\rightarrow \Hom_{A}(M(z), M(z))\rightarrow \Hom_{A}(\rej_{S(y)}(M(z)), M(z))\rightarrow0,   \notag
\end{align}
where the right exactness follows from $M(z)/\rej_{S(y)}(M(z))\in\add(S(y))=\add(M(y))\subseteq\add M$. 
By Lemma \ref{lem:Ifin}(1) and (R2), we have $\rej_{S(y)}(M(z))\in\calP^{\leq1}(\Gamma)$. 
Hence it follows from Lemma \ref{lem:apt-pfin} and $M(z)e_{\Gamma}\cong (\rej_{S(y)}(M(z)))e_{\Gamma}$ that 
\begin{align}
\Hom_{A}(\rej_{S(y)}(M(z)),M(z))\cong \Hom_{e_{\Gamma}Ae_{\Gamma}}((\rej_{S(y)}(M(z)))e_{\Gamma}, M(z)e_{\Gamma})\cong\Hom_{e_{\Gamma}Ae_{\Gamma}}(M(z)e_{\Gamma},M(z)e_{\Gamma}).    \notag
\end{align}
Thus, the previous short exact sequence induces a ring homomorphism $\End_{A}(M(z))\rightarrow\End_{e_{\Gamma}Ae_{\Gamma}}(M(z)e_{\Gamma})$ that is surjective, and so indecomposability of the $A$-module $M(z)$ implies that the $e_\Gamma Ae_\Gamma$-module $M(z)e_{\Gamma}$ is indecomposable.
Since truncation at $e_\Gamma$ preserves indecomposability of $M(z)$ for all $z\in\Gamma$, and $M(y)e_{\Gamma}=S(y)e_{\Gamma}=0$ by $\Lambda\setminus\Gamma=\{y\}$, $M':=Me_{\Gamma}$ admits an indecomposable decomposition $M'=\bigoplus_{z\in\Gamma}M'(z)$ with $M'(z)=M(z)e_\Gamma$. 

Now we show that $(M', \unlhd|_{\Gamma})$ satisfies both (I1) and (R2).
Since $[M(z)e_{\Gamma}:S(z')e_{\Gamma}]=[M(z):S(z')]$ for all $z,z'\in\Gamma$, it follows from Lemma \ref{lem:IST} that $(M', \unlhd|_{\Gamma})$ satisfies (I1) with respect to this decomposition. 
On the other hand, since it follows from (R2) for $(M,\unlhd)$ that $M' e_{\Gamma'}=(Me_{\Gamma})e_{\Gamma'}=Me_{\Gamma'}$ is $2$-rigid for each up-set $\Gamma'\subseteq\Gamma$, the pair $(M',\unlhd|_{\Gamma})$ also satisfies (R2). 

Now we can apply the induction hypothesis on $(M',\unlhd|_{\Gamma})$ and get that $(M', \unlhd|_{\Gamma})$ satisfies (I2). Thus $M'(x)=(M(x)e_{\Gamma})e_{\Gamma'}=M(x)e_{\Gamma'}$ is indecomposable for every up-set $\Gamma'\subseteq\Gamma$ containing $x$. 
In particular, as $\Lambda'$ is an up-set of $\Gamma$, $M(x)e_{\Lambda'}$ is indecomposable, as required.
\end{proof}

\subsection{Quasi-heredity of endomorphism algebras}

In this subsection, we show the endomorphism algebra of a certain module satisfying (IST) is quasi-hereditary. 

\begin{lemma}\label{lem:heredity-ideal}
Let $M$ be an $A$-module and let $S:=\bigoplus_{x\in \Gamma}S(x)$ be a semisimple direct summand of $M$ for some $\Gamma\subsetneq\Lambda$. 
Let $B:=\End_{A}(M)$ and let $\phi\in B$ be the idempotent given by composing the canonical projection $M\rightarrow S$ with the canonical inclusion $S\rightarrow M$.
Then the following statements hold.
\begin{itemize}
\item[(1)] $B\phi B$ is a heredity ideal of $B$, that is, $B\phi B$ is a projective $B$-module and   
$\phi B\phi$ is semisimple. 
\item[(2)] If $M$ is $2$-rigid, then there exists an algebra isomorphism $B/B\phi B\cong \End_{\e A\e}(M\e)$, where $\e:=e_{\Lambda\setminus\Gamma}$.
\end{itemize}
\end{lemma}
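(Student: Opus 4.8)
The plan is to study $B=\End_A(M)$ through the functor $\Hom_A(M,-)$ and, crucially, to give a hands-on description of the two-sided ideal $B\phi B$. Writing $\phi=\iota\pi$ with $\pi\colon M\twoheadrightarrow S$ the projection and $\iota\colon S\hookrightarrow M$ the inclusion, a generator $a\phi b$ of $B\phi B$ is the composite $M\xrightarrow{\pi b}S\xrightarrow{a\iota}M$, so a general element of $B\phi B$ is a finite sum of such composites, i.e. a map factoring through $S^n$ for some $n$, and conversely. Since every object of $\add S$ is a summand of some $S^n$, this identifies $B\phi B$ with the ideal $[\add S](M,M)$ of endomorphisms of $M$ that factor through $\add S$.

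For part (1), semisimplicity of $\phi B\phi\cong\End_A(S)\cong\prod_{x\in\Gamma}\End_A(S(x))$ is immediate, being a finite product of division rings. For projectivity of $B\phi B$ I would observe that $\tr_S(M)$ is semisimple --- it is the sum of the simple submodules of $M$ isomorphic to some $S(x)$, $x\in\Gamma$ --- so $\tr_S(M)\in\add S$, and that an endomorphism of $M$ factors through $\add S$ precisely when its image lies in $\tr_S(M)$ (corestrict in one direction; note that the image of any map $S^n\to M$ lies in $\tr_S(M)$ in the other). Thus post-composition with $\tr_S(M)\hookrightarrow M$ identifies $B\phi B$ with $\Hom_A(M,\tr_S(M))$, which lies in $\add\Hom_A(M,S)=\add(\phi B)$ and is therefore projective over $B$. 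By the characterisation of heredity ideals recalled just after the definition of heredity ideal (cf. \cite[Corollary 5.3]{APT92}), $B\phi B$ is a heredity ideal.

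For part (2), the functor $\sfe=(-)\e$ annihilates $\add S$ (as $S\e=0$), hence kills $B\phi B$ and induces a $\Bbbk$-algebra homomorphism $\overline{\Phi}\colon B/B\phi B\to\End_{\e A\e}(M\e)$, $\overline{f}\mapsto f\e$, which I must show is bijective when $M$ is $2$-rigid. Apply $\Hom_A(-,M)$ to the canonical sequence $0\to\rej_S(M)\xrightarrow{\kappa}M\xrightarrow{p}M/\rej_S(M)\to0$; since $M/\rej_S(M)$ embeds in a direct sum of copies of $S$, it lies in $\add S\subseteq\add M$, so $\Ext_A^1(M/\rej_S(M),M)=0$ by $1$-rigidity, and hence $\kappa^{\ast}\colon\End_A(M)\to\Hom_A(\rej_S(M),M)$ is surjective with kernel the endomorphisms factoring through $p$. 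That kernel equals $B\phi B$: any map factoring through $M/\rej_S(M)\in\add S$ factors through $\add S$, and conversely any $\alpha\colon M\to S'$ with $S'\in\add S$ vanishes on $\rej_S(M)$ and so factors through $p$. On the other hand, $2$-rigidity gives $\rej_S(M)\in\calP^{\leq 1}(\Lambda\setminus\Gamma)$ by Lemma \ref{lem:Ifin}(1), so Lemma \ref{lem:apt-pfin} furnishes an isomorphism $\Hom_A(\rej_S(M),M)\cong\Hom_{\e A\e}(\rej_S(M)\e,M\e)=\End_{\e A\e}(M\e)$, namely $g\mapsto g\e$, where we use $\rej_S(M)\e\cong M\e$ via $\kappa\e$. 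The composite of $\kappa^{\ast}$ with this isomorphism sends $f$ to $(f\kappa)\e=(f\e)(\kappa\e)$, which under the identification $\rej_S(M)\e\cong M\e$ is exactly $f\e$; so it equals $\Phi\colon f\mapsto f\e$. Therefore $\Phi$ is surjective with kernel $B\phi B$, and $\overline{\Phi}$ is an isomorphism.

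The step I expect to be the crux is the bookkeeping identifying $B\phi B$ at once as the endomorphisms factoring through $\add S$, as those with image in $\tr_S(M)$, and as $\ker\kappa^{\ast}$, together with checking that the isomorphism produced in part (2) is genuinely the algebra map induced by $\sfe$ and not merely a $\Bbbk$-linear one; the rest is a direct application of the APT-type results recalled in Section \ref{sec:prelim}.
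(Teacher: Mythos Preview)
Your proof is correct and follows essentially the same route as the paper. The only noteworthy difference is in part~(1): you show $B\phi B\in\add(\phi B)$ is projective as a \emph{right} $B$-module via the identification $B\phi B\cong\Hom_A(M,\tr_S(M))$, whereas the paper shows it is projective as a \emph{left} $B$-module via $B\phi B\cong\Hom_A(M/\rej_S(M),M)$; the paper explicitly remarks afterwards that the trace-based dual argument works equally well. For part~(2) your argument is the same as the paper's (both pass through $\rej_S(M)$, Lemma~\ref{lem:Ifin}(1), and Lemma~\ref{lem:apt-pfin}), with your presentation being slightly more direct in verifying that the induced isomorphism is the algebra map $f\mapsto f\e$.
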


\begin{proof}
Denote by $[S](X,Y)$ the subspace of $\Hom_{A}(X,Y)$ consisting of all homomorphisms that factor through some $S'\in \add S$.

(1) By $M/\rej_{S}(M)\in \add S\subseteq \add M$, we have $[S](M/\rej_{S}(M),M)=\Hom_{A}(M/\rej_{S}(M),M)\in \proj(B^{\op})$.
Since $[S](\rej_{S}(M),M)=0$ holds, the natural surjection $M\rightarrow M/\rej_{S}(M)$ induces an isomorphism 
\begin{align}\label{eq:[S](M,M)}
[S](M/\rej_{S}(M),M)\cong [S](M,M)=B\phi B.
\end{align}
Thus $B\phi B$ is a projective $B^{\mathrm{op}}$-module.
Since $\phi B\phi \cong \End_{A}(S)$ is a semisimple algebra, $B\phi B$ is a heredity ideal of $B$ by \cite[Statement 7]{DR89}.

(2) Assume that $M$ is $2$-rigid. By $M/\rej_{S}(M)\in \add S \subseteq\add M$ and Lemma \ref{lem:rgd-smpl}(2), we have
\begin{align}
\Hom_{A}(\rej_{S}(M),M/\rej_{S}(M))=0\ \text{and}\ \Ext_{A}^{1}(M/\rej_{S}(M),M)=0. \notag
\end{align}
Hence, applying $\Hom_{A}(-,M)$ to the exact sequence 
\begin{align}\label{eq:M rejS(M)}
0\rightarrow \rej_{S}(M)\rightarrow M\rightarrow M/\rej_{S}(M)\rightarrow 0 
\end{align}
yields a short exact sequence 
\begin{align}\label{eq:BphiB-middle-row}
0\rightarrow \Hom(M/\rej_{S}(M),M) \rightarrow \Hom_A(M,M) \rightarrow \Hom_A(\rej_{S}(M), M) \rightarrow 0. 
\end{align}
Now applying $\Hom_{A}(\rej_{S}(M),-)$ to the same exact sequence \eqref{eq:M rejS(M)} we get an isomorphism $\End_{A}(\rej_{S}(M))\cong \Hom_A(\rej_{S}(M),M)$.
In particular, the surjection in \eqref{eq:BphiB-middle-row} induces a surjective algebra homomorphism $\pi: \End_{A}(M)\rightarrow \End_{A}(\rej_{S}(M))$.

On the other hand, since $M/\rej_{S}(M)\in\add S$, the left-hand $[S]$ in the isomorphism  \eqref{eq:[S](M,M)} can be rewritten as $\Hom_A$.
The above discussion then combines to the following commutative diagram
\begin{align}
\xymatrix{
0\ar[r]&\Ker\pi \ar[r]\ar[d]^-{\cong}&\End_{A}(M)\ar@{=}[d]\ar[r]^-{\pi}&\End_{A}(\rej_{S}(M))\ar[d]^-{\cong}\ar[r]&0\\
0\ar[r]&\Hom_{A}(M/\rej_{S}(M),M)\ar[r]\ar[d]^-{\cong}& \Hom_{A}(M,M)\ar[r]\ar@{=}[d]&\Hom_{A}(\rej_{S}(M),M)\ar[r]\ar[d]^-{\cong}&0\\
0\ar[r]&[S](M,M)\ar[r]&\End_{A}(M)\ar[r]&\frac{\End_{A}(M,M)}{[S](M,M)}\ar[r]&0
}\notag
\end{align}
with exact rows.
In particular, the surjective algebra homomorphism $\pi:\End_{A}(M)\rightarrow \End_{A}(\rej_{S}(M))$ has kernel $\Ker\pi \cong [S](M,M)$.
Thus we have an algebra isomorphism 
\begin{align}
B/B\phi B=\End_{A}(M)/[S](M,M)\cong \End_{A}(\rej_{S}(M)).\notag
\end{align} 
On the other hand, by Lemma \ref{lem:Ifin}(1), $\rej_{S}(M)\in\calP^{\leq 1}(\Lambda\setminus\Gamma)$.
Hence, it follows from Lemma \ref{lem:aus-apt}(1) that $\End_{A}(\rej_{S}(M))\cong \End_{\e A \e}(\rej_{S}(M)\e)$.
The proof is complete.
\end{proof}

\begin{remark}
We proved the lemma using $\rej_{S}(M)$, but one can also show it using $M/\tr_S(M)$ in an analogous way.
\end{remark}

Our next task is to show how quasi-hereditary structures can be constructed on the endomorphism algebra of a module satisfying (IST).
For this, we prepare an elementary fact about heredity ideal that allows us to build a quasi-hereditary algebra from a smaller one.

\begin{lemma}\label{lem:qh}
Suppose that $\unlhd$ is a partial order on $\Lambda$ and $y\in \max \unlhd $ with $Ae_{y}A$ a heredity ideal of $A$.
Let $\Gamma:=\Lambda\setminus\{y\}$ and $\unlhd':=\unlhd|_{\Gamma}$. Then the following statements hold. 
\begin{itemize}
\item[(1)] $[\std(y):S(y)]=1$ and $\std(x)=\std'(x)$ for all $x\in\Gamma$, where $\std'(x)$ is the standard $A/Ae_{y}A$-module with respect to $\unlhd'$.
\item[(2)] Assume, moreover, that $\Hom_{A}(e_{x}A,e_{y}A)=\Hom_{A}(e_{y}A,e_{x}A)=0$ for all $x$ that is incomparable to $y$.
If $(A/Ae_{y}A,\unlhd')$ is quasi-hereditary, then so is $(A,\unlhd)$. 
\end{itemize}
\end{lemma}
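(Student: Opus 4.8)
The plan is to treat the two statements separately, with (1) being essentially a bookkeeping exercise about standard modules and (2) being the substantive one.

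For part (1): since $y$ is maximal, the idempotent used to define $\std(y)$ is $P := \bigoplus_{j\ntrianglelefteq y}P(j)$, but $y$ being maximal means that the only $j$ with $j\ntrianglelefteq y$ are those incomparable to $y$, hence $\std(y) = P(y)/\tr_P(P(y))$ is the largest quotient of $P(y)$ all of whose composition factors $S(j)$ satisfy $j\unlhd y$. Now $Ae_yA$ being a heredity ideal means (by the criterion recalled after Lemma \ref{lem:str-idemp}) that $e_yAe_y$ is semisimple and $Ae_yA\in\proj A$; in particular $e_yAe_y$ semisimple forces $\rad P(y)$ to have no composition factor isomorphic to $S(y)$, so $[\std(y):S(y)]=1$. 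For $x\in\Gamma$, I would compare $\tr_{\bigoplus_{j\ntrianglelefteq x}P(j)}(P(x))$ computed over $A$ with the analogous trace computed over $\bar A := A/Ae_yA$. The key observation is that $Ae_yA$ being a heredity ideal means $\bar A$ has finite global dimension shift behaviour and, more concretely, that the canonical surjection $P(x)\to \bar P(x)$ has kernel in $\add(e_yA)$; combined with the fact that $y$ is maximal (so $y\ntrianglelefteq x$ whenever $x\neq y$, unless $x \lhd y$ — but even then $y\ntrianglelefteq x$), the composition factor $S(y)$ is always rejected when forming $\std(x)$, so passing to $\bar A$ changes nothing. Hence $\std(x)=\std'(x)$.

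For part (2): the natural strategy is to verify (qh1) and (qh2) of Definition \ref{def:qha} directly for $(A,\unlhd)$. Condition (qh1), i.e. $[\std(i):S(i)]=1$ for all $i\in\Lambda$, follows for $i=y$ from part (1) and for $i\in\Gamma$ from part (1) together with the hypothesis that $(\bar A,\unlhd')$ is quasi-hereditary (which gives $[\std'(i):S(i)]=1$). For (qh2) with $i=y$: the canonical sequence $0\to K(y)\to P(y)\to\std(y)\to 0$ has $K(y)=\tr_P(P(y))$; I must show $K(y)\in\filt(\std(\rhd y))$, but $y$ is maximal so $\std(\rhd y)=\emptyset$, which forces $K(y)=0$, i.e. $P(y)=\std(y)$. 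This is exactly where $Ae_yA\in\proj A$ is used: it implies $e_yA = Ae_yA\cdot e_y$ is... more carefully, $Ae_yA\in\add(e_yA)$ by Lemma \ref{lem:proj-stridemp}, and one extracts that $P(y)=e_yA$ has all composition factors $S(j)$ with $j$ comparable to (in fact $\unlhd$) $y$ — here the hypothesis $\Hom_A(e_yA,e_xA)=0=\Hom_A(e_xA,e_yA)$ for $x$ incomparable to $y$ is what rules out incomparable composition factors appearing in $P(y)$, so that $P(y)$ itself already satisfies $[P(y):S(j)]\neq 0\Rightarrow j\unlhd y$, giving $\tr_P(P(y))=0$. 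For (qh2) with $i\in\Gamma$: I would take the canonical sequence $0\to K(i)\to P(i)\to \std(i)\to 0$ over $A$, compare it with $0\to \bar K(i)\to \bar P(i)\to \std'(i)\to 0$ over $\bar A$ using part (1) (so $\std(i)=\std'(i)$), and show $K(i)\in\filt(\std(\rhd i))$. Since $\bar A$ is quasi-hereditary, $\bar K(i)\in\filt(\std'(\rhd i))=\filt(\{\std(j)\mid j\rhd i, j\neq y\})$. The difference between $K(i)$ and $\bar K(i)$ is controlled by $e_yA$-summands, using again $Ae_yA\in\add(e_yA)$ and $P(y)=\std(y)$: concretely, the projective cover of $P(i)$ over $A$ versus $\bar A$, or rather the comparison $0\to \ker(P(i)\to\bar P(i))\to K(i)\to \bar K(i)\to 0$ where the left term lies in $\add(e_yA)=\add(\std(y))$, and $y\rhd i$ would need to be checked — but actually $y$ being maximal and the incomparability hypothesis $\Hom_A(e_iA,e_yA)=0$ forces $\ker(P(i)\to\bar P(i))=0$ when $i$ incomparable to $y$, while when $i\lhd y$ this kernel is a direct sum of copies of $P(y)=\std(y)$ with $y\rhd i$, so it contributes $\std(y)$-terms to the filtration and $K(i)\in\filt(\std(\rhd i))$ follows.

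The main obstacle I anticipate is the careful comparison in (qh2) for $i\in\Gamma$: one must pin down precisely how the projective resolution (or just the first syzygy) of $\std(i)$ over $A$ differs from that over $\bar A$, and verify that the extra terms are governed by $P(y)$ which — thanks to the incomparability hypothesis and heredity of $Ae_yA$ — equals $\std(y)$ and only appears with the correct order constraint $y\rhd i$. Everything else is either forced by maximality of $y$ (making the relevant filtration categories involving $\rhd y$ empty) or is a routine consequence of the equivalent formulations of "heredity ideal" recalled after Lemma \ref{lem:str-idemp} and Lemma \ref{lem:proj-stridemp}. I would also keep in mind that the whole setup is left-right symmetric, so if the trace-based computation becomes awkward one can dualize and argue with $\tr_S$, costandard modules, and (qh1$^{\op}$)/(qh2$^{\op}$) instead.
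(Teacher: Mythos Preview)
Your plan is correct and follows essentially the same route as the paper: compare $K(x)$ with $\bar K(x)$ via the short exact sequence $0\to e_xAe_yA\to K(x)\to \bar K(x)\to 0$, use $e_xAe_yA\in\add(e_yA)=\add(\std(y))$ from heredity, and split into the cases $x\lhd y$ versus $x$ incomparable to $y$; the paper packages this comparison into a single $3\times 3$ commutative diagram but the content is identical. One small slip: when $i$ is incomparable to $y$, the vanishing you need is $e_iAe_y\cong\Hom_A(e_yA,e_iA)=0$ (to kill $e_iAe_yA$), not $\Hom_A(e_iA,e_yA)=0$ as you wrote---though both are part of the hypothesis, so the argument goes through.
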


\begin{proof}
Let $x\in \Gamma$.
Take an exact sequence 
\begin{align}\label{eq:lemqh-exseq1}
0\rightarrow K'(x)\rightarrow e_{x}A/e_{x}Ae_{y}A\rightarrow  \std'(x)\rightarrow 0
\end{align}
in $\mod(A/Ae_{y}A)$. 
By the definition of standard modules, we have $\top K'(x)\in\add(\bigoplus_{z\ntrianglelefteq x}S(z))$. 
Similarly, we have an exact sequence 
\begin{align}\label{eq:lemqh-exseq2}
0\rightarrow K(x)\rightarrow e_{x}A\rightarrow  \std(x)\rightarrow 0
\end{align}
in $\mod A$ such that $\top K(x)\in\add(\bigoplus_{z\ntrianglelefteq x}S(z))$. By $\Hom_{A}(K(x),\std'(x))=0$, exact sequences \eqref{eq:lemqh-exseq1} and \eqref{eq:lemqh-exseq2} yield the following commutative diagram with exact rows and columns:
\begin{align}
\xymatrix{
&0\ar[d]&0\ar[d]&0\ar[d]\\
&K \ar[r]\ar[d]&e_{x}Ae_{y}A\ar[r] \ar[d] &K'\ar[d] &\\
0 \ar[r] &K(x) \ar[r]\ar[d]& e_{x}A\ar[r]\ar[d]&\std(x)\ar[r]\ar[d]&0&\\
0 \ar[r] &K'(x)\ar[d]^-{f}\ar[r] &e_{x}A/e_{x}Ae_{y}A \ar[r]\ar[d]&\std'(x) \ar[d]\ar[r]&0.&\\
&C&0&0
}\notag
\end{align}
Since $Ae_{y}A$ is a heredity ideal, it follows from Lemma \ref{lem:proj-stridemp}(1)$\Rightarrow$(2) that $Ae_{y}A\in\add(e_{y}A)$ holds.
In particular, $e_{x}Ae_{y}A\in \add(e_{y}A)$.
Since $y$ is maximal in $(\Lambda,\unlhd)$, we obtain $\Hom_{A}(e_{y}A,\std(x))=0$. 
This implies that $\Hom_{A}(e_{y}A, K')=0$, and hence $\Hom_{A}(e_{x}Ae_{y}A, K')=0$.
Thus we have $K\cong e_{x}Ae_{y}A$ and $K'\cong C$ by the snake lemma.

(1) These are elementary properties of a heredity ideal; we include the proof for completeness.   
First, we show that $[\std(y):S(y)]=1$. Since $Ae_{y}A$ is a heredity ideal and $e_{y}A$ is indecomposable, $\End_{A}(e_{y}A)$ is semisimple and local. 
Hence $\Hom_{A}(e_{y}A,\std(y))\neq 0$ and $\End_{A}(e_{y}A)=\Bbbk$ implies that 
\begin{align}
0\neq [\std(y):S(y)]=\dim_{\Bbbk}\Hom_{A}(e_{y}A,\std(y))\leq \dim_{\Bbbk}\End_{A}(e_{y}A)=1.\notag 
\end{align}
Thus $[\std(y):S(y)]=1$ holds. 

Next, we show that $\std(x)=\std'(x)$ for each $x\in\Gamma$. 
Suppose to the contrary that $K'\neq 0$.
Since $f$ is surjective and $\top K'(x)\in\add(\bigoplus_{z\ntrianglelefteq x}S(z))$, there is some $z\in \{w\in\Gamma\mid w\ntrianglelefteq x\}$ such that $[K':S(z)]=[C:S(z)]\neq 0$.
This contradicts the fact that $K'$ is a submodule of $\std(x)=P(x)/\mathrm{Tr}_{\bigoplus_{w\ntrianglelefteq x} P(w)}(P(x))$.
Thus we have $K'=0$, which means that $\std'(x)=\std(x)$. 

(2) Assume that $(A/Ae_{y}A,\unlhd')$ is quasi-hereditary.
We check the defining conditions (qh1) and (qh2) in Definition \ref{def:qha} for $(A,\unlhd)$. 
Since the condition (qh1) follows from (1), we show the condition (qh2), that is, $K(x)\in \filt(\std(x\lhd))$.
First, we show $\std(y)=e_{y}A$. 
Since $y$ is maximal in $(\Lambda,\unlhd)$, the trace defining $\std(x)$ involves only $P(x)$ for $x$ incomparable to $y$. 
It follows from the assumption that $[e_yA:S(x)]=\dim_{\Bbbk}\Hom_{A}(e_{x}A,e_{y}A)=0$ for all $x$ that is incomparable to $y$. Thus we have $\std(y)=e_{y}A$.

It remains to show $K(x)\in \filt(\std(x\lhd))$.
In the proof of (1), we have $C\cong K'=0$, and hence a short exact sequence
\begin{align}
0\rightarrow K\rightarrow K(x)\rightarrow K'(x)\rightarrow 0 \notag
\end{align}
with $K\cong e_{x}Ae_{y}A\in \add(e_{y}A)=\add(\std(y))$.
By the assumption and (1), we have
\begin{align}
K'(x)\in \filt(\std'(x\lhd')) = \filt(\std(x\lhd'))\subseteq\filt(\std(x\lhd)).\notag
\end{align}
Thus we obtain $K(x)\in \filt(\{\std(y), \std(z)\mid x\lhd z\})$.
If $x\lhd y$, then $K(x)\in \filt(\std(x\lhd))$.
Assume that $x \ntriangleleft y$, that is, $x$ and $y$ are incomparable with respect to $\unlhd$. 
By $e_{x}Ae_{y}=\Hom_{A}(e_{y}A,e_{x}A)=0$, we obtain $K\cong e_{x}Ae_{y}A=0$. 
Thus we have $K(x)\cong K'(x)\in \filt(\std(x\lhd))$.
The proof is complete.  
\end{proof}

\begin{proposition}\label{prop:end-quasi-hered}
Assume that $(M,\unlhd)$ satisfies (I1) and (R2).   
Then $(\End_{A}(M),\unlhd^{\op})$ is quasi-hereditary.
\end{proposition}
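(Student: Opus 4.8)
The plan is to argue by induction on $|\Lambda|$, peeling off one minimal vertex of $(\Lambda,\unlhd)$ at a time: the simple direct summand of $M$ sitting at such a vertex produces a heredity ideal of $B:=\End_A(M)$ whose quotient is again the endomorphism algebra of a pair satisfying (I1) and (R2), after which Lemma~\ref{lem:qh} lifts quasi-heredity back up. To set up, I would first invoke Lemma~\ref{lem:suff-i1} to upgrade the hypotheses to the full condition (IST); thus $M$ is basic, with a well-defined indecomposable decomposition $M=\bigoplus_{x\in\Lambda}M(x)$ satisfying (I1) and (I2), and $B$ is basic with primitive orthogonal idempotents $\{\bar e_x\}_{x\in\Lambda}$, where $\bar e_x$ projects onto $M(x)$. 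The base case $|\Lambda|=1$ is immediate: then $\e_x=1$, so $M\cong S(x)$ by (I1) and $B\cong\Bbbk$.

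For the inductive step with $|\Lambda|\ge 2$, I would fix a minimal element $y$ of $(\Lambda,\unlhd)$, equivalently a maximal element of $(\Lambda,\unlhd^{\op})$. Since $\Lambda_y=\Lambda$ and hence $\e_y=1$, condition (I1) gives $M(y)\cong S(y)$, so $M$ has the semisimple direct summand $S:=S(y)$ indexed by $\{y\}\subsetneq\Lambda$, with associated idempotent $\phi=\bar e_y\in B$; put $\e:=e_{\Lambda\setminus\{y\}}$. Then Lemma~\ref{lem:heredity-ideal}(1) says $B\bar e_yB$ is a heredity ideal of $B$, and---using that $M$ is $2$-rigid, which is (R2) for the up-set $\Lambda$ itself---Lemma~\ref{lem:heredity-ideal}(2) gives an algebra isomorphism $B/B\bar e_yB\cong\End_{\e A\e}(M\e)$. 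Tracing the proof of that lemma, which factors through $\End_A(\rej_{S}(M))$ and the identification $\rej_{S}(M)\e\cong M\e$, one sees that this isomorphism carries $\bar e_z+B\bar e_yB$ to the projection of $M\e$ onto $M(z)\e$ for every $z\neq y$, so it is compatible with the vertex labelling by $\Lambda\setminus\{y\}$.

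Next I would verify that the truncated pair $(M\e,\unlhd|_{\Lambda\setminus\{y\}})$ again satisfies (I1) and (R2), so that the inductive hypothesis applies. Since $\Lambda\setminus\{y\}$ is an up-set of $\Lambda$, Lemma~\ref{lem:basic-results-ist}(2) shows $(M\e,\unlhd|_{\Lambda\setminus\{y\}})$ satisfies (IST), in particular (I1); and every up-set of $\Lambda\setminus\{y\}$ is an up-set of $\Lambda$ on which idempotent truncation agrees, so (R2) for $(M,\unlhd)$ restricts to (R2) for $(M\e,\unlhd|_{\Lambda\setminus\{y\}})$. By the inductive hypothesis, $(\End_{\e A\e}(M\e),(\unlhd|_{\Lambda\setminus\{y\}})^{\op})$ is quasi-hereditary, and therefore, via the label-preserving isomorphism above, so is $(B/B\bar e_yB,(\unlhd^{\op})|_{\Lambda\setminus\{y\}})$.

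It remains to apply Lemma~\ref{lem:qh}(2) to the algebra $B$, the partial order $\unlhd^{\op}$, the maximal element $y$, and the heredity ideal $B\bar e_yB$: the only outstanding hypothesis is that $\Hom_B(\bar e_xB,\bar e_yB)=\Hom_B(\bar e_yB,\bar e_xB)=0$ for every $x$ incomparable to $y$. Under $B=\End_A(M)$ these are, in some order, the Hom-spaces $\Hom_A(M(x),S(y))$ and $\Hom_A(S(y),M(x))$, and either being nonzero would force $[M(x):S(y)]\neq 0$; but then Lemma~\ref{lem:IST} applied to (I1) for $M(x)$ would give $y\unlhd x$, contradicting incomparability of $x$ and $y$. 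Hence Lemma~\ref{lem:qh}(2) yields that $(\End_A(M),\unlhd^{\op})$ is quasi-hereditary, completing the induction. The substance is carried by Lemmas~\ref{lem:heredity-ideal}, \ref{lem:qh}, \ref{lem:suff-i1} and \ref{lem:basic-results-ist}; I expect the points needing real care to be the label-compatibility of the isomorphism $B/B\bar e_yB\cong\End_{\e A\e}(M\e)$---so that the transported quasi-hereditary structure is indexed by exactly the restricted order---and the short Hom-vanishing step, which is the one place where condition (I1) genuinely enters.
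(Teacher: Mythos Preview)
Your proof is correct and follows essentially the same approach as the paper: induction on $|\Lambda|$, peeling off a minimal $y$ via Lemma~\ref{lem:heredity-ideal} to obtain a heredity ideal $B\bar e_yB$ with quotient $\End_{\e A\e}(M\e)$, checking that the truncated pair again satisfies (I1) and (R2) via Lemma~\ref{lem:basic-results-ist}(2) and the up-set argument, and then lifting quasi-heredity back using Lemma~\ref{lem:qh}(2) together with the Hom-vanishing from Lemma~\ref{lem:IST}. Your explicit attention to the label-compatibility of the isomorphism $B/B\bar e_yB\cong\End_{\e A\e}(M\e)$ is a point the paper leaves implicit, but otherwise the arguments coincide.
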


\begin{proof}
We show the claim by induction on $|\Lambda|$. If $|\Lambda|=1$, then the assertion clearly holds.
Assume $|\Lambda|\geq2$. 
By Lemma \ref{lem:suff-i1}, $(M,\unlhd)$ satisfies (IST). 
Let $y$ be a minimal element of $(\Lambda,\unlhd)$. 
For simplicity, let $\Gamma:=\Lambda\setminus\{y\}$, $\unlhd':=\unlhd|_{\Gamma}$ and $\varepsilon:=e_{\Gamma}$. 
Since $\Gamma\subset\Lambda$ is an up-set, it follows from Lemma \ref{lem:basic-results-ist}(2) that $(M\varepsilon,\unlhd')$ satisfies (IST). 
Let $\Gamma'\subseteq\Gamma$ be an up-set. 
Since $\Gamma'$ is also an up-set of $\Lambda$, the module $(M\varepsilon)e_{\Gamma'}=Me_{\Gamma'}$ is $2$-rigid by (R2) for $(M,\unlhd)$. 
Hence the induction hypothesis on $(M\varepsilon, \unlhd')$ implies that $(\End_{\varepsilon A\varepsilon}(M\varepsilon),\unlhd'^{\op})$ is a quasi-hereditary algebra.

Let $B:=\End_{A}(M)$. 
Using Lemma \ref{lem:qh}(2), we show that $(B,\unlhd^{\op})$ is quasi-hereditary. 
Take an indecomposable decomposition $M=\bigoplus_{x\in \Lambda}M(x)$ satisfying (I1). 
For each $x\in\Lambda$, let $\phi_{x}:M\rightarrow M(x)\rightarrow M$ be the composition of the canonical projection and inclusion. 
Since 
$M(y)=S(y)$ by minimality of $y$ in $(\Lambda,\unlhd)$ and $M$ is $2$-rigid by (R2), it follows from Lemma \ref{lem:heredity-ideal} that $B\phi_{y}B$ is a heredity ideal of $B$ and $B/B\phi_{y}B\cong \End_{\varepsilon A\varepsilon}(M\varepsilon)$. 
Thus $(B/B\phi_{y}B, \unlhd'^{\op})$ is quasi-hereditary.
Since $y$ is maximal in $(\Lambda,\unlhd^{\op})$, it is enough to show $\Hom_{B}(\phi_{x}B,\phi_{y}B)=\Hom_{B}(\phi_{y}B,\phi_{x}B)=0$ for $x$ that is incomparable to $y$. 
By Lemma \ref{lem:IST}, $\Hom_{A}(M(x),S(y))=\Hom_{A}(S(y),M(x))=0$. Since 
$M(y)=S(y)$, we have
\begin{align}
\Hom_{B}(\phi_{x}B,\phi_{y}B)\cong
\phi_{y}B\phi_{x}\cong \Hom_{A}(M(x),M(y))=\Hom_{A}(M(x),S(y))=0, \notag\\
\Hom_{B}(\phi_{y}B,\phi_{x}B)\cong
\phi_{x}B\phi_{y}\cong \Hom_{A}(M(y),M(x))=\Hom_{A}(S(y),M(x))=0.\notag
\end{align} 
Therefore $(B,\unlhd^{\op})$ is quasi-hereditary by Lemma \ref{lem:qh}(2). 
\end{proof}

We provide an example of Proposition \ref{prop:end-quasi-hered}. 

\begin{example}
Let $A=\Bbbk Q/I$ be the bound quiver algebra given by
\begin{align}
Q:\;\; \xymatrix{1\ar@<0.7mm>[r]^-{a}&2\ar@<0.7mm>[l]^-{b}\ar[r]^-{c}&3\ar[r]^-{d}&4}\;\; \text{ and } \;\;I=\langle ac, ba, cd \rangle.\notag
\end{align}
Let $(\Lambda,\unlhd)=\{1\lhd4\lhd3\lhd2\}$ and $M=\bigoplus_{x\in\Lambda}M(x)$, where 
\begin{align}
M(1)=\sm{1},\ M(2)=\sm{&2&\\1&&3},\ M(3)=\sm{3\\4},\
M(4)=\sm{4}.\notag 
\end{align}
We can check that $(M,\unlhd)$ satisfies (I1) and (R2), but $M$ is not $3$-rigid by $\Ext_{A}^{3}(M(1),M(4))\neq 0$. 
Then $(\End_{A}(M), \unlhd^{\op})$ is a quasi-hereditary algebra. 
\end{example}

\subsection{IS-tilting modules and their endomorphism algebras}

In this subsection, we introduce IS-tilting modules, which are a central notion of this paper. 

\begin{definition}\label{def:IS-tilt}
We call $T$ an \defn{IS-tilting module} if $T$ is a tilting $A$-module and there is some poset $(\Lambda,\unlhd)$ such that $(T,\unlhd)$ satisfies (IST).  In this case, we write $(T,\unlhd)$ to specify the associated partial order.
\end{definition}

\begin{lemma}\label{lem:truncation-tilting2}
Suppose that $(T,\unlhd)$ is a tilting $A$-module that satisfies (I1).  Then $Te_\Gamma$ is a tilting $e_\Gamma Ae_\Gamma$-module for every up-set $\Gamma\subseteq\Lambda$.
\end{lemma}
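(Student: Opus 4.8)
The plan is to proceed by induction on $|\Lambda|$, using the already-established machinery for idempotent truncation. If $|\Lambda|=1$ or $\Gamma=\Lambda$ there is nothing to prove, so assume $\Gamma\subsetneq\Lambda$ and pick a minimal element $y\in\Lambda$ with $y\notin\Gamma$; such a $y$ exists because $\Gamma$ is an up-set. Set $\Gamma_0:=\Lambda\setminus\{y\}$, which is an up-set of $\Lambda$ containing $\Gamma$, and put $\e:=e_{\Gamma_0}$. By (I1) applied to the minimal element $y$, the corresponding summand $T(y)$ satisfies $T(y)\e_y\cong S(y)\e_y$; since $y$ is minimal, $\e_y=e_\Lambda$ up to the idempotent at $y$, forcing $T(y)\cong S(y)$. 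Thus $S(y)$ is a simple direct summand of $T$, and applying Proposition~\ref{prop:truncation-tilting} with the semisimple summand $S=S(y)$ gives that $T\e$ is a tilting $\e A\e$-module.

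Next I would transfer the hypotheses to the truncated situation. The pair $(T\e, \unlhd|_{\Gamma_0})$ should again be a tilting module satisfying (I1): the tilting part is what we just proved, and (I1) for $(T\e,\unlhd|_{\Gamma_0})$ follows from Lemma~\ref{lem:IST} together with the identity $[T(x)\e:S(x')\e]=[T(x):S(x')]$ for $x,x'\in\Gamma_0$, exactly as in the proof of Lemma~\ref{lem:basic-results-ist}(2) or Lemma~\ref{lem:suff-i1}. One subtlety to check carefully is that $T\e$ genuinely has $|\Gamma_0|$ indecomposable summands indexed as $T(x)\e$ for $x\in\Gamma_0$ — here we use $T(y)\e=S(y)\e=0$ (Lemma~\ref{lem:IST}) to see that the summand at $y$ disappears, and the fact that $|T\e|=|\e A\e|=|\Gamma_0|$ (Proposition~\ref{prop:property-tilting}(2)) to conclude that no summand splits further, so the labelling is well-defined. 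Then by the induction hypothesis applied to $(T\e,\unlhd|_{\Gamma_0})$ and the up-set $\Gamma\subseteq\Gamma_0$, we get that $(T\e)e_\Gamma$ is a tilting $e_\Gamma Ae_\Gamma$-module. Since $(T\e)e_\Gamma=Te_\Gamma$ and $e_\Gamma(\e A\e)e_\Gamma=e_\Gamma Ae_\Gamma$, this is exactly the claim.

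The main obstacle I anticipate is the bookkeeping needed to know that (I1) survives the single truncation at $\e=e_{\Gamma_0}$ with the \emph{correct} indexing — i.e. that after truncating we still have an indecomposable decomposition compatible with the restricted order, so that the induction hypothesis is applicable. This requires invoking the indecomposability-preservation argument (as in Lemma~\ref{lem:suff-i1}, via $\calP^{\leq 1}$ and Lemma~\ref{lem:apt-pfin}), or alternatively deducing indecomposability of the truncated summands directly from the fact that $T\e$ is tilting hence basic with $|\Gamma_0|$ summands. A cleaner route, which I would probably prefer in the write-up, is to avoid re-deriving indecomposability by hand: note that $(T,\unlhd)$ tilting and satisfying (I1) already implies, via Proposition~\ref{prop:truncation-tilting} applied iteratively along a linear extension of $\unlhd$ restricted appropriately, that each intermediate truncation is tilting, and then use Lemma~\ref{lem:IST} purely at the level of composition multiplicities to recover (I1). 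Everything else is a routine unwinding of adjunctions and the identity $\e A\e$-truncation composed with $e_\Gamma$-truncation equals $e_\Gamma$-truncation.
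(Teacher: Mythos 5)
Your overall strategy is correct and your ``cleaner route'' in the final paragraph is essentially the paper's proof, but your primary argument has a genuine gap that you gesture at without fully closing.

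The issue is in the step where you try to establish (I1) for $(T\e,\unlhd|_{\Gamma_0})$ in order to apply the induction hypothesis. Condition (I1) requires the summands $T(x)\e$ ($x\in\Gamma_0$) to be \emph{indecomposable}. Your proposed argument -- that $T\e = \bigoplus_{x\in\Gamma_0} T(x)\e$ has $|\Gamma_0|$ nonzero pieces while Proposition~\ref{prop:property-tilting}(2) gives $|T\e|=|\Gamma_0|$ -- does not immediately close this gap, because $|T\e|$ counts non-isomorphic indecomposable summands. If some $T(x_0)\e$ split into two pieces, one of which were isomorphic to a summand of some other $T(x_1)\e$, one could still have $|T\e|=|\Gamma_0|$; to exclude this you would also need $T\e$ to be basic, which has not been established. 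In the paper's development, indecomposability of truncated summands (condition (I2)) is obtained from (R2) via Lemma~\ref{lem:suff-i1}, and (R2) in turn is what Lemma~\ref{lem:truncation-tilting2} is needed to supply -- so re-deriving (I1)/(I2) at each stage threatens circularity.

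The paper sidesteps this entirely: it never re-establishes (I1) for the truncation. Instead it inducts along the filtration $\Gamma=\Gamma_k\subsetneq\cdots\subsetneq\Gamma_0=\Lambda$ (removing one minimal element at a time), and at each step it only needs to exhibit \emph{one} simple direct summand of the already-tilting module $T e_{\Gamma_\ell}$, namely $T(x_{\ell+1})e_{\Gamma_\ell}\cong S(x_{\ell+1})e_{\Gamma_\ell}$; this isomorphism comes directly from Lemma~\ref{lem:IST} applied to the original $(T,\unlhd)$ together with $\Gamma_\ell\subseteq\Lambda_{x_{\ell+1}}$. Then Proposition~\ref{prop:truncation-tilting} applies immediately. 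Your ``cleaner route'' captures this idea, but even there you speak of ``recover(ing) (I1)'' at each stage, which is not required and would reintroduce the indecomposability question you were trying to avoid. If you restructure your write-up around the single-simple-summand observation rather than around re-establishing (I1), the proof goes through.
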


\begin{proof}
We have a filtration
\begin{align}
\Gamma = \Gamma_{k} \subsetneq \Gamma_{k-1} \subsetneq \cdots \subsetneq \Gamma_{1} \subsetneq \Gamma_{0} = \Lambda \notag
\end{align}
with $\Gamma_{i-1}\setminus \Gamma_{i} = \{x_{i}\}$, $x_{i} \in \min\unlhd|_{\Gamma_{i-1}}$ and $k=|\Lambda\setminus\Gamma|$.

If $\Gamma=\Gamma_0=\Lambda$, then the assertion clearly holds as $e_{\Gamma}=1_{A}$.
Suppose that the claim holds for every up-set $\Gamma=\Gamma_\ell$ with $\ell<k$. 
By the induction hypothesis, $T\e$ is a tilting $\e A \e$-module for $\e:=e_{\Gamma_{\ell}}$.
Since $\Gamma_{\ell}$ is an up-set in $\Lambda$ with minimal element $x_{\ell+1}$, we have $\Gamma_{\ell}\subseteq \Lambda_{x_{\ell+1}}$. 
By Lemma \ref{lem:IST}, $[T(x_{\ell+1}):S(y)]\neq 0$ implies $y\unlhd x_{\ell+1}$.
Hence, combining with $\Gamma_{\ell}\subseteq \Lambda_{x_{\ell+1}}$, we get $T(x_{\ell+1}) \e \cong T(x_{\ell+1}) \e_{x_{\ell+1}} \cong S(x_{\ell+1}) \e_{x_{\ell+1}} = S(x_{\ell+1}) \e$. Thus the tilting $\e A\e$-module $T\e$ has a simple direct summand $S(x_{\ell+1})\e$. 
As $e_{\Gamma_{\ell+1}}=\e-e_{x_{\ell+1}}$, it follows from Proposition \ref{prop:truncation-tilting} that $Te_{\Gamma_{\ell+1}} = T\e e_{\Gamma_{\ell+1}}$ is a tilting $e_{\Gamma_{\ell+1}} A e_{\Gamma_{\ell+1}}$-module.
\end{proof}

In general, the condition (I2) can be difficult to check on arbitrary modules.
For tilting modules, it turns out that this can be simplified.

\begin{proposition}\label{prop:IS-tilt}
Let $T$ be a tilting $A$-module and let $(\Lambda,\unlhd)$ be a poset. 
Then the following statements are equivalent. 
\begin{itemize}
\item[(1)] The pair $(T,\unlhd)$ satisfies (I1). 
\item[(2)] The pair $(T,\unlhd)$ satisfies (I1) and (R2).
\item[(3)] The pair $(T,\unlhd)$ satisfies (IST), i.e. $T$ is IS-tilting.
\end{itemize}
Consequently, in such a case, the following statements hold. 
\begin{itemize}
\item[(a)] $(Te_\Gamma, \unlhd|_{\Gamma})$ is IS-tilting for every up-set $\Gamma\subseteq \Lambda$.
\item[(b)] $(\End_{A}(T), \unlhd^{\op})$ is a quasi-hereditary algebra.
\end{itemize}
\end{proposition}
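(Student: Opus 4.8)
The plan is to prove the equivalence $(1)\Leftrightarrow(2)\Leftrightarrow(3)$ and then deduce the consequences $(a)$, $(b)$ from earlier results. The implications $(3)\Rightarrow(2)\Rightarrow(1)$ are trivial, and $(2)\Rightarrow(3)$ is exactly Lemma \ref{lem:suff-i1}, so the whole content is the implication $(1)\Rightarrow(2)$: namely, if $T$ is tilting and $(T,\unlhd)$ satisfies (I1), then $Te_{\Lambda'}$ is $2$-rigid over $e_{\Lambda'}Ae_{\Lambda'}$ for every up-set $\Lambda'\subseteq\Lambda$. This is where all the work lies, and the key tool is Lemma \ref{lem:truncation-tilting2}: since $(T,\unlhd)$ is tilting and satisfies (I1), the truncation $Te_{\Lambda'}$ is a \emph{tilting} $e_{\Lambda'}Ae_{\Lambda'}$-module. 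A tilting module over any algebra satisfies (T2), i.e. $\Ext^{>0}=0$, hence in particular it is $2$-rigid (indeed $\infty$-rigid). So (R2) is immediate.

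Concretely, the proof I would write is: $(3)\Rightarrow(2)$ and $(2)\Rightarrow(1)$ are clear from the definitions. For $(1)\Rightarrow(2)$: assume $(T,\unlhd)$ satisfies (I1) and let $\Lambda'\subseteq\Lambda$ be any up-set. By Lemma \ref{lem:truncation-tilting2}, $Te_{\Lambda'}$ is a tilting $e_{\Lambda'}Ae_{\Lambda'}$-module; in particular it satisfies (T2), so $\Ext^{j}_{e_{\Lambda'}Ae_{\Lambda'}}(Te_{\Lambda'},Te_{\Lambda'})=0$ for all $j\geq 1$, and thus $Te_{\Lambda'}$ is $2$-rigid. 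This establishes (R2), so $(T,\unlhd)$ satisfies both (I1) and (R2), which is $(2)$. Finally $(2)\Rightarrow(3)$ is the content of Lemma \ref{lem:suff-i1}.

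For the ``consequently'' part: statement $(a)$ follows by applying Lemma \ref{lem:basic-results-ist}(2) to see $(Te_{\Gamma},\unlhd|_{\Gamma})$ satisfies (IST) once we know $T$ does; alternatively, and perhaps more directly, Lemma \ref{lem:truncation-tilting2} gives that $Te_{\Gamma}$ is tilting over $e_{\Gamma}Ae_{\Gamma}$, and (I1) for $(Te_{\Gamma},\unlhd|_{\Gamma})$ is inherited from (I1) for $(T,\unlhd)$ via the composition-factor criterion of Lemma \ref{lem:IST} (using $[T(x)e_{\Gamma}:S(x')e_{\Gamma}]=[T(x):S(x')]$ for $x,x'\in\Gamma$), so by the equivalence $(1)\Leftrightarrow(3)$ already proven, $(Te_{\Gamma},\unlhd|_{\Gamma})$ is IS-tilting. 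Statement $(b)$ is then Proposition \ref{prop:end-quasi-hered} applied to $(T,\unlhd)$, which satisfies (I1) and (R2) by the equivalence just proven, giving that $(\End_A(T),\unlhd^{\op})$ is quasi-hereditary.

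I expect there to be essentially no obstacle here: the proposition is a convenient repackaging of Lemmas \ref{lem:truncation-tilting2} and \ref{lem:suff-i1} together with the trivial observation that tilting modules are rigid in all positive degrees. The only point requiring a moment's care is making sure that in the proof of $(a)$ one correctly checks (I1) descends to the up-set $\Gamma$ — but this is already handled in the proof of Lemma \ref{lem:basic-results-ist}(2), so citing that suffices. No new ideas are needed; the proposition is a corollary-style consolidation.
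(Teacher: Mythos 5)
Your argument follows essentially the same route as the paper: $(1)\Rightarrow(2)$ via Lemma \ref{lem:truncation-tilting2} (the truncation $Te_{\Lambda'}$ is tilting, hence rigid in all positive degrees), $(2)\Rightarrow(3)$ via Lemma \ref{lem:suff-i1}, and the consequences $(a)$, $(b)$ via Lemma \ref{lem:basic-results-ist}(2), Lemma \ref{lem:truncation-tilting2}, and Proposition \ref{prop:end-quasi-hered}. One small logical slip: you assert that $(3)\Rightarrow(2)$ is ``clear from the definitions,'' but (IST) means (I1) together with (I2), not (I1) together with (R2), and the implication (I2)$\Rightarrow$(R2) is not a priori immediate. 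This does not break the argument, since the truly definitional implication is $(3)\Rightarrow(1)$, and the cycle $(1)\Rightarrow(2)\Rightarrow(3)\Rightarrow(1)$ you do establish closes the equivalence -- which is exactly the cycle the paper proves.
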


\begin{proof}
(2)$\Rightarrow$(3) follows from Lemma \ref{lem:suff-i1} and (3)$\Rightarrow$(1) is clear.  
Since it follows from Lemma \ref{lem:truncation-tilting2} that $Te_{\Gamma}$ is $2$-rigid for any up-set $\Gamma\subseteq\Lambda$, (1)$\Rightarrow$(2) holds.
The claim (a) follows from combining Lemma \ref{lem:basic-results-ist}(2) and Lemma \ref{lem:truncation-tilting2}. The claim (b) follows from Proposition \ref{prop:end-quasi-hered}.
\end{proof}

By Definition-Proposition \ref{prop:ringel}, Lemma \ref{lem:IST} and Proposition \ref{prop:IS-tilt}, characteristic tilting modules are always IS-tilting.  

Recall that a tilting $A$-module $T$ is also a tilting $\End_{A}(T)^{\op}$-module.
It turns out that the condition (IST) is also inherited when transferring to the endomorphism ring side.

\begin{proposition}\label{prop:istilt-op}
Let $(T,\unlhd)$ be an IS-tilting $A$-module and $B:=\End_{A}(T)$.
Then $({}_BT,\unlhd^{\op})$ is an IS-tilting $B^{\op}$-module.
\end{proposition}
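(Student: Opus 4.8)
The plan is to transport the IST structure on $(T,\unlhd)$ along the recollement-type equivalences for the $B^{\op}$-module ${}_BT$. Since $T$ is IS-tilting, Proposition \ref{prop:IS-tilt} tells us it suffices to verify that $({}_BT,\unlhd^{\op})$ satisfies (I1); that is, writing ${}_BT = \bigoplus_{x\in\Lambda} {}_BT(x)$ with ${}_BT(x) = \Hom_A(T(x),T)$ the indecomposable direct summand of ${}_BT$ dual to $T(x)$, we must show ${}_BT(x)\,\phi_x \cong {}_BS'(x)\,\phi_x$ in $\mod(\phi_x B\phi_x)$, where $\phi_x := \sum_{y\in\Lambda_x^{\op}} \phi_y$ is the idempotent of $B$ associated to the up-set $\Lambda_x^{\op} = \{ y \mid y\nrhd^{\op} x\} = \{ y\mid x\ntriangleleft y\}$ of $(\Lambda,\unlhd^{\op})$, and $S'(x)$ is the simple $B^{\op}$-module at $x$. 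By Lemma \ref{lem:IST}, this is equivalent to the combinatorial statement $[{}_BT(x):S'(x)]=1$ together with $[{}_BT(x):S'(y)]\ne 0 \Rightarrow x\ntriangleleft y$ (i.e.\ $y\unrhd^{\op} x$ or incomparable — precisely $y\unlhd^{\op} x$ in the poset $\unlhd^{\op}$... care is needed: the condition is $y \unlhd^{\op} x$, meaning $x\unlhd y$).

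First I would reduce the multiplicity $[{}_BT(x):S'(y)]$ to a $\Hom$-space over $B$. Since $S'(y)$ has projective cover $\phi_y B$, we have $[{}_BT(y') : S'(y)]$... rather, for a left $B$-module $N$ the multiplicity $[N:S'(y)] = \dim_\Bbbk \Hom_B(N, I'(y))$ where $I'(y)$ is the injective; dually one uses that ${}_BT(x) = \Hom_A(T(x),T)$ and that $\Hom_B(\Hom_A(T(x),T), \Hom_A(T(y),T)) \cong \Hom_A(T(y),T(x))$ by the tilting equivalence $\Hom_A(-,T)\colon \add T \to \add({}_BT) = \proj B^{\op}$. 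So $[{}_BT(x):S'(y)]$ can be computed as the multiplicity of $S'(y)$ in the top or socle structure; the cleanest route is: apply $\Hom_A(-,T)$, which is a duality $\add T \xrightarrow{\sim} \proj(B^{\op})$, so $T(y)\mapsto P^{B^{\op}}(y)$ and a composition series of ${}_BT(x)$ as a $B^{\op}$-module has $[{}_BT(x):S'(y)] = \dim_\Bbbk \Hom_{B^{\op}}(P^{B^{\op}}(y),{}_BT(x)) = \dim_\Bbbk \Hom_A(T(y),T(x))$... but this overcounts unless one is careful. The honest statement: $[{}_BT(x):S'(y)]$ equals $\dim_\Bbbk \mathrm{Hom}_{B}(\mathrm{proj.\,cover\ of\ }S'(y)^\vee,\dots)$; I would instead use Brauer--Humphreys-type bookkeeping together with the known quasi-hereditary structure from Proposition \ref{prop:IS-tilt}(b).

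Concretely, here is the route I expect to work. By Proposition \ref{prop:IS-tilt}(b), $(B,\unlhd^{\op})$ is quasi-hereditary, hence so is $(B^{\op},\unlhd^{\op})$ by left-right symmetry (\cite{DR89}, Statement 9). One shows that ${}_BT = \Hom_A(T,{}_AT)$ is, up to relabelling, the characteristic tilting module of $(B^{\op},\unlhd^{\op})$ — this is essentially Ringel's theorem that the characteristic tilting module of the Ringel dual is the original algebra read on the other side, combined with Definition-Proposition \ref{prop:ringel}. Then the defining property of the characteristic tilting module, namely $[{}_BT(x):S'(x)]=1$ and $[{}_BT(x):S'(y)]\ne 0 \Rightarrow y\unlhd^{\op} x$, gives exactly (I1) for $({}_BT,\unlhd^{\op})$ via Lemma \ref{lem:IST}; Proposition \ref{prop:IS-tilt} then upgrades (I1) to (IST), so $({}_BT,\unlhd^{\op})$ is IS-tilting. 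An alternative, more self-contained route avoids invoking Ringel duality: induct on $|\Lambda|$ using Proposition \ref{prop:istilt-op}'s own statement — for $y$ minimal in $(\Lambda,\unlhd)$ one has $T(y)=S(y)$ and $\phi_y$ is as in Lemma \ref{lem:heredity-ideal}, and $B\phi_y B$ is a heredity ideal with $B/B\phi_y B \cong \End_{\e A\e}(T\e)$ where $\e = e_{\Lambda\setminus\{y\}}$; by Proposition \ref{prop:IS-tilt}(a), $(T\e,\unlhd|_{\Lambda\setminus\{y\}})$ is IS-tilting, so by induction $({}_{B'}(T\e),(\unlhd|_{\Lambda\setminus\{y\}})^{\op})$ is IS-tilting over $B'^{\op}$ with $B' = \End_{\e A\e}(T\e)$. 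One then checks that the idempotent truncation $(-)\phi$ (for $\phi = \sum_{z\ne y}\phi_z$) sends ${}_BT$ to ${}_{B'}(T\e)$ compatibly with the decompositions and the partial orders, and that ${}_BT(y)\,\phi_y\cong S'(y)\,\phi_y$ directly since $T(y)=S(y)$ forces $\Hom_A(T(y),T(z)) = \Hom_A(S(y),T(z))$ to be nonzero only when $S(y)$ is a composition factor of $\soc$-related pieces — giving the required (I1) condition at $y$. Combining the two (the case of $y$ and the truncated case) yields (I1) for all of $\Lambda$, hence IST by Proposition \ref{prop:IS-tilt}.

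The main obstacle I anticipate is the bookkeeping of \emph{which} up-set idempotent on $B$ corresponds to $\e_x$ on $A$, and checking that idempotent truncation on the $B$-side is genuinely compatible with the (I1)-decomposition — i.e.\ that $\Hom_A(-,T)$ carries the filtration $\Gamma_\bullet$ of up-sets of $(\Lambda,\unlhd)$ to the matching filtration of up-sets of $(\Lambda,\unlhd^{\op})$ on the $B^{\op}$-side, and that the "iterative simple direct summand" appears on the nose after each truncation. This is where one must use the heredity-ideal description of Lemma \ref{lem:heredity-ideal}(2) carefully, matching $B/B\phi_y B\cong \End_{\e A\e}(T\e)$ with the induction hypothesis, and verifying that under $(-)\phi$ the summand ${}_BT(y)$ (corresponding to the minimal $y$) is precisely the one killed — so that the surviving summands are the ${}_{B'}(T\e)(z)$ for $z\ne y$ and no spurious decomposition occurs. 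Once this compatibility is nailed down, the rest is the routine induction sketched above.
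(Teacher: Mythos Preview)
Your decomposition of ${}_BT$ is wrong, and this is the source of all the confusion in your second paragraph: the modules $\Hom_A(T(x),T)$ are the indecomposable \emph{projective} $B^{\op}$-modules $P^{B^{\op}}(x)=B\phi_x$, not the direct summands of ${}_BT$. The correct decomposition is ${}_BT \cong \Hom_A(A,T) = \bigoplus_{x\in\Lambda}\Hom_A(P(x),T)$, indexed by the indecomposable projective $A$-modules. With this labelling ${}_BT(x):=\Hom_A(P(x),T)$ the paper's proof is a one-line computation:
\begin{align*}
[\Hom_A(P(x),T):S'(y)] &= \dim_\Bbbk \Hom_{B^{\op}}\bigl(\Hom_A(T(y),T),\,\Hom_A(P(x),T)\bigr) \\
&= \dim_\Bbbk \Hom_A(P(x),T(y)) = [T(y):S(x)],
\end{align*}
so the composition-multiplicity matrix of ${}_BT$ is the transpose of that of $T$, and (I1) for $(T,\unlhd)$ together with Lemma \ref{lem:IST} gives (I1) for $({}_BT,\unlhd^{\op})$ immediately. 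Proposition \ref{prop:IS-tilt} then finishes.

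Your fallback route (a) is circular in the paper's logical order: identifying ${}_BT$ as the characteristic tilting module of $(B^{\op},\unlhd^{\op})$ is essentially Ringel duality, but here the identification $\IStilt=\chtilt$ (Proposition \ref{prop:isttilt-chtilt}) is proved \emph{using} the present proposition via Theorem \ref{thm:qh-tilting}. Route (b) could perhaps be pushed through, but the bookkeeping obstacle you flag---matching idempotent truncation on the $B$-side with that on the $A$-side through Lemma \ref{lem:heredity-ideal}(2)---is genuine, whereas the transpose argument above sidesteps it entirely.
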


\begin{proof}
As $B^{\op}$-modules, since $T$ and $\Hom_{A}(A,T)$ are isomorphic, we have an indecomposable decomposition $T\cong \bigoplus_{x\in \Lambda}\Hom_{A}(P(x),T)$.
By \cite[Theorem 1.5]{M86}, $T$ is a tilting $B^{\op}$-module.
By Proposition \ref{prop:IS-tilt}, it is enough to show that $({}_{B}T,\unlhd^{\op})$ satisfies (I1).
Let $S'(x)$ be a simple $B^{\op}$-module which is isomorphic to $\top\Hom_{A}(T(x),T)$.
Then we have
\begin{align}
[\Hom_{A}(P(x),T):S'(y)]
&=\dim_{\Bbbk}(\Hom_{B^{\op}}(\Hom_{A}(T(y),T),\Hom_{A}(P(x),T)))\notag\\
&=\dim_{\Bbbk}(\Hom_{A}(P(x),T(y)))=[T(y):S(x)].\notag
\end{align}
Since the pair $(T_A, \unlhd)$ satisfies (I1), the equality above and Lemma \ref{lem:IST} together imply that the $B^{\op}$-module $(T,\unlhd^{\op})$ also satisfies (I1), as required.
\end{proof}

\subsection{IS-tilting modules and quasi-hereditary structures}

Since IS-tilting modules come with a chosen partial order, we are interested in comparing this with the one associated to a characteristic tilting module.
The goal of the subsection is to make clear this relationship, and thus make precise the relation between IS-tilting modules and quasi-hereditary structures.

We show that an IS-tilting module yields a quasi-hereditary structure on the original algebra. 

\begin{theorem}\label{thm:qh-tilting}
Let $(T,\unlhd)$ be an IS-tilting $A$-module.
Then $(A,\unlhd)$ is a quasi-hereditary algebra.
\end{theorem}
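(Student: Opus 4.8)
The plan is to bootstrap from the results already proved for endomorphism algebras: to realise $A$ itself (up to the usual identifications) as the endomorphism algebra of $T$ over the opposite of its own endomorphism ring, and then invoke Proposition \ref{prop:IS-tilt}(b).

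Concretely, put $B:=\End_{A}(T)$. By Proposition \ref{prop:istilt-op}, the pair $({}_{B}T,\unlhd^{\op})$ is an IS-tilting $B^{\op}$-module, with respect to the indecomposable decomposition ${}_{B}T=\bigoplus_{x\in\Lambda}\Hom_{A}(P(x),T)$, and where the simple $B^{\op}$-module indexed by $x$ is $\top\Hom_{A}(T(x),T)$. Applying Proposition \ref{prop:IS-tilt}(b) to this IS-tilting $B^{\op}$-module then shows that $\bigl(\End_{B^{\op}}({}_{B}T),(\unlhd^{\op})^{\op}\bigr)$ is a quasi-hereditary algebra, and of course $(\unlhd^{\op})^{\op}=\unlhd$. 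It then remains to identify $\End_{B^{\op}}({}_{B}T)$: by classical tilting theory (\cite[Theorem 1.5]{M86}) the canonical map sending $a\in A$ to right multiplication by $a$ on $T$ is an algebra isomorphism between $A$ and $\End_{B^{\op}}({}_{B}T)$, possibly up to passing to opposite algebras according to one's composition convention. Moreover, under this identification the simple module indexed by $x\in\Lambda$ on the $\End_{B^{\op}}({}_{B}T)$-side corresponds to $\top\Hom_{B^{\op}}(\Hom_{A}(P(x),T),T)\cong\top P(x)=S(x)$, so the index set $\Lambda$ and the order $\unlhd$ are transported faithfully. If the identification lands on $A^{\op}$ rather than $A$, one concludes via the left-right symmetry of quasi-heredity (\cite[Statement 9]{DR89}); either way, $(A,\unlhd)$ is quasi-hereditary.

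The only delicate point — and hence the step I would spell out with care — is precisely this bookkeeping: verifying that the double-endomorphism identification respects the labelling of simple modules by $\Lambda$, and keeping consistent track of opposite algebras (harmless in the end, thanks to left-right symmetry, but it must be handled uniformly). No further homological input is needed: all the substance is already contained in Propositions \ref{prop:IS-tilt} and \ref{prop:istilt-op}, together with the classical fact that $A$ is recovered as the endomorphism algebra of its tilting modules viewed on the opposite side.
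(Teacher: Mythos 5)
Your proposal is correct and follows exactly the same route as the paper: pass to $B=\End_A(T)$, use Proposition \ref{prop:istilt-op} to get that $({}_BT,\unlhd^{\op})$ is IS-tilting over $B^{\op}$, apply Proposition \ref{prop:IS-tilt}(b), and identify $A\cong\End_{B^{\op}}(T)$ via Miyashita. The extra care you take with the labelling of simples under the double-centraliser identification is sound but is handled implicitly in the paper.
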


\begin{proof}
Let $B:=\End_{A}(T)$.
By Proposition \ref{prop:istilt-op}, $({}_BT,\unlhd^{\op})$ is an IS-tilting $B^{\op}$-module.
It follows from Proposition \ref{prop:IS-tilt}(b) that $(\End_{B^{\op}}(T), \unlhd)$ is a quasi-hereditary algebra.
By \cite[Theorem 1.5]{M86}, we have an algebra isomorphism $A\cong \End_{B^{\op}}(T)$.
Thus $(A,\unlhd)$ is quasi-hereditary as claimed.
\end{proof}

In the rest of this subsection, we show that the set of IS-tilting modules is in one-to-one correspondence with the set of quasi-hereditary structures. 
So far, it is unclear that if one IS-tilting module can be associated to two inequivalent partial orders; in contrast, we already know how to characterise the class of partial orders that give rise to the same characteristic tilting modules.
In the following two results, we show that IS-tilting modules are indeed the same thing as characteristic tilting modules.

\begin{lemma}\label{lem:chtilt}
Let $(T,\unlhd)$ be an IS-tilting $A$-module.
Then we have $\Ext_{A}^{1}(\std(x),T)=0$ and $\Ext_{A}^{1}(T,\costd(x))=0$ for all $x\in \Lambda$, where $\std(x)$ is the standard module and $\costd(x)$ is the costandard module with respect to $\unlhd$.
\end{lemma}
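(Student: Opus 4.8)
The plan is to prove the two vanishing statements simultaneously by induction on $|\Lambda|$, reducing along idempotent truncation at up-sets. By Theorem~\ref{thm:qh-tilting}, $(A,\unlhd)$ is quasi-hereditary, so $\std(x)$ and $\costd(x)$ are defined; fix the indecomposable decomposition $T=\bigoplus_{x\in\Lambda}T(x)$ witnessing (IST). The case $|\Lambda|=1$ is trivial, so assume $|\Lambda|\geq 2$ and pick a minimal element $y$ of $(\Lambda,\unlhd)$. Since nothing lies strictly below $y$, the module $\std(y)$ is a quotient of $P(y)$ all of whose composition factors are $S(y)$; as it is nonzero by (qh1) and has simple top, $\std(y)=S(y)$, and similarly $\costd(y)=S(y)$. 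Moreover (IST) forces $T(y)=S(y)$. As $T(y)$ is a direct summand of $T$, the groups $\Ext^{1}_{A}(\std(y),T)=\Ext^{1}_{A}(T(y),T)$ and $\Ext^{1}_{A}(T,\costd(y))=\Ext^{1}_{A}(T,T(y))$ are direct summands of $\Ext^{1}_{A}(T,T)=0$, hence vanish.

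Now let $\Gamma:=\Lambda\setminus\{y\}$, which is an up-set because $y$ is minimal, and set $\e:=e_{\Gamma}$ and $A':=\e A\e$. By Proposition~\ref{prop:IS-tilt}(a), $(T\e,\unlhd|_{\Gamma})$ is an IS-tilting $A'$-module, so the inductive hypothesis yields $\Ext^{1}_{A'}(\std_{A'}(z),T\e)=0$ and $\Ext^{1}_{A'}(T\e,\costd_{A'}(z))=0$ for all $z\in\Gamma$. Fix $x\in\Gamma$. I first claim that $\Ext^{i}_{A}(\std(x'),S(y))=0$ for every $i\geq 0$ and every $x'\neq y$: this follows by induction on $i$, using for $i=0$ that $\top\std(x')=S(x')\neq S(y)$, and for $i\geq 1$ the short exact sequence $0\to K(x')\to P(x')\to\std(x')\to 0$ of (qh2) together with $K(x')\in\filt(\std(\rhd x'))$, where every index occurring is $\neq y$ because $y$ is minimal (so that $x'\lhd y$ is impossible). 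By Lemma~\ref{lem:apt-pfin} this places $\std(x)$ in $\calP^{\leq\infty}(\Gamma)$, and dually (using (qh2${}^{\op}$)) $\costd(x)$ in $\calI^{\leq\infty}(\Gamma)$. Since, moreover, truncation at an up-set is compatible with the quasi-hereditary structure, $\std(x)\e\cong\std_{A'}(x)$ and $\costd(x)\e\cong\costd_{A'}(x)$ (see \cite{DR89,CPS96}). Combining these with the isomorphisms of Lemma~\ref{lem:apt-pfin} gives
\[
\Ext^{1}_{A}(\std(x),T)\cong\Ext^{1}_{A'}(\std(x)\e,T\e)\cong\Ext^{1}_{A'}(\std_{A'}(x),T\e)=0,
\]
and, dually, $\Ext^{1}_{A}(T,\costd(x))\cong\Ext^{1}_{A'}(T\e,\costd_{A'}(x))=0$. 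This finishes the induction.

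The step requiring the most care is the reduction for $x\in\Gamma$: one must verify that $\std(x)\in\calP^{\leq\infty}(\Gamma)$ (so that $\Ext^{1}_{A}(\std(x),-)$ may be computed after applying $\sfe$) and that $\sfe$ carries $\std(x)$ to $\std_{A'}(x)$, together with the dual statements for $\costd(x)$. The first is the homological computation of $\Ext^{*}_{A}(\std(x),S(y))$ sketched above, a straightforward dévissage from (qh2); the second is the standard fact that an idempotent subalgebra of a quasi-hereditary algebra at an up-set inherits the quasi-hereditary structure, with (co)standard modules obtained by restriction. If a self-contained argument is preferred, both can be extracted from the recollement machinery of Subsection~\ref{subsec:recolle subcats}: exactness of $\sfe$ and the equivalence $\calP^{\leq 1}(\Gamma)\simeq\mod A'$ identify $\std(x)\e$ with the relevant quotient of $P_{A'}(x)$, and the $\Ext$-vanishing places $\std(x)$ in $\calP^{\leq\infty}(\Gamma)$.
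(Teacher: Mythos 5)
Your proof is correct, and it takes a genuinely different route from the paper's. The paper also peels off a minimal element and truncates, but it organises the argument as an induction on the length $n$ of a covering chain $x_1\lhd^\bullet\cdots\lhd^\bullet x_n=x$ from a minimal element $x_1\unlhd x$ to the fixed $x$. It then applies $\Hom_A(-,\costd(x))$ to the rejection sequence $0\to\rej_{S(x_1)}(T)\to T\to T/\rej_{S(x_1)}(T)\to 0$, using Lemma~\ref{lem:Ifin}(1) to place $\rej_{S(x_1)}(T)$ in $\calP^{\leq\infty}(\Lambda\setminus\{x_1\})$, and then invokes Lemma~\ref{lem:apt-pfin} on the $T$-side rather than on the $\std/\costd$-side. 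You, by contrast, induct on $|\Lambda|$ and transfer the (co)standard module across the truncation, which costs you a small d\'evissage to establish $\std(x)\in\calP^{\leq\infty}(\Gamma)$ and the dual statement for $\costd(x)$. That d\'evissage is fine: the base $i=0$ uses $\top\std(x')=S(x')\neq S(y)$, the step uses (qh2) together with minimality of $y$ to keep $y$ out of the filtration $\filt(\std(\rhd x'))$, and the dual case works identically via (qh2${}^{\op}$).

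Your organisation buys two things. First, you prove both vanishings explicitly rather than proving one and asserting the other is similar. Second, induction on $|\Lambda|$ sidesteps a subtlety in the paper's induction on chain length: after deleting $x_1$, the truncated chain $x_2\lhd^\bullet\cdots\lhd^\bullet x_n=x$ need not start at a minimal element of the restricted poset (for instance when $x_2$ covers several minimal elements), so the chain-length parameter can fail to strictly decrease; one would want to reformulate the paper's induction along the lines you use. On the other side, the paper's route is slightly more economical in that it does not need the extra computation of $\Ext^{>0}_A(\std(x),S(y))$ -- the membership $\rej_{S(x_1)}(T)\in\calP^{\leq\infty}$ comes for free from the rigidity machinery (Lemma~\ref{lem:Ifin}). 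Both proofs rely on the same background facts: Theorem~\ref{thm:qh-tilting}, Proposition~\ref{prop:IS-tilt}(a), Lemma~\ref{lem:apt-pfin}, and the compatibility of (co)standard modules with truncation at an up-set after removing a minimal vertex, which both you and the paper invoke without reproving.
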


\begin{proof} 
We show the claim involving costandard modules; the other cases can be done similarly.

Let $x\in\Lambda$. 
Take a sequence of covering relations $x_{1}\lhd^{\bullet}x_{2}\lhd^{\bullet}\cdots\lhd^{\bullet}x_{n}=x$ of $x$ such that $x_1$ is minimal. 
We proceed by induction on $n$. 
If $n=1$, then $x$ is a minimal element in $(\Lambda, \unlhd)$, which means that $\costd(x)=S(x)$.
Since $(T,\unlhd)$ is IS-tilting, we have $S(x)\in \add T$, and so $\Ext_{A}^{1}(T,\costd(x))=0$.

Assume $n\geq2$. 
Applying $\Hom_{A}(-,\costd(x))$ to the canonical short exact sequence $0\rightarrow \rej_{S(x_{1})}(T)\rightarrow T\rightarrow T/\rej_{S(x_{1})}(T)\rightarrow 0$ yields an exact sequence
\begin{align}\label{eq:exseqExt}
\Ext_{A}^{1}(T/\rej_{S(x_{1})}(T),\costd(x))\rightarrow \Ext_{A}^{1}(T,\costd(x))\rightarrow\Ext_{A}^{1}(\rej_{S(x_{1})}(T),\costd(x)).
\end{align}
First, we show that $\Ext_{A}^{1}(T/\rej_{S(x_{1})}(T),\costd(x))=0$.
Since $x_{1}\lhd x$, we have $\costd(x)\in \calI^{\leq 1}(\Lambda\setminus\{x_{1}\})$.
Hence, it follows from Lemma \ref{lem:apt-pfin} that $\Ext_{A}^{1}(S(x_{1}),\costd(x))=0$ holds.
As $T/\rej_{S(x_{1})}(T) \in \add(S(x_{1}))$, we obtain $\Ext_{A}^{1}(T/\rej_{S(x_{1})}(T),\costd(x))=0$. 
Next, we show that $\Ext_{A}^{1}(\rej_{S(x_{1})}(T),\costd(x))=0$.
Let $\varepsilon:=1-e_{x_{1}}$.
Since Lemma \ref{lem:Ifin}(1) says that $\rej_{S(x_{1})}(T)\in\calP^{\leq \infty}(\Lambda\setminus\{x_{1}\})$ and $T\varepsilon\cong (\rej_{S(x_{1})}(T))\varepsilon$, we can apply Lemma \ref{lem:apt-pfin} and obtain the following isomorphisms 
\begin{align}
\Ext_{A}^{1}(\rej_{S(x_{1})}(T),\costd(x))\cong \Ext_{\varepsilon A\varepsilon}^{1}((\rej_{S(x_{1})}(T))\varepsilon,\costd(x)\varepsilon)\cong \Ext_{\varepsilon A\varepsilon}^{1}(T\varepsilon,\costd(x)\varepsilon).\notag
\end{align}
By minimality of $x_{1}$, the module $\costd(x)\varepsilon$ is the costandard $\varepsilon A\varepsilon$-module with respect to $\unlhd':=\unlhd |_{\Lambda\setminus\{x_{1}\}}$.
Since $\Lambda\setminus\{x_{1}\}$ is an up-set of $\Lambda$, $(T\varepsilon,\unlhd')$ is an IS-tilting module by Proposition \ref{prop:IS-tilt}(a).
By the induction hypothesis, $\Ext_{\varepsilon A\varepsilon}^{1}(T\varepsilon,\costd(x)\varepsilon)=0$ holds. 
Since the left-hand side and right-hand side of \eqref{eq:exseqExt} are both zero, the same holds for the middle term, i.e. $\Ext_{A}^{1}(T,\costd(x))=0$, as required.
\end{proof}

\begin{proposition}\label{prop:isttilt-chtilt}
Let $(T,\unlhd)$ be an IS-tilting $A$-module.
Then $T$ is isomorphic to the characteristic tilting module $T_\unlhd$ associated to a quasi-hereditary algebra $(A, \unlhd)$.
In particular, the following conditions are equivalent for IS-tilting modules $(T,\unlhd), (T',\unlhd')$.
\begin{itemize}
\item[(1)] $T\cong T'$.
\item[(2)] $T_{\unlhd} \cong T_{\unlhd'}$.
\item[(3)] $[\unlhd]=[\unlhd']$.
\end{itemize}
\end{proposition}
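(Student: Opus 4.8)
The plan is to first prove the single statement $T\cong T_{\unlhd}$; the three equivalences then follow almost formally. Since $(T,\unlhd)$ is IS-tilting, Theorem~\ref{thm:qh-tilting} shows that $(A,\unlhd)$ is a quasi-hereditary algebra, so the characteristic tilting module $T_{\unlhd}$ is defined, and by Definition-Proposition~\ref{prop:ringel} it is basic with $\add T_{\unlhd}=\filt(\std)\cap\filt(\costd)$ and exactly $|\Lambda|$ indecomposable summands. The crucial homological input is already available in Lemma~\ref{lem:chtilt}, which gives $\Ext_A^1(\std(x),T)=0$ and $\Ext_A^1(T,\costd(x))=0$ for every $x\in\Lambda$.

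Next I would invoke the standard characterisation of $\Delta$- and $\nabla$-filtered modules over a quasi-hereditary algebra (Ringel; see \cite{R91}): a module $X$ lies in $\filt(\costd)$ if and only if $\Ext_A^1(\std(x),X)=0$ for all $x$, and dually $X$ lies in $\filt(\std)$ if and only if $\Ext_A^1(X,\costd(x))=0$ for all $x$. Feeding Lemma~\ref{lem:chtilt} into this yields $T\in\filt(\std)\cap\filt(\costd)=\add T_{\unlhd}$, hence $\add T\subseteq\add T_{\unlhd}$. To turn this into an isomorphism I would argue by counting indecomposable summands: $T$ is basic because it satisfies (I1) (as observed after Definition~\ref{definiton:IST}) and it is a tilting module, so $|T|=|A|=|\Lambda|$ by Proposition~\ref{prop:property-tilting}(2); as $T_{\unlhd}$ is basic with the same number $|\Lambda|$ of indecomposable summands and $\add T\subseteq\add T_{\unlhd}$, we must have $\add T=\add T_{\unlhd}$, i.e. $T\cong T_{\unlhd}$.

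For the list of equivalences, applying the first part to both $(T,\unlhd)$ and $(T',\unlhd')$ gives $T\cong T_{\unlhd}$ and $T'\cong T_{\unlhd'}$, so (1)$\Leftrightarrow$(2) is immediate. For (2)$\Leftrightarrow$(3), Theorem~\ref{thm:qh-tilting} ensures that $\unlhd$ and $\unlhd'$ are quasi-hereditary partial orders, so Proposition~\ref{prop:FKR-lems} applies and gives $T_{\unlhd}\cong T_{\unlhd'}$ if and only if $\unlhd\sim\unlhd'$, which by the definition of a quasi-hereditary structure is exactly $[\unlhd]=[\unlhd']$.

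The one point that requires care — and is not purely bookkeeping — is the step passing from $\Ext^1$-vanishing against all standard (respectively costandard) modules to an actual membership in $\filt(\costd)$ (respectively $\filt(\std)$); this rests on the homological characterisation of $\nabla$-filtered (respectively $\Delta$-filtered) modules over quasi-hereditary algebras, which I would state explicitly with a reference rather than leave implicit. Once this is granted the rest is formal, the real work having been absorbed into Lemma~\ref{lem:chtilt}.
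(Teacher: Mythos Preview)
Your proposal is correct and follows essentially the same route as the paper's proof: invoke Theorem~\ref{thm:qh-tilting}, use Lemma~\ref{lem:chtilt} together with Ringel's characterisation of $\filt(\std)$ and $\filt(\costd)$ via $\Ext^1$-vanishing (which the paper cites as \cite[Theorems~4 and~4$^{\ast}$]{R91}) to conclude $T\in\add T_{\unlhd}$, and then compare the number of indecomposable summands to get $T\cong T_{\unlhd}$; the equivalences are deduced exactly as you say via Proposition~\ref{prop:FKR-lems}.
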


\begin{proof}
By Theorem \ref{thm:qh-tilting}, $(A,\unlhd)$ is quasi-hereditary.
We show that $T$ is isomorphic to the associated characteristic tilting module $T_{\unlhd}$.
By \cite[Theorems 4 and 4$^{\ast}$]{R91}, we have 
\begin{align}
&\filt(\std)=\{ X\in \mod A \mid \Ext_{A}^{1}(X,\costd(x))=0\ \text{for all $x\in \Lambda$}\},\notag\\
&\filt(\costd)=\{X\in \mod A \mid \Ext_{A}^{1}(\std(x), X)=0\ \text{for all $x\in \Lambda$}\}, \notag
\end{align}
where $\std(x)$ is the standard module and $\costd(x)$ is the costandard module with respect to $\unlhd$. 
By Lemma \ref{lem:chtilt}, $T\in \filt(\std)\cap \filt(\costd) = \add T_{\unlhd}$ holds, which means that $\add T\subseteq \add T_{\unlhd}$.
Since $T$ and $T_{\unlhd}$ are basic tilting modules, we have $|T|=|\Lambda|=|T_{\unlhd}|$, and hence $T\cong T_{\unlhd}$.

Assume that $(T',\unlhd')$ is an IS-tilting module. 
Then it follows from the previous paragraph that $(A,\unlhd')$ is quasi-hereditary with characteristic tilting $T_{\unlhd'}\cong T'$.  Hence, $T\cong T'$ if and only if $T_{\unlhd}\cong T_{\unlhd'}$.
By Proposition \ref{prop:FKR-lems}, $T_\unlhd \cong T_{\unlhd'}$ if and only if $[\unlhd]=[\unlhd']$.
\end{proof}

Let $\IStilt A$ denote the set of IS-tilting $A$-modules up to isomorphism.
By Proposition \ref{prop:isttilt-chtilt}, an equivalence class $[\unlhd]$ is uniquely determined by an IS-tilting module $T$. Thus we write such an equivalence class $[\unlhd]_{T}$. 

Our first main result of this paper is the following. 

\begin{theorem}\label{thm:bij-qh-IStilting}
Let $A$ be an arbitrary finite-dimensional algebra over an algebraically closed filed $\Bbbk$.
Then there exist mutually inverse isomorphisms of posets
\begin{align}
\xymatrix{\IStilt A\ar@<1mm>[r]^-{\varphi} &\qhs A\ar@<1mm>[l]^-{\psi}},\notag
\end{align}
where $\psi([\unlhd]):=T_{\unlhd}$, and $\varphi(T):=[\unlhd]_{T}$.
\end{theorem}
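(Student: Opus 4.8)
The plan is to show that the maps $\varphi$ and $\psi$ are well-defined, mutually inverse, and order-preserving in both directions, assembling the pieces already established. First I would verify that $\psi$ is well-defined: given a quasi-hereditary structure $[\unlhd]\in\qhs A$, pick any representative $\unlhd$; by Definition-Proposition \ref{prop:ringel} we obtain a characteristic tilting module $T_\unlhd$ with an indecomposable decomposition $T_\unlhd=\bigoplus_{i\in\Lambda}T(i)$ satisfying $[T(i):S(i)]=1$ and $[T(i):S(j)]\neq0\Rightarrow j\unlhd i$. By Lemma \ref{lem:IST} this says exactly $T(i)\e_i\cong S(i)\e_i$, so $(T_\unlhd,\unlhd)$ satisfies (I1); by Proposition \ref{prop:IS-tilt} it then satisfies (IST), i.e. $T_\unlhd\in\IStilt A$. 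That the value $T_\unlhd$ does not depend on the choice of representative is precisely Proposition \ref{prop:FKR-lems}. Hence $\psi([\unlhd]):=T_\unlhd$ is a well-defined map $\qhs A\to\IStilt A$.

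Next I would check $\varphi$ is well-defined. Given an IS-tilting module, a priori it may come with several partial orders $\unlhd$ making $(T,\unlhd)$ IS-tilting; Proposition \ref{prop:isttilt-chtilt} shows that any two such orders are equivalent (condition (1)$\Leftrightarrow$(3) there), so the class $[\unlhd]_T$ is genuinely determined by $T$ alone, and $\varphi(T):=[\unlhd]_T$ is a well-defined map $\IStilt A\to\qhs A$ (landing in $\qhs A$ because $(A,\unlhd)$ is quasi-hereditary by Theorem \ref{thm:qh-tilting}). The two maps are mutually inverse: for $T\in\IStilt A$ with associated order $\unlhd$, Proposition \ref{prop:isttilt-chtilt} gives $T\cong T_\unlhd=\psi(\varphi(T))$; conversely, for $[\unlhd]\in\qhs A$, the module $T_\unlhd$ is IS-tilting with $\unlhd$ itself as a compatible order, so $\varphi(\psi([\unlhd]))=\varphi(T_\unlhd)=[\unlhd]_{T_\unlhd}=[\unlhd]$, using again Proposition \ref{prop:isttilt-chtilt}(1)$\Leftrightarrow$(3).

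It remains to treat the poset structures. On $\qhs A$ the order is $\preceq$ via inclusion of $\filt(\costd)$'s (equivalently reverse inclusion of $\filt(\std)$'s), and Proposition \ref{prop:FKR-lems} already states that $[\unlhd]\mapsto T_\unlhd$ is an isomorphism of posets onto $(\chtilt A,\preceq)$. Since we have shown $\chtilt A=\IStilt A$ as subsets of $\tilt A$ (every characteristic tilting module is IS-tilting, and by Proposition \ref{prop:isttilt-chtilt} every IS-tilting module is some $T_\unlhd$), the partial order transported to $\IStilt A$ via $\psi$ coincides with the restriction to $\IStilt A$ of the order on $\chtilt A$ coming from $\qhs A$. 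Thus $\varphi$ and $\psi$ are mutually inverse isomorphisms of posets, which is the assertion.

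I do not anticipate a genuine obstacle here: the theorem is essentially a repackaging of Theorem \ref{thm:qh-tilting}, Proposition \ref{prop:isttilt-chtilt}, Proposition \ref{prop:IS-tilt}, and Proposition \ref{prop:FKR-lems}. The one point requiring a little care is the well-definedness of $\varphi$ on the level of \emph{orders} rather than equivalence classes — one must invoke the equivalence (1)$\Leftrightarrow$(3) of Proposition \ref{prop:isttilt-chtilt} to know that an IS-tilting module pins down a unique quasi-hereditary structure — but this has already been done, so the proof is a short synthesis.
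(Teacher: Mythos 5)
Your proof is correct and follows essentially the same route as the paper, which condenses the argument to: $\IStilt A=\chtilt A$ by Proposition \ref{prop:isttilt-chtilt}, and then the assertion follows from Proposition \ref{prop:FKR-lems} and Theorem \ref{thm:qh-tilting}. Your version simply unpacks the well-definedness and mutual-inversity checks that the paper leaves implicit.
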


\begin{proof}
By Proposition \ref{prop:isttilt-chtilt}, $\IStilt A=\chtilt A$ holds. Hence the assertion follows from Proposition \ref{prop:FKR-lems} and Theorem \ref{thm:qh-tilting}.
\end{proof}

Ringel provided a characterisation of quasi-hereditary algebras in terms of tilting modules over their quotient algebras \cite[Appendix, Theorem]{R91}, namely,
$(A,\unlhd)$ is a quasi-hereditary algebra if and only if there exists a tilting module $T=\bigoplus_{x\in\Lambda} T(x)$ such that $[T(x): S(x)]=1$ and $\bigoplus_{y\in\Lambda\setminus\Lambda_x} T(y)$ is a tilting $A/A\varepsilon_{x} A$-module. 
Theorem \ref{thm:bij-qh-IStilting} can be regarded as a `subalgebra counterpart' to Ringel’s result. 
As mentioned in the introduction, a notable advantage of working with the subalgebra version (IS-tilting) is that, in contrast to the quotient algebra case where one must verify the tilting property over each respective quotient algebra, IS-tilting is a condition that considers only $A$-modules essentially.

\section{Criteria for all tilting modules being IS-tilting}\label{sec:all tilt = IS-tilt <=>}

Throughout this section, $A$ is a ring-indecomposable basic finite-dimensional algebra over an algebraically closed field $\Bbbk$.
In other words, $A\cong \Bbbk Q /I$ for a finite connected quiver $Q$ and an admissible ideal $I$ of $\Bbbk Q$. 
For a quiver $Q$, let $Q_{0}$ be its vertex set and $Q_{1}$ its arrow set.

Let $\vec{\bbA}_{n}$ be the linearly oriented Dynkin quiver of type $\bbA_n$ for $n\geq 1$, i.e.
\[
\Vec{\bbA}_{n}: \quad  1\xrightarrow{\alpha_{1}} 2 \xrightarrow{\alpha_{2}} \cdots \xrightarrow{\alpha_{n-1}} n.
\]
By a \defn{linear Nakayama algebra} we mean an algebra whose Gabriel quiver is of the form $\vec{\bbA}_{n}$ for some $n\geq 1$.

An algebra of the form $\Bbbk Q/I$ is \defn{quadratic} if $I$ is generated by quadratic relations.  In the case when $Q=\vec{\bbA}_{n}$, this is equivalent to say that $I$ is generated by a set of length two paths.

\subsection{Main result}

The aim of this section is to prove the following theorem.

\begin{theorem}\label{thm:tilt=IStit=>gentle-LNaka}
Let $A$ be a ring-indecomposable basic finite-dimensional algebra over an algebraically closed field.
Then $\tilt A= \IStilt A$ holds if, and only if, $A$ is a quadratic linear Nakayama algebra.
\end{theorem}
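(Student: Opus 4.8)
The plan is to prove the two directions separately, with the ``if'' direction being comparatively routine and the ``only if'' direction requiring the hard construction.

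\medskip

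\textbf{The ``if'' direction.} Suppose $A=\Bbbk\Vec{\bbA}_{n}/I$ is a quadratic linear Nakayama algebra. By Remark \ref{rem:tilting/qln}, a module $T$ is tilting if and only if it is $2$-rigid with $|T|=|A|$; moreover, by Proposition \ref{prop:IS-tilt}, to show a tilting module is IS-tilting it suffices to produce a partial order $\unlhd$ so that $(T,\unlhd)$ satisfies (I1). So first I would fix an arbitrary tilting $A$-module $T=\bigoplus T_i$ and analyse the indecomposable direct summands: every indecomposable $A$-module is uniserial, so $T_i$ is determined by a pair (top, socle), equivalently an interval $[a_i,b_i]$ of vertices compatible with the relations. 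The key point to extract from quadraticity is a combinatorial/numerical statement: among the $n$ intervals appearing in $T$, one can find an ordering $x\mapsto T(x)$ such that for each $x$, after deleting the vertices that are $\lhd x$ (those already ``processed''), $T(x)$ restricts to the simple at $x$ in $\e_x A\e_x$. The natural candidate order is the one reading off the summands ``from the simples upward'': a summand supported on a single vertex $v$ with no smaller-labelled vertex of $T$ below it gets minimal rank, and one builds $\unlhd$ inductively. I expect the cleanest route is induction on $n$ using the recollement: pick a simple direct summand $S(v)$ of $T$ (which must exist — this is exactly the content one must verify, and it uses quadraticity, since a radical-square-zero-type argument forces some projective-or-simple summand at an end of an interval), set it minimal, and apply Proposition \ref{prop:truncation-tilting} to pass to $\e A\e$ where $\e=e_{\Lambda\setminus\{v\}}$, noting $\e A\e$ is again a quadratic linear Nakayama algebra (deleting one vertex of a linear Nakayama algebra and composing relations preserves quadraticity — this needs checking but is elementary). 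By induction $T\e$ is IS-tilting, and Lemma \ref{lem:qh}-type bookkeeping (or directly Lemma \ref{lem:basic-results-ist} run backwards) lifts the order to $\Lambda$.

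\medskip

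\textbf{The ``only if'' direction.} Here I would prove the contrapositive: if $A$ is ring-indecomposable, basic, with $A\cong\Bbbk Q/I$, and $A$ is \emph{not} a quadratic linear Nakayama algebra, then there is a tilting $A$-module with no simple direct summand — hence not IS-tilting, since by the remark after Definition \ref{definiton:IST} every IS-tilting module has $S(x)$ as a summand for each minimal $x$. (One must first reduce to $Q$ acyclic: if $Q$ has an oriented cycle then $A$ has infinite global dimension and $\gldim A$ must be finite for a quasi-hereditary structure to have a characteristic \emph{tilting} module with the summand property — actually simpler: $A$ itself has $S(x)$ as a summand only if $P(x)=S(x)$; but if there's a cycle no indecomposable projective is simple... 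I would instead just cite that a non-acyclic $Q$ gives infinite global dimension so no algebra version of the statement we want, then handle acyclic $Q$.) The heart is: there are essentially two ways to fail ``quadratic linear Nakayama'' — either $Q$ is not of type $\Vec{\bbA}_n$ (it has a vertex of in- or out-degree $\ge 2$, or is type $\bbA$ but with a non-trivial orientation, i.e.\ not linear), or $Q=\Vec{\bbA}_n$ but $I$ is not generated by paths of length two (there is a relation path of length $\ge 3$, or the algebra is not monomial). In each case I would \emph{construct an explicit mutation sequence}, starting from $A=\bigoplus P(x)$, that produces a tilting module all of whose indecomposable summands have Loewy length $\ge 2$. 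Concretely: find a vertex $x$ with $P(x)$ not simple and not injective and no arrow into $x$ from a vertex whose projective is problematic, then mutate away $P(x)$; the approximation exact sequence $0\to P(x)\to P'\to \Cok\to 0$ replaces $P(x)$ by a module with top $=\rad P(x)/\ldots$ still of length $\ge 2$ when the quiver/relations are ``big enough''. The point of the hypothesis is that it \emph{prevents} this process from ever being forced to produce a simple: in a linear quadratic Nakayama algebra, mutating at $P(x)$ with $x$ a source produces precisely $S(2)$-type simples (as in Example \ref{eg:A_n^!}), so one cannot avoid simples; conversely outside that class one can always steer mutations to dodge all simples.

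\medskip

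\textbf{Main obstacle.} The genuine difficulty is the explicit mutation construction in the ``only if'' direction: one needs a \emph{uniform} recipe — or a small finite case analysis (quiver not $\Vec\bbA_n$ vs.\ relation of length $\ge 3$ vs.\ non-monomial) — producing, for every algebra outside the target class, a tilting module with no simple summand, and one must \emph{prove} no summand is simple, which requires tracking cokernels of approximation maps through a whole mutation sequence. I would organise this by: (i) if $Q\neq\Vec\bbA_n$, exhibit a local configuration (a branch vertex, or a ``$\to\cdot\leftarrow$'', or length $\ge 3$ sincere projective) and show the single mutation already used in Riedtmann--Schofield produces a non-simple replacement while all other summands stay projective non-simple (taking care that $A$ ring-indecomposable forbids isolated simple-projective summands globally once $n\ge 2$); (ii) if $Q=\Vec\bbA_n$ but not quadratic, do a slightly longer mutation sequence exploiting the long relation. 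Isolating the cleanest combinatorial invariant that is preserved by the relevant mutations — probably ``every summand has $\ge 2$ composition factors'' together with a bound relating it to the quiver data — is the crux, and verifying it survives each mutation step is where the bulk of the real work lies; the rest of the argument is assembled from Proposition \ref{prop:tilting-mutation}, Proposition \ref{prop:IS-tilt}, and the observation that IS-tilting forces simple summands.
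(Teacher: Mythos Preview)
Your overall architecture matches the paper's: the ``if'' direction is handled by showing every tilting module over a quadratic linear Nakayama algebra has a simple direct summand and then inducting via idempotent truncation (Proposition \ref{prop:truncation-tilting}); the ``only if'' direction reduces to acyclic $Q$ and then constructs, by an explicit mutation sequence from $A$, a tilting module with no simple summand. So the high-level plan is correct.

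Two points where your proposal diverges or wobbles. First, for the acyclic reduction your initial instinct (infinite global dimension, hence $A$ cannot be quasi-hereditary, hence $A\notin\IStilt A$) is correct and is essentially what the paper does via Theorem \ref{thm:qh-tilting}; your ``simpler'' alternative (``if there's a cycle no indecomposable projective is simple'') is false --- a connected quiver with a cycle can still have sinks off the cycle --- so discard it and keep the global-dimension route. Second, and more substantively, your case split for the mutation construction (quiver $\neq\Vec{\bbA}_n$ versus non-quadratic relations) is not how the paper organises things, and the paper's organisation is what makes the argument go through uniformly. The paper partitions the sinks into $\calS_2$ (valency $\ge 2$) and $\calS_1$ (valency $1$); the $\calS_2$ sinks are eliminated in one mutation step (the cokernel of $S(v)\hookrightarrow\bigoplus_{u\to v}P(u)$ is automatically non-simple). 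For each $v\in\calS_1$, one isolates the maximal quadratic-linear-Nakayama ``tail'' $\Gamma^v$ ending at $v$ --- this is where the failure to be quadratic linear Nakayama is encoded, since maximality of $\Gamma^v$ forces something non-trivial (a branch, a reversed arrow, or a long relation) at the far end $1^v$ --- and then runs a two-step mutation (first replace the projectives along the tail by $\kD C$-type summands via Proposition \ref{lem:mutate-corner}, then mutate away the resulting simple) with a three-case analysis depending on the local picture at $1^v$. The invariant tracked is not just ``no simple summands'' but a stronger package (S1)--(S3) controlling which projectives remain available; this bookkeeping is essential for the induction to close. Your sketch does not yet contain this structure, and without it the ``verify no summand becomes simple'' step you flag as the crux cannot be carried out.

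For the ``if'' direction, the paper's counting argument for the existence of a simple summand is cleaner than what you outline: decompose a putative simple-free tilting module as $U=\bigoplus_i U_i$ with each $U_i$ supported on one hereditary block $A/A(1-f_i)A\cong\Bbbk\Vec{\bbA}_{\ell_{i+1}-\ell_i+1}$, note each $U_i$ is partial tilting there with at most $\ell_{i+1}-\ell_i$ summands, and sum to get $|U|\le n-1$, a contradiction.
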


\begin{proof}
We show the `if' part in the next Proposition \ref{prop:tilt=IStilt}, and the `only if' part in Proposition \ref{prop:tilt=IStilt=>qln}.
\end{proof}

For a quadratic linear Nakayama algebra $A\cong \Bbbk \Vec{\bbA}_{n}/I$ with $I\neq 0$, let
\begin{align*}
& \relvx(A):=\{ \ell \in [1,n]\mid a_{\ell-1}a_{\ell}\in I\}=\{\ell_{1}<\ell_{2}<\cdots<\ell_{k}\}, \\
& \ell_{0}:=1, \;\; \ell_{k+1}:=n, \\
&f_{i}:=e_{\ell_{i}}+e_{\ell_{i}+1}+\cdots+e_{\ell_{i+1}} \text{ for each } i\in[0,k].
\end{align*}
Note that $A/A(1-f_{i})A \cong \Bbbk \Vec{\bbA}_{\ell_{i+1}-\ell_{i}+1}$.

\begin{proposition}\label{prop:tilt=IStilt}
Let $A$ be a quadratic linear Nakayama algebra.
Then $\tilt A=\IStilt A$ holds.
\end{proposition}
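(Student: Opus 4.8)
By Proposition \ref{prop:IS-tilt}, it suffices, for each $T\in\tilt A$, to exhibit a partial order $\unlhd$ on $\Lambda=[1,n]$ for which $(T,\unlhd)$ satisfies (I1). Recall that every indecomposable $A$-module is a uniserial interval module $M[a,b]$, supported on $\{a,a+1,\dots,b\}$ with each composition factor of multiplicity one, and that $M[a,b]$ exists precisely when $\relvx(A)\cap[a+1,b-1]=\emptyset$. By Lemma \ref{lem:IST}, condition (I1) for a summand $M[a,b]$ carrying the label $x$ says exactly that $x$ is the $\unlhd$-greatest element of the interval $[a,b]$. So the plan is to produce a partial order $\unlhd$ on $[1,n]$ together with a bijective labelling of the indecomposable summands of $T$ such that each summand's support interval attains its $\unlhd$-maximum at its label.

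I would argue by induction on $n$, the case $n=1$ being trivial. The crucial input for $n\ge 2$ is the following \textbf{Claim}: every tilting module over a quadratic linear Nakayama algebra has a simple direct summand. Granting the Claim, choose a simple summand $S(v)$ of $T$. By Proposition \ref{prop:truncation-tilting}, $T':=Te_{\Lambda\setminus\{v\}}$ is a tilting module over $A':=e_{\Lambda\setminus\{v\}}Ae_{\Lambda\setminus\{v\}}$. A direct inspection of the Gabriel quiver and relations shows that $A'$ is again a quadratic linear Nakayama algebra on the vertex set $\Lambda\setminus\{v\}$ --- connected if $v\notin\relvx(A)$, and otherwise the product of two such algebras, on $\{1,\dots,v-1\}$ and $\{v+1,\dots,n\}$; here one uses that, $A$ being quadratic, every vanishing path contains a vanishing length-two subpath, and this property survives the merging of the two arrows adjacent to $v$. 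Since a tilting module over a product $A_1'\times A_2'$ is of the form $T_1'\oplus T_2'$ with $T_i'$ tilting over $A_i'$, and since condition (I1) can be tested on each factor separately, the inductive hypothesis applies to each connected component of $A'$ and yields a partial order $\unlhd'$ on $\Lambda\setminus\{v\}$ with $(T',\unlhd')$ satisfying (I1). Now extend $\unlhd'$ to a partial order $\unlhd$ on $\Lambda$ by declaring $v$ to be the global minimum. To verify (I1) for $(T,\unlhd)$: the summand $S(v)$ receives the label $v$, while any other summand $M[a,b]$ receives the label $x\neq v$ it had over $A'$. Truncating at $e_{\Lambda\setminus\{v\}}$ keeps $M[a,b]$ indecomposable and deletes at most one copy of $S(v)$ from its composition factors, so $[M[a,b]:S(x)]=1$ still holds; moreover the composition factors of $M[a,b]$ other than $S(v)$ are $\unlhd'$-below, hence $\unlhd$-below, $x$, and $S(v)$ lies below everything. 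Thus $(T,\unlhd)$ satisfies (I1), so $T$ is IS-tilting.

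The main obstacle is the Claim. One part of it is easy: since $S(1)=I(1)$ is injective, applying $\Hom_A(-,S(1))$ to the canonical short exact sequence $0\to A\to T_0\to C\to 0$ with $T_0\in\add T$ coming from (T3) shows $\Hom_A(T_0,S(1))\twoheadrightarrow\Hom_A(A,S(1))\neq 0$, so $T$ has an indecomposable summand with top $S(1)$; dually, as $S(n)=P(n)$ is projective, $T$ has a summand with socle $S(n)$. Upgrading this to a genuine simple summand is where the combinatorics of the relation vertices $\ell_1<\cdots<\ell_k$ must enter: I would restrict $T$ to the hereditary ``segments'' --- recall $A/A(1-f_i)A\cong\Bbbk\vec{\bbA}_{\ell_{i+1}-\ell_i+1}$, over which every tilting module has a simple summand, being characteristic tilting by Proposition \ref{prop:FKR-thm4.7} and characteristic tilting modules having a simple summand (namely the one indexed by a $\unlhd$-minimal vertex, by Definition-Proposition \ref{prop:ringel}) --- and then track such a simple summand back to a simple $A$-summand by analysing how the support intervals of the summands of $T$ are constrained across a relation vertex by the $\Ext$-vanishing of a tilting module. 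I expect this last step to be the only substantial part of the argument; the remainder is formal.
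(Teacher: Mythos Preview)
Your inductive scheme---locate a simple summand $S(v)$, truncate at $1-e_v$, and recurse---is exactly the paper's argument, and your verification that the resulting $(T,\unlhd)$ satisfies (I1) is correct. The only incomplete step is the Claim, and the route you sketch (``track a simple summand back across relation vertices via $\Ext$-constraints'') is more complicated than necessary and is not actually carried out.

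The paper closes this gap with a short counting argument. Suppose for contradiction that a tilting module $U$ has no simple summand. Since $A$ is quadratic, every non-simple indecomposable $A$-module is supported in a unique hereditary segment $[\ell_i,\ell_{i+1}]$, giving a decomposition $U=\bigoplus_{i=0}^{k}U_i$ with $U_i\in\mod(A/A(1-f_i)A)\cong\mod(\Bbbk\vec{\bbA}_{\ell_{i+1}-\ell_i+1})$. The embedding $\mod(A/A(1-f_i)A)\hookrightarrow\mod A$ preserves short exact sequences, so each $U_i$ is $1$-rigid over its segment and hence, by Bongartz completion over the hereditary algebra $\Bbbk\vec{\bbA}_{\ell_{i+1}-\ell_i+1}$, a direct summand of some tilting module there. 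But $U_i$ has no simple summand, so by your own observation (via Proposition~\ref{prop:FKR-thm4.7} and Definition-Proposition~\ref{prop:ringel}) it cannot itself be tilting; thus $|U_i|\le(\ell_{i+1}-\ell_i+1)-1=\ell_{i+1}-\ell_i$. Summing over $i$ yields
\[
n=|U|=\sum_{i=0}^{k}|U_i|\le\sum_{i=0}^{k}(\ell_{i+1}-\ell_i)=\ell_{k+1}-\ell_0=n-1,
\]
a contradiction. This replaces your unfinished final paragraph entirely; the remainder of your proof goes through unchanged.
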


\begin{proof}
Let $A$ be a quadratic linear Nakayama algebra $A\cong \Bbbk \Vec{\bbA}_{n}/I$.
We show that $\tilt A \subseteq \IStilt A$.
Assume that $I=0$, that is, $A$ is a path algebra.
By Proposition \ref{prop:FKR-thm4.7}, all tilting modules are characteristic tilting, which are IS-tilting by design.

Let us assume now that $I\neq 0$.
Recall that $A/A(1-f_{i})A \cong \Bbbk \Vec{\bbA}_{\ell_{i+1}-\ell_{i}+1}$ for each $i\in [0,k]$.
Hence, as $\tilt \Bbbk \vec{\bbA}_{r} = \IStilt \Bbbk \vec{\bbA}_r$, each tilting $(A/A(1-f_{i})A)$-module has at least one simple module as a direct summand.
If $M\in \mod(A/A(1-f_{i})A)$ satisfies $\Ext_{A/A(1-f_{i})A}^{1}(M,M)=0$ and has no simple direct summand, then it is a direct summand of a tilting $(A/A(1-f_{i})A)$-module by \cite{B81}.
On the other hand, as $M$ has no simple direct summand, $M$ is not tilting.
Thus we have $|M|\leq |A/A(1-f_{i})A|-1 =\ell_{i+1}-\ell_{i}$.

Suppose that a tilting $A$-module $U$ has no simple direct summand.
Since $A$ is a quadratic linear Nakayama algebra, an indecomposable non-simple $A$-module $X$ is in (the image of the embedding of) $\mod (A/A(1-f_{i})A)$ for a unique $i\in[0,k]$.
Hence, we have a unique decomposition $U=\bigoplus_{i=0}^{k}U_{i}$ so that $U_{i}\in \mod(A/A(1-f_{i})A)$.
This $U_i$ is 1-rigid as a $(A/A(1-f_{i})A)$-module; this is because the embedding $\mod(A/A(1-f_{i})A) \rightarrow \mod A$ preserves short exact sequences, which means that $\Ext_{A/A(1-f_{i})A}^{1}(U_{i}, U_{i})\cong \Ext_A^{1}(U_{i}, U_{i})=0$.
Since each $U_{i}$ has no simple direct summand, the argument in the previous paragraph implies that 
\begin{align}
n=|U|=\sum_{i=0}^{k}|U_{i}|\leq \sum_{i=0}^{k}(\ell_{i+1}-\ell_{i})=\ell_{k+1}-\ell_{0}=n-1, \notag
\end{align}  
a contradiction. Thus each tilting $A$-module admits a simple direct summand.

Let $T$ be a tilting $A$-module and $S(x_{1})$ a simple direct summand of $T$.
Let $\e_{2}:=1_{A}-e_{x_{1}}$. Then $\e_{2} A\e_{2}$ is a quadratic linear Nakayama algebra.
Moreover, by Proposition \ref{prop:truncation-tilting}, $T\e_{2}$ is a tilting $\e_{2}A\e_{2}$-module.
Thus $T\e_{2}$ admits a simple direct summand $S(x_{2})\e_{2}$.
Repeating this argument, we have a sequence $S(x_{1}), S(x_{2}),\ldots, S(x_{n})$ of non-isomorphic simple modules with $S(x_{i})\e_{i}\in \add(T\e_{i})$, where $\e_{1}=1_{A}$ and $\varepsilon_{i}=1_{A}-\sum_{j=1}^{i-1}e_{x_{j}}$.
Namely, $(T,\unlhd)$ satisfies (I1) of Definition \ref{definiton:IST}, where $(\Lambda,\unlhd)=\{ x_{1}\unlhd x_{2}\unlhd \cdots\unlhd x_{n}\}$.
By Proposition \ref{prop:IS-tilt}, $T$ is an IS-tilting module.
\end{proof}

For the `only if' part of Theorem \ref{thm:tilt=IStit=>gentle-LNaka}, the key arguments are given by the following three results.

\begin{lemma}\label{lem:IStilt-acy-Gab}
If $A\in \IStilt A$ holds, then the Gabriel quiver $Q$ of $A$ is acyclic, i.e. there is no oriented cycle.
\end{lemma}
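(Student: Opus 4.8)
The plan is to unwind $A\in\IStilt A$ via Proposition \ref{prop:IS-tilt} into a statement about composition factors of the indecomposable projectives, and then observe that this statement forces the chosen partial order $\unlhd$ to strictly decrease along the arrows of $Q$, which rules out oriented cycles. Note first that the regular module $A$ is always tilting (it has projective dimension $0$, no higher self-extensions, and $0\to A\to A\to 0$ witnesses (T3)), so the hypothesis $A\in\IStilt A$ amounts to the existence of a partial order $\unlhd$ on $\Lambda$ such that $(A,\unlhd)$ satisfies (IST); by Proposition \ref{prop:IS-tilt} this is equivalent to $(A,\unlhd)$ satisfying (I1).

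Next I would translate (I1) into composition-factor data. Since $A$ is basic, its indecomposable decomposition is $A=\bigoplus_{x\in\Lambda}P(x)$, and by Lemma \ref{lem:IST} the condition (I1) for $(A,\unlhd)$ says precisely that for every $x\in\Lambda$ one has $[P(x):S(x)]=1$ and $[P(x):S(y)]\neq 0$ implies $y\unlhd x$. I would then read off the consequence for arrows: if $x\to y$ is an arrow of the Gabriel quiver $Q$, then $S(y)$ occurs in the second radical layer $\rad P(x)/\rad^{2}P(x)$, so $[P(x):S(y)]\geq 1$ and hence $y\unlhd x$. Moreover $Q$ has no loops, since a loop at $x$ would put an extra copy of $S(x)$ into $\rad P(x)$, forcing $[P(x):S(x)]\geq 2$ and contradicting (I1). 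Therefore every arrow $x\to y$ of $Q$ satisfies $x\neq y$ and $y\unlhd x$, i.e. $y\lhd x$.

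Finally I would conclude by contradiction: suppose $Q$ contains an oriented cycle $x_{0}\to x_{1}\to\cdots\to x_{m}=x_{0}$ with $m\geq 1$. Applying the previous paragraph to each arrow yields $x_{m}\lhd x_{m-1}\lhd\cdots\lhd x_{0}$, hence $x_{0}=x_{m}\lhd x_{0}$, which is impossible because $\lhd$ is a strict partial order. Thus $Q$ is acyclic.

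As for difficulty: there is essentially no genuine obstacle here. The only points requiring care are invoking the equivalence (I1)$\Leftrightarrow$(IST) for tilting modules from Proposition \ref{prop:IS-tilt}, the reformulation of (I1) through composition factors in Lemma \ref{lem:IST}, and the observation that the definition of the Gabriel quiver $Q$ relates an arrow $x\to y$ to $S(y)$ being a composition factor of $P(x)$; once these are in place the argument is a short order-theoretic deduction.
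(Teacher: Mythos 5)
Your proof is correct and takes a genuinely different route from the paper's. The paper argues by induction on $|\Lambda|$: a $\unlhd$-minimal element $x$ satisfies $A(x)=P(x)=S(x)$, so $x$ is a sink of $Q$; then $(A\e,\unlhd|_{\Lambda\setminus\{x\}})$ is IS-tilting over $\e A\e$ for $\e=1-e_x$ by Proposition \ref{prop:IS-tilt}(a), and the induction hypothesis plus the fact that $Q\setminus\{x\}$ is obtained from $Q$ by deleting a sink gives acyclicity. You bypass the induction by unpacking (I1) for all summands simultaneously via Lemma \ref{lem:IST}, extracting the single order-theoretic consequence that every arrow $x\to y$ forces $y\lhd x$, and then noting this is incompatible with any oriented cycle (including loops, which would violate $[P(x):S(x)]=1$). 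Your argument is shorter and more transparent as a stand-alone proof; the paper's inductive route has the merit of exercising the reduction machinery (Proposition \ref{prop:IS-tilt}(a)) that is reused throughout Section \ref{sec:all tilt = IS-tilt <=>}.

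There is one small step you elide. Condition (I1) gives, for \emph{some} indecomposable decomposition $A=\bigoplus_{x\in\Lambda}A(x)$, the conditions $[A(x):S(x)]=1$ and $[A(x):S(y)]\neq 0\Rightarrow y\unlhd x$; you immediately replace $A(x)$ by $P(x)$, i.e. you assume the labelling satisfies $A(x)=P(x)$. This is true but not automatic from Lemma \ref{lem:unique-label} alone: since $A$ is basic, there is a permutation $\sigma$ of $\Lambda$ with $A(x)=P(\sigma(x))$, and because $S(\sigma(x))=\top P(\sigma(x))$ is a composition factor of $A(x)$, the implication in Lemma \ref{lem:IST} gives $\sigma(x)\unlhd x$ for every $x$. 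In a finite poset a permutation satisfying $\sigma(x)\unlhd x$ for all $x$ is necessarily the identity (iterating $\sigma$ along any orbit produces a weakly decreasing chain that closes up, forcing equalities by antisymmetry). Once this observation is supplied, your deduction that $y\lhd x$ for every arrow $x\to y$ is valid and the rest of the argument goes through.
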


\begin{proof}
Since $A$ is IS-tilting, there is a poset $(\Lambda,\unlhd)$ such that $(A, \unlhd)$ satisfies (IST).
Note that $(A,\unlhd)$ is quasi-hereditary by Theorem \ref{thm:qh-tilting}. 
We show the assertion by induction on $n=|\Lambda|$.
If $n=1$, then $A$ is necessarily semisimple, and hence the assertion clearly holds.
Assume that $n\geq 2$. 
Take a minimal element $x$ in $(\Lambda,\unlhd)$ and let $\e:=1-e_x$.
By Proposition \ref{prop:IS-tilt}(a), $(A\e, \unlhd|_{\Lambda\setminus\{x\}})$ is an IS-tilting $\e A\e$-module.
Note that as $x$ is minimal with respect to $\unlhd$, we have $S(x)=P(x)$, which means that $x$ is a sink in $Q$.
In particular, we have $A\e \cong \e A\e$ as $\e A\e$-modules.

Now we can apply the induction hypothesis which says that the Gabriel quiver $Q'$ of $\e A\e$ is acyclic.
Since $Q'$ is the full subquiver of $Q$ containing $Q_{0}\setminus\{x\}$ and $x$ is a sink in $Q$, the quiver $Q$ must be acyclic.
\end{proof}

\begin{proposition}\label{prop:non-qln-non-ist}
Assume that the Gabriel quiver $Q$ of $A$ is acyclic.
If $A$ is not isomorphic to a quadratic linear Nakayama algebra, then there exists a tilting $A$-module without simple direct summands.
\end{proposition}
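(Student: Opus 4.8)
The plan is to prove this by induction on $n:=|\Lambda|$, the inductive step deleting a sink or a source of $Q$ and the base handling a short list of algebras by explicit tilting mutations. Every connected acyclic quiver on at most two vertices yields a quadratic linear Nakayama algebra, so we may assume $n\ge 3$ and that the statement holds for all algebras with fewer simples.

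For the inductive step, let $v$ be a sink of $Q$; the case of a source is symmetric. Since no nontrivial path starts at $v$, the ideal $Ae_{\Lambda\setminus v}A$ coincides with $e_{\Lambda\setminus v}A=\bigoplus_{w\ne v}P(w)$ as a right module and with $\bigoplus_{w\ne v}Ae_{w}$ as a left module, so it is projective on both sides; by Lemma~\ref{lem:proj-stridemp} it is a stratifying ideal, $A':=e_{\Lambda\setminus v}Ae_{\Lambda\setminus v}$ is the algebra of the full subquiver on $\Lambda\setminus v$ with the induced relations, and $e_{\Lambda\setminus v}A$ is projective as a left $A'$-module, whence $\mod^{\lex}(A')=\mod A'$. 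Suppose deleting $v$ (and, if the outcome is disconnected, restricting to a component that is not a quadratic linear Nakayama algebra) still leaves an algebra that is not a quadratic linear Nakayama algebra; by induction it has a tilting module $V$ without simple direct summands. Then $\sfl(V)$ is a pretilting $A$-module lying in $\calP^{\le\infty}(\Lambda\setminus v)$ (Lemma~\ref{lem:aus-apt}, Proposition~\ref{prop:ex-tilting}), each of its indecomposable summands has length $\ge 2$ because $\sfl(V)e_{\Lambda\setminus v}\cong V$, and $\Ext_{A}^{>0}(\sfl(V),S(v))=0$ by Lemma~\ref{lem:apt-pfin}; as $S(v)=P(v)$ is projective and simple and $\sfl(V)\oplus S(v)$ has $n$ indecomposable summands, Proposition~\ref{prop:property-tilting}(3) shows $\sfl(V)\oplus S(v)$ is a basic tilting $A$-module whose only simple summand is $S(v)$. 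Mutating it at $S(v)$, and then at whatever new simple summand is created, gives a strictly decreasing chain in $\tilt A$, hence terminates; the target has no simple summand provided that each mutation cokernel that arises along the way is non-simple, which is the crux of the argument (see below).

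For the base of the induction, the algebras that cannot be reduced this way — those every one of whose sink- and source-deletions is a quadratic linear Nakayama algebra — form, up to isomorphism, the short list $\Bbbk(1\to2\leftarrow3)$, $\Bbbk(1\leftarrow2\to3)$ and $\Bbbk\vec{\bbA}_{4}/(\alpha_{1}\alpha_{2}\alpha_{3})$. For each of these I would exhibit an explicit left-mutation sequence from $A$ ending at a tilting module with no simple summand: $\mu_{S(2)}(A)=I(2)\oplus P(1)\oplus P(3)$ for the first; $\mu_{S(3)}\mu_{S(1)}(A)=I(1)\oplus P(2)\oplus I(3)$ for the second; and $\mu_{S(3)}\mu_{S(4)}(A)=\bigl(P(1)/\rad^{2}P(1)\bigr)\oplus P(1)\oplus P(2)\oplus P(3)$ for the third. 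In each step the relevant approximation is injective, so Proposition~\ref{prop:tilting-mutation} keeps the output tilting, and a direct inspection shows no simple summand survives.

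The routine ingredients — injectivity of the approximations, the rank counts, the appeals to Proposition~\ref{prop:property-tilting}(3) — are straightforward. I expect the two genuine obstacles to be: (i) showing that the list of irreducible algebras above is complete, i.e.\ that any acyclic $A$ that is not a quadratic linear Nakayama algebra and not one of those three admits a sink or source whose deletion preserves the property of not being quadratic linear Nakayama — a combinatorial analysis of how deleting a vertex affects the Gabriel quiver and the induced relations; and (ii) the non-simplicity of the mutation cokernels in the inductive step, that is, proving the mutation process terminates at a module with \emph{no} simple summand rather than perpetually regenerating one (the very phenomenon responsible for quadratic linear Nakayama algebras failing the conclusion). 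Obstacle (ii) is, in this recursive guise, the delicate point, and is handled by tracking how the truncation functors $(-)\varepsilon_{x}$ act on the individual indecomposable summands of the intermediate tilting modules, in the spirit of Section~\ref{sec:qhs vs tilting}.
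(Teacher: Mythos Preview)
Your inductive framework is a genuinely different route from the paper's direct mutation sequence, but it has two concrete gaps. First, the base-case list is wrong: $\Bbbk\vec{\bbA}_n/(\alpha_1\cdots\alpha_{n-1})$ is irreducible in your sense for \emph{every} $n\geq 4$, since deleting either endpoint leaves a path algebra; so is every generalized Kronecker quiver with $m\geq 2$ parallel arrows between two vertices. This already gives infinitely many base cases, each needing its own explicit tilting module without simple summand, so obstacle~(i) does not reduce to a finite check. Second, the disconnected case is mishandled. When deleting $v$ splits $A'$ into blocks some of which are quadratic linear Nakayama, every tilting module over those blocks carries a simple summand by Proposition~\ref{prop:tilt=IStilt}, so the lifted module $\sfl(V)\oplus S(v)$ has several simple summands, not just $S(v)$; ``restricting to one component'' does not yield a tilting $A$-module at all. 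Mutating the extra simples away without regenerating new ones is exactly the delicate analysis the paper carries out in Section~\ref{sec:all tilt = IS-tilt <=>}, and your induction does not sidestep that work but merely relocates it.

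Two smaller points. Your appeal to Proposition~\ref{prop:property-tilting}(3) is illegitimate --- nothing guarantees $A$ is representation-finite --- though the conclusion survives: use Proposition~\ref{prop:ex-tilting}(1) to get $\sfl(V)\in\tilt\calP^{\leq\infty}(\Lambda\setminus\{v\})$, and then (T3) for $\sfl(V)\oplus S(v)$ follows directly since $S(v)=P(v)$. And in the clean case where $A'$ is connected and $V$ has no simple summand, your obstacle~(ii) is in fact easy: any indecomposable summand of $\sfl(V)$ appearing in the minimal approximation of $S(v)$ has $\varepsilon$-truncation of length $\geq 2$ and contains $S(v)$ at least once, hence has length $\geq 3$, so a single mutation of $S(v)$ already produces a non-simple cokernel. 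The difficulty you flag under~(ii) is therefore real only in the disconnected situation --- precisely where your argument is already broken.
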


Before detailing the argument for Proposition \ref{prop:non-qln-non-ist}, let us first explain how the `only if' part of Theorem \ref{thm:tilt=IStit=>gentle-LNaka} is a simple consequence of it.

\begin{proposition}\label{prop:tilt=IStilt=>qln}
If $\tilt A=\IStilt A$ holds, then $A$ is isomorphic to a quadratic linear Nakayama algebra.
In particular, the `only if' part of Theorem \ref{thm:tilt=IStit=>gentle-LNaka} holds.
\end{proposition}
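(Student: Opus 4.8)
The plan is to derive the statement directly from Lemma \ref{lem:IStilt-acy-Gab} and Proposition \ref{prop:non-qln-non-ist}, so the bulk of the work has already been carried out. Suppose $\tilt A = \IStilt A$. Since $A$ is itself a tilting $A$-module, we have $A \in \tilt A = \IStilt A$, and so Lemma \ref{lem:IStilt-acy-Gab} tells us that the Gabriel quiver $Q$ of $A$ is acyclic. This is the only place where the \emph{equality} $\tilt A = \IStilt A$ (as opposed to just the inclusion $\supseteq$) gets used in a direct way, apart from the contradiction step below.

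Next I would argue by contradiction: assume that $A$ is \emph{not} isomorphic to a quadratic linear Nakayama algebra. Since $Q$ is acyclic, Proposition \ref{prop:non-qln-non-ist} applies and produces a tilting $A$-module $T$ having no simple direct summand. On the other hand, recall from the discussion immediately following Definition \ref{definiton:IST} that whenever $(T',\unlhd)$ satisfies (IST) one has $T'(x) = S(x)$ for every minimal element $x$ of $(\Lambda,\unlhd)$; in particular every IS-tilting module has at least one simple direct summand. Hence the module $T$ above cannot be IS-tilting, so $T \in \tilt A \setminus \IStilt A$, contradicting the hypothesis $\tilt A = \IStilt A$. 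Therefore $A$ must be isomorphic to a quadratic linear Nakayama algebra.

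Finally, for the ``in particular'' clause: the statement just proved is precisely the ``only if'' direction of Theorem \ref{thm:tilt=IStit=>gentle-LNaka}, and combined with the ``if'' direction established in Proposition \ref{prop:tilt=IStilt} it completes the proof of that theorem. I do not anticipate any genuine obstacle in writing this proposition up, since all of the substance is contained in Proposition \ref{prop:non-qln-non-ist} (whose proof builds an explicit sequence of tilting mutations) and in Lemma \ref{lem:IStilt-acy-Gab}; the only point that requires a moment's care is to invoke correctly the elementary observation that an IS-tilting module always has a simple summand, which is what makes the existence of a simple-summand-free tilting module incompatible with $\tilt A = \IStilt A$.
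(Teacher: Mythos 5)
Your proposal is correct and follows essentially the same route as the paper: deduce acyclicity of $Q$ from $A \in \IStilt A$ via Lemma \ref{lem:IStilt-acy-Gab}, then invoke Proposition \ref{prop:non-qln-non-ist} together with the observation that an IS-tilting module must have a simple direct summand. The paper phrases this last step via the contrapositive of Proposition \ref{prop:non-qln-non-ist} rather than by contradiction, but that is a purely cosmetic difference.
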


\begin{proof}
By $A_A\in\tilt A =\IStilt A$ and Lemma \ref{lem:IStilt-acy-Gab}, the Gabriel quiver $Q$ of $A$ is acyclic.
Since $\tilt A = \IStilt A$ implies that each tilting $A$-module has a simple direct summand, the claim now follows immediately by applying (the contrapositive of) Proposition \ref{prop:non-qln-non-ist} to $A$.
\end{proof}

\subsection{Tilting mutation for coextentions of linear Nakayama algebras}\label{subsec:coext-by-LNA}

Before starting the proof of Proposition \ref{prop:non-qln-non-ist}, we give one observation for tilting mutation for coextension of algebras. The results of this subsection will be used in subsection \ref{subsec:mutate-corner}.

Consider an arbitrary finite-dimensional algebra $A$ that can be written in an upper-triangular matrix form: 
\begin{align}
A = \begin{bmatrix} B & {}_{B}M_{C} \\ 0 & C \end{bmatrix}\notag
\end{align}
with the condition that $C$ is a linear Nakayama algebra and the right $C$-module $M_{C}$ is semisimple with composition factors all isomorphic to a simple $C$-module $S^{C}(1)$.  Here, $1\in (Q_{C})_{0}$ is the source vertex.

We use superscript to distinguish $B$-module and $C$-module whenever necessary; for example, $P^{B}(x)$ and $P^{C}(y)$ are projective $B$-module and projective $C$-module respectively.
An $A$-module can be specified as a triple $(X^{B},X^{C},\alpha_{X})$ where $\alpha_{X}:X^{B}\otimes_{B} M\rightarrow X^{C}$ is a $C$-module homomorphism.
We write this as $[X^{B},X^{C}]_{\alpha_X}$, and if $\alpha_{X}=0$ then we will omit the subscript as well.
In this form, an $A$-module homomorphism $f:[X^{B},X^{C}]_{\alpha_{X}}\rightarrow [Y^{B},Y^{C}]_{\alpha_{Y}}$ is given by a pair $(f^{B},f^{C})\in \Hom_{B}(X^{B},Y^{B})\times\Hom_{C}(X^{C},Y^{C})$ satisfying $\alpha_{Y}\circ(f^{B}\otimes 1)=f^{C}\circ\alpha_{X}$.
For details, see \cite{ARS95}.

Recall also the description of some structural $A$-modules:
\begin{itemize}
\item A simple $A$-module $S(x)$ is given by $[S^B(x),0]$ or $[0,S^{C}(x)]$.
\item An indecomposable projective $A$-module $P(x)$ takes the form $[P^{B}(x), e_{x}M]_{\mathrm{id}}$ or $[0,P^{C}(x)]$.
\item An indecomposable injective $A$-module $I(x)$ takes the form $[I^{B}(x),0]$ or $[\mathbb{D}(Me_{x}),I^C(x)]_{\mathrm{ev}}$, where $\mathrm{ev}$ is induced by $\mathrm{id}_{\Hom_C(M,I^{C}(x))}$.  In particular, we have  $I(x)=[0,I^{C}(x)]$ for $x\in (Q_{C})_{0}\setminus\{1\}$.
\end{itemize}

Note that the two-sided ideal $AeA$ generated by the idempotent $e=\left[ \sm{1_{B} & 0\\0& 0}\right]$ is just $[B,M]_{\mathrm{id}} = eA\in \add(eA) \subseteq \proj A$. By Lemmas \ref{lem:str-idemp}(2) and \ref{lem:proj-stridemp}, we have $\Ext_{A}^{k}([0,X],[0,Y])\cong \Ext_{C}^{k}(X,Y)$ for all $X,Y\in\mod C$ for all $k\ge 0$.

\begin{lemma}\label{lem:indec-upper-tri}
For an indecomposable $A$-module $X=[X^{B},X^{C}]_{\alpha}$ with both $X^{B}, X^{C}$ non-zero, we always have $X^{C}\in \add(S^{C}(1))$.
\end{lemma}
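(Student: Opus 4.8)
The plan is to exploit the structure of the algebra $A$ as a coextension, together with the hypothesis that $M_C$ is a semisimple $C$-module all of whose composition factors are isomorphic to $S^C(1)$. The key observation is that for any $B$-module $X^B$, the tensor product $X^B \otimes_B M$ is a semisimple $C$-module lying in $\add(S^C(1))$, since $M$ itself has this property and tensoring with $X^B$ over $B$ just produces a direct sum of copies of $M$ (up to the action of $X^B$, $M = S^B(1)\otimes_B M$-type pieces). Hence the image of the structure map $\alpha_X : X^B\otimes_B M \to X^C$ lies in $\add(S^C(1))$, being a quotient of a module in $\add(S^C(1))$.

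First I would argue by contradiction: suppose $X^C \notin \add(S^C(1))$. Then $X^C$ has an indecomposable direct summand, or more usefully a quotient or submodule, that is not killed by the radical structure forcing it into $\add(S^C(1))$. More precisely, write $X^C = Y \oplus Z$ where... — actually the cleaner route is to consider the submodule $\Img \alpha_X \subseteq X^C$. Since $C$ is a linear Nakayama algebra with source vertex $1$, and $\Img\alpha_X \in \add(S^C(1))$, I would look at the $A$-submodule $[0, W]$ of $X = [X^B, X^C]_{\alpha_X}$ where $W$ is an $S^C(1)$-complement or an appropriate direct summand of $X^C$ that does not meet $\Img\alpha_X$. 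The point is that any $C$-submodule $W\subseteq X^C$ with $W\cap\Img\alpha_X = 0$ and $W$ a direct summand of $X^C$ as a $C$-module gives an $A$-module direct summand $[0,W]$ of $X$: indeed the inclusion $[0,W]\hookrightarrow X$ splits because a $C$-linear splitting $X^C\to W$ together with the zero map on the $B$-component is $A$-linear (the compatibility condition $\alpha_X$ vs $0$ holds precisely because $W\cap\Img\alpha_X=0$ guarantees the splitting restricts to zero on $\Img\alpha_X$). By indecomposability of $X$ and the assumption $X^B\neq 0$, we must have $W = 0$, i.e. every $C$-direct-summand of $X^C$ disjoint from $\Img\alpha_X$ is zero.

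Next I would translate this into the statement $X^C\in\add(S^C(1))$. Since $C$ is Nakayama, $X^C$ is a direct sum of uniserial modules; if some summand $U$ is not isomorphic to $S^C(1)$, then either $U$ has top not equal to $S^C(1)$ — in which case $U$ itself (or a suitable further summand) is disjoint from $\Img\alpha_X\subseteq\add(S^C(1))$, contradiction after splitting off — or $U$ has top $S^C(1)$ but length $\geq 2$; then the radical $\rad U$ is a nonzero submodule with top $S^C(\text{something}\neq 1)$ (as $1$ is a source, $S^C(1)$ cannot be a composition factor of $\rad U$ unless there is a loop, impossible for $\Vec{\bbA}_n$), so $\rad U$ is disjoint from $\Img\alpha_X$; but $\rad U$ need not be a \emph{direct summand}. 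This is the step I expect to be the main obstacle: passing from "no direct summand disjoint from $\Img\alpha_X$" to "$X^C\in\add(S^C(1))$" when $X^C$ can have uniserial summands of length $>1$ with top $S^C(1)$. I would resolve it by a more careful socle/top analysis: choose a surjection $P^{C}(1)^{\oplus m}\twoheadrightarrow \Img\alpha_X$ (possible since $\Img\alpha_X\in\add S^C(1)$ is semisimple, so actually $m = \dim\Img\alpha_X$ and the map factors through the top), lift to understand how $\Img\alpha_X$ sits inside $X^C$, and use that $\Img\alpha_X$, being semisimple and a submodule, lies in $\soc X^C$; then any uniserial summand $U\subseteq X^C$ of length $\geq 2$ has one-dimensional socle, and $U$ modulo its socle splits off... — alternatively and most cleanly, pick a complement $V$ of $\soc(X^C)\cap\Img\alpha_X$ inside $\soc X^C$, note any simple summand of $X^C$ is detected here, and invoke that a length-$\geq 2$ uniserial with top $S^C(1)$ actually \emph{cannot} have socle $\subseteq\Img\alpha_X$ compatibly unless... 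I would instead simply observe: since $\Img\alpha_X \subseteq \soc X^C$ and $\Img\alpha_X\in\add S^C(1)$, if $X^C$ had a uniserial summand $U$ with $\ell(U)\geq 2$, then $\soc U\subseteq \soc X^C$; but $\soc U = S^C(j)$ for $j$ the sink-most vertex in the support of $U$, and comparing, the $S^C(1)$-isotypic part of $\soc X^C$ that lies in $\Img\alpha_X$ is exactly $\Img\alpha_X$; one then splits off everything else and contradicts indecomposability. Working out this socle bookkeeping carefully, using that $\Vec{\bbA}_n$ is linearly oriented so that $S^C(1)$ is the top of $P^C(1)$ and appears in the socle of no indecomposable of length $\geq 2$, completes the argument and yields $X^C\in\add(S^C(1))$.
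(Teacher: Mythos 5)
Your final argument is correct, but it takes a genuinely different route from the paper's. The paper observes that since $1$ is a source of $Q_C$, we have $\Ext_{C}^{1}(S^{C}(u),S^{C}(1))=0$ for all $u$, so by d\'evissage $\Ext_C^1(X^C/\Img\alpha, \Img\alpha)=0$; hence $\Img\alpha$ is directly a summand of $X^C$, giving $X=[X^B,\Img\alpha]_\alpha\oplus[0,X^C/\Img\alpha]$ and the quotient must vanish by indecomposability. You instead argue via the explicit uniserial structure of $C$-modules: $\Img\alpha\subseteq\soc X^C$ is $S^C(1)$-isotypic, and for linear Nakayama $C$ with source $1$, no nonsimple indecomposable has $S^C(1)$ in its socle, so $\Img\alpha$ sits entirely inside the direct sum of simple $S^C(1)$-summands of $X^C$; everything else splits off as $[0,-]$ and must vanish. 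Both are valid; the paper's one-line Ext argument is cleaner and does not lean on Nakayama specifics, while yours is more concrete.

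One caveat: the intermediate claim in your proposal --- that $W\cap\Img\alpha_X=0$ for a $C$-direct-summand $W$ already guarantees a $C$-linear splitting $X^C\to W$ vanishing on $\Img\alpha_X$, hence an $A$-module splitting --- is \emph{not} correct as stated. If $X^C=S^C(1)^{\oplus 2}$, $W$ the first factor, and $\Img\alpha_X$ the diagonal, then $W\cap\Img\alpha_X=0$ but the obvious projection does not kill $\Img\alpha_X$; what one needs is a \emph{complement} of $W$ containing $\Img\alpha_X$, which is a strictly stronger condition. You flag this difficulty yourself and your eventual socle bookkeeping avoids relying on this claim (by producing the decomposition $X^C=Z\oplus Y$ with $\Img\alpha_X\subseteq Z$ directly), so the final argument stands --- but the motivating step should be stated more carefully.
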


\begin{proof}
The condition on $M$ means that $X^{B}\otimes_{B} M \in \add(S^{C}(1))$ since $(X^{B}\otimes_{B} M)e_{u}=0$ for each $u\in(Q_{C})_{0}\setminus\{1\}$.   
Since $\alpha\neq 0$ by the indecomposability of $X$, we have a non-zero submodule $\alpha(X^{B}\otimes_{B} M)$ of $X^{C}$ that lies in $\add(S^{C}(1))$. Since $1$ being a source of $Q_{C}$ means that $\Ext_{C}^{1}(S^{C}(u),S^{C}(1))=0$ for all $u\in (Q_{C})_{0}$, we have $X^{C}=\mathrm{Im}(\alpha)\oplus Y$ for $Y=X^{C}/\mathrm{Im}(\alpha)$. 
If $Y\neq 0$, then we have $X=[X^{B}, \mathrm{Im}(\alpha)]_{\alpha}\oplus [0,Y]$. Since $X$ is indecomposable, we obtain $Y=0$. Thus, $X^{C}=\mathrm{Im}(\alpha)\in\add(S^{C}(1))$ as required.
\end{proof}

\begin{proposition}\label{lem:mutate-corner}
Consider $T\in \tilt A$ with $[0,U]\in \add T$ and $U\in \tilt C$.
Then, for any $X\in \add U$ such that $\mu_{X}(U)\in\tilt C$, we have $\mu_{[0,X]}(T) = T/[0,U] \oplus [0,\mu_{X}(U)]\in \tilt A$. 
\end{proposition}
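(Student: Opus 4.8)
The plan is to make the left approximation defining $\mu_{[0,X]}(T)$ explicit and then apply Proposition \ref{prop:tilting-mutation}. Write $U = X \oplus W$ with $\add X \cap \add W = 0$, and let $g \colon X \to W'$ be a minimal left $\add W$-approximation; by hypothesis $g$ is injective and $\mu_X(U) = \Cok g \oplus W$. Since $[0,U] = [0,X] \oplus [0,W]$ is a direct summand of the basic module $T$, we may write $T = [0,X] \oplus [0,W] \oplus R$, so that $T/[0,X] = [0,W] \oplus R$ and $T/[0,U] = R$. The crux of the proof will be the claim that $[0,g] \colon [0,X] \to [0,W']$ is a minimal, injective left $\add(T/[0,X])$-approximation. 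Granting this, Proposition \ref{prop:tilting-mutation}(1) and (2), together with the exactness of the fully faithful functor $N \mapsto [0,N]$, yield $\mu_{[0,X]}(T) = \Cok[0,g] \oplus (T/[0,X]) = [0,\Cok g] \oplus [0,W] \oplus R = [0,\mu_X(U)] \oplus (T/[0,U])$, which is then a tilting $A$-module, as desired. Minimality and injectivity of $[0,g]$ are immediate: $\End_A([0,W']) \cong \End_C(W')$ and $[0,-]$ is exact with $g$ injective, while $[0,W'] \in \add[0,W] \subseteq \add(T/[0,X])$ because $W' \in \add W$.

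To establish the claim I would first record the elementary identity $\Hom_A([0,N], [Y^B, Y^C]_\alpha) \cong \Hom_C(N, Y^C)$ for any $C$-module $N$ and $A$-module $[Y^B, Y^C]_\alpha$, the compatibility condition on a morphism out of $[0,N]$ being vacuous. Since approximations can be checked on indecomposable summands, it then suffices to show, for each indecomposable direct summand $Z = [Z^B, Z^C]_\alpha$ of $T/[0,X]$, that $\Hom_C(g, Z^C) \colon \Hom_C(W', Z^C) \to \Hom_C(X, Z^C)$ is surjective. I would split into three cases. If $Z^C = 0$ there is nothing to check. If $Z^B \neq 0$, then $Z^C \in \add S^C(1)$ by Lemma \ref{lem:indec-upper-tri}; since $1$ is a source of $Q_C$, the simple $S^C(1)$ is injective over $C$ (indeed $I^C(1) = S^C(1)$), so applying $\Hom_C(-, Z^C)$ to $0 \to X \xrightarrow{g} W' \to \Cok g \to 0$ yields the surjectivity. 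The remaining case $Z^B = 0$, $Z^C \neq 0$ — so that $Z = [0,Z^C]$ with $Z^C$ an indecomposable $C$-module — reduces to showing $Z^C \in \add W$, after which the surjectivity of $\Hom_C(g, Z^C)$ is just the defining property of the left $\add W$-approximation $g$.

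The main obstacle is thus the sub-claim that every indecomposable summand $[0,Y]$ of $T$ with $Y$ a $C$-module lies in $\add[0,U]$; combined with $[0,Y] \not\cong [0,X]$ (valid here since $Z$ is a summand of $T/[0,X]$ and $T$ is basic) and full faithfulness of $[0,-]$, this gives $Z^C = Y \in \add W$. To prove the sub-claim I would use, as recalled in the excerpt, that $AeA = eA \in \proj A$ is a stratifying ideal, so $[0,-]$ is exact, sends $P^C(x)$ to the projective $P_A(x) = [0,P^C(x)]$, and induces isomorphisms $\Ext^k_A([0,V],[0,V']) \cong \Ext^k_C(V,V')$ for all $k \geq 0$ and all $C$-modules $V, V'$ (Lemmas \ref{lem:str-idemp} and \ref{lem:proj-stridemp}). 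Hence $Y \oplus U$ is a rigid $C$-module, and $\pdim_C Y \leq \pdim_A [0,Y] < \infty$ since $[0,Y]$ is a summand of the tilting module $T$; therefore $Y \oplus U$ is pretilting over $C$. As $C$, being a linear Nakayama algebra, is representation-finite, Proposition \ref{prop:property-tilting}(2) and (3) force $|Y \oplus U| \leq |C| = |U|$, so $Y \in \add U$ because $U$ is basic. With the sub-claim in hand, the three cases above complete the verification that $[0,g]$ is the required approximation, and the proof concludes as in the first paragraph.
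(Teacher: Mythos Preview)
Your proof is correct and proceeds in the opposite direction from the paper's. The paper starts with an abstract minimal left $\add(T/[0,X])$-approximation $f=[0,f^C]\colon[0,X]\to[Y^B,Y^C]_\alpha$ and argues that $Y^B=0$: if some indecomposable summand $[Z^B,Z^C]_\beta$ had $Z^B\neq 0$, then by Lemma~\ref{lem:indec-upper-tri} and minimality the nonzero component $X\to Z^C\in\add S^C(1)$ would force $[\top X:S^C(1)]\neq 0$, hence $X$ would have an injective summand (any indecomposable $C$-module with top $S^C(1)$ being injective in a linear Nakayama algebra), contradicting mutability. Having established $Y^B=0$, the paper then identifies $f^C$ with a minimal left $\add(U/X)$-approximation, using exactly your sub-claim that any $[0,Y]\in\add T$ has $Y\in\add U$.

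You instead construct the candidate $[0,g]$ explicitly and verify the approximation property directly on indecomposable summands $Z$ of $T/[0,X]$. Both proofs share the same two pillars: Lemma~\ref{lem:indec-upper-tri} and the sub-claim (proved identically via the stratifying ideal $AeA$ and the rank bound for pretilting modules over the representation-finite algebra $C$). The genuine difference is in the treatment of the case $Z^B\neq 0$: the paper rules it out by a second appeal to mutability, whereas you show it is harmless because $S^C(1)=I^C(1)$ is injective, so $\Hom_C(-,Z^C)$ applied to the short exact sequence $0\to X\xrightarrow{g} W'\to\Cok g\to 0$ already gives surjectivity. Your route therefore invokes mutability only once (for injectivity of $g$), and has the minor advantage of producing the approximation constructively.
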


\begin{proof}
Let $f=[0,f^{C}]:[0,X]\rightarrow [Y^{B}, Y^{C}]_{\alpha}$ be a minimal left $\add(T/[0,X])$-approximation. 
We want to show that $Y^{B}=0$.
Suppose to the contrary that $Y^{B}\neq 0$.
Then there exists an indecomposable direct summand $[Z^{B},Z^{C}]_{\beta}$ with $Z^{B}\neq 0$.
Note that $Z^{C}\neq 0$ as $f$ is minimal.
By Lemma \ref{lem:indec-upper-tri}, we must have $Z^{C}\in\add(S^{C}(1))$.  
Hence, $f^{C}\neq 0$ means that we have $[\top X: S^{C}(1)]\neq 0$.
Since $C$ is a Nakayama algebra, there is an indecomposable direct summand $X'$ of $X$ such that $\top X' = S^{C}(1)$.
Since $C$ is linear Nakayama, such an $X'$ is injective, which means that $X$ cannot be mutable. This contradicts the mutability assumption.
Thus, $Z^{B}$ is necessarily zero and so is $Y^{B}$.

Now we have a minimal left $\add(T/[0,X])$-approximation $f=[0,f^{C}]:[0,X]\rightarrow [0,Y^{C}]$ of $[0,X]$.
Note that if $[0,U']$ is a direct summand of $T$, then we have $\Ext_{C}^{k}(U'\oplus U,U'\oplus U) \cong \Ext_A^{k}( [0,U'\oplus U],[0,U'\oplus U])\cong 0$ for all $k>0$.
Since $U$ is a tilting $C$-module, $U'\in\add U$.
Hence, $Y^C\in \add U$ and $[0,f^{C}]$ is a minimal left $\add([0,U]/[0,X])$-approximation of $[0,X]$.
Thus $f^{C}:X\rightarrow Y^{C}$ is a minimal left $\add(U/X)$-approximation.
This implies that $f^{C}$ is injective as $X$ is a mutable direct summand of $U$.  The claim now follows.
\end{proof}

\begin{remark}
Note that it is possible to have $\mu_{[0,X]}(T)\in \tilt A$ while $X$ is not a mutable direct summand of $U$.  It is easy to construct such an example even in the case when $A$ is a quadratic linear Nakayama with $C=\Bbbk \vec{\bbA}_{n}$, one concrete example is when $X=S^C(1)$, $U=\kD C$ and $T=A/[0,C]\oplus [0,\kD C]$.
\end{remark}

\begin{example}\label{eg:mutate-corner-gentle-LNaka}
Let $C$ be a quadratic linear Nakayama algebra.
Since $C$ is representation-finite, every tilting $C$-module can be obtained by iterative mutation starting from $C$ \cite[Corollary 2.2]{HU05}.
If $T=V\oplus [0,C] \in \tilt A$, applying Proposition \ref{lem:mutate-corner} repeatedly, we have $V\oplus[0,U] \in \tilt A$ for all $U\in \tilt C$.
\end{example}

\subsection{Strategy of the proof of Proposition \ref{prop:non-qln-non-ist}}

In the rest of this section, we show Proposition \ref{prop:non-qln-non-ist}.
Assume that the Gabriel quiver $Q$ of $A$ is acyclic and $A$ is non-semisimple and not isomorphic to a quadratic linear Nakayama algebra.
Note that the ring-indecomposable assumption means that $Q$ is connected.

Let $\calS$ be the set of all sinks of $Q$ and decompose $\calS=\calS_{1}\sqcup\calS_{2}$, where $\calS_{1}$ is the subset of $\calS$ consisting of vertices $v$ with valency (i.e. the number of arrows attached to $v$) exactly one and $\calS_{2}$ is the subset of $\calS$ consisting of vertices with valency at least two.
Since $Q$ is acyclic, the set $\calS$ is not empty. 
Note that $P(v)$ is simple if and only if $v\in \calS$.

We will construct a tilting module via a sequence of mutations from $A$:
\begin{align}\label{eq:tilting-mutation}
A \overset{\text{ mutate }}{\leadsto} T_{0} \overset{\text{ mutate }}{\leadsto} T_{1} \overset{\text{ mutate }}{\leadsto} T_{2} \overset{\text{ mutate }}{\leadsto} \cdots \overset{\text{ mutate }}{\leadsto} T_{|\calS_{1}|},
\end{align}
where $T_{0}$ will eliminate all $S(v)$ for $v\in \calS_{2}$, and each $T_{a}$ for $a>0$ will eliminate a single simple direct summand $S(v)$ for $v\in \calS_{1}$ from $T_{a-1}$ while not creating any other new simple direct summand.
In particular, $T_{|\calS_{1}|}$ has no simple direct summands.

We first construct a tilting $A$-module $T_0$.

\begin{lemma}\label{lem:mutate-away-S2}
Let $X:=\bigoplus_{v\in \calS_{2}} P(v)$.
Then $X$ is a mutable direct summand of $A$ and $T_{0}:=\mu_{X}(A)$ is a tilting $A$-module such that $S(v)\in\add T_{0}$ if and only if $v\in\calS_{1}$.
\end{lemma}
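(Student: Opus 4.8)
The plan is to mutate the regular module $A$ at the summand $X=\bigoplus_{v\in\calS_2}P(v)=\bigoplus_{v\in\calS_2}S(v)$, and to control the cokernels of the approximation so that the only simple summands surviving or appearing are exactly the $S(v)$ with $v\in\calS_1$. First I would record that $X$ is basic (the vertices in $\calS_2$ are pairwise distinct) and that $\add X\cap\add(A/X)=0$; to apply Proposition~\ref{prop:tilting-mutation} I must produce an injective left $\add(A/X)$-approximation of $X$. For each $v\in\calS_2$, I would build the minimal left $\add(A/X)$-approximation $f_v\colon S(v)\to E_v$ of $S(v)=P(v)$. The key point is injectivity of $f_v$: since $v$ is a sink of the \emph{acyclic} quiver $Q$, there is at least one arrow $\alpha\colon u\to v$, and since $v$ has valency $\geq 2$ there are at least two such arrows; each arrow $\alpha\colon u\to v$ with source $u\notin\calS_2$ (if $u\in\calS_2$ too we can still route through the radical, but actually a sink is never a source so this needs care) contributes a map to the socle of $P(u)$, hence a nonzero map $S(v)\to P(u)\in\add(A/X)$ when $P(u)$ has $S(v)$ in its socle. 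I would argue that the sum of these gives an injection $S(v)\hookrightarrow\bigoplus P(u_i)$ — equivalently, $S(v)$ is not a summand of $\rad P$ killed by every map to $A/X$ — using that $S(v)$ embeds into its injective envelope which, $v$ being a sink, is built from projectives $P(u)$ with arrows $u\to v$; as long as at least one such $P(u)$ lies in $\add(A/X)$, i.e. $u\notin\calS_2$, we get injectivity. The genuinely delicate case is when \emph{all} neighbours $u$ of $v$ lie in $\calS_2$; but a sink cannot be a source, so a neighbour $u$ of the sink $v$ is not itself required to be a sink, and I would need a short separate argument (using acyclicity and connectedness, and possibly treating the component spanned by $\calS_2$) to see this cannot obstruct injectivity, or else absorb such $v$ into the $T_0$-construction differently.

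Granting injectivity, Proposition~\ref{prop:tilting-mutation}(1)--(2) gives that $T_0:=\mu_X(A)=\Cok f\oplus(A/X)$ is a basic tilting module with $|T_0|=|A|$, where $f=\bigoplus_{v\in\calS_2}f_v$. It remains to identify the simple direct summands of $T_0$. The summands coming from $A/X=\bigoplus_{w\notin\calS_2}P(w)$ are simple exactly for $w\in\calS_1$ (since $P(w)$ is simple iff $w$ is a sink, and the sinks outside $\calS_2$ are precisely $\calS_1$). So I must show $\Cok f=\bigoplus_{v\in\calS_2}\Cok f_v$ has no simple direct summand. Here I would use that $\Cok f_v$ sits in $0\to S(v)\to E_v\to\Cok f_v\to 0$ with $E_v\in\add(A/X)$ having all indecomposable summands $P(u)$ \emph{non-simple} (as $u$ is a source of an arrow into $v$, hence not a sink, so $P(u)$ is non-simple): then $\top\Cok f_v=\top E_v$ is a sum of tops of non-simple projectives, and a simple summand $S$ of $\Cok f_v$ would force $S=P(u)/\rad^{\geq 1}$ to split off, contradicting that $E_v\to\Cok f_v$ is a projective cover with no simple summand in $E_v$; more carefully, a simple direct summand $S(w)$ of $\Cok f_v$ would have to have $w$ a sink with $S(w)$ in $\top E_v$, forcing some non-simple $P(u)$ to have simple top $S(w)$ with $w$ a sink — impossible unless $P(u)=S(u)$, i.e. $u$ a sink, contradiction. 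I would also double-check no $S(v)$ with $v\in\calS_2$ re-appears: by Lemma~\ref{lem:IST}-type bookkeeping on composition factors, $[\Cok f_v:S(v)]=[E_v:S(v)]-1$, and if $[E_v:S(v)]=1$ then $S(v)$ is no longer a top constituent — I would verify $[E_v:S(v)]=1$ from minimality of $f_v$, or simply note any $S(v)$-summand of $\Cok f$ would contradict $|T_0|=|A|$ combined with the $A/X$ part already accounting for the summands indexed by $\Lambda\setminus\calS_2$.

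The main obstacle I anticipate is precisely the injectivity of the approximation $f_v$ in the degenerate configuration where every neighbour of the sink $v\in\calS_2$ is again a sink (hence also a source of no arrows, forcing $Q$ to be disconnected or a single $\mathbb{A}_2$, contradicting the standing hypotheses once we exclude the Nakayama cases) — I would want to dispatch this by a clean case analysis on $Q$ rather than a computation, leveraging that $Q$ is connected, acyclic, non-semisimple, and not $\vec{\mathbb{A}}_n$. A secondary technical point is ensuring the chosen approximation is \emph{minimal} so that Proposition~\ref{prop:tilting-mutation}(2) applies and $\Cok f$ is basic with $|\Cok f|=|X|$; this is automatic by passing to a minimal version of $f_v$. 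Once these are in place, the counting of simple summands via tops of projective covers is routine and I would not spell out every composition-factor multiplicity.
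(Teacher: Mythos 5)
Your proposal follows essentially the same route as the paper's proof, and the essential steps are all present: mutate at $X=\bigoplus_{v\in\calS_2}P(v)$ using the left-multiplication map $\psi_v\colon P(v)=S(v)\to\bigoplus_{\alpha\colon u\to v}P(u)$, note that the target lies in $\add(A/X)$, and show the cokernels are non-simple. However, the "main obstacle" you anticipate --- the "genuinely delicate case" in which \emph{all} neighbours of $v\in\calS_2$ again lie in $\calS_2$ --- is a phantom. The sources of arrows $u\to v$ always have an outgoing arrow, namely that arrow, so \emph{no} such $u$ is a sink; hence $u\notin\calS\supseteq\calS_2$ automatically, with no case analysis, connectedness argument, or component analysis required. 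You half-notice this ("a sink cannot be a source") but do not carry the observation to its immediate conclusion, and instead plan a detour through injective envelopes and a separate case analysis that the paper's proof never needs.

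Two further points of unnecessary complication. First, injectivity of $\psi_v$ is immediate because $P(v)=S(v)$ is simple and $\psi_v\neq 0$ (there is at least one arrow into $v$); the injective-envelope reasoning is overkill. Second, your bookkeeping on tops and composition-factor multiplicities to exclude simple summands of $\Cok f$ is more elaborate than necessary: the paper simply observes that each $\Cok\psi_v$ is indecomposable by Proposition~\ref{prop:tilting-mutation}(2) and has length $\geq 2$ (as $v$ has valency $\geq 2$, so the middle term has dimension $\geq 4$, giving $\dim\Cok\psi_v\geq 3$), hence is non-simple, so $\Cok f=\bigoplus_v\Cok\psi_v$ contributes no simple summands at all. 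This cleanly dispatches both the new simples and the possible re-appearance of some $S(w)$ with $w\in\calS_2$ without any separate counting argument. Your overall plan is sound, but trimming the phantom case and replacing the multiplicity bookkeeping with the indecomposability-plus-length argument yields the paper's shorter proof.
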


\begin{proof}
For each $v\in\calS_{2}$, the map $\psi_{v} =  (\alpha\cdot -)_{\alpha} : P(v)\rightarrow \bigoplus_{(u\xrightarrow{\alpha} v)\in Q_{1}}P(u)$
is an injective minimal left $\add(A/P(v))$-approximation of $P(v)=S(v)$.
By $\bigoplus_{(u\xrightarrow{\alpha} v)\in Q_{1}}P(u)\in \add(A/X)$, the map $\psi_{v}$ is also a minimal left $\add(A/X)$-approximation of $P(v)$.
Then $\bigoplus_{v\in\calS_{2}}\psi_{v}$ is an injective minimal left $\add(A/X)$-approximation of $X$.

For each $v\in\calS_{2}$, the cokernel $\Cok\psi_{v}$ is not simple since there exist at least two arrows ending at $v$ and $\Cok \psi_{v}$ is indecomposable by Lemma \ref{prop:tilting-mutation}(2).
Hence, $\Cok \left( \bigoplus_{v\in\calS_{2}}\psi_{v}\right) = \bigoplus_{v\in\calS_{2}}\Cok\psi_{v}$ has no simple direct summands.
This implies that $S(v)\in\add T_{0}$ if and only if $v\in\calS_{1}$.
\end{proof}

In the following example, a dashed line connecting the starting and ending arrows of the path $p$ is used to indicate that $p$ is a defining relation.

\begin{example}\label{eg:mut-T-1}
We consider the following quiver with relations
\[
\begin{tikzcd}
 4 & 3 \ar[l] & 2 \ar[l,""{name=a23}] & 1 \ar[l,""{name=a12}] \ar[r,""{name=a15}]  \ar[ld] \ar[d]   & 5 \ar[r,""{name=a56}]  & 6 \ar[r,""{name=a67}] & 7 \\
&&8  & 9 \ar[l] \ar[r,""{name=a910}] \ar[rd] \ar[d]  & 13 \ar[r]  & 14 \ar[r,""{name=a1112}] & 15 \ar[r, ""{name=a1516}] & 16 \\
 &&& 10 \ar[r] & 11 \ar[r] & 12 .
\ar[dashed, no head, bend left, from=a12, to=a23]
\ar[dashed, no head, bend left, from=a15, to=a56]
\ar[dashed, no head, bend left, from=a56, to=a67]
\ar[dashed, no head, bend left, from=a910, to=a1112]
\ar[dashed, no head, bend left, from=a1112, to=a1516]
\end{tikzcd}
\]
Then we have $\calS_{1}=\{4, 7, 12, 16\}$, $\calS_{2} =\{8\}$, and $T_{0} = (A/P(8)) \oplus C(8)$ with $C(8)=\Cok(P(8) \rightarrow P(1)\oplus P(9))$, where $P(8) \rightarrow P(1)$ is a morphism obtained by multiplying the arrow from $1$ to $8$.
\end{example}

Observe that for each $v\in \calS_{1}$, there is a full subquiver $\Gamma^{v}$ satisfying the following properties.

\begin{itemize}
\item[(i)] $\Gamma^{v}\cong \Vec{\bbA}_{n}$ for some $n\geq 2$.
Then we can label the vertices and arrows of $\Gamma^{v}$ as follows:
\begin{align}
1^{v}\xrightarrow{\alpha_{1}^{v}} 2^{v} \xrightarrow{\alpha_{2}^{v}} \cdots \rightarrow (n-1)^{v} \xrightarrow{\alpha_{n-1}^{v}} n^{v}=v.\notag
\end{align}
\item[(ii)] The valency of $j^{v}$ in $Q$ is two for all $1<j<n$ and that of $n^{v}=v$ in $Q$ is one.
\item[(iii)] $eIe$ is zero or generated by paths of length two for the idempotent $e:=e_{1^{v}}+e_{2^{v}}+\cdots +e_{n^{v}}\in \Bbbk Q$.
\item[(iv)] $\Gamma^{v}$ is maximal with respect to the properties (i), (ii), and (iii), that is, if $\Gamma'$ is a full subquiver of $Q$ satisfying (i), (ii) and (iii), then $\Gamma'\subseteq\Gamma^{v}$. 
\end{itemize}

To keep track of which simple modules are eliminated from the tilting module at hand, we consider the following subset of $\Lambda$.
\begin{align}
\Lambda' &:= \left\{1^w\mid \exists \alpha\neq\beta \in Q_{1} : s(\alpha)=1^w=s(\beta), w\in \calS_{1}\right\} \cup \left(\Lambda\setminus \Bigl(\calS_{2} \cup \bigcup_{u\in\calS_1} \Gamma^{u}_{0}\Bigr) \right).\notag
\end{align}
Note that $\Gamma^{v}_{0}\cap \Gamma^{w}_{0}\subseteq \Lambda'$ for $v\neq w$.

\begin{example}\label{eg:mut-T-Gamma-sets}
We continue with Example \ref{eg:mut-T-1}.
For each $v\in\calS_1=\{4,7,12,16\}$, the quiver $\Gamma^{v}$ is the following:
\begin{align}
\begin{array}{lll}
\Gamma^4 : \begin{tikzcd}[ampersand replacement=\&]
1 \ar[r,""{name=a12}]  \& 2 \ar[r,""{name=a23}]  \& 3 \ar[r,""{name=a34}] \& 4
\ar[dashed, no head, bend left, from=a12, to=a23]
\end{tikzcd},
&&
\Gamma^7 : \begin{tikzcd}[ampersand replacement=\&]
 1 \ar[r,""{name=a15}]  \& 5 \ar[r,""{name=a56}]  \& 6 \ar[r,""{name=a67}] \& 7
\ar[dashed, no head, bend left, from=a15, to=a56]
\ar[dashed, no head, bend left, from=a56, to=a67]
\end{tikzcd},
\\
\Gamma^{12} : \begin{tikzcd}[ampersand replacement=\&]
 11 \ar[r] \& 12 
\end{tikzcd},
&&
\Gamma^{16} : \begin{tikzcd}[ampersand replacement=\&]
13 \ar[r] \& 14 \ar[r,""{name=a14}] \& 15 \ar[r,""{name=a15}] \& 16
\ar[dashed, no head, bend left, from=a14, to=a15]
\end{tikzcd}.
\end{array}\notag
\end{align}
We have $\Lambda' = \{1, 9, 10\}$.
\end{example}

As mentioned after \eqref{eq:tilting-mutation}, for each $a\ge 1$, we will construct from a tilting $A$-module $T_{a-1}$ a new tilting $A$-module $T_a$ with fewer simple direct summands.
To construct $T_a$, we consider the following setup.
For an $A$-module $M$, let
\begin{align*}
\calS(M) := \{ u\in \Lambda \mid  S(u) \in \add M\}.
\end{align*}

\begin{definition}
We say that an $A$-module $M$ \defn{satisfies (S)} if all of the following hold.
\begin{itemize}
\item[(S1)] $\calS(M) \subseteq \calS_{1}$.
\item[(S2)] $P(u)\in \add M$ for all $u\in\Lambda'$.
\item[(S3)] If $w\in\calS(M)\cap \calS_{1}$, then $P(u)\in \add M$ for all $u\in \Gamma^{w}_{0}$.
\end{itemize}
\end{definition}

\begin{lemma}\label{T0-condition}
The module $T_{0}$ of Lemma \ref{lem:mutate-away-S2} satisfies (S) with $\calS(T_{0})=\calS_{1}$.
\end{lemma}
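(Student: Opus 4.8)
The plan is to unwind the explicit description of $T_{0}$ coming from Lemma~\ref{lem:mutate-away-S2} and then verify each of (S1), (S2), (S3) by a short combinatorial argument about which vertices of $Q$ can lie in $\calS_{2}$. First, recall from Lemma~\ref{lem:mutate-away-S2} that $T_{0}=\mu_{X}(A)=(A/X)\oplus\bigoplus_{v\in\calS_{2}}\Cok\psi_{v}$ with $X=\bigoplus_{v\in\calS_{2}}P(v)$, and that $S(v)\in\add T_{0}$ if and only if $v\in\calS_{1}$. The latter immediately gives $\calS(T_{0})=\calS_{1}$, and hence (S1). Since $A/X=\bigoplus_{u\in\Lambda\setminus\calS_{2}}P(u)$ is a direct summand of $T_{0}$, the key observation I would use for both (S2) and (S3) is that $P(u)\in\add T_{0}$ for every $u\in\Lambda\setminus\calS_{2}$; thus it suffices to show that $\Lambda'\subseteq\Lambda\setminus\calS_{2}$ and, for each $w\in\calS_{1}$, that $\Gamma^{w}_{0}\subseteq\Lambda\setminus\calS_{2}$.

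For the first inclusion I would argue directly from the definition of $\Lambda'$: its second piece $\Lambda\setminus(\calS_{2}\cup\bigcup_{u\in\calS_{1}}\Gamma^{u}_{0})$ is visibly disjoint from $\calS_{2}$, while any vertex $1^{w}$ in the first piece is the source of the arrow $\alpha_{1}^{w}$ of $\Gamma^{w}$, hence not a sink of $Q$, so $1^{w}\notin\calS\supseteq\calS_{2}$. For the second inclusion, writing $\Gamma^{w}\colon 1^{w}\xrightarrow{\alpha_{1}^{w}}\cdots\xrightarrow{\alpha_{n-1}^{w}}n^{w}=w$, every vertex $j^{w}$ with $j<n$ is the source of $\alpha_{j}^{w}$ and so is not a sink, while $n^{w}=w\in\calS_{1}$, which is disjoint from $\calS_{2}$; hence no vertex of $\Gamma^{w}$ lies in $\calS_{2}$. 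Combining, $P(u)\in\add(A/X)\subseteq\add T_{0}$ for all $u\in\Lambda'$ and for all $u\in\bigcup_{w\in\calS_{1}}\Gamma^{w}_{0}$, which is exactly (S2) and (S3) (using $\calS(T_{0})\cap\calS_{1}=\calS_{1}$).

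There is essentially no hard step here: the statement is a bookkeeping consequence of the construction of $T_{0}$ together with the defining properties (i)--(ii) of the subquivers $\Gamma^{w}$. The only point that requires a moment's care -- and the one I would make sure to state explicitly -- is that the ``candidate'' vertices appearing in $\Lambda'$ and in the $\Gamma^{w}$ (namely the sources $1^{w}$ and the internal vertices $j^{w}$) all fail to be sinks of $Q$, so they cannot be absorbed into $X=\bigoplus_{v\in\calS_{2}}P(v)$ during the mutation.
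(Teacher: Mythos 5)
Your proof is correct and follows essentially the same route as the paper's: read off from $\mu_{X}(A)=(A/X)\oplus\bigoplus_{v\in\calS_{2}}\Cok\psi_{v}$ that $P(u)\in\add T_{0}$ for all $u\notin\calS_{2}$, and then check that $\Lambda'$ and each $\Gamma^{w}_{0}$ avoid $\calS_{2}$. The only difference is cosmetic: you spell out explicitly why the vertices $1^{w}$ and the internal vertices $j^{w}$ are not sinks, whereas the paper asserts these disjointness facts without elaboration.
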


\begin{proof}
Recall that $T_{0}=\mu_{X}(A)$ with $X=\bigoplus_{v\in \calS_{2}}P(v)$.
By Lemma \ref{lem:mutate-away-S2}, $\calS(T_{0})=\calS_{1}$ holds, which clearly implies (S1).
For $u\in\Lambda$, $P(u)\in\add T_{0}$ if and only if $u\not\in\calS_{2}$.
It is clear that $1^w\not\in\calS_{2}$ for any $w\in\calS_{1}$.
Therefore $T_0$ satisfies (S2).
Moreover, by $\calS_{2} \cap \bigcup_{w\in\calS_{1}}\Gamma_{0}^{w} = \emptyset$, $T_{0}$ satisfies (S3).
\end{proof}

Assume that we are given a tilting module $T_{a-1}$ satisfying $({\rm S})$, and fix $v\in\calS(T_{a-1})$.
Our task is to construct a sequence of mutations
\begin{align}
T:=T_{a-1} \overset{\text{ mutate }}{\leadsto} T' \overset{\text{ mutate }}{\leadsto} T'' 
\end{align}
so that $T''$ satisfies ({\rm S}) and $\calS(T'')=\calS(T_{a-1})\setminus\{v\}$ holds (see Lemmas \ref{lem-T''-conditon(i)(ii)} and \ref{lem:T''-conditon(iii)}).
In particular, we can take $T_a:=T''$ and repeat this simple-eliminating procedure until reaching $a=|\calS_{1}|$; in which case, ({\rm S1}) and $\calS(T_{0})=\calS_{1}$ say that $\calS(T_{|\calS_{1}|})=\emptyset$, and so the final tilting module has no simple direct summand as desired.

\subsection{Construction of \texorpdfstring{$T'$}{T'}}\label{subsec:mutate-corner}

Let $T$ be a tilting $A$-module satisfying $({\rm S})$, and fix $v\in\calS(T)$.
We may drop the superscript $v$ in the notation of the quiver $\Gamma^v$, as well as its vertices and arrows, whenever there is no confusion.

Consider $e:=\sum_{x \in \Gamma_{0}}e_{x}$ and
\begin{align}
m:=\begin{cases}
n, & \text{if $eAe=\Bbbk \vec{\bbA}_{n}$}; \\
\min \relvx(eAe), & \text{otherwise}.
\end{cases}\notag
\end{align}
Note that $m>1$ always.
Intuitively, $m$ marks the `first' relation that appears in $\Gamma$.
By (i) and (ii) of $\Gamma$, we can write 
\begin{align}\label{eq:triangular}
 A =  \begin{bmatrix}
\e' A \e' & \e' A\e \\ 0 & \e A\e
\end{bmatrix}=:\begin{bmatrix}
B & M \\ 0 & C\end{bmatrix},
\end{align}
where $\e=\sum_{i=m}^{n}e_{i}$ and $\e'=1_{A} - \e$.
By (S3) for $T$, we have $P(j)=[0,P^{C}(j)]\in \add T$ for all $j\in [m,n]$, which implies that $[0,C]$ is a direct summand of $T$.

\begin{lemma}\label{lem:T-to-T'}
Define 
\begin{align}
T':= [0,\kD C]\oplus T/[0,C].\notag
\end{align}
Then $T'$ is a tilting $A$-module obtained from $T$ via a sequence of mutation, and it satisfies the following properties.
\begin{itemize}
\item[(a)] $\calS(T') = (\calS(T)\setminus\{v\}) \cup \{m\}$.
\item[(b)] $P(u)\in \add T'$ for all $u\in \Lambda'$.
\item[(c)] If $w\in\calS(T')\cap \calS_{1}$, then $P(u)\in \add T'$ for all $u\in \Gamma^{w}_0$.
\item[(d)] $\bigoplus_{i=1}^{m-1} P(i)\oplus [0, \kD C] \in \add T'$.  Moreover, for an indecomposable $U\in\add T'$, if $x\in \Gamma_0^v$ holds whenever $[U:S(x)]\neq 0$, then we have $U\in\add(\bigoplus_{i=1}^{m-1} P(i)\oplus [0, \kD C])$.
\end{itemize}
\end{lemma}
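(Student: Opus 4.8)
The plan is to work with the decomposition $T'=T/[0,C]\oplus[0,\kD C]$ together with the triangular form \eqref{eq:triangular}. First, $T'$ is a tilting module obtained from $T$ by mutation: by (S3) we have $[0,C]\in\add T$; since $C$ is a quadratic linear Nakayama algebra it is representation-finite of finite global dimension, and $\kD C=DC$, being injective, is a cotilting $C$-module, hence a tilting $C$-module by Remark~\ref{rem:tilting/qln}; moreover it is reached from $C$ by a sequence of mutations, as $C$ is representation-finite \cite[Corollary~2.2]{HU05}. Applying Proposition~\ref{lem:mutate-corner} along this sequence (as in Example~\ref{eg:mutate-corner-gentle-LNaka}), each mutation lifts to a mutation of $A$-modules that fixes the summand $T/[0,C]$, so $T'\in\tilt A$, is obtained from $T$ by mutations, and in particular is basic.

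Parts (a)--(c) and the first assertion of (d) are bookkeeping. For (a): as $m^v$ is the source and $n^v=v$ the sink of the quiver of $C$, the only simple summand of $[0,\kD C]$ is $[0,S^{C}(m^v)]=S(m^v)$, and the only simple summand of $[0,C]$ is $[0,P^{C}(v)]=S(v)$; hence $\calS(T')=(\calS(T)\setminus\{v\})\cup\{m^v\}$ (in the degenerate case $m=n$ one has $T'=T$ and $m^v=v$). For (b): $P(u)\in\add T$ for $u\in\Lambda'$ by (S2); each of $m^v,\dots,(n-1)^v$ has a single outgoing arrow in $Q$ and $n^v$ is a sink, so none of $m^v,\dots,n^v$ equals any $1^w$ with two distinct outgoing arrows, and each lies in $\Gamma^v_0$; thus $\{m^v,\dots,n^v\}\cap\Lambda'=\emptyset$, so $P(u)\notin\add[0,C]=\add(\bigoplus_{j=m}^{n}P(j^v))$ and therefore $P(u)\in\add(T/[0,C])\subseteq\add T'$. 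Part (c) is the same argument, using $\Gamma^w_0\cap\Gamma^v_0\subseteq\Lambda'$ for $w\neq v$ to get $\Gamma^w_0\cap\{m^v,\dots,n^v\}=\emptyset$ for $w\in\calS(T)\setminus\{v\}$; with $w=v$ one also gets $P(i^v)\in\add T'$ for $1\le i\le m-1$, which together with $[0,\kD C]\in\add T'$ gives the first assertion of (d).

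For the second assertion of (d), let $U\in\add T'$ be indecomposable with all composition factors at vertices of $\Gamma^v_0$; if $U\in\add[0,\kD C]$ there is nothing to show, so assume $U\in\add(T/[0,C])\subseteq\add T$ and $U\notin\add[0,C]$. Since $\Gamma^v$ is a \emph{full} subquiver of type $\vec{\bbA}_n$ and, by (iii), the relations supported on $\Gamma^v_0$ are quadratic monomial, $U$ is uniserial with composition factors $S(a^v),\dots,S(b^v)$ for some $1\le a\le b\le n$; write $U=M[a^v,b^v]$ (equivalently, by \eqref{eq:triangular} and Lemma~\ref{lem:indec-upper-tri}, $U=[U^B,U^C]_\alpha$ with $U^C\in\add S^{C}(m^v)$). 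I claim $U$ is projective over $A$. If $a\ge m$, then $U$ is a $C$-module $[0,Z]$; the $C$-type summands of $T$ form a pretilting $C$-module containing $C$, hence equal to $C$ (as $C$ is representation-finite, by Proposition~\ref{prop:property-tilting}(3) and a rank count), so $U\in\add[0,C]$, a contradiction. Thus $a\le m-1$; if $U$ were non-projective then $b<m$, and $\Omega_A M[a^v,b^v]$ has $M[(b+1)^v,m^v]$ as a direct summand, which is $P((b+1)^v)$ when $b\le m-2$ and $S(m^v)$ when $b=m-1$. Using $\Hom_A(P(a^v),P((b+1)^v))=e_{a^v}P((b+1)^v)=0$ (since $a<b+1$) one gets $\Ext^1_A(U,P((b+1)^v))\neq0$ when $b\le m-2$; when $b=m-1$ and $a\le m-2$, using $\Hom_A(S(m^v),P((a+1)^v))\cong\Bbbk$ and $\Hom_A(P(a^v),P((a+1)^v))=0$ one gets $\Ext^1_A(U,P((a+1)^v))\neq0$; and the remaining possibility $U=S((m-1)^v)$ is excluded by (S1), as $(m-1)^v$ is not a sink. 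In every case the relevant projective belongs to $\add T$ by (S3) with $w=v$, contradicting $\Ext^{>0}_A(T,T)=0$. Hence $U$ is projective; being supported on $\Gamma^v_0$ it equals some $P(j^v)$, and $U\notin\add[0,C]$ forces $U\in\add(\bigoplus_{i=1}^{m-1}P(i^v))$, as required.

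The main obstacle is the projectivity claim in the previous step, specifically the case where the source vertex $1^v$ of $\Gamma^v$ carries additional arrows leaving $\Gamma^v$: then $P(1^v)$ is not supported inside $\Gamma^v_0$, whereas $M[1^v,m^v]$ is a rigid indecomposable with composition factors in $\Gamma^v_0$ that is \emph{not} in $\add(\bigoplus_{i=1}^{m-1}P(i^v)\oplus[0,\kD C])$, so $M[1^v,m^v]\in\add T$ must be ruled out separately. Here the key point is that, by acyclicity of $Q$ and because the interior and terminal vertices of $\Gamma^v$ receive no arrows from outside $\Gamma^v$, the syzygy $\Omega_A M[1^v,m^v]$ is supported entirely outside $\Gamma^v_0$; combining this with the abundance of projectives in $\add T$ forced by (S) and with $T$ being Ext-projective in $\operatorname{Gen} T$ produces the required contradiction. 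Everything else reduces to routine verification using the decomposition of $T'$ and the combinatorics of $\Lambda'$ and the subquivers $\Gamma^w$.
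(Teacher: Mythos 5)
Your proof tracks the paper closely for the tilting-via-mutation step and for parts (a), (b), (c) and the first assertion of (d): the arguments are essentially the same bookkeeping (using Proposition~\ref{lem:mutate-corner} iteratively as in Example~\ref{eg:mutate-corner-gentle-LNaka}, and the disjointness of $\{m^v,\dots,n^v\}$ from $\Lambda'$ and from $\Gamma^w_0$ for $w\neq v$). Where you diverge is the second assertion of (d). The paper does \emph{not} try to classify the indecomposables supported on $\Gamma^v_0$ and show they are projective; instead it uses that the dimension vectors of the indecomposable direct summands of a tilting module form a $\Z$-basis of $K_0(\mod A)$, argues that the $n$ summands of $\bigoplus_{i=1}^{m-1}P(i)\oplus[0,\kD C]$ already account for the sublattice $\langle[S(1^v)],\dots,[S(n^v)]\rangle_{\Z}$, and concludes by linear independence that no further summand can have all its composition factors inside $\Gamma^v_0$. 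Your route -- reduce to uniserial modules $M[a^v,b^v]$ over the truncation $eAe$, split into $a\ge m$ (handled by a rank count on pretilting $C$-modules, which is fine) and $a\le m-1$, and then try to prove $U$ is projective -- is a genuinely different, more module-theoretic argument.

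The gap is concrete and you name it yourself: when $1^v$ carries more than one outgoing arrow, the $A$-module $M[1^v,m^v]$ is \emph{not} projective, and your case analysis no longer produces an $\Ext^1$-obstruction from a projective in $\add T$. The final paragraph, invoking that $\Omega_A M[1^v,m^v]$ is supported outside $\Gamma^v_0$ together with ``the abundance of projectives forced by $(\mathrm{S})$'' and ``Ext-projectivity in $\operatorname{Gen}T$'', is only a gesture at an argument, not a proof: the syzygy being supported outside $\Gamma^v_0$ shows $\Ext^1_A(M[1^v,m^v],X)=0$ for $X$ supported \emph{inside} $\Gamma^v_0$, which is the wrong direction (it makes $M[1^v,m^v]$ \emph{more} compatible with the summands you know are present, not less), and $T$ is a Miyashita tilting module, not necessarily of projective dimension one, so ``Ext-projective in $\operatorname{Gen}T$'' is not directly available. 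As written, the case $|\mathrm{ds}_{1^v}|>1$ is unresolved, and it is precisely the case your projectivity strategy cannot reach; the paper sidesteps it entirely via the Grothendieck group basis rather than attempting to exhibit an $\Ext$-obstruction.
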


\begin{proof}
Note that $M$ in (\ref{eq:triangular}) satisfies the condition of subsection \ref{subsec:coext-by-LNA} since $Me_j=0$ for $j\in[m+1, n]$ by the definition of $m$. Hence, $T'$ can be obtained by repeatedly applying Proposition \ref{lem:mutate-corner}, as demonstrated in Example \ref{eg:mutate-corner-gentle-LNaka}.

Since $[0,C]$ (respectively, $[0,\kD C]$) has only one simple module $S(n)=S(v)$ (respectively, $S(m)$) as a direct summand, we have (a).

We show (b) and (c). If $u$ is in $\Lambda'$ or $\Gamma^{w}_{0}$ ($w\neq v$), then $P(u)$ is a direct summand of neither $[0,C]$ nor $[0,\kD C]$. Thus (b) and (c) follow from (S2) and (S3) for $T$ respectively.

Now $m>1$. Hence, $1^{v}\notin (Q_{C})_{0}$ by definition, and so $P(1^{v})\in\add T$ by (S3) for $T$. This implies that $P(1^{v})\in \add T'$, as required.

We show (d). By (S3) for $T$, we have $P(1),\ldots, P(m-1)\in \add(T/[0,C])$. Thus the first statement is immediate from the construction of $T'$.
For the second part, recall that, as $T'$ is a tilting $A$-module, the dimension vectors 
\begin{align}
\{ \mathbf{dim}\, U\mid U\text{ indecomposable direct summand of }T'\}\notag
\end{align}
form a $\Z$-basis of the Grothendieck group $K_{0}(\mod A)$ by Proposition \ref{prop:property-tilting}(2) and $\gldim A<\infty$. 
The dimension vectors of the $n$ indecomposable direct summands of $\bigoplus_{i=1}^{m-1} P(i)\oplus [0, \kD C]$ span the rank-$n$ sublattice $\langle [S(1)], \ldots, [S(n)]\rangle_{\Z}$ of $K_{0}(\mod A)$.
Thus, no other indecomposable direct summand of $T'$ can have composition factor concentrated in $\Gamma_{0}^{v}$.
\end{proof}

\begin{example}\label{eg:mut-T-to-T'}
We continue with Examples \ref{eg:mut-T-1} and \ref{eg:mut-T-Gamma-sets}, where $T:=T_{0}=(A/P(8)) \oplus C(8)$.
For $v=4\in\calS_{1}$, we have $m=2$.
The algebra $C$ inside the upper triangular matrix algebra \eqref{eq:triangular} is the path algebra of $(2 \rightarrow 3 \rightarrow 4)$.
Note that $[0, C]=P(2)\oplus P(3) \oplus S(4)$ and $[0, \kD C] = P(2) \oplus (P(2)/S(4)) \oplus S(2)$. Thus we have
\begin{align*}
T' = \frac{T_0}{[0, C]} \oplus [0, \kD C] = \frac{A}{P(2)\oplus P(3)\oplus P(4)\oplus P(8)} \oplus C(8) \oplus P(2) \oplus \frac{P(2)}{P(4)} \oplus S(2) .
\end{align*}
For other $v\in \calS_{1}$, we have $m$ and $C$ as follows.
\begin{align}
\begin{array}{cccc}
 v & (n, n^{v}) & (m, m^{v}) &  C\\ \hline
 7  & (4,7) & (2,5) &  \Bbbk (5\xrightarrow{a}6\xrightarrow{b}7)/(ab)\\
 12 & (2,12) & (2,12) & \Bbbk \cong \Bbbk(12)\\
 16 & (4,16) & (3,15) &  \Bbbk (15\rightarrow 16)\\
\end{array}\notag
\end{align}
\end{example}

\subsection{Construction of $T_{a} = T''$}

Let $T$ be a tilting $A$-module satisfying (S), and fix $v\in\calS(T)$ as in subsection \ref{subsec:mutate-corner}.
Let $T'$ be the tilting $A$-module constructed from $T$ in Lemma \ref{lem:T-to-T'}.

Let $\mathrm{dp}_{1}$ be the set of \defn{direct predecessors} of the vertex $1$ in $Q$, i.e. all $w\in Q_{0}$ such that $(w\rightarrow 1)\in Q_{1}$; likewise, let $\mathrm{ds}_{1}$ be the set of \defn{direct successors} of the vertex 1 in $Q$.

\begin{lemma}\label{lem:easy-obs}
If $|\mathrm{ds}_{1}|=1$, then $\mathrm{dp}_{1}\subseteq \Lambda'$.
\end{lemma}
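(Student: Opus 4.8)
Write $1 = 1^{v}$ and $n = n_{v}$; the hypothesis $|\mathrm{ds}_{1}| = 1$ says that $\alpha_{1}^{v}\colon 1^{v}\to 2^{v}$ is the unique arrow of $Q$ with source $1^{v}$. The plan is to fix $w\in\mathrm{dp}_{1}$ together with an arrow $\gamma\colon w\to 1^{v}$ and argue by contradiction, assuming $w\notin\Lambda'$. First I would note that $\gamma$ makes $w$ a non-sink, so $w\notin\calS_{2}$, and hence the description of $\Lambda'$ forces $w\in\Gamma^{u}_{0}$ for some $u\in\calS_{1}$; write $w = q^{u}$, where $q < n_{u}$ since $w$ is not a sink whereas $n_{u}^{u} = u$ is. I would then dispose of the case $u = v$: an arrow $q^{v}\to 1^{v}$ between vertices of the full subquiver $\Gamma^{v}\cong\vec{\bbA}_{n}$ would have to be an arrow of $\Gamma^{v}$, but $\vec{\bbA}_{n}$ has no arrow with target $1^{v}$ (one may also simply invoke acyclicity of $Q$). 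So $u\neq v$.

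Next I would pin down $\gamma$. If $q = 1$, then $w = 1^{u}$, and $w\notin\Lambda'$ together with $u\in\calS_{1}$ says $1^{u}$ has a unique outgoing arrow, necessarily $\alpha_{1}^{u}$, so $\gamma = \alpha_{1}^{u}$. If $q > 1$, then property (ii) of $\Gamma^{u}$ gives that $q^{u}$ has valency $2$ in $Q$, with incoming arrow $\alpha_{q-1}^{u}$ and outgoing arrow $\alpha_{q}^{u}$; since $\gamma$ has source $q^{u}$, again $\gamma = \alpha_{q}^{u}$. In either case $1^{v} = (q+1)^{u}$, and since $1^{v}$ is not a sink this forces $q+1 < n_{u}$.

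The heart of the argument is then an induction showing that $\Gamma^{v}$ runs along the corresponding segment of $\Gamma^{u}$: namely $j^{v} = (q+j)^{u}$, with $q+j\le n_{u}$, for all $1\le j\le n$. The base case $j=1$ is the identity just obtained. For the inductive step, given $j^{v} = (q+j)^{u}$ with $j < n$ and $q+j < n_{u}$, one has $1 < q+j < n_{u}$ because $q,j\ge 1$, so property (ii) of $\Gamma^{u}$ makes $(q+j)^{u}$ have valency $2$ with outgoing arrow $\alpha_{q+j}^{u}$; as $\alpha_{j}^{v}$ is an arrow with source $j^{v} = (q+j)^{u}$, it follows that $\alpha_{j}^{v} = \alpha_{q+j}^{u}$, hence $(j+1)^{v} = (q+j+1)^{u}$ and $q+j+1\le n_{u}$ (for $j=1$ this is exactly where $|\mathrm{ds}_{1}|=1$ enters, to identify $\alpha_{1}^{v}$ with that forced outgoing arrow). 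The induction stops at the first $j$ with $q+j = n_{u}$ or $j = n$: in the former case $j^{v} = (q+j)^{u} = u$ is a sink, forcing $j = n$ and $v = u$; in the latter case $v = n^{v} = (q+n)^{u}$ is either an interior vertex of $\Gamma^{u}$, which has valency $2$ and so cannot be the valency-one sink $v\in\calS_{1}$, or else equals $u$. Every branch gives $u = v$ or an outright contradiction, and $u = v$ was already excluded; this contradiction proves $w\in\Lambda'$, as wanted.

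I expect the only genuinely delicate point to be the bookkeeping at the ends of the induction: verifying that whenever property (ii) of $\Gamma^{u}$ is invoked the index really satisfies $1 < q+j < n_{u}$, and cleanly separating the two terminal scenarios ($\Gamma^{v}$ reaching its sink $v$ versus $\Gamma^{u}$ reaching its sink $u$). Everything else is a routine unwinding of the definitions of $\Lambda'$, of $\Gamma^{u}$, and of what it means to be a sink in an acyclic quiver.
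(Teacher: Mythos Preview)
Your proof is correct and follows the same approach as the paper's: both reduce to showing that $w$ cannot sit at an interior vertex of some $\Gamma^{u}$ (with $u\neq v$) by using property (ii) to force $1^{v}$, then $2^{v}$, etc., into $\Gamma^{u}$ until reaching a contradiction with $u\neq v$. The paper compresses this entire walk into the single line ``By (ii) for $\Gamma^{u}$, $w$ cannot be $j^{u}$ for all $j>1$,'' whereas you spell out the induction explicitly; your version is more careful (and in fact the hypothesis $|\mathrm{ds}_{1}|=1$ is not really needed in the inductive step, since valency $2$ at $(q+j)^{u}$ already pins down the unique outgoing arrow).
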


\begin{proof}
Let $w\in \mathrm{dp}_{1}$. Clearly $w\notin \calS_{2}$ holds. If $w\notin \Gamma^{u}_{0}$ for all $u\in \calS_{1}$, then there is nothing to prove.
Assume that $w\in \Gamma_{0}^{u}$ holds for some $u\in \calS_{1}\setminus\{ v\}$. 
By (ii) for $\Gamma^{u}$, $w$ cannot be $j^{u}$ for all $j>1$. Thus $w=1^{u}$.
This implies that there is a subquiver $1^{v} \leftarrow 1^{u} \rightarrow 2^{u}$ in $Q$. Hence $w\in \Lambda'$.
\end{proof}

Define $r\in [1,m]$ as follows. 
\begin{align*}
r:=& 
\begin{cases}
m, & \text{if $|\mathrm{ds}_1|>1$};\\
\max\{ j\in [1,m] \mid \Hom_A(S(j), \bigoplus_{w\in \Lambda\setminus \Gamma_0}P(w))\neq 0\}, & \text{otherwise};
\end{cases}\\
= & \begin{cases}
m, & \text{if $|\mathrm{ds}_1|>1$};\\
\max\{j\in [1,m] \mid S(j)\in \add(\soc \bigoplus_{w\in \mathrm{dp}_{1}} P(w)) \}, & \text{otherwise}.
\end{cases}
\end{align*}
Note that, as $m>1$, $r=1$ only when $|\mathrm{ds}_{1}|=1$; in particular, $r=1$ implies that there is no other $w\in\calS_{1}$ such that $1^{w}=1^{v}$.
Intuitively, $r$ tells us the furthest point that a projective $P(u)\in \add T$ with $u\in\Lambda'$ reaches into $\Gamma^{v}$.

To construct $T''$ from $T'$, we consider three cases.
\begin{itemize}
\item[(I)] $|\mathrm{ds}_{1}|>1$
\item[(II)] $|\mathrm{ds}_{1}|=1$ and $r=m$
\item[(III)] $|\mathrm{ds}_{1}|=1$ and $r<m$
\end{itemize}
In the cases (I) and (II), we construct $T''$ from $T'$ by taking one mutation (Lemma \ref{lem-T''-conditon(i)(ii)}).
In the case (III), we construct $T''$ from $T'$ by taking mutation twice   (Lemmas \ref{lem:mut-T'(iii)} and \ref{lem:T''-conditon(iii)}).

\begin{example}\label{eg:mrcase}
We continue with Examples \ref{eg:mut-T-1}, \ref{eg:mut-T-Gamma-sets} and \ref{eg:mut-T-to-T'}.
For each $v\in\calS_{1}$, we have $r$ and $\mathrm{ds_{1}}$ as follows.
\begin{align}
\begin{array}{ccccc}
 v &  (m, m^{v}) & (r, r^{v}) & \mathrm{ds_{1}} & \mbox{Case} \\ \hline
 4  &  (2,2) & (2,2) & \{2, 5, 8, 9\} & {\rm (I)}\\
 7  &  (2,5) & (2,5) & \{2, 5, 8, 9\} & {\rm (I)}\\
 12 &  (2,12) & (2,12) & \{12\} & {\rm (II)} \\
 16 &  (3,15) & (2,14) & \{14\} & {\rm (III)}\\
\end{array}\notag
\end{align}
\end{example}

We first consider the cases (I) and (II).
Define
\begin{align}
X:=
\begin{cases}
S(m)\oplus \displaystyle\bigoplus_{1<j<m} P(j), & \text{if $|\mathrm{ds}_1|>1$};\\
S(m)\oplus\displaystyle\bigoplus_{1\leq j<m} P(j), & \text{if $|\mathrm{ds}_1|=1$ and $r=m$}.\\
\end{cases}\notag
\end{align}
Note that $X\in \add T'$ by Lemma \ref{lem:T-to-T'}(a) and (d).

\begin{lemma}\label{lem:mut-T'(i)(ii)}
Assume that (I) or (II) hold.
Let $X$ as above.
For any indecomposable module $U\in \add X$, if $\phi:U\rightarrow W$ is a minimal left $\add(T'/X)$-approximation, then $\phi$ is injective with indecomposable non-simple $\Cok\phi$.
\end{lemma}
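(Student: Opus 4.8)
The plan is to analyze the minimal left $\add(T'/X)$-approximation $\phi:U\to W$ explicitly, using the structure of $T'$ recorded in Lemma \ref{lem:T-to-T'}. First I would split into the two cases according to whether $U$ is one of the projectives $P(j)$ with (essentially) $1\le j<m$, or $U=S(m)$. In either case I want to identify $W$ concretely. Since $T'$ contains $\bigoplus_{i=1}^{m-1}P(i)\oplus[0,\kD C]$ as a direct summand (Lemma \ref{lem:T-to-T'}(d)) and $X$ consists exactly of $S(m)$ together with these $P(j)$'s, the complement $T'/X$ consists of the indecomposable summands of $T'$ whose composition factors are \emph{not} concentrated in $\Gamma_0^v$ (again Lemma \ref{lem:T-to-T'}(d)), together with those summands of $[0,\kD C]$ other than its simple top $S(m)$. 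The key point is that $\Gamma^v\cong\Vec{\bbA}_n$ and the relations in $e A e$ start at position $m$, so $C=\e A\e$ is $\Bbbk\Vec{\bbA}_{n-m+1}$ (a path algebra), whence $[0,\kD C]=\bigoplus_{j=m}^{n}[0,P^C(j)/P^C(n)]$ — i.e. its indecomposable summands are the modules $\kD C$-supported on the interval $[m,n]$, with tops $S(m),\dots,S(n)$ respectively.

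Next I would build the approximation. For $U=P(j)$ with $j<m$: the module $P(j)$ has a uniserial quotient supported along $\Gamma^v$ up to the first relation; mutating means replacing $P(j)$ by the minimal left $\add(T'/X)$-approximation, and because $P(j-1)$ (or, for $j$ the appropriate lower endpoint, a projective $P(u)$ with $u\in\Lambda'$, available by Lemma \ref{lem:T-to-T'}(b), or a direct successor in $\mathrm{ds}_1$ in case (I)) lies in $\add(T'/X)$ and receives a map from $P(j)$, one gets $W$ built from such projectives; the cokernel is the standard `short' cokernel familiar from Example \ref{eg:A_n^!}, which is indecomposable by Proposition \ref{prop:tilting-mutation}(2) and non-simple because the arrow(s) into vertex $j$ that are not in $\Gamma^v$, or the extra elements of $\mathrm{ds}_1$ in case (I), force at least two composition factors. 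For $U=S(m)$: here $[0,\kD C]$ contains the uniserial module $N$ with top $S(m)$ and socle $S(m+1)$ (the radical-one quotient of $P^C(m)$), and $0\to S(m)\to N\to S(m+1)\to 0$ realizes a left $\add(T'/X)$-approximation up to completing with the remaining minimal data; its cokernel is $N/S(m)$-type, indecomposable and non-simple as $n>m$ — and if instead $n=m$ one must argue separately that $S(m)$ is not a summand of $X$ in that degenerate situation, or that the case does not arise. Throughout, injectivity of $\phi$ follows because $U$ is not injective (for $U=P(j)$, $j<m\le n$, so $P(j)$ is not the injective $I(v)$; for $U=S(m)$, $S(m)$ is the socle but not all of $N$), so any nonzero map into a module without $U$ as a summand, being part of a minimal approximation, must be mono — more precisely one checks $\Hom_A(U,U)\to\Hom_A(U,W)$ has image meeting the kernel of "$\phi$ factors through a proper quotient" trivially, equivalently the approximation detects the whole socle/top and hence is injective.

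The main obstacle I anticipate is handling the two sub-cases (I) and (II) uniformly while correctly pinning down $W$ at the \emph{lower} endpoint — i.e. understanding which module in $\add(T'/X)$ receives the approximation map from $P(1)$ (in case (II)) or from $P(j)$ at $j=2$ (in case (I)). In case (II), $|\mathrm{ds}_1|=1$ and $r=m$, which by the definition of $r$ means $S(m)\in\soc\bigoplus_{w\in\mathrm{dp}_1}P(w)$, so there is a projective $P(w)$ with $w\in\Lambda'$ (via Lemma \ref{lem:easy-obs}) containing the whole interval $[1,m]$ down to $S(m)$ in its radical; this $P(w)\in\add T'$ by Lemma \ref{lem:T-to-T'}(b) is the target for $P(1)$, and that is exactly why $r=m$ is the hypothesis here. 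In case (I), $|\mathrm{ds}_1|>1$ supplies an extra arrow out of vertex $1^v$, giving the needed second composition factor in $\Cok\phi$ for $U=P(2)$; and vertex $1^v\in\Lambda'$ so $P(1^v)$ itself stays in the tilting module and need not be mutated. I would organize the proof so that the $j$-indexed projectives are treated by a single computation $0\to P(j)\to P(j-1)\oplus(\text{extra})\to\Cok\to 0$ valid for $1<j<m$ (resp.\ $1\le j<m$), with the endpoint and the $S(m)$ case peeled off, invoking Proposition \ref{prop:tilting-mutation}(2) for indecomposability of each cokernel and a short direct argument (two distinct composition factors) for non-simplicity. I would also remark that the exact sequences exhibited show $\phi$ is a genuine left $\add(T'/X)$-approximation and left minimal, using that $T'$ is a tilting module so its summands satisfy the requisite $\Ext$-vanishing.
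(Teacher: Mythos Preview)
Your proposal has several genuine errors that would prevent the argument from going through.

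\textbf{The approximation of $S(m)$ is wrong.} You propose to approximate $U=S(m)$ by an embedding into a non-simple summand $N$ of $[0,\kD C]$ with ``top $S(m)$ and socle $S(m+1)$''. But every indecomposable summand of $[0,\kD C]$ is an injective $C$-module $I^C(j)$, and since $m$ is the \emph{source} of $Q_C$, the only one with socle $S(m)$ is $I^C(m)=S(m)$ itself. All other $I^C(j)$ have socle $S(j)$ with $j>m$, so $\Hom_A(S(m),[0,\kD C]/S(m))=0$. Your short exact sequence $0\to S(m)\to N\to S(m+1)\to 0$ is oriented the wrong way: $S(m)$ is the \emph{top} of $N=\sm{m\\m+1}$, not its socle. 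Thus no map from $S(m)$ lands in the $[0,\kD C]$-part of $T'/X$ at all.

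\textbf{For $U=P(j)$, the proposed target $P(j-1)$ is not available.} You suggest approximating $P(j)$ by $P(j-1)$, but for $1<j<m$ in Case~(I) (and $1\le j<m$ in Case~(II)) the module $P(j-1)$ lies in $\add X$, not in $\add(T'/X)$. So the ``short cokernel familiar from Example~\ref{eg:A_n^!}'' picture does not apply.

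\textbf{Also}, $C$ need not be a path algebra (Example~\ref{eg:mut-T-to-T'} with $v=7$ has $C=\Bbbk(5\to6\to7)/(ab)$), so your description of $[0,\kD C]$ as $\bigoplus_j[0,P^C(j)/P^C(n)]$ is incorrect.

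\textbf{What the paper actually does.} In Case~(I), the single projective $P(1)$ remains in $\add(T'/X)$, and for \emph{every} indecomposable $U\in\add X$ the minimal left approximation is simply the path-multiplication $\phi:U\hookrightarrow P(1)$; then $\Cok\phi$ is a proper quotient of $P(1)$, which is non-simple because $|\mathrm{ds}_1|>1$ makes $P(1)$ non-uniserial. In Case~(II), $P(1)$ is in $X$, so one uses instead the projectives $P(w)$ for $w\in\mathrm{dp}_1\subseteq\Lambda'$ (Lemma~\ref{lem:easy-obs} and Lemma~\ref{lem:T-to-T'}(b)); the hypothesis $r=m$ guarantees that $S(m)\in\add\soc\bigoplus_{w\in\mathrm{dp}_1}P(w)$, which gives injectivity, and non-simplicity of $\Cok\phi$ follows because $[\Cok\phi:S(1)]\neq0\neq[\top\Cok\phi:S(w)]$ for some $w\in\mathrm{dp}_1$ (with the $U=P(1)$ sub-case handled by maximality~(iv) of $\Gamma$). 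You glimpse this in your final paragraph, but it is the whole argument, not an endpoint correction to a local one.
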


\begin{proof}
\underline{Case (I)}:
By the setup of $\Gamma$ and Lemma \ref{lem:T-to-T'}(d), we can take $W=P(1)$ and $\phi$ given by left-multiplying the path $(1\rightarrow \cdots \rightarrow j)$ for $U=P(j)$ or $S(m)$ (in which case $j=m$).
Since $P(1)$ is not uniserial as $|\mathrm{ds}_1|>1$, $\Cok\phi$ is a quotient of $P(1)$ of length at least $2$; hence, non-simple.

\underline{Case (II)}:
By Lemma \ref{lem:easy-obs}, we have $\mathrm{dp}_{1}\subseteq \Lambda'$.
Thus Lemma \ref{lem:T-to-T'}(b) yields $P(w)\in \add T'$ for all $w\in\mathrm{dp}_1$.

For $w\in\mathrm{dp}_{1}$, let $\{\beta_{i}^{U,w} \mid 1 \leq i \leq \dim_{\Bbbk}\Hom_{A}(U,P(w))\}$ be a $\Bbbk$-basis of $\Hom_{A}(U,P(w))$.
When $U=P(j)$ for $1\le j<m$, we have $\Hom_{A}(U,P(w))\cong e_{w}Ae_{j}$, and so we can consider each $\beta_{i}^{U,w}$ as elements of $A$. 
When $U=S(m)$, we can still consider $\beta_{i}^{U,w} \in e_{w} A e_{m}$ since $\Hom_{A}(S(m),P(w))\cong\Hom_{A}(P(m), P(w))$ by the definition of $m(=r)$.
Then, by Lemma \ref{lem:T-to-T'}(d),
\begin{align}\label{eq:phi'}
\phi' = ( \beta_{i}^{U,w} \cdot - )_{\substack{w\in \mathrm{dp}_1 \\1\leq i\leq d({U,w})}} : U \rightarrow \bigoplus_{w\in \mathrm{dp}_1}P(w)^{\oplus d(U,w)}
\end{align}
is a left $\add(T'/X)$-approximation of $U$, where $d(U,w):=\dim_{\Bbbk}\Hom_A(U,P(w))$.
Since there must be some $w\in \mathrm{dp}_1$ with $[\soc P(w) : S(m)] \neq 0$ and $\soc P(j) = S(m)$ for all $1\le j< m$, we have $\Ker(\phi')=0$, i.e. $\phi'$ is always injective. 
A minimal left $\add(T'/X)$-approximation $\phi$ of $U$ is a direct summand of $\phi'$; such a map is also injective since $\phi'$ already is so.

It remains to show that $\Cok\phi$ is indecomposable non-simple.
By Proposition \ref{prop:tilting-mutation}, $\Cok\phi$ is indecomposable.
If $U=S(m)$ or $U=P(j)$ with $1<j<m$, then we always have $[\Cok\phi: S(1)]\neq 0$ and $[\top \Cok\phi: S(w)]\neq 0$ for some $w\in\mathrm{dp}_1$.  As $w\neq 1$, $\Cok\phi$ is necessarily non-simple.

In the case $U=P(1)$, assume on the contrary that $\Cok\phi$ is simple. 
Since $\phi$ can be taken as a direct summand of $\phi'$ given in \eqref{eq:phi'}, the assumption of $\Cok\phi$ being simple implies that $\mathrm{dp}_{1}=\{w\}$ with a unique connecting arrow $\alpha:w\rightarrow 1$ such that $\phi=(\alpha\cdot-): P(1)\rightarrow P(w)$.
Now we have a full subquiver $w \rightarrow 1 \rightarrow 2 \rightarrow \dots \rightarrow n$ of $Q$.
As $[\soc P(w) : S(m)]\neq 0$, this full subquiver satisfies (i) (ii) and (iii) of the assumption of $\Gamma$.
But this contradicts the maximal property (iv) in the definition of $\Gamma$.
Therefore $\Cok\phi$ is non-simple.
We complete the proof.
\end{proof}

In the setting of Lemma \ref{lem:mut-T'(i)(ii)}, we have $T''$ as follows.

\begin{lemma}\label{lem-T''-conditon(i)(ii)}
Assume that (I) or (II) hold.
Under the notation in Lemma \ref{lem:mut-T'(i)(ii)}, $T'':=\mu_X(T')$ is a tilting $A$-module that satisfies (S) with $\calS(T'') = \calS(T)\setminus\{v\}$.
\end{lemma}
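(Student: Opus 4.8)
The plan is to read off everything from the single mutation $T''=\mu_{X}(T')$, using Lemmas~\ref{lem:T-to-T'} and~\ref{lem:mut-T'(i)(ii)} (and, for case (II), Lemma~\ref{lem:easy-obs}) as the main inputs. First I would confirm that $T''$ is a tilting module. Since $X\in\add T'$, one may form the minimal left $\add(T'/X)$-approximation $f$ of $X$ as the direct sum of the minimal left $\add(T'/X)$-approximations of the indecomposable summands of $X$; by Lemma~\ref{lem:mut-T'(i)(ii)} each of these is injective, hence so is $f$, and Proposition~\ref{prop:tilting-mutation}(1) then gives that $T''=\Cok f\oplus(T'/X)$ is a tilting $A$-module.

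\textbf{Computing $\calS(T'')$.} Next I would identify the simple direct summands. By Lemma~\ref{lem:mut-T'(i)(ii)} every indecomposable summand of $\Cok f$ is non-simple, so $\calS(T'')=\calS(T'/X)$. Apart from $S(m)$, the indecomposable summands of $X$ are projectives $P(j)$ with $j\in\Gamma^{v}_{0}$ and $j<m$; as $j<m\le n$, property~(ii) of $\Gamma^{v}$ says the corresponding vertices are not sinks, so these $P(j)$ are non-simple. Hence $S(m)$ is the unique simple summand of $X$, and, $T'$ being basic, $\calS(T'/X)=\calS(T')\setminus\{m\}$. Combining this with $\calS(T')=(\calS(T)\setminus\{v\})\cup\{m\}$ from Lemma~\ref{lem:T-to-T'}(a), together with the fact that $m\notin\calS(T)\setminus\{v\}$ (when $eAe$ is hereditary one has $m^{v}=v$, and otherwise $1<m<n$ so $m^{v}$ is an interior vertex of $\Gamma^{v}$, hence not a sink of $Q$), I obtain $\calS(T'')=\calS(T)\setminus\{v\}$.

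\textbf{Verifying (S).} Finally I would check that $T''$ satisfies (S). Condition (S1) is immediate from $\calS(T'')=\calS(T)\setminus\{v\}\subseteq\calS(T)\subseteq\calS_{1}$. For (S2) and (S3) the guiding observation is that every indecomposable module that disappears when passing from $T$ to $T''$---the summands $P(j^{v})$ of $[0,C]$ with $m\le j\le n$, together with the summands of $X$---is indexed by a vertex of $\Gamma^{v}_{0}$; on the other hand, (S2) for $T''$ requires keeping $P(u)$ for $u\in\Lambda'$ (available in $T'$ by Lemma~\ref{lem:T-to-T'}(b)), and (S3) for $T''$ requires keeping $P(u)$ for $u\in\Gamma^{w}_{0}$ with $w\in\calS(T'')\cap\calS_{1}=\calS(T)\setminus\{v\}$ (available in $T$ by (S3) for $T$). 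Since $\Lambda'\cap\Gamma^{v}_{0}\subseteq\{1^{v}\}$ and $\Gamma^{v}_{0}\cap\Gamma^{w}_{0}\subseteq\Lambda'$, checking that no such required projective is ever deleted reduces to a single verification at the vertex $1^{v}$, where I would split along cases (I) and (II): in case (I) the index set of $X$ avoids $1^{v}$, whereas in case (II) one has $|\mathrm{ds}_{1}|=1$, so Lemma~\ref{lem:easy-obs} and the definition of $\Lambda'$ force $1^{v}\notin\Lambda'$ and in fact $\Lambda'\cap\Gamma^{v}_{0}=\emptyset$. In both cases the relevant projectives survive, so (S2) and (S3) hold. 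I expect this last bit of bookkeeping---tracking exactly which indecomposable summands survive, the delicate point being the boundary vertex $1^{v}$ and the possible overlaps $\Gamma^{v}_{0}\cap\Gamma^{w}_{0}$ among distinct sinks---to be the main obstacle; the tilting property and the computation of $\calS(T'')$ are comparatively formal given Lemmas~\ref{lem:T-to-T'} and~\ref{lem:mut-T'(i)(ii)}.
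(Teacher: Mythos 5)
Your proof is correct and follows essentially the same approach as the paper: establish the tilting property via Lemma~\ref{lem:mut-T'(i)(ii)} and Proposition~\ref{prop:tilting-mutation}, compute $\calS(T'')=\calS(T)\setminus\{v\}$ from the fact that $S(m)$ is the only simple summand removed and $\Cok f$ contributes no new ones, and verify (S1)--(S3) by checking that $\add X$ misses every required projective, the only delicate vertex being $1^{v}$ in case~(II). Your extra bookkeeping (why $m^{v}\notin\calS(T)$, the intersections $\Gamma^{v}_{0}\cap\Gamma^{w}_{0}\subseteq\Lambda'$) makes explicit what the paper compresses into one-line remarks; the appeal to Lemma~\ref{lem:easy-obs} in case~(II) is superfluous, since $|\mathrm{ds}_{1}|=1$ together with the definition of $\Lambda'$ already gives $1^{v}\notin\Lambda'$, but this is harmless.
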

\begin{proof}
By Lemma \ref{lem:mut-T'(i)(ii)}, $T''$ is tilting.
Moreover, Lemma \ref{lem:mut-T'(i)(ii)} also implies that $\calS(T'') = \calS(T')\setminus\{m\} =\calS(T)\setminus\{v\}$; hence, {\rm (S1)} is satisfied.
The condition {\rm (S3)} are inherited from (c) of $T'$ in Lemma \ref{lem:T-to-T'}.
The condition {\rm (S2)} follows from the choice of $X$ and (b) for $T'$ in Lemma \ref{lem:T-to-T'}.
Indeed, we only need to be careful when $P(1)\in \add X$, but this is the case (II). Since $|\mathrm{ds}_1|=1$, we have $1^v\notin \Lambda'$ by definition. Thus, $\add(\bigoplus_{w\in \Lambda'}P(w)) \cap \add X=0$ always.
\end{proof}

Next we consider the case (III).

\begin{lemma}\label{lem:mut-T'(iii)}
Assume that (III) holds.
Let $Y:= S(m)\oplus \bigoplus_{i=r+1}^{m-1}P(i)$. Then $Y$ is a mutable direct summand of $T'$. Moreover, the tilting $A$-module $\widetilde{T}:=\mu_{Y}(T')$ satisfies the following properties. 
\begin{itemize}
\item[(a)] $\calS(\widetilde{T}) = (\calS(T)\setminus\{v\}) \cup \{r\}$.
\item[(b)] $P(u)\in \add \widetilde{T}$ for all $u\in \Lambda'$.
\item[(c)] If $w\in\calS(\widetilde{T})\cap \calS_{1}$, then $P(u)\in \add \widetilde{T}$ for all $u\in \Gamma^{w}_{0}$.
\item[(d)] Let $U:=\bigoplus_{i=1}^{r-1} P(i) \oplus \bigoplus_{i=r+1}^{m-1} (P(r)/P(i)) \oplus (P(r)/S(m)) \oplus ([0, \kD C]/S(m) )$. Then $U\in\add \widetilde{T}$. Moreover, for an indecomposable $V\in\add \widetilde{T}$, if $x\in \Gamma_{0}^{v}$ holds whenever $[V:S(x)]\neq 0$, then we have $V\in\add U$.  
\end{itemize}
\end{lemma}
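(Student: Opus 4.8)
The plan is to compute $\widetilde T=\mu_Y(T')$ explicitly, using the uniserial structure of the projectives supported on $\Gamma:=\Gamma^v$ together with the description of the summands of $T'$ from Lemma \ref{lem:T-to-T'}. First I would record the local picture: since $m=\min\relvx(eAe)$ (or $m=n$ when $eAe=\Bbbk\vec{\bbA}_n$) and property (iii) makes $eAe$ quadratic linear Nakayama, there is no relation among the vertices $1,\dots,m-1$ of $\Gamma$, so property (ii) gives that $P(j)$ is uniserial with composition factors $S(j),\dots,S(m)$ for $r\le j\le m-1$, with submodule chain $P(r)\supset P(r+1)\supset\dots\supset P(m-1)\supset S(m)\supset 0$ and $\rad^{i-r}P(r)\cong P(i)$. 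From Lemma \ref{lem:T-to-T'}(a),(d) the indecomposable summands of $T'$ all of whose composition factors lie in $\Gamma_0$ are among $P(1),\dots,P(m-1)$ and the summands $[0,I^C(j)]$ of $[0,\kD C]$; deleting those lying in $Y$ (namely $S(m)=[0,I^C(m)]$ and $P(r+1),\dots,P(m-1)$), the ones surviving in $T'/Y$ are $P(1),\dots,P(r)$ and $[0,I^C(j)]$ for $m<j\le n$. Since $|\mathrm{ds}_1|=1$, Lemma \ref{lem:easy-obs} gives $\mathrm{dp}_1\subseteq\Lambda'$, so $P(w)\in\add(T'/Y)$ for all $w\in\mathrm{dp}_1$ by Lemma \ref{lem:T-to-T'}(b).

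Next I would prove mutability of $Y$ and compute $\Cok f$. For $U'=P(i)$ with $r<i<m$ the candidate left $\add(T'/Y)$-approximation is the canonical inclusion $P(i)\cong\rad^{i-r}P(r)\hookrightarrow P(r)$: since $P(i)$ is local with top $S(i)$, any map from $P(i)$ to a summand $Z$ of $T'/Y$ has uniserial image with top $S(i)$, and using the explicit structure above together with the rigidity $\Ext_A^1(T',T')=0$ (transported via the idempotent-truncation isomorphisms of Lemmas \ref{lem:apt-pfin} and \ref{lem:aus-apt}, just as in the proof of Lemma \ref{lem:mut-T'(i)(ii)}) one shows every such map factors through this inclusion. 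For $U'=S(m)$, the inequality $r<m$ and the definition of $r$ force $S(m)\notin\soc(\bigoplus_{w\in\mathrm{dp}_1}P(w))$, so one pins down which summands of $T'/Y$ receive a nonzero map from $S(m)$ (they are $P(r)$ and the summands with $S(m)$ in their socle) and assembles an injective left approximation. In each case the exhibited map is a monomorphism, and since the minimal left approximation is a direct summand of it, it too is a monomorphism; hence $Y$ is a mutable summand of $T'$, and $\widetilde T:=\mu_Y(T')=\Cok f\oplus(T'/Y)$ is a tilting $A$-module obtained from $T'$ by one mutation. Reading off the cokernels ($P(r)/P(i)$ for the $P(i)$-component, and $P(r)/S(m)$ together with a copy of $[0,\kD C]/S(m)$ for the $S(m)$-component) and combining with the $\Gamma_0$-supported summands of $T'/Y$ from the first paragraph identifies the part of $\widetilde T$ supported on $\Gamma_0$ with the module $U$ of (d); the remaining clause of (d) then follows from the same Grothendieck-group argument as in Lemma \ref{lem:T-to-T'}(d), using Proposition \ref{prop:property-tilting}(2) and $\gldim A<\infty$.

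Finally, (a), (b), (c) are bookkeeping. Among the new summands $P(r)/P(i)$, $P(r)/S(m)$, $[0,\kD C]/S(m)$ only $P(r)/P(r+1)\cong S(r)$ is simple ($[0,I^C(j)]$ is non-simple for $j>m$), while $T'/Y$ contributes the simple summands $\calS(T')\setminus\{m\}=\calS(T)\setminus\{v\}$ by Lemma \ref{lem:T-to-T'}(a), giving (a). For (b) and (c), for $u\in\Lambda'$ (resp.\ $u\in\Gamma_0^w$ with $w\in\calS(T)\setminus\{v\}=\calS(\widetilde T)\setminus\{r\}$) the projective $P(u)$ lies in $\add T'$ by Lemma \ref{lem:T-to-T'}(b),(c), and it is not a summand of $Y$ (a summand of $Y$ is $S(m)$ or $P(i)$ with $1<i<n$, and using $\Gamma_0^v\cap\Gamma_0^w\subseteq\Lambda'$ and the definition of $\Lambda'$ one checks such a module cannot be $P(u)$ for the relevant $u$), so $P(u)\in\add(T'/Y)\subseteq\add\widetilde T$; note $r\notin\calS_1$ since $r^v$ is not a sink (it has the arrow $r^v\to(r+1)^v$), so (c) has no further case.

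The step I expect to be the main obstacle is proving that the exhibited maps are genuine left $\add(T'/Y)$-approximations: this requires controlling \emph{all} homomorphisms from the summands of $Y$ into arbitrary summands of $T'/Y$, in particular ruling out stray summands of $T'/Y$ inherited from $T$ that straddle the boundary of $\Gamma_0$ and could receive an unexpected map from $P(i)$ or $S(m)$ -- which is exactly where one must lean on the rigidity of $T'$ and the $\calP^{\le\infty}$/$\calI^{\le\infty}$-machinery of Section \ref{sec:prelim}, the $S(m)$-component being the most delicate since its approximation necessarily involves the $[0,\kD C]$-part.
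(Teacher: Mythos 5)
Your overall structure is right, and you correctly identify the candidate minimal left approximation $\iota\colon Y_1\hookrightarrow P(r)$ for each indecomposable $Y_1\in\add Y$. However, your treatment of the $S(m)$-component contains a concrete error: you claim its mutation "necessarily involves the $[0,\kD C]$-part" and list $[0,\kD C]/S(m)$ among the new cokernels. In fact $\Hom_A(S(m),[0,I^C(j)])=0$ for all $j>m$ because the uniserial module $[0,I^C(j)]$ has socle $S(j)\neq S(m)$, so the minimal left $\add(T'/Y)$-approximation of $S(m)$ is just the inclusion into $P(r)$, and by Proposition~\ref{prop:tilting-mutation}(2) the resulting cokernel is a single indecomposable $P(r)/S(m)$. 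The module $[0,\kD C]/S(m)=\bigoplus_{j>m}[0,I^C(j)]$ does belong to $\add\widetilde{T}$, but as a summand inherited from $T'/Y$, not as a mutation cokernel.

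More importantly, you defer the decisive step -- ruling out "stray summands of $T'/Y$ that straddle the boundary of $\Gamma_0$" -- to "the rigidity of $T'$ and the $\calP^{\le\infty}/\calI^{\le\infty}$-machinery", which is not what drives the paper's argument. The paper's proof is a direct consequence of the definition of $r$: if $T_1\in\add(T'/Y)$ is indecomposable with a nonzero map $f\colon Y_1\to T_1$, then $\Img f$ is a $\Gamma_0$-supported submodule of $T_1$ with socle $S(\ell)$ for some $\ell\in[r+1,m]$. Since $r$ is precisely the furthest vertex of $\Gamma_0^v$ reached by any nonzero path starting outside $\Gamma_0^v$ (equivalently, $\Hom_A(S(\ell),P(w))=0$ for $w\notin\Gamma_0^v$ and $\ell>r$), the part of $A$ "connecting" $\Lambda\setminus\Gamma_0^v$ to $\Gamma_0^v$ lives entirely at vertices $\le r$; this forces any indecomposable $T_1$ with such a submodule to have all composition factors in $\Gamma_0^v$, whence Lemma~\ref{lem:T-to-T'}(d) and the socle considerations pin down $T_1\in\{P(1),\dots,P(r)\}$ and every map $Y_1\to P(j)$ factors through $\iota$ by uniseriality. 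Neither $\Ext$-rigidity nor the $\calP^{\le\infty}/\calI^{\le\infty}$ apparatus is invoked at this point, so as written your proposal has a genuine gap exactly at the step you flag as the main obstacle, and the tool you point to there would not close it.
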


\begin{proof}
We show that $Y$ is a mutable direct summand of $T'$.
By Lemma \ref{lem:T-to-T'}(a) and (d), $Y\in\add T'$.
It is shown that for each indecomposable direct summand $Y_1$ of $Y$, a natural inclusion $\iota : Y_1 \rightarrow P(r)$ is a minimal left $\add(T'/Y)$-approximation of $Y_{1}$. 
Indeed, let $T_{1}\in\add(T'/Y)$ be an indecomposable module such that there exists a non-zero morphism $f:Y_{1} \rightarrow T_{1}$.
By the definition of $r$, the composition factors of $T_{1}$ are concentrated in $\Gamma_{0}^{v}$.
Thus Lemma \ref{lem:T-to-T'}(d) yields $T_{1}\in \add(\bigoplus_{i=1}^{m-1}P(i))$.
This implies that $f$ factors through $\iota$.
Thus $\iota$ is a minimal left $\add(T'/Y)$-approximation.
The direct sum $Y \rightarrow P(r)^{\oplus (m-r)}$ is a minimal left $\add(T'/Y)$-approximation of $Y$, which is injective.

We show the latter part. Since $Y$ is mutable, we have
\begin{align}
\widetilde{T}=\mu_Y(T')=T'/Y\oplus \bigoplus_{i=r+1}^{m-1}(P(r)/P(i))\oplus P(r)/S(m).\notag
\end{align}
The properties (a) and (c) follow from Lemma \ref{lem:T-to-T'}(a) and (c) respectively. We show (b). If $r=1$, then we have $|\mathrm{ds_{1}}|=1$, and hence $r=1\notin \Lambda'$. 
This implies $\{ r,r+1, \ldots, n\}\cap \Lambda'=\emptyset$ for each $r\geq 1$. Thus (b) follows from Lemma \ref{lem:T-to-T'}(b).
The former part of (d) follows from the construction of $\widetilde{T}$ and Lemma \ref{lem:T-to-T'}(d). The latter part of (d) comes from the same argument as the proof of the latter part of Lemma \ref{lem:T-to-T'}(d).
\end{proof}

We obtain $T''$ by mutating $\widetilde{T}$ as follows.

\begin{lemma}\label{lem:T''-conditon(iii)}
Assume that (III) holds.
Let $\widetilde{T}=\mu_Y(T')$ be the tilting $A$-module constructed in Lemma \ref{lem:mut-T'(iii)}.
Then $S(r)$ is a mutable direct summand of $\widetilde{T}$. Moreover, the tilting $A$-module $T'':=\mu_{S(r)}(\widetilde{T})$ satisfies (S) with $\calS(T'')=\calS(T)\setminus\{v\}$.
\end{lemma}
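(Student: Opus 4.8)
The statement has two parts: that $S(r)$ is a mutable direct summand of $\widetilde{T}$, and that $T'':=\mu_{S(r)}(\widetilde{T})$ is a tilting module satisfying (S) with $\calS(T'')=\calS(T)\setminus\{v\}$. I would first handle mutability. By Lemma~\ref{lem:mut-T'(iii)}(a), $S(r)\in\add\widetilde{T}$; let $\phi\colon S(r)\to W$ be a minimal left $\add(\widetilde{T}/S(r))$-approximation. Since $S(r)$ is simple, $\phi$ is automatically injective once it is non-zero, and then $S(r)$ is mutable by Proposition~\ref{prop:tilting-mutation}. In case (III) we have $|\mathrm{ds}_{1}|=1$, so $\mathrm{dp}_{1}\subseteq\Lambda'$ by Lemma~\ref{lem:easy-obs}, whence $P(w)\in\add\widetilde{T}$ for every $w\in\mathrm{dp}_{1}$ by Lemma~\ref{lem:mut-T'(iii)}(b); and the definition of $r$ provides some $w\in\mathrm{dp}_{1}$ with $[\soc P(w):S(r)]\neq 0$, hence a non-zero map $S(r)\to P(w)$ whose target lies in $\add(\widetilde{T}/S(r))$ (note $P(w)\not\cong S(r)$, since $w$ has an outgoing arrow and $P(w)$ is therefore non-simple). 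So $\phi\neq 0$, $S(r)$ is mutable, $T''=\Cok\phi\oplus\widetilde{T}/S(r)$ is a basic tilting module, and $\Cok\phi$ is indecomposable by Proposition~\ref{prop:tilting-mutation}(2).

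Next I would determine $\calS(T'')$. From $T''=\Cok\phi\oplus\widetilde{T}/S(r)$ we get $\calS(T'')=(\calS(\widetilde{T})\setminus\{r\})\cup\calS(\Cok\phi)$; by Lemma~\ref{lem:mut-T'(iii)}(a) and the fact that $r$ is not a sink ($r<m\le n$, so $r$ has the outgoing arrow $\alpha_{r}^{v}$), we have $\calS(\widetilde{T})\setminus\{r\}=\calS(T)\setminus\{v\}$. Thus everything reduces to showing that $\Cok\phi$ is \emph{not} simple. As a key preliminary I would locate the target $W$: by Lemma~\ref{lem:mut-T'(iii)}(d), the only indecomposable summand of $\widetilde{T}$ supported on $\Gamma_{0}^{v}$ having $S(r)$ in its socle is $S(r)$ itself (the socles of the other such summands are $S(m)$, $S(m-1)$, or $S(i-1)$ with $r<i<m$, or lie in the $C$-part $[0,\kD C]$), so this summand does not survive in $\widetilde{T}/S(r)$. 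Hence any indecomposable summand of $\widetilde{T}/S(r)$ admitting a non-zero map from $S(r)$ is not supported on $\Gamma_{0}^{v}$; since by (ii) of $\Gamma^{v}$ every vertex $j^{v}$ with $1<j\le n$ has all its incident arrows inside $\Gamma^{v}$, the only vertex of $\Gamma_{0}^{v}$ receiving arrows from outside $\Gamma_{0}^{v}$ is $1^{v}$, and one deduces that $W\in\add\bigl(\bigoplus_{w\in\mathrm{dp}_{1}}P(w)\bigr)$.

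Now suppose, for contradiction, that $\Cok\phi$ is simple. Then, $\Cok\phi$ being indecomposable, $W$ is indecomposable of length $2$ with socle $S(r)$; by the previous paragraph $W\cong P(w)$ for some $w\in\mathrm{dp}_{1}$, and so $\top P(w)=S(w)$ with an arrow $w\to r$. But $w\in\mathrm{dp}_{1}$ also gives an arrow $w\to 1^{v}$; were $r\neq 1^{v}$, then $P(w)$ would have at least two composition factors in its radical, hence length at least $3$, contradicting that $P(w)$ has length $2$. So $r=1^{v}$, i.e.\ $r=1$. Then, since $r=1$, some $w'\in\mathrm{dp}_{1}$ has $S(1^{v})\in\soc P(w')$, i.e.\ $w'\to 1^{v}\to 2^{v}$ is a relation in $A$. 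If $|\mathrm{dp}_{1}|=1$ (so $w'$ is the unique direct predecessor of $1^{v}$ and the valency of $1^{v}$ in $Q$ equals $2$), then the full subquiver $w'\to 1^{v}\to 2^{v}\to\cdots\to n^{v}$ of $Q$ satisfies (i), (ii) and (iii) of the definition of $\Gamma^{v}$ and strictly contains $\Gamma^{v}$, contradicting the maximality (iv). If $|\mathrm{dp}_{1}|>1$, then \emph{every} $w'\in\mathrm{dp}_{1}$ satisfies $[\soc P(w'):S(1^{v})]\neq 0$ — otherwise the path out of $w'$ would run past $1^{v}$ and terminate at some $j^{v}$ with $2\le j\le m$, so that $j$ lies in the set defining $r$ and $r\ge 2$ — so each such $P(w')$ receives a non-zero map from $S(1^{v})$; since $\Hom_{A}(P(w),P(w'))=0$ for distinct $w,w'\in\mathrm{dp}_{1}$ when $P(w)$ has length $2$, these maps cannot all factor through $S(1^{v})\to W=P(w)$, so $W$ must have at least two indecomposable summands, contradicting its indecomposability. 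Therefore $\Cok\phi$ is non-simple and $\calS(T'')=\calS(T)\setminus\{v\}$. The conditions (S) for $T''$ are then routine: (S1) is immediate from $\calS(T'')=\calS(T)\setminus\{v\}\subseteq\calS(T)\subseteq\calS_{1}$, and for (S2) and (S3) the required indecomposable projectives lie in $\add\widetilde{T}$ by Lemma~\ref{lem:mut-T'(iii)}(b),(c), and — since $r$ is not a sink, so that none of them equals $S(r)$ — they belong to $\add(\widetilde{T}/S(r))\subseteq\add T''$.

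The main obstacle is the middle part: identifying the target $W$ of the approximation and, with it, establishing the non-simplicity of $\Cok\phi$ via the maximality of $\Gamma^{v}$ and the valency conditions (i)--(iii). This runs in the same vein as the treatment of case (II) in Lemmas~\ref{lem:mut-T'(i)(ii)} and~\ref{lem-T''-conditon(i)(ii)}, but demands a little more bookkeeping because $\widetilde{T}$ contains a wider class of indecomposable summands than $T'$ did there.
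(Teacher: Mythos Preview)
Your proof is correct and follows essentially the same strategy as the paper's: show mutability via maps to $P(w)$ for $w\in\mathrm{dp}_1$, locate $W$ inside $\add\bigl(\bigoplus_{w\in\mathrm{dp}_1}P(w)\bigr)$ via Lemma~\ref{lem:mut-T'(iii)}(d), rule out simplicity of $\Cok\phi$ using the maximality (iv) of $\Gamma^v$, and read off (S) from Lemma~\ref{lem:mut-T'(iii)}(b),(c). The only differences are organisational --- the paper argues directly and splits on $r>1$ versus $r=1$ (rather than on $|\mathrm{dp}_1|$ after a contradiction-reduction to $r=1$), and it makes explicit that $\phi'\colon S(r)\to\bigoplus_{w}P(w)^{d(w)}$ is already a left $\add(\widetilde{T}/S(r))$-approximation, which is the content behind your ``one deduces''.
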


\begin{proof}
We show that $S(r)$ is a mutable direct summand of $\widetilde{T}$.
By Lemma \ref{lem:mut-T'(iii)}(a), we have $S(r)\in\add \widetilde{T}$.
Consider now
\begin{align}
W':=\bigoplus_{w\in\mathrm{dp}^{r}_{1}} P(w)^{\oplus \dim_{\Bbbk} \Hom_{A}(S(r),P(w))},\notag
\end{align}
where $\mathrm{dp}_{1}^{r}:=\{ w\in \mathrm{dp}_{1}\mid [P(w):S(r)]\neq 0\}$.
By Lemma \ref{lem:easy-obs}, we have $\mathrm{dp}_{1}\subseteq \Lambda'$.
Thus Lemma \ref{lem:mut-T'(iii)}(b) yields $W'\in \add(\widetilde{T}/S(r))$. Take a $\Bbbk$-basis $\{\beta_{1}^{w},\ldots, \beta^{w}_{k_{w}}\}$ of $\Hom_A(S(r),P(w))\cong e_{w}Ae_{r}$.
Then $\phi':=(\beta_{i}^{w}\cdot-)_{w\in\mathrm{dp}_{1}, \ 1\le i\le k_{w}}: S(r)\rightarrow W'$ is a left $\add(\widetilde{T}/S(r))$-approximation of $S(r)$.
Indeed, let $V\in \add(\widetilde{T}/S(r))$ be indecomposable with $\Hom_{A}(S(r),V)\neq 0$. By Lemma \ref{lem:mut-T'(iii)}(d), there exists $w\in \mathrm{dp}_{1}^{r}$ such that $Ve_{w}\neq 0$.
Thus each morphism $S(r) \rightarrow V$ factors through $P(w)$ for some $w\in \mathrm{dp}^{r}_{1}$. This implies that $\phi'$ is a left $\add(\widetilde{T}/S(r))$-approximation. Thus we can take a minimal left $\add(\widetilde{T}/S(r))$-approximation $\phi: S(r)\rightarrow W$. Since $S(r)$ is simple, $\phi$ is injective.

We now show that $\Cok\phi$ is indecomposable non-simple.
By Proposition \ref{prop:tilting-mutation}, $\Cok\phi$ is indecomposable.
If $r>1$, then we always have $[\Cok\phi: S(r-1)]\neq 0$ and $[\top \Cok\phi: S(w)]\neq 0$ for some $w\in\mathrm{dp}_{1}$. Thus $\Cok\phi$ is necessarily non-simple.
If $r=1$, then $\phi$ can be written as $S(r) \xrightarrow{(\alpha\cdot-)_{\alpha}} \bigoplus_{(\alpha:w\rightarrow 1)\in Q_{1}}P(w)$. 
We show that $k:=|\{ \alpha:w\rightarrow 1 \mid w\in \mathrm{dp}_{1}\}|>1$. If $k=1$, then $\Gamma:=\Gamma^{v}$ contradicts the maximal property (iv) in the definition of $\Gamma^{v}$. 
Thus we have $k>1$. This implies that $\Cok\phi$ is non-simple.

By the construction of $T''$, it follows from Lemma \ref{lem:mut-T'(iii)} that the latter assertion holds. The proof is complete.
\end{proof}

Now, we are ready to prove Proposition \ref{prop:non-qln-non-ist}.

\begin{proof}[Proof of Proposition \ref{prop:non-qln-non-ist}]
Let $T$ be a tilting $A$-module satisfying ({\rm S}).
Fix one $v\in\calS(T)$ if $\calS(T) \neq \emptyset$.
By Lemmas \ref{lem:T-to-T'}, \ref{lem-T''-conditon(i)(ii)} and \ref{lem:T''-conditon(iii)}, we can construct a tilting $A$-module $T''$ satisfying ({\rm S}) with $\calS(T'')=\calS(T)\setminus\{v\}$.

By Lemma \ref{T0-condition}, $T_{0}$ is a tilting module satisfying ({\rm S}) with $\calS(T_{0})=\calS_{1}$.
Thus we can repeat the above procedure $|\calS_{1}|$ times from $T_{0}$, and we have a sequence of tilting $A$-modules
\begin{align}
T_{0} \overset{\text{ mutate }}{\leadsto} T_{0}''=:T_{1} \overset{\text{ mutate }}{\leadsto} T_{1}''=:T_{2} \overset{\text{ mutate }}{\leadsto}\cdots \overset{\text{ mutate }}{\leadsto}T_{|\calS_{1}|-1}''=:T_{|\calS_{1}|}\notag
\end{align}
such that $|\calS(T_{a})| = |\calS_{1}| - a$.
Therefore we have $\calS(T_{|\calS_{1}|}) = \emptyset$, that is, $T_{|\calS_{1}|}$ has no simple direct summand as desired.
\end{proof}

\begin{example}
We continue with Examples \ref{eg:mut-T-1}, \ref{eg:mut-T-Gamma-sets}, \ref{eg:mut-T-to-T'} and \ref{eg:mrcase}.
The case (I) (respectively, (II), (III)) happens when $v=4, 7$ (respectively, $v=12$, $v=16$) by Example \ref{eg:mrcase}.
Write $U=A/(P(1)\oplus P(2)\oplus P(3)\oplus P(4)\oplus P(8)) \oplus C(8)$.
We already have $T_{0} = (A/P(8)) \oplus C(8)$ and $T' = U\oplus P(1) \oplus P(2) \oplus (P(2)/P(4)) \oplus S(2)$.  In the next step we take $X=S(2)$ which is a direct summand of $\soc P(1)$. 
Following the procedure yields
\begin{align}
T_{1} = T'' = \mu_{S(2)}(T') &= \mu_{S(2)}\left( U \oplus P(1) \oplus P(2) \oplus \frac{P(2)}{P(4)} \oplus S(2) \right) \notag \\
& = U \oplus P(1) \oplus P(2) \oplus \frac{P(2)}{P(4)} \oplus \frac{P(1)}{S(2)}.\notag
\end{align}
So far we have eliminated the simple modules $S(8)=P(8)$ and $S(4)=P(4)$, and it remains to do so for $S(7), S(12), S(16)$.

For the remaining steps, we summarise the data as follows, and leave the reader to check for themselves. 
For Cases (I) and (II), we have the following data.
\begin{align}
\begin{array}{cccccc}
a & v & \mbox{Case} &  T' & X & T_{a} = T''\\ \hline 
2 & 7 & {\rm (I)} &  \frac{T_{1}}{S(7)}\oplus S(5) & S(5) & \frac{T_{1}}{S(7)}\oplus \frac{P(1)}{S(5)}\\ [1ex]
3 & 12 & {\rm (II)} & T_{2} & P(11)\oplus S(12) & \frac{T_{2}}{P(11)\oplus S(12)}\oplus C_{11} \oplus C_{12} 
\end{array}\notag
\end{align}
Here $C_{11}=\Cok\left( P(11)\rightarrow P(9)\oplus P(10) \right)$ and $C_{12}=\Cok\left( S(12)\rightarrow P(9)\oplus P(10) \right)$.
For Case (III), we have the following data.
\begin{align}
\begin{array}{ccccccc}
a & v & \mbox{Case} &  T' & Y & \widetilde{T}  & T_{a} = T''\\ \hline 
4 & 16 & {\rm (III)} & \frac{T_{3}}{S(16)}\oplus S(15) & S(15) & \frac{T_{3}}{S(16)}\oplus S(14) &  \frac{T_{3}}{S(16)} \oplus \frac{P(9)}{S(14)}
\end{array}\notag
\end{align}
Now $T_{4}$ is a tilting module without simple direct summand.
\end{example}

\section{Tilting modules over quadratic linear Nakayama algebras}
\label{sec:tilt(qLNaka)}

In this section, we show how to recursively construct tilting modules over a quadratic linear Nakayama algebra $A$ from those of smaller ranks, namely, $eAe$ and $A/AeA$ for certain choices of idempotents $e$.

\subsection{Gluing tilting modules}\label{subsec:co/tilt gluing}

In this subsection, we provide some sufficient conditions under which (co)tilting modules of an algebra $A$ can be glued from, and decomposed into, (co)tilting modules of $eAe$ and $A/AeA$.

Let $A$ be an arbitrary basic finite-dimensional algebra and let $\{ e_{x}\}_{x\in \Lambda}$ be a complete set of primitive orthogonal idempotents of $A$.
Fix an idempotent $\e=\sum_{x\in \Gamma}e_{x} \in A$ and consider the adjoint triple $(\sfl, \sfe, \sfr)$ between $\mod A$ and $\mod(\e A\e)$ as in subsection \ref{subsec:recolle subcats}.
For simplicity, we write $\calP^{\le k}_{\e}$ and $\calI^{\le k}_{\e}$ instead of $\calP^{\le k}(\Gamma)$ and $\calI^{\le k}(\Gamma)$ respectively. 
For $M\in \mod A$, we consider a decompose $M=K_{\e}(M)\oplus R_{\e}(M)$, where $K_{\e}(M)$ is the maximal direct summand of $M$ in  $\Ker \sfe$ and $R_{\e}(M):=M/K_{\e}(M)$.  
In particular, $K_{\e}(M)$ is in the image of the embedding $\mod(A/A\e A)\rightarrow \mod A$; thus, we will also regard $K_{\e}(M)$ as an $(A/A\e A)$-module.

For pairs $(U,V), (U',V')\in \mod (A/A\e A)\times \mod(\e A\e)$, we write $(U,V)\geq (U',V')$ if $U\geq U'$ and $V\geq V'$, that is, $\Ext_{A/A\e A}^{>0}(U,U')=0$ and $\Ext_{\e A\e}^{>0}(V,V')=0$.  
Note that $(\tilt(A/A\e A)\times \tilt(\e A\e), \geq)$ is a poset given by the product of the individual tilting posets.

We say that an algebra $A$ \defn{satisfies the rank-tilting-completion property} if every pretilting $A$-module $T$ with $|T|=|A|$ is tilting, i.e. (T1)$+$(T2)$+$(T3') implies (T1)$+$(T2)$+$(T3).
Note that there is no known example of an algebra that fails the rank-tilting-completion property.
Any algebra that is representation-finite satisfies the rank-tilting-completion property.
Similarly, we can define rank-cotilting-completion property.
In the case when the algebra $A$ has finite global dimension, which implies that $\tilt A=\cotilt A$, then satisfying any one of the completion properties implies the other one.

The following poset isomorphisms are the keys in this section.

\begin{proposition}\label{prop:main-bijections}
Let $\e$ be an idempotent of a basic finite-dimensional algebra $A$.
\begin{itemize}
\item[(1)] Suppose that one of the following conditions holds.
\begin{itemize}
\item[(i)] $A/A\e A$ is projective as an $A$-module.
\item[(ii)] $\pdim_A(A/A\e A)<\infty$, $A\varepsilon A$ is a stratifying ideal, and $A$ satisfies the rank-tilting-completion property.
\end{itemize}
Then there exist poset isomorphisms
\begin{align}
\begin{aligned}
\xymatrix{
\{ T\in \tilt A\mid R_{\e}(T)\in \calP^{\leq \infty}_{\e}\}\ar@<-1mm>[d]_-{\varphi}\\
\{ (U,V)\in \tilt (A/A\e A)\times \tilt^{\lex}(\e A\e )\mid  \Ext_{A}^{>0}(U,\sfl(V))=0\},\ar@<-1mm>[u]_-{\psi}
}
\end{aligned}
\end{align}
where $\varphi(T):=(K_{\e}(T), \sfe(R_{\e}(T)))$ and $\psi(U,V):=U\oplus \sfl(V)$. 
Moreover, in the case where $A/A\e A$ is semisimple and projective as an $A$-module, then the isomorphisms induce the following poset isomorphisms
\begin{align}\label{eq:P-infty=lex}
\{ T\in \tilt A\mid R_{\e}(T)\in \calP^{\leq \infty}_{\e}\}=\tilt_{(1-\e)A}A \cong \tilt^{\lex}(\e A\e).
\end{align}
\item[(2)] Suppose that one of the following conditions holds.
\begin{itemize}
\item[(i)] $\kD (A/A\e A)$ is injective as an $A$-module.
\item[(ii)] $\idim_A(\kD(A/A\e A))<\infty$, $A\varepsilon A$ is a stratifying ideal, and $A$ satisfies the rank-cotilting-completion property.
\end{itemize}
Then there exist poset isomorphisms
\begin{align}
\begin{aligned}\label{eq:cotilt-glue-bijn}
\xymatrix{
\{ T\in \cotilt A\mid R_{\e}(T)\in \calI^{\leq \infty}_{\e}\}\ar@<-1mm>[d]_-{\varphi}\\
\{ (U,V)\in \cotilt (A/A\e A)\times \cotilt^{\rex}(\e A\e )\mid  \Ext_{A}^{>0}(\sfr(V), U)=0\},\ar@<-1mm>[u]_-{\psi}
}
\end{aligned}
\end{align}
where $\varphi(T):=(K_{\e}(T), \sfe(R_{\e}(T)))$ and $\psi(U,V):=U\oplus \sfr(V)$. 
Moreover, in the case where $\kD(A/A\e A)$ is semisimple and injective as $A$-modules, then the isomorphisms induce the following poset isomorphisms
\begin{align}
\{ T\in \cotilt A\mid R_{\e}(T)\in \calI^{\leq \infty}_{\e}\}=\cotilt_{\kD A(1-\e)}A \cong \cotilt^{\rex}(\e A\e).\notag
\end{align}
\end{itemize}
\end{proposition}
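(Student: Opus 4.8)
The plan is to prove part (1) in detail and to deduce part (2) by applying the $\Bbbk$-duality $\kD\colon\mod A\to\mod A^{\op}$, which interchanges $\tilt\leftrightarrow\cotilt$, $\sfl\leftrightarrow\sfr$, $\calP^{\leq\infty}_{\e}\leftrightarrow\calI^{\leq\infty}_{\e}$, $\mod^{\lex}\leftrightarrow\mod^{\rex}$, and carries hypotheses (1)(i),(ii) to (2)(i),(ii). Throughout (1) I note first that $A\e A$ is a stratifying ideal in both cases — by Lemma \ref{lem:proj-stridemp} in case (i), by assumption in case (ii) — so by Lemma \ref{lem:str-idemp} the inclusion $\mod(A/A\e A)\hookrightarrow\mod A$ preserves all $\Ext$-groups, and $\sfe(A/A\e A)=0$. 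Two elementary consequences of Lemma \ref{lem:apt-pfin} will be used constantly: (a) if $R\in\calP^{\leq\infty}_{\e}$ and $K\in\Ker\sfe$ then $\Hom_A(K,R)=0$, so a module in $\calP^{\leq\infty}_{\e}$ has no nonzero summand killed by $\sfe$; and (b) $\calP^{\leq\infty}_{\e}$ is closed under kernels of epimorphisms (and extensions, and cokernels of monomorphisms). Since $\sfe\sfl\cong\id$ always and $\sfl\sfe R\cong R$ for $R\in\calP^{\leq\infty}_{\e}$ (Lemma \ref{lem:aus-apt}(1)), fact (a) forces $K_{\e}(U\oplus\sfl V)=U$ and $R_{\e}(U\oplus\sfl V)=\sfl V$, so $\varphi\psi=\id$ and $\psi\varphi=\id$ will be formal once $\varphi$ and $\psi$ are known to be well-defined.

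I would next check that $\psi(U,V)=U\oplus\sfl V$ lies in the left-hand set. Here (T1) reduces to $\pdim_A U<\infty$ — in case (i) because $A/A\e A\in\proj A$, in case (ii) by dimension-shift using $\pdim_A(A/A\e A)<\infty$ — together with $\pdim_A\sfl V=\pdim_{\e A\e}V<\infty$ from Lemma \ref{lem:apt-pfin}; and (T2) splits into four $\Ext_A^{>0}$-blocks: $\Ext_A^{>0}(U,U)\cong\Ext_{A/A\e A}^{>0}(U,U)$ by stratifiedness, $\Ext_A^{>0}(\sfl V,\sfl V)\cong\Ext_{\e A\e}^{>0}(V,V)$ and $\Ext_A^{>0}(\sfl V,U)\cong\Ext_{\e A\e}^{>0}(V,U\e)=0$ by Lemma \ref{lem:apt-pfin}, and $\Ext_A^{>0}(U,\sfl V)=0$ by hypothesis. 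For (T3): in case (ii), $U\oplus\sfl V$ is then a pretilting module of rank $|A/A\e A|+|\e A\e|=|A|$, so the rank-tilting-completion property applies; in case (i), the split surjection $A\to A/A\e A$ gives $A\cong A\e A\oplus(A/A\e A)$ with $A\e A\in\add(\e A)$, and one takes the direct sum of a (T3)-coresolution of $A/A\e A$ by $\add U$ with the $\sfl$-image of an $\add V$-coresolution of the projective $\e A\e$-module $\sfe(A\e A)$ (using that $\sfl$ is exact on $\mod^{\lex}(\e A\e)$ and $\sfl\sfe(A\e A)\cong A\e A$). Finally $R_{\e}(\psi(U,V))=\sfl V\in\calP^{\leq\infty}_{\e}$ by Lemma \ref{lem:aus-apt}(1).

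Then I would check that $\varphi(T)=(K_{\e}(T),\sfe R_{\e}(T))$ lies in the right-hand set, writing $K:=K_{\e}(T)$, $R:=R_{\e}(T)\in\calP^{\leq\infty}_{\e}$, $V:=\sfe R$. That $V\in\tilt^{\lex}(\e A\e)$ is straightforward: applying the exact functor $\sfe$ to a (T3)-coresolution of $A$ and using $\sfe(\e A)=\e A\e$ and $\sfe K'=0$ for $K'\in\add K$ gives (T3) for $V$ over $\e A\e$, while (T1), (T2) and membership in $\mod^{\lex}$ follow from Lemma \ref{lem:apt-pfin} and Lemma \ref{lem:aus-apt}(1) (alternatively, one may cite Proposition \ref{prop:ex-tilting}(1)). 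The step I expect to be the main obstacle is showing $K\in\tilt(A/A\e A)$, and within it the condition (T3); conditions (T1) and (T2) again follow from stratifiedness, and $|K|=|A|-|\e A\e|=|A/A\e A|$ since $V$ is tilting. For (T3) I would use a finite $\add T$-coresolution of $A/A\e A$ over $A$ — this exists because $A/A\e A$ has finite projective dimension over $A$, being a summand of $A$ in case (i) and using the rank-tilting-completion hypothesis in case (ii) — and split each term as its $\add K$-part plus its $\add R$-part; by fact (a) there are no morphisms from the $\add K$-parts to the $\add R$-parts, so the $\add K$-parts (together with the augmentation, whose $\add R$-component vanishes by fact (a)) form a subcomplex whose quotient is the complex of $\add R$-parts. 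The latter is $\sfl$ applied to an acyclic complex over $\e A\e$ (since $\sfe(A/A\e A)=0$ and all $\add R$-parts lie in $\calP^{\leq\infty}_{\e}$), hence acyclic; the full coresolution is acyclic; so the long exact sequence in homology forces the $\add K$-subcomplex to be acyclic, which is exactly a (T3)-coresolution of $A/A\e A$ over $A/A\e A$.

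Finally, for the poset structure: $\varphi$ is order-preserving because $T\ge T'$ gives $\Ext_A^{>0}(T,T')=0$, and the $KK$- and $RR$-blocks transfer to $\Ext^{>0}$ over $A/A\e A$ and $\e A\e$ respectively (stratifiedness and Lemma \ref{lem:apt-pfin}). For $\psi$ order-preserving — the remaining nontrivial point — given $(U,V)\ge(U',V')$ three of the four $\Ext_A^{>0}$-blocks of $U\oplus\sfl V$ against $U'\oplus\sfl V'$ vanish as in the well-definedness step, and for the mixed block $\Ext_A^{>0}(U,\sfl V')$ I would push a finite $\add V$-coresolution of $V'$ (available since $V\ge V'$ in $\tilt(\e A\e)$) through $\sfl$ and dimension-shift: by fact (b) every cosyzygy of this $\sfl(\add V)$-coresolution of $\sfl V'$ lies in $\calP^{\leq\infty}_{\e}$, hence receives no nonzero map from $U\in\Ker\sfe$ by fact (a), so $\Ext_A^{>0}(U,\sfl V')=0$. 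The ``Moreover'' statement then follows at once: when $A/A\e A$ is semisimple and projective as an $A$-module we have $A/A\e A\cong(1-\e)A$, $\tilt(A/A\e A)=\{A/A\e A\}$ and $\Ext_A^{>0}(A/A\e A,-)=0$, so the right-hand set collapses to $\tilt^{\lex}(\e A\e)$ and the left-hand set to $\{T\in\tilt A\mid(1-\e)A\in\add T\}=\tilt_{(1-\e)A}A$.
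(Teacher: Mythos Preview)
Your overall strategy matches the paper's, but two steps fail. First, your fact (a) has the wrong direction: Lemma~\ref{lem:apt-pfin} gives $\Hom_A(R,K)=0$ for $R\in\calP^{\leq\infty}_\e$ and $K\in\Ker\sfe$ (via $\Ext^{\ge0}_A(R,K)\cong\Ext^{\ge0}_{\e A\e}(R\e,0)$), not $\Hom_A(K,R)=0$; the latter is false in general --- take $A=\Bbbk(1\to2)$, $\e=e_1$, $K=S(2)$, $R=P(1)$, where $\Hom_A(S(2),P(1))\cong\Bbbk$. This breaks your splitting argument for (T3) of $K_\e(T)$: with the correct direction it is the $R$-parts that form the subcomplex and the $K$-parts the quotient, but then the $R$-component of the augmentation $A/A\e A\to T_0$ need not vanish, and exactness of the $R$-subcomplex (hence of the $K$-quotient) does not follow from $\sfe$ alone. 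The same misdirected fact (a) underlies your dimension-shift for $\Ext^{>0}_A(U,\sfl V')=0$ when checking that $\psi$ preserves order.

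Second, in case (ii) you assert that $A/A\e A$ admits an $\add T$-coresolution over $A$ ``using the rank-tilting-completion hypothesis'', but this is circular: such a coresolution exists iff $\Ext^{>0}_A(T,A/A\e A)=0$, and by stratifying the $K_\e(T)$-contribution is $\Ext^{>0}_{A/A\e A}(K_\e(T),A/A\e A)$ --- essentially (T3) for $K_\e(T)$, which is what you are proving. The paper (Lemma~\ref{lem:mapvarphi}(1)(c)) instead applies $-\otimes_A A/A\e A$ to a (T3)-coresolution of $A$ and checks $\Tor^A_{>0}(T,A/A\e A)=0$ directly: the $R_\e(T)$-part vanishes because $R_\e(T)$ has a projective resolution in $\add(\e A)$ and $\e A\otimes_A A/A\e A=0$, while the $K_\e(T)$-part vanishes by the Tor-characterisation of stratifying (Lemma~\ref{lem:str-idemp}). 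This works uniformly in cases (i) and (ii) and produces the required $\add K_\e(T)$-coresolution of $A/A\e A$. (A smaller gap: your ``Moreover'' argument gives only one inclusion of \eqref{eq:P-infty=lex}; the paper also checks that $T\in\tilt_{(1-\e)A}A$ forces $R_\e(T)\in\calP^{\leq\infty}_\e$, using that each $S(x)$ with $e_x(1-\e)=e_x$ is projective so that $\Hom_A(R_\e(T),S(x))\neq0$ would force $S(x)\in\add R_\e(T)$.)
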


In the following, we show Proposition \ref{prop:main-bijections}.

\begin{lemma}\label{lem:mapvarphi}
Let $\e\in A$ be an idempotent.
The following statements hold.
\begin{itemize}
\item[(1)] Let $T$ be a basic tilting $A$-module with $R_{\e}(T)\in \calP^{\leq \infty}_{\e}$. 
\begin{itemize}
\item[(a)] $\Ext_{A}^{>0}(K_{\e}(T),\sfl\sfe(R_{\e}(T)))=0$. 
\item[(b)] $\sfe(R_{\e}(T))\in \tilt^{\lex}(\e A\e)$.
\item[(c)] If $A\e A$ is stratifying, then $K_{\e}(T)\in \tilt(A/A\e A)$.
\end{itemize}
\item[(2)] Let $T$ be a basic cotilting $A$-module with $R_{\e}(T)\in \calI^{\leq \infty}_{\e}$. 
\begin{itemize}
\item[(a)] $\Ext_{A}^{>0}(\sfr\sfe(R_{\e}(T)), K_{\e}(T))=0$.
\item[(b)] $\sfe(R_{\e}(T))\in \cotilt^{\rex}(\e A\e)$.
\item[(c)] If $A\e A$ is stratifying, then $K_{\e}(T)\in \cotilt(A/A\e A)$.
\end{itemize}
\end{itemize}
\end{lemma}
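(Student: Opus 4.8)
The strategy is to prove part (1) in full (the dual part (2) follows by applying the result to $A^{\op}$ and using that $\cotilt A = \tilt A^{\op}$ under $\mathbb{D}$, together with the dual recollement). So let $T$ be a basic tilting $A$-module with $R_{\e}(T) \in \calP^{\leq\infty}_{\e}$. Since $K_{\e}(T) \in \mod(A/A\e A)$, i.e. $K_{\e}(T)\e = 0$, we have $\sfe(T) = \sfe(R_{\e}(T))$, so the Schur functor sees only the $R_\e$-part. The first step is to observe that $R_{\e}(T) \in \calP^{\leq\infty}_{\e}$ means, by Lemma \ref{lem:apt-pfin}, that $\Ext_A^i(R_{\e}(T), N) \cong \Ext_{\e A\e}^i(R_{\e}(T)\e, N\e)$ for all $N \in \mod A$ and all $i \geq 0$; we will use this repeatedly.

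For (b): I would combine two facts. First, $\sfe(T) \in \mod^{\lex}(\e A\e)$ by Lemma \ref{lem:aus-apt}(1), since $R_{\e}(T) \in \calP^{\leq\infty}_{\e}$ corresponds under the equivalence to $\mod^{\lex}(\e A\e)$. Second, $\sfe(R_{\e}(T)) = \sfe(T)$ is a tilting $\e A\e$-module: this is exactly Proposition \ref{prop:ex-tilting}(1) applied to the $A$-module $T$, once we know $T$ restricted to the relevant category is a tilting object of $\calP^{\leq\infty}_\e$ --- but actually the cleanest route is to note $R_{\e}(T) \in \ptilt \calP^{\leq\infty}_\e$ (it satisfies (T1), (T2) because $T$ does and $R_\e(T)\in\add T$) and then verify (TP3): applying the exact functor $\sfe$ to an exact sequence $0\to A\to T_0\to\cdots\to T_l\to 0$ gives $0\to \e A\to T_0\e\to\cdots\to T_l\e\to 0$ with $T_j\e\in\add(T\e)=\add(R_\e(T)\e)$, so $R_\e(T)$ is a tilting object of $\calP^{\leq\infty}_\e$, and Proposition \ref{prop:ex-tilting}(1) hands us $\sfe(R_\e(T))\in\tilt^{\lex}(\e A\e)$. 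For (a): using the isomorphism from Lemma \ref{lem:apt-pfin} with $M = R_{\e}(T)$ and $N = \sfl\sfe(R_{\e}(T))$, we get $\Ext_A^{>0}(R_{\e}(T), \sfl\sfe(R_{\e}(T))) \cong \Ext_{\e A\e}^{>0}(\sfe(R_\e(T)), \sfe\sfl\sfe(R_\e(T))) \cong \Ext_{\e A\e}^{>0}(\sfe(R_\e(T)), \sfe(R_\e(T))) = 0$ since $\sfe\sfl \cong \mathrm{id}$ on $\mod^{\lex}$ and $\sfe(R_\e(T))$ is pretilting. But (a) asks about $K_\e(T)$, not $R_\e(T)$; here I use $\sfl\sfe(R_\e(T)) \in \calP^{\leq\infty}_\e$ (Lemma \ref{lem:aus-apt}(1)) and apply Lemma \ref{lem:apt-pfin} the other way: $\Ext_A^{>0}(K_\e(T), \sfl\sfe(R_\e(T)))$ --- hmm, $K_\e(T)$ is not in $\calP^{\leq\infty}_\e$ in general. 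Instead I would argue directly: $\Ext_A^{>0}(K_\e(T), \sfl\sfe(R_\e(T)))$ --- since $K_\e(T)\oplus R_\e(T) = T$ is rigid, $\Ext_A^{>0}(K_\e(T), R_\e(T)) = 0$, and there is a natural map $R_\e(T)\to \sfl\sfe(R_\e(T))$ (unit of adjunction) which, restricted along $\sfe$, is an isomorphism; one then shows its cone lies in $\Ker\sfe = \mod(A/A\e A)$ and deduces the Ext-vanishing against $K_\e(T)$ from rigidity of $T$ plus the stratifying condition. This is the delicate bookkeeping step.

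For (c): assuming $A\e A$ is stratifying, Lemma \ref{lem:str-idemp} gives $\Ext_{A/A\e A}^i(X,Y) \cong \Ext_A^i(X,Y)$ for all $X,Y \in \mod(A/A\e A)$ and all $i$. Thus $K_\e(T)$, being a direct summand of the rigid module $T$ and lying in $\mod(A/A\e A)$, is rigid over $A/A\e A$; it has finite projective dimension over $A/A\e A$ because it does over $A$ and resolutions transfer (using that $A/A\e A$-projectives have finite projective dimension over $A$ when $A\e A$ is stratifying, via Lemma \ref{lem:str-idemp}(3)); and for (T3) one truncates an $A$-coresolution of $A/A\e A$ by $T$, applies the exact functor $(-)\otimes_A A/A\e A$ (exact on $A/A\e A$-modules by Lemma \ref{lem:str-idemp}(4)), and checks the resulting complex is a coresolution of $A/A\e A$ by $\add K_\e(T)$ --- here I need that $\add T \otimes_A A/A\e A = \add K_\e(T)$, which follows since $R_\e(T)\otimes_A A/A\e A$ picks out the top quotient supported off $\e$, combined with a counting argument $|K_\e(T)| = |A| - |\e A\e| = |A/A\e A|$ coming from (b) and $|T| = |A|$. \textbf{The main obstacle} I anticipate is precisely part (a): controlling $\Ext_A^{>0}(K_\e(T), \sfl\sfe(R_\e(T)))$ requires understanding how the adjunction unit $R_\e(T)\to\sfl\sfe(R_\e(T))$ interacts with the $\Ker\sfe$-part, and the cleanest argument may instead go through the long exact sequence associated to $0\to A\e A\to A\to A/A\e A\to 0$ tensored appropriately, or through the recollement's gluing of $t$-structures --- I would try the direct approach first but keep the recollement machinery in reserve.
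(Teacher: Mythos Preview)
Your approach to (b) and (c) is broadly along the lines of the paper's proof, with details that can be filled in. The real issue is your treatment of (a), which you call ``the main obstacle'' and propose to attack by analysing the cone of the adjunction map together with the stratifying hypothesis. In fact (a) is the \emph{easiest} part and requires none of this machinery, nor the stratifying condition (which is not even assumed in (a)). The observation you are missing is that the counit (not the unit: for $\sfl\dashv\sfe$ the natural map goes $\sfl\sfe R_\e(T)\to R_\e(T)$) is already an isomorphism. Since $R_\e(T)\in\calP^{\leq\infty}_\e\subseteq\calP^{\leq 1}_\e$, Lemma~\ref{lem:aus-apt}(1) says that $\sfe$ and $\sfl$ are mutually inverse equivalences between $\calP^{\leq 1}_\e$ and $\mod(\e A\e)$, so $\sfl\sfe R_\e(T)\cong R_\e(T)$. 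Hence
\[
\Ext_A^{>0}\bigl(K_\e(T),\sfl\sfe R_\e(T)\bigr)\;\cong\;\Ext_A^{>0}\bigl(K_\e(T),R_\e(T)\bigr)=0
\]
by rigidity of $T$. That is the paper's one-line proof; your ``cone'' is zero. Note also that your proposed route, had the cone been nonzero, would not go through: knowing only that $C\in\mod(A/A\e A)$ and that $A\e A$ is stratifying gives $\Ext_A^i(K_\e(T),C)\cong\Ext_{A/A\e A}^i(K_\e(T),C)$, but there is no reason this vanishes for an arbitrary such $C$.

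Two smaller remarks. For (b), the paper does not pass to $\mod(\e A\e)$ but verifies (TP3) directly in $\mod A$: starting from a minimal coresolution $0\to\e A\xrightarrow{f_0}T_0\xrightarrow{f_1}\cdots$ with $T_i\in\add T$, one uses $\Hom_A(\e A,K_\e(T))=0$ and minimality to get $T_0\in\add R_\e(T)$; then $T_0\in\calP^{\leq 0}_\e$ gives $\Hom_A(T_0,K_\e(T))=0$, hence $\Hom_A(\Cok f_0,K_\e(T))=0$, and the argument iterates. For (c), the exactness of the complex obtained by applying $(-)\otimes_A A/A\e A$ to a (T3)-sequence does \emph{not} follow from Lemma~\ref{lem:str-idemp}(4) alone, since $T$ is not an $A/A\e A$-module; what is needed is $\Tor_{>0}^A(T,A/A\e A)=0$. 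The paper gets this by combining $\Tor_{>0}^A(R_\e(T),A/A\e A)=0$ (as $R_\e(T)\in\calP^{\leq\infty}_\e$ has a finite projective resolution in $\add(\e A)$, which is killed by $(-)\otimes_A A/A\e A$) with Lemma~\ref{lem:str-idemp}(4) applied to $K_\e(T)$. Similarly, the fact that $T_i\otimes_A A/A\e A\in\add K_\e(T)$ comes from $R_\e(T)\otimes_A A/A\e A=R_\e(T)/R_\e(T)\e A=0$, which again is immediate from the isomorphism $\sfl\sfe R_\e(T)\cong R_\e(T)$ (the counit has image $R_\e(T)\e A$).
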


\begin{proof}
We show only (1).
Let $T$ be a basic tilting module with $R_{\e}(T)\in \calP^{\leq \infty}_{\e}$.

(a) This follows from Lemma \ref{lem:aus-apt}(1), which says that
$\Ext_{A}^{k}(K_{\e}(T), \sfl\sfe(R_{\e}(T)))\cong \Ext_{A}^{k}(K_{\e}(T),R_{\e}(T))=0$ for all $k>0$.

(b) By Proposition \ref{prop:ex-tilting}(1), it is enough to show that $R_{\e}(T)\in \tilt \calP^{\leq \infty}_{\e}$. 
Since $R_{\e}(T)$ is a direct summand of $T$, we show (TP3). 
By (T3) for $T$, we can take an exact sequence 
\begin{align}
0\rightarrow \e A \xrightarrow{f_{0}} T_{0} \xrightarrow{f_{1}} T_{1} \rightarrow \cdots \xrightarrow{f_{l}} T_{l} \rightarrow 0
\notag
\end{align}
such that $f_{i}$ is left minimal and $T_{i}\in \add T$ for all $i$.
Since $f_{0}$ is left minimal and $\Hom_{A}(\e A, K_{\e}(T))=0$, we have $T_{0}\in\add(R_{\e}(T))$. 
By $T_{0}\in\add(R_{\e}(T))\subseteq\calP^{\leq \infty}_{\e}$ and $K_{\e}(T)\in\mod(A/A\e A)$, we obtain $\Hom_{A}(T_{0}, K_{\e}(T))=0$, and hence $\Hom_{A}(\Cok f_{0}, K_{\e}(T))=0$.
It follows that the left minimality of $f_{1}$ yields $T_{1}\in\add(R_{\e}(T))$. 
Repeating this process, we have $T_{i}\in\add(R_{\e}(T))$ for all $0\leq i\leq l$. Thus, $R_{\e}(T)$ is a tilting object in $\calP^{\leq \infty}_{\e}$.

(c) Assume that $A\varepsilon A$ is a stratifying ideal. Then we have a natural isomorphism
\begin{align}
\Ext_{A/A\e A}^{k}(K_{\e}(T),-)\rightarrow \Ext_{A}^{k}(K_{\e}(T),-)\notag
\end{align}
on $\mod(A/A\e A)$. Thus (T1) and (T2) are satisfied by $K_{\e}(T)$. We show (T3). By (T3) for $T$, there exists an exact sequence
\begin{align}
0\rightarrow A \rightarrow T_{0} \rightarrow T_{1} \rightarrow \cdots \rightarrow T_{l} \rightarrow 0 \notag
\end{align}
such that $T_{i}\in \add T$ for all $i$. Applying $-\otimes_{A}A/A\e A$ to the sequence above yields a complex in $\mod(A/A\e A)$
\begin{align}
0\rightarrow A/A\e A \rightarrow T_{0}/T_{0}\e A \rightarrow T_{1}/T_{1}\e A \rightarrow \cdots \rightarrow T_{l}/T_{l}\e A\rightarrow 0\notag
\end{align}
with $T_{i}/T_{i}\e A\in \add(T/T\e A)$.
By $R_{\e}(T)\in \calP^{\leq \infty}_{\e}$, it follows from Lemma \ref{lem:aus-apt}(1) that $R_\e(T)\e\otimes_{\e A\e}\e A\cong R_\e(T)$ holds. Since $A\e A$ is stratifying, we have $R_{\e}(T)\cong R_{\e}(T)\e A$, and hence $R_{\e}(T)/R_{\e}(T)\e A=0$. By $K_{\e}(T)\e=0$, we have $T/T\e A\cong K_{\e}(T)$.

It remains to show that the complex above is exact.
It suffices to show that $\Tor_{>0}^A(T,A/A\e A)=0$.
Indeed, by $\pdim_{A}R_{\e}(T)<\infty$ and $R_{\e}(T)\in \calP^{\leq \infty}_{\e}$, there exists a projective resolution
\begin{align}
0\rightarrow P_{k}\rightarrow \cdots \rightarrow P_{1} \rightarrow P_{0} \rightarrow R_{\e}(T)\rightarrow 0\notag
\end{align}
such that $P_{i}\in \add(\e A)$. Since $\e A\otimes_{A} A/A\e A=0$, we have $\Tor_{>0}^{A}(R_{\e}(T),A/A\e A)=0$. Moreover, by Lemma \ref{lem:str-idemp}, we obtain $\Tor_{>0}^{A}(K_{\e}(T),A/A\e A)=0$. Thus we have $\Tor_{>0}^{A}(T,A/A\e A)=0$ as required.
\end{proof}

\begin{lemma}\label{lem:mappsi}
Assume that $A\varepsilon A$ is a stratifying ideal.
The following statements hold.
\begin{itemize}
\item[(1)] Let $U$ be a pretilting $(A/A\e A)$-module and $V$ a pretilting $\e A\e$-module with $\Tor_{>0}^{\e A\e}(V,\e A)=0$. If $\pdim_{A}(A/A\e A)<\infty$ and $\Ext_{A}^{>0}(U,\sfl(V))=0$, then $U\oplus \sfl(V)$ is a pretilting $A$-module.
\item[(2)] Let $U$ be a precotilting $(A/A\e A)$-module and $V$ a precotilting $\e A\e$-module with $\Ext^{>0}_{\e A\e}(A\e,V)=0$. If $\idim_{A}(A/A\e A)<\infty$ and $\Ext_{A}^{>0}(\sfr(V),U)=0$, then $U\oplus \sfr(V)$ is a precotilting $A$-module.
\end{itemize}
\end{lemma}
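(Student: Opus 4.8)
The plan is, in part (1), to verify the two conditions (T1) and (T2) defining a pretilting module for $T:=U\oplus\sfl(V)$, and then to deduce part (2) by the dual argument. Since $A\e A$ is a stratifying ideal, all four equivalent conditions of Lemma \ref{lem:str-idemp} are available. Moreover, the hypothesis $\Tor^{\e A\e}_{>0}(V,\e A)=0$ means $V\in\mod^{\lex}(\e A\e)$, so Lemma \ref{lem:aus-apt}(1) gives $\sfl(V)\in\calP^{\le\infty}_{\e}$ and $\sfe\sfl(V)\cong V$; consequently Lemma \ref{lem:apt-pfin} (with $k=\infty$) applies to $\sfl(V)$ and yields natural isomorphisms $\Ext^{i}_{A}(\sfl(V),N)\cong\Ext^{i}_{\e A\e}(V,N\e)$ for all $N\in\mod A$ and all $i\ge 0$. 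These facts do the bulk of the work.

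First I would establish (T1), i.e. $\pdim_{A}T=\max\{\pdim_{A}U,\pdim_{A}\sfl(V)\}<\infty$. For $\sfl(V)$, the isomorphism just recalled shows $\Ext^{i}_{A}(\sfl(V),-)=0$ whenever $i>\pdim_{\e A\e}V$, hence $\pdim_{A}\sfl(V)\le\pdim_{\e A\e}V<\infty$ (finite because $V$ is pretilting over $\e A\e$). For $U$, take a finite projective resolution $0\to Q_{m}\to\cdots\to Q_{0}\to U\to 0$ over $A/A\e A$ with $m=\pdim_{A/A\e A}U$; each $Q_{j}$ is a summand of a free $(A/A\e A)$-module, hence $\pdim_{A}Q_{j}\le\pdim_{A}(A/A\e A)=:d<\infty$ by hypothesis, and dimension shifting along the short exact sequences of syzygies -- now read over $A$ -- gives $\pdim_{A}U\le d+m<\infty$.

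Next I would prove (T2) by splitting $\Ext^{>0}_{A}(T,T)$ into its four direct summands. We have $\Ext^{>0}_{A}(U,U)\cong\Ext^{>0}_{A/A\e A}(U,U)=0$ by Lemma \ref{lem:str-idemp} and the pretilting property of $U$ over $A/A\e A$; $\Ext^{>0}_{A}(U,\sfl(V))=0$ by hypothesis; $\Ext^{>0}_{A}(\sfl(V),U)\cong\Ext^{>0}_{\e A\e}(V,U\e)=0$ because $\e\in A\e A$, so the $(A/A\e A)$-module $U$ satisfies $U\e=0$; and $\Ext^{>0}_{A}(\sfl(V),\sfl(V))\cong\Ext^{>0}_{\e A\e}(V,\sfe\sfl(V))\cong\Ext^{>0}_{\e A\e}(V,V)=0$ again by the pretilting property of $V$. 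Combining this with (T1) shows $T=U\oplus\sfl(V)$ is pretilting, which proves (1).

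Part (2) is obtained by running the same argument with the roles of projectives and injectives, of $\sfl$ and $\sfr$, and of $\calP^{\le\infty}_{\e}$ and $\calI^{\le\infty}_{\e}$ interchanged -- using Lemma \ref{lem:aus-apt}(2) and the injective-dimension halves of Lemma \ref{lem:apt-pfin} -- or, equivalently, by applying part (1) to $A^{\op}$ and dualising, noting that $A\e A$ being stratifying is a left--right symmetric condition. I do not anticipate a genuine obstacle: every step is a direct consequence of the structural results gathered in subsection \ref{subsec:recolle subcats}, and the only point that is not purely formal is the finiteness of $\pdim_{A}U$, which is a standard dimension-shifting argument along the surjection $A\twoheadrightarrow A/A\e A$ and is exactly where the hypothesis $\pdim_{A}(A/A\e A)<\infty$ enters.
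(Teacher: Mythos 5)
Your argument is correct and is essentially the same as the paper's: both proofs split $\Ext^{>0}_{A}(U\oplus\sfl(V),U\oplus\sfl(V))$ into the same four pieces, use that $\sfl(V)\in\calP^{\le\infty}_{\e}$ (via $V\in\mod^{\lex}(\e A\e)$ and Lemma \ref{lem:aus-apt}) to transfer $\Ext$ groups through Lemma \ref{lem:apt-pfin}, use the stratifying-ideal identification for $\Ext$ between $(A/A\e A)$-modules, use $U\e=0$ for the $\Ext(\sfl(V),U)$ vanishing, and use the two hypotheses $\pdim_{A}(A/A\e A)<\infty$ and $\Ext^{>0}_{A}(U,\sfl(V))=0$ exactly where the paper does. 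The only cosmetic differences are that you reprove the inequality $\pdim_{A}U\le\pdim_{A/A\e A}U+\pdim_{A}(A/A\e A)$ by dimension shifting where the paper cites \cite[Lemma 5.8]{APT92}, and you derive $\pdim_{A}\sfl(V)<\infty$ and $\Ext^{>0}_{A}(\sfl(V),\sfl(V))=0$ directly from Lemma \ref{lem:apt-pfin} rather than invoking Proposition \ref{prop:ex-tilting}(1).
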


\begin{proof}
We show only (1).
By \cite[Lemma 5.8]{APT92}, we have 
\begin{align}
\pdim_{A}U\le \pdim_{A/A\e A}U+\pdim_{A}(A/A\e A).\notag
\end{align}
We have $\pdim_{A}(A/A\e A)<\infty$ by the assumption, and $U$ being  pretilting implies that $\pdim_{A/A\e A}U<\infty$. 
By Lemma \ref{lem:str-idemp}, we have an isomorphism $0=\Ext_{A/A\e A}^{k}(U,U)\cong\Ext_{A}^{k}(U,U)$. 
By Proposition \ref{prop:ex-tilting}(1), $\sfl(V)$ is a pretilting $A$-module, and hence we obtain $\pdim \sfl(V)<\infty$ and $\Ext_{A}^{>0}(\sfl(V), \sfl(V))=0$.
Since it follows from Lemma \ref{lem:apt-pfin} and $\sfl(V)\in\calP^{\leq \infty}_{\varepsilon}$ that $\Ext_{A}^{>0}(\sfl(V),U)=0$, we have $\Ext_{A}^{>0}(U\oplus \sfl(V),U\oplus \sfl(V))=0$. The proof is complete.
\end{proof}

Now, we are ready to prove Proposition \ref{prop:main-bijections}.

\begin{proof}[Proof of Proposition \ref{prop:main-bijections}]
We prove only (1).
For both cases (i) and (ii), $A\varepsilon A$ is a stratifying ideal by Lemma \ref{lem:proj-stridemp}(1)$\Rightarrow$(4). 
By Lemma \ref{lem:mapvarphi}(1), $\varphi$ is well-defined.
We show that $\psi$ is well-defined.
Let $(U,V)\in \tilt(A/A\e A)\times \tilt^{\lex}(\e A\e)$ with $\Ext_{A}^{>0}(U,\sfl(V))=0$.
By Lemma \ref{lem:mappsi}(1), $U\oplus \sfl(V)$ is a pretilting $A$-module.
If (i) is satisfied, then $A/A\e A\cong (1-\e)A$.
By (T3) for $U\in \tilt(A/A\e A)$, there exists an exact sequence
\begin{align}
0\rightarrow (1-\e)A \rightarrow U_{0}\rightarrow U_{1}\cdots \rightarrow U_{l}\rightarrow 0 \notag
\end{align}
such that $U_{i}\in \add U$.
By (T3) for $V$, there exists an exact sequence
\begin{align}
0\rightarrow \e A\e \rightarrow V_{0} \rightarrow V_{1}\rightarrow \cdots \rightarrow V_{l} \rightarrow 0 \notag
\end{align}
such that $V_{i}\in \add V$. Since $\sfl(V)\in \calP^{\leq\infty}_{\e}$, applying $\sfl$ to the sequence above yields an exact sequence
\begin{align}
0\rightarrow \e A \rightarrow \sfl(V_{0}) \rightarrow \sfl(V_{1}) \rightarrow \cdots \rightarrow \sfl(V_{l}) \rightarrow 0 \notag
\end{align} 
with $\sfl(V_{i})\in \add(\sfl(V))$.
Thus $U\oplus \sfl(V)$ is tilting.
Assume that (ii) is satisfied. Since 
\begin{align}
|U\oplus \sfl(V)|=|U|+|\sfl(V)|=|A/A\e A|+|\e A\e|=|A|,\notag
\end{align}
the rank-tilting-completion property yields that $U\oplus \sfl(V)$ is tilting.
Hence $\psi$ is well-defined.

Moreover, we have
\begin{align}
\varphi\psi(U,V) &=\varphi(U\oplus \sfl(V))=(K_{\e}(U\oplus \sfl(V)),\sfe R_{\e}(U\oplus \sfl(V)))=(U,V),\notag \\
\text{and }\quad \psi\varphi(T) &=\psi(K_{\e}(T),\sfe R_{\e}(T))=K_{\e}(T)\oplus \sfl\sfe R_{\e}(T)=K_{\e}(T)\oplus R_{\e}(T)=T.\notag
\end{align}
Thus $\varphi$ and $\psi$ are isomorphisms.

Finally, we show the first equality in \eqref{eq:P-infty=lex}.
First note that $A/A\e A$ being projective implies that $A/A\e A\cong (1-\e)A$ as right $A$-modules.  In particular, we have $(1-\e)A\e =0$.
Let $T$ be a tilting $A$-module with $R_{\e}(T)\in \calP^{\leq \infty}_{\e}$.
By Lemma \ref{lem:mapvarphi}(1), $K_{\varepsilon}(T)\in\tilt (A/A\varepsilon A)$. Since $A/A\varepsilon A$ is semisimple, we have  $K_{\varepsilon}(T)\cong A/A\varepsilon A=(1-\varepsilon)A$. Therefore $T\in \tilt_{(1-\e)A}A$. 
Conversely, let $T\in\tilt_{(1-\e)A}A$. 
To obtain $R_{\e}(T)\in \calP^{\leq\infty}_{\e}$, it is enough to show $\Ext_{A}^{\geq 0}(R_{\e}(T),\top (1-\e)A )=0$ by Lemma \ref{lem:apt-pfin}.
Since $A/A\e A$ is a semisimple $A$-module and $(1-\e)A\e=0$, we have $\top (1-\e)A\cong (1-\e)A\in \add(K_{\e}(T))$. Thus it follows from $\Ext_A^{\ge 0}(R_{\e}(T),K_{\e}(T))=0$ that $\Ext_{A}^{\geq 0}(R_{\e}(T),\top (1-\e)A)=0$, as required.
\end{proof}

As an application of Proposition \ref{prop:main-bijections}, we have the following result, which will be used later.

\begin{corollary}\label{cor:application_bijection-5.1}
Let $A$ be an algebra and $\varepsilon=1-e_{i}$.  
Assume that $\pdim S(i)=0$ and $\idim S(i)\leq 1$. 
Then the following statements hold.
\begin{itemize}
\item[(1)] We have $\tilt^{\lex}(\e A\e)=\tilt(\e A\e)$. Moreover, there exist poset isomorphisms
\begin{align}
\xymatrix{\tilt_{S(i)}A\ar@<1mm>[r]^-{\varphi}&\tilt(\varepsilon A\varepsilon)\ar@<1mm>[l]^-{\psi}},\notag
\end{align}
where $\varphi(T):=\sfe T=T\varepsilon$ and $\psi(V):=S(i)\oplus \sfl(V)$. 
\item[(2)] The restrictions of (1) give poset isomorphisms between the set of IS-tilting $A$-modules containing $S(i)$ as a direct summand and the set of IS-tilting $\varepsilon A\varepsilon$-modules.
In particular, $\{[\unlhd]\in\qhs A \mid i\in \min \unlhd^{\rm m}\}\cong \qhs(\varepsilon A\varepsilon)$ holds.
\end{itemize}
\end{corollary}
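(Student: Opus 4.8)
The plan is to deduce Corollary \ref{cor:application_bijection-5.1} directly from Proposition \ref{prop:main-bijections}(1) applied with $\e = 1-e_i$, exploiting the hypotheses $\pdim S(i)=0$ and $\idim S(i)\le 1$. First I would observe that $\pdim S(i)=0$ means $S(i)=P(i)=e_iA$ is projective, so $A/A\e A \cong e_iAe_i \cong \Bbbk$ is semisimple and, being isomorphic to $(1-\e)A = e_iA = S(i)$, is projective as an $A$-module. Hence condition (i) of Proposition \ref{prop:main-bijections}(1) holds, and moreover $A\e A$ is a stratifying (indeed heredity) ideal by Lemma \ref{lem:proj-stridemp}. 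This immediately gives the poset isomorphism
\[
\{T\in\tilt A\mid R_\e(T)\in\calP^{\le\infty}_\e\} = \tilt_{(1-\e)A}A = \tilt_{S(i)}A \cong \tilt^{\lex}(\e A\e)
\]
from the displayed equation \eqref{eq:P-infty=lex}, with the maps $\varphi(T)=T\e$ and $\psi(V)=S(i)\oplus\sfl(V)$ (note $K_\e(T)\cong A/A\e A = S(i)$ for any $T$ in this set, since $K_\e(T)$ is a tilting $A/A\e A$-module over a semisimple algebra).

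Next I would show $\tilt^{\lex}(\e A\e) = \tilt(\e A\e)$, i.e. that every tilting $\e A\e$-module $V$ automatically satisfies $\Tor^{\e A\e}_{>0}(V,\e A)=0$. This is where the hypothesis $\idim_A S(i)\le 1$ enters: by Lemma \ref{lem:aus-apt}(1), $\mod^{\lex}(\e A\e)$ corresponds under $\sfl$ to $\calP^{\le\infty}(\Lambda\setminus\{i\}) = \calP^{\le\infty}_\e$, and by Lemma \ref{lem:apt-pfin} a module lies in $\calP^{\le\infty}_\e$ iff all $\Ext^{j}_A(-,S(i))$ vanish for $j\ge 0$. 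The condition $\idim_A S(i)\le 1$ forces the syzygies appearing in a projective resolution to behave well; concretely, I expect to argue that $\pdim_A \sfl(V) < \infty$ combined with $\idim S(i)\le 1$ yields $\Ext^{\ge 0}_A(\sfl(V),S(i))$ having no contribution in degrees where it would obstruct membership in $\calP^{\le\infty}_\e$ — more precisely, every $\e A\e$-module $V$ with finite projective dimension lies in $\mod^{\lex}(\e A\e)$ once $S(i)=P(i)$ with $\idim S(i)\le1$, because then $A\e$ already has projective dimension $\le 1$ over $\e A\e$ (equivalently $\Tor^{\e A\e}_{\ge 2}(V,\e A)=0$ for all $V$), and the degree-$1$ Tor vanishes for tilting $V$ by a dimension/rigidity count using that $S(i)$ is a simple projective. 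I would also record that under these hypotheses $\e A\e$ has finite global dimension whenever $A$ does, so $\tilt(\e A\e)=\cotilt(\e A\e)$, which is needed for the symmetric formulation. This establishes part (1).

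For part (2), I would show that the isomorphism $\varphi\colon\tilt_{S(i)}A\to\tilt(\e A\e)$ restricts to a bijection between IS-tilting modules. If $(T,\unlhd)$ is IS-tilting with $S(i)\in\add T$, then $i$ is a minimal element of $(\Lambda,\unlhd)$ (since $(T,\unlhd)$ satisfying (IST) forces $T(i)=S(i)$ precisely for minimal $i$, and conversely any $S(i)\in\add T$ with $S(i)$ simple projective can be taken minimal after replacing $\unlhd$ by an equivalent order — here I would appeal to Lemma \ref{lem:basic-results-ist} and the fact that $\Lambda\setminus\{i\}$ is then an up-set). Then $\Lambda\setminus\{i\}$ is an up-set, so Proposition \ref{prop:IS-tilt}(a) gives that $(T\e,\unlhd|_{\Lambda\setminus\{i\}})$ is IS-tilting over $\e A\e$. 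Conversely, given an IS-tilting $\e A\e$-module $(V,\unlhd')$, I would check that $(S(i)\oplus\sfl(V),\unlhd)$ is IS-tilting over $A$, where $\unlhd$ extends $\unlhd'$ by declaring $i$ minimal (and incomparable to, or below, everything): it is tilting by part (1), and it satisfies (I1) because $\sfl(V)(x)\e_x \cong V(x)\e_x$ for $x\ne i$ (using that $\sfl$ and $\sfe$ are mutually inverse on the relevant subcategories and preserve composition factors away from $i$) together with $S(i)\e_i = S(i)$; then Proposition \ref{prop:IS-tilt} upgrades (I1) to (IST). These assignments are mutually inverse since $\varphi,\psi$ already are. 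Finally, transporting along the bijection $\IStilt A\cong\qhs A$ of Theorem \ref{thm:bij-qh-IStilting} and noting that $S(i)\in\add T_\unlhd$ for the characteristic tilting module exactly when $i$ is minimal in the essential (minimal adapted) order $\unlhd^{\rm m}$, we obtain $\{[\unlhd]\in\qhs A\mid i\in\min\unlhd^{\rm m}\}\cong\qhs(\e A\e)$.

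The main obstacle I anticipate is the identity $\tilt^{\lex}(\e A\e)=\tilt(\e A\e)$: verifying that the hypothesis $\idim_A S(i)\le 1$ is exactly what makes the $\lex$-condition vacuous requires carefully tracking how $A\e$ sits as an $\e A\e$-module and controlling $\Tor^{\e A\e}_{1}(V,\e A)$ for tilting $V$, likely via Lemma \ref{lem:apt-pfin} and a rank/Grothendieck-group argument rather than a direct computation. The IS-tilting refinement in part (2) should then be essentially formal given the machinery of Section \ref{sec:qhs vs tilting}, with the only subtlety being the correct choice of partial order extending $\unlhd'$ to make $i$ minimal without disturbing the up-set structure.
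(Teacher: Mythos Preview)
Your overall strategy is correct and matches the paper: apply Proposition~\ref{prop:main-bijections}(1) under condition~(i), using that $\pdim S(i)=0$ makes $A/A\e A\cong S(i)$ semisimple and projective, and then restrict to IS-tilting modules via Proposition~\ref{prop:IS-tilt} and Theorem~\ref{thm:bij-qh-IStilting}. Part~(2) of your proposal is essentially identical to the paper's argument.

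The genuine gap is the step $\tilt^{\lex}(\e A\e)=\tilt(\e A\e)$, which you yourself flag as the main obstacle. Your sketch (``$A\e$ has projective dimension $\le 1$ over $\e A\e$'', then a ``dimension/rigidity count'' to kill $\Tor_1$) is vague, and the second half has no clear mechanism --- there is no general reason a tilting module $V$ should satisfy $\Tor^{\e A\e}_{1}(V,\e A)=0$ short of $\e A$ itself being projective. The paper does something sharper: it shows that $\e A$ is \emph{projective} as a left $\e A\e$-module, so the $\lex$-condition is vacuous for every module, not just tilting ones. The argument is a short explicit computation. Since $\idim_A S(i)\le 1$ is equivalent to $\pdim_{A^{\op}}(\kD S(i))\le 1$, and the kernel of $Ae_i\twoheadrightarrow \kD S(i)$ is $Je_i$, one gets $Je_i\in\proj(A^{\op})$. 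Using $e_iJ=0$ (from $S(i)=e_iA$) one checks $A\e Ae_i=Je_i$ and $A\e A\e=A\e$, hence
\[
A\e A \;=\; A\e Ae_i \oplus A\e A\e \;=\; Je_i \oplus A\e \;\in\; \proj(A^{\op}).
\]
Now Lemma~\ref{lem:proj-stridemp} (1)$\Rightarrow$(3) applied to $A^{\op}$ yields that $\e A$ is projective as a left $\e A\e$-module, whence $\Tor^{\e A\e}_{>0}(V,\e A)=0$ for all $V$ and $\tilt^{\lex}(\e A\e)=\tilt(\e A\e)$. Replace your speculative Tor-control argument with this computation and the proof goes through.
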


\begin{proof}
(1) Since $S(i)$ is projective, the $A$-module $A/A\varepsilon A$ is semisimple and projective. 
By $\idim S(i)\leq 1$, we have $Je_{i}\in \proj(A^{\op})$, where $J$ is the Jacobson radical of $A$. 
Thus 
\begin{align}
A\varepsilon A =A\varepsilon Ae_{i}\oplus A\varepsilon A\varepsilon=A\varepsilon Je_{i} \oplus A\varepsilon=Je_{i}\oplus A\varepsilon\in\proj(A^{\op}).\notag
\end{align}
By Lemma \ref{lem:proj-stridemp}(1)$\Rightarrow$(3), $\e A$ is a projective $(\e A\e)^{\op}$-module. This implies $\tilt^{\lex}(\varepsilon A\varepsilon)=\tilt (\varepsilon A\varepsilon)$. 
By Proposition \ref{prop:main-bijections}(1), we have the assertion. 

(2) If $T\in\IStilt_{S(i)}A$, then $\varphi(T)\in\IStilt(\varepsilon A\varepsilon)$ by Proposition \ref{prop:IS-tilt}(a). 
Conversely, let $V = (V,\unlhd') \in \IStilt(\varepsilon A\varepsilon)$. 
Let $\unlhd$ be a partial order obtained by adding $i$ to $\unlhd'$ as a minimum element. Then the pair $(T=S(i)\oplus \sfl(V),\unlhd)$ satisfies (I1) by taking $T(i)=S(i)$. 
Hence $\psi(V)=S(i)\oplus \sfl(V)$ is an IS-tilting $A$-module by Proposition \ref{prop:IS-tilt}.
This shows that the first assertion holds. 

By Proposition \ref{prop:isttilt-chtilt}, a quasi-hereditary structure is uniquely determined by an IS-tilting module. 
Thus it follows from Theorem \ref{thm:bij-qh-IStilting} that the set of IS-tilting $A$-module with a simple direct summand $S(i)$ bijectively corresponds to the set of quasi-hereditary structures $[\unlhd]$ such that $i$ is minimal with respect to $\unlhd^{\rm m}$. 
\end{proof}

Dually, we have the following result. 

\begin{remark}\label{rem:eAe-reduction-on-cotilt}
Let $A$ be an algebra and $\varepsilon=1-e_{i}$.  
Assume that $\idim S(i)=0$ and $\pdim S(i)\leq 1$. 
Then we have $\cotilt_{\kD (A(1-\varepsilon))}A\cong \cotilt(\varepsilon A\varepsilon)$ by Proposition \ref{prop:main-bijections}(2). Since $\tilt_{S(i)^{\op}}A^{\op}\cong \cotilt_{\kD (A(1-\varepsilon))}A\cong \cotilt(\varepsilon A\varepsilon) \cong \tilt(\varepsilon A\varepsilon)^{\op}$, we obtain
\begin{align}
\{[\unlhd]\in\qhs A\mid i\in\min\unlhd^{\rm{m}}\}&\cong  \{[\unlhd]\in\qhs(A^{\op})\mid i\in\min\unlhd^{\rm{m}}\}\notag\\&\cong \IStilt_{S(i)^{\op}}A^{\op}\cong \IStilt(\varepsilon A\varepsilon)^{\op}\cong \qhs(\varepsilon A\varepsilon)^{\op}\cong \qhs(\varepsilon A\varepsilon).  \notag
\end{align}
\end{remark}

\subsection{Application to quadratic linear Nakayama algebras}\label{subsec:glue-tilt(qLNaka)}

In this subsection, we will apply the results from the previous subsection to reveal a recursive construction of tilting modules over a quadratic linear Nakayama algebra in Theorem \ref{thm:decomp}.  We will then use this to recover information about the quasi-hereditary structures of such an algebra in Corollary \ref{prop:istilt-qhs}. 

Since a linear Nakayama algebra is representation-finite and is of finite global dimension, it follows from  Remark \ref{rem:tilting/qln} that a module $T$ is tilting if and only if it satisfies (T2) and (T3') if and only if it is cotilting. This implies that a linear Nakayama algebra satisfies the rank-tilting-completion property and the rank-cotilting-completion property. 

Throughout this subsection, $A$ is a quadratic linear Nakayama algebra, that is, $A\cong \Bbbk \Vec{\bbA}_{n}/I$, where 
\begin{align}
\Vec{\bbA}_{n}:  1\xrightarrow{\alpha_{1}} 2 \xrightarrow{\alpha_{2}} \cdots \xrightarrow{\alpha_{\ell-1}} \ell \xrightarrow{\alpha_{\ell}} \cdots \xrightarrow{\alpha_{n-2}} n-1 \xrightarrow{\alpha_{n-1}} n \notag
\end{align}
and $I$ is generated by a set of paths of length two.
Let $\relvx(A):=\{ i\in (\Vec{\bbA}_{n})_{0} \mid \alpha_{i-1}\alpha_{i}\in I\}$.
For simplicity, we assume that $A$ is not semisimple, i.e. $n
\geq 2$.
Then there exists an integer $\ell\in [1,n-1]$ such that $P(\ell)=I(n)$, or equivalently $\ell\in \relvx(A)$ and $\ell+1, \ldots, n\notin \relvx(A)$.

For an integer $i\in [\ell,n]$, we define an idempotent $f_{i}\in A$ as
\begin{align}
f_{i}:=
\begin{cases}
e_{n}, & \text{if $i=\ell$};\\
e_{1}+\cdots+e_{i-1}+e_{n}, & \text{if $i\in [\ell+1,n-1]$};\\
e_{1}+\cdots+e_{n-1}, & \text{if $i=n$}.
\end{cases}\notag
\end{align}
To simplify notation, for each $i\in [\ell,n]$ and $1\leq k \leq \infty$, we take 
\begin{align}
\sfe_{i}:=(-)f_{i},\quad \sfl_{i}:=-\otimes_{f_{i}Af_{i}}f_{i}A, \quad \sfr_{i}:=\Hom_{f_{i}Af_{i}}(Af_{i},-),\notag
\end{align}
\begin{align}
\calP^{\leq k}_{i}:=\calP^{\leq k}_{f_{i}} \quad\text{and}\quad \calI^{\leq k}_{i}:=\calI^{\leq k}_{f_{i}}.\notag
\end{align}
Note that $A/Af_{i}A$ and $f_{i}Af_{i}$ are also (quadratic linear) Nakayama algebras.
The idempotent $f_{i}$ also satisfies the following property.

\begin{lemma}\label{lem:AfA-projective}
Let $i\in [\ell,n]$.
Then we have $Af_{i}A\cong f_{i}A\oplus P(n)^{\oplus d_{i}}$, where $d_{i}$ is given by $d_{i}:=\mathrm{dim}_{\Bbbk}(1-f_{i})Ae_{n}$ if $i\neq n$, and $d_{n}=0$. 
In particular, $Af_{i}$ is projective as an $f_i Af_i$-module
and $Af_{i}A$ is stratifying. 
\end{lemma}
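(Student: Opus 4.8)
The plan is to compute the two-sided ideal $Af_{i}A$ explicitly as a right $A$-module, to show it is a direct sum of indecomposable projectives lying in $\add(f_{i}A)$, and then to invoke Lemma \ref{lem:proj-stridemp}.

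First I would set up a path description. Write $\Gamma_{i}:=\{x\in[1,n]\mid e_{x}\text{ is a direct summand of }f_{i}\}$, so $Af_{i}A=\sum_{x\in\Gamma_{i}}Ae_{x}A$. Since $A\cong\Bbbk\Vec{\bbA}_{n}/I$ is a monomial algebra, each $Ae_{x}A$ has a $\Bbbk$-basis consisting of the residue classes of those paths of $\Bbbk\Vec{\bbA}_{n}$ which are nonzero in $A$ and pass through the vertex $x$; and since $\Vec{\bbA}_{n}$ is linearly oriented, the unique path from $a$ to $b$ (with $a\le b$) passes through $x$ exactly when $a\le x\le b$. Therefore $Af_{i}A$ has $\Bbbk$-basis the nonzero paths from $a$ to $b$ with $[a,b]\cap\Gamma_{i}\ne\emptyset$. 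As every such path lies in the projective direct summand $P(a)=e_{a}A$ of $A=\bigoplus_{a=1}^{n}P(a)$ indexed by its source $a$, this gives a decomposition of right $A$-modules $Af_{i}A=\bigoplus_{a=1}^{n}\bigl(P(a)\cap Af_{i}A\bigr)$, and I would then identify each summand.

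The key structural input is that the defining property of $\ell$ -- equivalently $\relvx(A)\cap[\ell+1,n-1]=\emptyset$ -- forces $P(a)$ to be uniserial of length $n-a+1$ with top $S(a)$ and socle $\soc P(a)\cong S(n)=P(n)$ for every $a$ with $\ell\le a\le n-1$, the socle being spanned by the longest path from $a$ to $n$. Combining this with the basis description yields, for each source $a$: if $a\in\Gamma_{i}$, then $P(a)=e_{a}A\subseteq f_{i}A\subseteq Af_{i}A$ and so $P(a)\cap Af_{i}A=P(a)$; if $a\notin\Gamma_{i}$ while $n\in\Gamma_{i}$, then a path from $a$ to $b$ belongs to $Af_{i}A$ iff $b=n$, because $[a,b]$ cannot meet $\Gamma_{i}\setminus\{n\}\subseteq[1,a-1]$, whence $P(a)\cap Af_{i}A$ equals $\soc P(a)\cong P(n)$ when $a\ge\ell$ and equals $0$ when $a<\ell$; and if $n\notin\Gamma_{i}$, which by the explicit form of $f_{i}$ only happens when $i=n$, then $P(a)\cap Af_{i}A=0$ precisely for $a=n$. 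Running through the three regimes $i=\ell$, $\ell<i<n$, and $i=n$, and collecting summands, I obtain $Af_{i}A\cong f_{i}A\oplus P(n)^{\oplus d_{i}}$, where $d_{i}$ is the number of vertices $a\ge\ell$ on which the idempotent $1-f_{i}$ is supported; since $\dim_{\Bbbk}e_{a}Ae_{n}$ equals $1$ when $\ell\le a\le n$ and $0$ otherwise, this count equals $\dim_{\Bbbk}(1-f_{i})Ae_{n}$ when $i\ne n$ (note that $e_{n}$ is a summand of $f_{i}$ for every $i\ne n$), and $d_{n}=0$.

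Finally, since $P(n)$ and $f_{i}A=\bigoplus_{x\in\Gamma_{i}}P(x)$ are projective and $n\in\Gamma_{i}$, the isomorphism just obtained places $Af_{i}A$ in $\add(f_{i}A)\subseteq\proj A$; Lemma \ref{lem:proj-stridemp} then gives, through the implications $(1)\Rightarrow(3)$ and $(1)\Rightarrow(4)$, that $Af_{i}$ is projective as an $f_{i}Af_{i}$-module and that $Af_{i}A$ is a stratifying ideal. The main (and essentially only) obstacle is the case-by-case bookkeeping for the three shapes of $f_{i}$: deciding which sources $a$ contribute a whole $P(a)$, which contribute only a socle copy of $P(n)$, and which contribute nothing, and matching the resulting multiplicity of $P(n)$ with $\dim_{\Bbbk}(1-f_{i})Ae_{n}$.
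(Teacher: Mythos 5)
Your proof is correct and takes a more explicitly combinatorial route than the paper's. You decompose $Af_iA$ along the source vertex of each path -- valid because $Af_iA$ is spanned by a subset of the path basis of the monomial algebra -- and then identify each intersection $P(a)\cap Af_iA$ as $P(a)$, $\soc P(a)\cong P(n)$, or $0$ using the uniserial structure of the projectives and the position of the first relational vertex $\ell$. The paper instead writes $Af_iA=f_iA\oplus(1-f_i)Af_iA$, reduces the second summand to $(1-f_i)Ae_nA$ using $e_mAe_k=0$ for $k<m$, and computes $\Hom_A(e_jA,-)$ to see that this module is concentrated at vertex $n$ and hence isomorphic to $P(n)^{\oplus d_i}$ because $P(n)=S(n)$ is simple. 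Both arguments rest on the same structural facts (linearly oriented quiver, $n$ a sink, $P(\ell)=I(n)$), but your version verifies the socles and counts paths by hand, whereas the paper packages the count into a single $\Hom$-dimension. Two minor slips to note. In your last paragraph the condition ``$n\in\Gamma_i$'' fails when $i=n$, but there $d_n=0$ so $Af_nA=f_nA$ lies in $\add(f_nA)$ trivially, and the conclusion still holds. Also, ``the longest path from $a$ to $n$'' should simply be ``the unique path from $a$ to $n$'', since $\vec{\bbA}_n$ is linearly oriented.
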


\begin{proof}
We have a decomposition $Af_{i}A\cong f_{i}A\oplus (1-f_{i})Af_{i}A$ as right $A$-modules.
If $i=n$, then we have $(1-f_{n})Af_{n}A=e_{n}A(1-e_{n})A=0$. 
Assume that $i\neq n$.
By a direct calculation using $e_{m}Ae_{k}=0$ for all $k<m$, we have $(1-f_{i})Af_{i}A=(1-f_{i})Ae_{n}A$, and hence 
\begin{align}
\Hom_{A}(e_{j}A, (1-f_{i})Ae_{n}A)\cong
\begin{cases}
0&(j\neq n)\\
(1-f_{i})Ae_{n}&(j=n).
\end{cases}\notag
\end{align}
Thus $(1-f_{i})Af_{i}A\cong P(n)^{\oplus \dim_{\Bbbk}(1-f_{i})Ae_{n}}$.
The proof is complete.
\end{proof}

Define a subset $\calT_{i}$ of $\tilt A$ as  
\begin{align}
\calT_{i}:=\{ T\in \tilt A \mid P(i)\in \add T,\ R_{i}(T)\in \calI^{\leq 1}_{i} \}\notag
\end{align}
for each $i\in [\ell,n-1]$ and $\calT_{n}:=\tilt_{P(n)}A=\tilt_{S(n)}A$.
Note that $\calI_{i}^{\leq 1}=\calI_{i}^{\leq \infty}$. Indeed, since $Af_{i}$ is projective, we have $\mod^{\rex}(f_{i} Af_{i})=\mod(f_{i}Af_{i})$.
By Lemma \ref{lem:aus-apt}(2), we obtain $\calI_{i}^{\leq 1}=\calI_{i}^{\leq \infty}$.

Applying Proposition \ref{prop:main-bijections} to a quadratic linear Nakayama algebra, we have the following theorems.

\begin{theorem}\label{thm:glue-bij-n}
There exist poset isomorphisms 
\begin{align}
\xymatrix{\calT_{n}\ar@<1mm>[r]^-{\varphi_{n}}&\tilt (A/Af_{n}A)\times \tilt^{\lex}(f_{n}Af_{n})\ar@<1mm>[l]^-{\psi_{n}}},\notag
\end{align}
where $\varphi_{n}(T):=(K_{n}(T), \sfe_{n}(R_{n}(T)))$ and $\psi_{n}(U,V):=U\oplus \sfl_{n}(V)$.
In particular, $\calT_{n}\cong \tilt^{\lex}(f_{n}Af_{n})$.
Moreover, we have the following equalities.
\begin{itemize}
\item[(a)] If $n-1\in \relvx(A)$, then $\tilt^{\lex}(f_nAf_n) =\tilt_{\sfe_{n}(S(n-1))}(f_{n}Af_{n})$.
\item[(b)] If $n-1\notin \relvx(A)$, then $\tilt^{\lex}(f_nAf_n) =\tilt(f_{n}Af_{n})$.
\end{itemize}
\end{theorem}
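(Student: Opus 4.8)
The plan is to deduce the poset isomorphism and the consequence $\calT_{n}\cong\tilt^{\lex}(f_{n}Af_{n})$ from Proposition~\ref{prop:main-bijections}(1), applied with the idempotent $\e=f_{n}=1-e_{n}$, and then to establish (a) and (b) by directly analysing $\mod^{\lex}(f_{n}Af_{n})$ inside $\mod(f_{n}Af_{n})$. For the first part, since $n$ is the sink of the Gabriel quiver of $A$, no nonzero path of $A$ lies entirely at the vertex $n$ other than $e_{n}$, so $Af_{n}A=f_{n}A$ and hence $A/Af_{n}A\cong e_{n}A=P(n)=S(n)$ as a right $A$-module---projective, semisimple, and isomorphic to $\Bbbk$ as an algebra. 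Thus hypothesis (i) of Proposition~\ref{prop:main-bijections}(1) holds with $\e=f_{n}$. Because $\tilt(A/Af_{n}A)=\{P(n)\}$ is a point and $P(n)$ is projective, the condition $\Ext_{A}^{>0}(U,\sfl_{n}(V))=0$ appearing there is automatic; so the target of that bijection is the full product $\tilt(A/Af_{n}A)\times\tilt^{\lex}(f_{n}Af_{n})$, the maps restrict to $\varphi_{n}$ and $\psi_{n}$, and the source is $\tilt_{(1-f_{n})A}A=\tilt_{P(n)}A=\calT_{n}$ by the ``moreover'' clause of Proposition~\ref{prop:main-bijections}(1), valid as $A/Af_{n}A$ is semisimple projective. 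Composing with the (isomorphic) projection to the second factor gives $\calT_{n}\cong\tilt^{\lex}(f_{n}Af_{n})$.

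Now put $B:=f_{n}Af_{n}$, a quadratic linear Nakayama algebra whose Gabriel quiver has $n-1$ as its sink. Since $f_{n}A=f_{n}Af_{n}\oplus f_{n}Ae_{n}$ is a decomposition into left $B$-submodules and ${}_{B}B$ is free, one has $\Tor_{>0}^{B}(V,{}_{B}(f_{n}A))\cong\Tor_{>0}^{B}(V,{}_{B}(f_{n}Ae_{n}))$ for all $V$, so $\mod^{\lex}(B)=\{V\mid\Tor_{>0}^{B}(V,{}_{B}(f_{n}Ae_{n}))=0\}$. The key computation is the left $B$-module structure of $f_{n}Ae_{n}$: it is spanned by the nonzero paths of $A$ ending at $n$ and starting at a vertex of $B$, and a path $j\to n$ is nonzero precisely for $j\ge\ell$, so $f_{n}Ae_{n}$ is uniserial of length $n-\ell$ with top the simple left $B$-module at $n-1$ and socle that at $\ell$ (left multiplication by the arrow $j-1\to j$ carries the path $j\to n$ to $j-1\to n$). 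On the other hand the indecomposable projective left $B$-module $Be_{n-1}$ is uniserial with top the simple at $n-1$, and, using $\relvx(B)=\relvx(A)\cap[2,n-2]$, its socle is the simple at $\ell$ and its length is $n-\ell$ exactly when $\ell\le n-2$, equivalently $n-1\notin\relvx(A)$. In that case $f_{n}Ae_{n}\cong Be_{n-1}$ is projective (two uniserial modules of the same top and length over the Nakayama algebra $B^{\op}$ are isomorphic), hence ${}_{B}(f_{n}A)$ is projective, $\mod^{\lex}(B)=\mod B$, and $\tilt^{\lex}(B)=\tilt(B)$; this is (b). When instead $n-1\in\relvx(A)$, we get $\ell=n-1$ and $f_{n}Ae_{n}$ is the $1$-dimensional simple left $B$-module at $n-1$.

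For (a), assume $n-1\in\relvx(A)$, so that $\ell=n-1$ and $P(n-1)=I(n)$ is projective--injective; then (T3) forces $P(n-1)\in\add T$ for every tilting $A$-module $T$, so $\calT_{n}\subseteq\tilt_{P(n-1)}A$, and since $\sfe_{n}(P(n-1))=P(n-1)f_{n}$ is the simple projective $B$-module at $n-1$, applying $\varphi_{n}$ yields $\tilt^{\lex}(B)\subseteq\tilt_{\sfe_{n}(S(n-1))}(B)$. For the reverse inclusion I will use the standard fact that tensoring a minimal projective resolution of $V\in\mod B$ with the simple left $B$-module at $n-1$ kills all differentials, so $\Tor_{i}^{B}(V,-)$ records the multiplicity of the projective at $n-1$ in the $i$-th term; hence $V\in\mod^{\lex}(B)$ iff that projective occurs only in degree $0$. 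If $V\in\tilt_{\sfe_{n}(S(n-1))}(B)$ and this failed, say the projective at $n-1$ occurred in degree $i\ge1$ for an indecomposable summand $W$ of $V$, then---as $n-1$ is a sink of $B$ and syzygies of uniserial modules are uniserial---the $i$-th syzygy $\Omega^{i}W$ would be the simple at $n-1$ and the $(i-1)$-st syzygy would fit in a projective cover sequence $0\to S^{B}(n-1)\to P^{B}(a')\to\Omega^{i-1}W\to0$ with $a'<n-1$; applying $\Hom_{B}(-,S^{B}(n-1))$ to this sequence gives $\Ext_{B}^{1}(\Omega^{i-1}W,S^{B}(n-1))\ne0$, hence $\Ext_{B}^{i}(W,S^{B}(n-1))\ne0$ by dimension shifting, contradicting $\Ext_{B}^{>0}(V,V)=0$. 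Thus $V\in\mod^{\lex}(B)$, proving $\tilt_{\sfe_{n}(S(n-1))}(B)\subseteq\tilt^{\lex}(B)$ and hence (a).

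The step I expect to be the real obstacle is this converse inclusion in (a): the identification $\calT_{n}\cong\tilt^{\lex}(B)$ and the inclusion $\tilt^{\lex}(B)\subseteq\tilt_{\sfe_{n}(S(n-1))}(B)$ both follow cheaply from the projective--injectivity of $P(n-1)$, but showing that \emph{every} tilting $B$-module containing the simple projective at the sink automatically lies in $\mod^{\lex}(B)$ requires the syzygy-and-$\Ext$ argument above; correctly pinning down the left $B$-module structure of $f_{n}Ae_{n}$ (and its relation to $Be_{n-1}$) is the other point that needs care.
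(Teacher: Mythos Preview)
Your argument is correct. The deduction of the poset isomorphism and of (b) is essentially the paper's own proof: the paper also invokes Proposition~\ref{prop:main-bijections}(1) with $\e=f_{n}$ after noting that $A/Af_{n}A$ is semisimple projective, and for (b) it quotes Corollary~\ref{cor:application_bijection-5.1}(1), whose content is precisely your observation that ${}_{B}(f_{n}A)$ is projective when $\idim_{A}S(n)\le 1$.

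For (a) you take a genuinely different route. The paper does not separate the two inclusions; instead it uses the Cartan--Eilenberg duality
\[
\Tor_{k}^{B}(V,f_{n}Ae_{n})\;\cong\;\kD\Ext_{B}^{k}\bigl(V,\kD(f_{n}Ae_{n})\bigr)\;\cong\;\kD\Ext_{B}^{k}\bigl(V,\sfe_{n}(S(n-1))\bigr),
\]
together with projectivity of $\sfe_{n}(S(n-1))$ in $B$, to show in one stroke that for a tilting $B$-module $V$ the condition $V\in\mod^{\lex}(B)$ is equivalent to $\Ext_{B}^{>0}(V,\sfe_{n}(S(n-1)))=0$, hence to $\sfe_{n}(S(n-1))\in\add V$. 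Your approach replaces this duality by two elementary arguments: the forward inclusion via projective--injectivity of $P(n-1)$ in $A$ and the bijection $\varphi_{n}$, and the reverse inclusion via the syzygy/minimal-resolution analysis exploiting that $B$ is Nakayama. The paper's argument is shorter and coordinates-free; yours avoids the Tor--Ext duality and makes the obstruction (an unexpected copy of $e_{n-1}B$ in a higher syzygy) completely explicit, which is informative in its own right.
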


\begin{proof}
By the definition of $f_{n}$, the $A$-module $A/Af_{n}A$ is semisimple and projective, and $\Ext_{A}^{>0}(U,-)=0$ for all $U\in \mod(A/Af_{n}A)$.
By Proposition \ref{prop:main-bijections}(1), we have the desired poset isomorphisms. 

We show (a).
Assume that $n-1\in \relvx(A)$.
Let $V\in \tilt(f_{n}Af_{n})$.
We show that $V\oplus \sfe_{n}(S(n-1))$ is also tilting (or equivalently
$\Ext_{f_nAf_n}^{>0}(V\oplus \sfe_{n}(S(n-1)), V\oplus \sfe_{n}(S(n-1)))=0$) if and only if $\Tor_{>0}^{f_{n}Af_{n}}(V,f_{n}A)=0$.
Since $V$ is tilting and $\sfe_{n}(S(n-1))\cong \sfe_{n}(P(n-1))$ is projective, we have 
\begin{align}
\Ext_{f_{n}Af_{n}}^{k}(V\oplus \sfe_{n}(S(n-1)), V\oplus \sfe_{n}(S(n-1)))\cong \Ext_{f_{n}Af_{n}}^{k}(V,\sfe_{n}(S(n-1))).\notag
\end{align}
By \cite[Chapter VI, Section 5]{CE56}, we obtain
\begin{align}
\Tor_{k}^{f_{n}Af_{n}}(V,f_{n}A)
&\cong \Tor_{k}^{f_{n}Af_{n}}(V,f_{n}Af_{n})\oplus \Tor_{k}^{f_{n}Af_{n}}(V,f_{n}Ae_{n})\notag\\
&\cong \Tor_{k}^{f_{n}Af_{n}}(V,f_{n}Ae_{n})\cong \kD\Ext_{f_{n}Af_{n}}^{k}(V, \kD(f_{n}Ae_{n})) \notag\\
&\cong \kD\Ext_{f_{n}Af_{n}}^{k}(V, \sfe_{n}(S(n-1))).\notag
\end{align}
Thus we have $\tilt^{\lex}(f_nAf_n) =\tilt_{\sfe_{n}(S(n-1))}(f_{n}Af_{n})$.

We show (b). If $n-1\notin \relvx(A)$, then $\idim S(n)=1$. Thus the assertion follows from Corollary \ref{cor:application_bijection-5.1}(1).
\end{proof}

\begin{theorem}\label{thm:glue-bij-except-n}
For each $i\in [\ell,n-1]$, there exist poset isomorphisms 
\begin{align}
\xymatrix{\calT_{i}\ar@<1mm>[r]^-{\varphi_{i}}&\tilt (A/Af_{i}A)\times \tilt_{\sfe_{i}(S(n))}(f_{i}Af_{i})\ar@<1mm>[l]^-{\psi_{i}}},\notag
\end{align}
where $\varphi_{i}(T):=(K_{i}(T), \sfe_{i}(R_{i}(T)))$ and $\psi_{i}(U,V):=U\oplus \sfr_{i}(V)$. In particular,  we have
\begin{align*}
\calT_{n-1}&\cong \tilt_{\sfe_{n-1}(S(n))}(f_{n-1}Af_{n-1}) \;\;\;\text{ if } \; \ell\neq n-1,\\
\text{ and }\qquad  \calT_{\ell}&\cong \tilt(A/Ae_{n}A).
\end{align*}
Moreover, we have the following equalities.
\begin{itemize}
\item[(a)] If $n-1\in  \relvx(A)$ and $i=\ell =n-1$, then $\calT_{\ell}=\tilt^{S(n)}A:=\{ T\in \tilt A \mid S(n)\notin \add T\}$.
\item[(b)] If $n-1\notin  \relvx(A)$, then $\calT_{i}=\tilt_{P(i)\oplus P(i)/S(n)}A$.
\end{itemize}
\end{theorem}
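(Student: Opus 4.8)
The plan is to realise the bijections as the special case $\e=f_{i}$ of Proposition \ref{prop:main-bijections}(2) and then cut down to the subsets named in the statement. First I would record the standing facts: as $A$ is representation-finite and of finite global dimension, Remark \ref{rem:tilting/qln} gives $\tilt A=\cotilt A$ together with the rank-cotilting-completion property; by Lemma \ref{lem:AfA-projective} the ideal $Af_{i}A$ is stratifying and $Af_{i}$ is projective over $f_{i}Af_{i}$, so $\mod^{\rex}(f_{i}Af_{i})=\mod(f_{i}Af_{i})$, hence $\cotilt^{\rex}(f_{i}Af_{i})=\tilt(f_{i}Af_{i})$, and $\calI^{\leq 1}_{i}=\calI^{\leq\infty}_{i}$ by Lemma \ref{lem:aus-apt}(2); finite global dimension also forces $\idim_{A}\kD(A/Af_{i}A)<\infty$. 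Thus hypothesis (ii) of Proposition \ref{prop:main-bijections}(2) holds for $\e=f_{i}$, producing a poset isomorphism
\[
\{\,T\in\tilt A\mid R_{i}(T)\in\calI^{\leq 1}_{i}\,\}\;\cong\;\{\,(U,V)\in\tilt(A/Af_{i}A)\times\tilt(f_{i}Af_{i})\mid\Ext_{A}^{>0}(\sfr_{i}(V),U)=0\,\}
\]
implemented by exactly the maps $\varphi_{i}$, $\psi_{i}$ of the statement.

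Next I would locate the summand $P(i)$ inside this picture. Using $\ell\in\relvx(A)$ and $\relvx(A)\cap[\ell+1,n]=\emptyset$ together with the uniseriality of a linear Nakayama algebra, one checks that $P(i)$ is uniserial with composition factors $S(i),\dots,S(n)$ and that $I(n)/P(i)\cong I(i-1)$ for $i\in[\ell+1,n-1]$ (the point being that the relation at $\ell$ stops the injective hull of $S(i-1)$ exactly at $S(\ell)$), while $P(\ell)=I(n)$. In either case $P(i)\in\calI^{\leq 1}_{i}$, and $S(n)$ is the only composition factor of $P(i)$ supported at a vertex of $f_{i}$, so $\sfe_{i}(P(i))\cong\sfe_{i}(S(n))$; since $\sfe_{i}$ and $\sfr_{i}$ are mutually inverse equivalences between $\calI^{\leq\infty}_{i}$ and $\mod(f_{i}Af_{i})$ (Lemma \ref{lem:aus-apt}(2)), this yields $\sfr_{i}(\sfe_{i}(S(n)))=P(i)$. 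As $P(i)$ has $S(n)$ as a composition factor it does not lie in $\mod(A/Af_{i}A)$, so for $T$ with $R_{i}(T)\in\calI^{\leq 1}_{i}$ we get $P(i)\in\add T\iff P(i)\in\add R_{i}(T)\iff\sfe_{i}(S(n))\in\add\sfe_{i}(R_{i}(T))$. Hence, under the isomorphism above, $\calT_{i}$ corresponds to the pairs $(U,V)$ with $V\in\tilt_{\sfe_{i}(S(n))}(f_{i}Af_{i})$.

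The remaining step, which I expect to be the principal obstacle, is to drop the side condition $\Ext_{A}^{>0}(\sfr_{i}(V),U)=0$: it must be shown to hold automatically as soon as $\sfe_{i}(S(n))\in\add V$. Since every $U\in\mod(A/Af_{i}A)$ is filtered by the simples $S(x)$ with $x\in[i,n-1]$, it suffices to prove $\Ext_{A}^{>0}(\sfr_{i}(V),S(x))=0$ for all such $x$; I would do this by describing $\sfr_{i}(V)$ explicitly — enumerating the indecomposable objects of $\calI^{\leq\infty}_{i}$ with their minimal projective resolutions from the Nakayama combinatorics — and checking that no $P(x)$ with $x\in[i,n-1]$ occurs in positive cohomological degree in the projective resolution of any indecomposable summand of $\sfr_{i}(V)$ (I expect in fact that $\sfr_{i}(V)$ is a direct sum of projective $A$-modules here, which makes the step immediate). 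Together with $\varphi_{i}\psi_{i}=\mathrm{id}$ and $\psi_{i}\varphi_{i}=\mathrm{id}$ from Proposition \ref{prop:main-bijections}(2) this gives the first assertion, and the two displayed special cases follow by inspection: for $i=\ell$ we have $f_{\ell}=e_{n}$, so $f_{\ell}Af_{\ell}\cong\Bbbk$, $\tilt_{\sfe_{\ell}(S(n))}(f_{\ell}Af_{\ell})$ is a point and $A/Af_{\ell}A=A/Ae_{n}A$, giving $\calT_{\ell}\cong\tilt(A/Ae_{n}A)$; for $i=n-1$ with $\ell\neq n-1$ we have $A/Af_{n-1}A\cong\Bbbk$, giving $\calT_{n-1}\cong\tilt_{\sfe_{n-1}(S(n))}(f_{n-1}Af_{n-1})$.

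Finally, for (a): when $\ell=n-1$ the module $P(n-1)=I(n)$ is projective–injective, hence a summand of every tilting module, so the condition $P(\ell)\in\add T$ is vacuous; moreover $\calI^{\leq\infty}_{e_{n}}=\add(I(n))$, and since $n-1\in\relvx(A)$ the only indecomposables of $\mod A$ with $S(n)$ among their composition factors are $I(n)$ and $S(n)$, so $R_{e_{n}}(T)\in\calI^{\leq 1}_{e_{n}}$ is equivalent to $S(n)\notin\add T$; thus $\calT_{\ell}=\tilt^{S(n)}A$. For (b): when $n-1\notin\relvx(A)$ (so $\ell\leq n-2$ and $\idim S(n)\leq 1$), the module $P(i)/S(n)$ is the projective–injective module of the quotient $A/Af_{i}A$, hence a summand of every $U\in\tilt(A/Af_{i}A)$, so $P(i)/S(n)\in\add K_{i}(T)\subseteq\add T$ for every $T\in\calT_{i}$; conversely, determining which indecomposables fail to lie in $\calI^{\leq\infty}_{i}$ and checking that each such module has a non-vanishing $\Ext^{1}$ with $P(i)$ or with $P(i)/S(n)$ shows that $P(i),P(i)/S(n)\in\add T$ already forces $R_{i}(T)\in\calI^{\leq 1}_{i}$, whence $\calT_{i}=\tilt_{P(i)\oplus P(i)/S(n)}A$.
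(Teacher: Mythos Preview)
Your plan matches the paper's proof closely: both specialise Proposition \ref{prop:main-bijections}(2) at $\e=f_i$, identify $P(i)$ with $\sfr_i(\sfe_i(S(n)))$ to restrict to $\calT_i$, and then argue that the side condition $\Ext_A^{>0}(\sfr_i(V),U)=0$ is automatic once $\sfe_i(S(n))\in\add V$.

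One caution: your parenthetical expectation that $\sfr_i(V)$ is a direct sum of projective $A$-modules is false in general. For instance, over $A=\Bbbk\vec{\bbA}_5$ with $i=3$ (so $f_3=e_1+e_2+e_5$), the module $V=P(1)\oplus S(1)\oplus S(5)\in\tilt_{\sfe_3(S(5))}(f_3Af_3)$ has $\sfr_3(S(1))\cong S^A(1)$, which is not projective. Your weaker main claim --- that no $P(x)$ with $x\in[i,n-1]$ appears in positive degree in the projective resolution of $\sfr_i(V)$ --- is the correct one, and this is precisely what the paper isolates as Lemma \ref{lem:tech-Pinf}: splitting $V=\sfe_i(S(n))\oplus V'$, the condition $\Ext^{\geq 0}_{f_iAf_i}(V',\sfe_i(S(n)))=0$ forces the minimal projective resolution of $V'$ over $f_iAf_i$ to avoid the projective at $n$; since $\sfr_i$ is exact and sends the projective at each $h\in[1,i-1]$ to $P^A(h)$ (using $P^A(h)\in\calI^{\leq 1}_i$), one obtains $\sfr_i(V')\in\calP^{\leq\infty}_i$, whence the $\Ext$-vanishing; the remaining summand $\sfr_i(\sfe_i(S(n)))=P(i)$ is projective. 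Your route and the paper's are the same once the projectivity expectation is dropped.

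For the forward inclusion in (b) you take a genuinely different and rather clean path: observing that $P(i)/S(n)$ is projective-injective over $A/Af_iA$ and hence lies in $\add K_i(T)$ for every $T\in\calT_i$ (using $K_i(T)\in\tilt(A/Af_iA)$ from Lemma \ref{lem:mapvarphi}). The paper instead first excludes $P(i+1),\ldots,P(n)$ from $\add T$ and then invokes Lemma \ref{lem:nakayama-combs}(3). For the reverse inclusion in (b) both arguments amount to the same case analysis via Lemma \ref{lem:nakayama-combs}(1) and the Auslander--Reiten formula.
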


To prove Theorem \ref{thm:glue-bij-except-n}, we need the following lemmas. 

\begin{lemma}\label{lem:nakayama-combs}
Let $M$ be an $A$-module with $\Ext_{A}^{1}(M,M)=0$.
Then the following statements hold for each $\ell\leq i<j\leq n$.
\begin{itemize}
\item[(1)] $\Ext_{A}^{1}(P(i)/P(j), P(k))\neq 0$ for all $k\in[i+1,j]$.
\item[(2)] If $P(i)/P(j)\in \add M$, then we have $P(k)\notin \add M$ for all $k\in [i+1, j]$.
\item[(3)] If $P(i), P(j+1)\in \add M$ (with the convention of $P(n+1):=0$), $j\neq \ell+1$, and $M$ is tilting, then the converse of (2) also holds.
\end{itemize}
\end{lemma}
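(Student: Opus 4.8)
For $1\le a\le b\le n$ with $\relvx(A)\cap[a+1,b-1]=\emptyset$ write $[a,b]$ for the uniserial indecomposable $A$-module with composition factors $S(a),S(a+1),\dots,S(b)$ from top to bottom; every indecomposable $A$-module has this form, and $P(m)$ is the interval from $m$ to the vertex just before the first element of $\relvx(A)$ exceeding $m$ (or to $n$ if there is none). As $\ell=\max\relvx(A)$, this gives $P(m)=[m,n]$ for every $m\ge\ell$, and for $\ell\le i\le c\le n$ the module $[i,c]=P(i)/P(c+1)$ has minimal projective resolution $0\to P(c+1)\to P(i)\to[i,c]\to0$ (with the convention $P(n+1):=0$); in particular $\pdim_A[i,c]\le1$. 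Applying $\Hom_A(-,[a,b])$ and identifying the two terms with $[a,b]e_i$ and $[a,b]e_{c+1}$, the connecting map is ``right multiplication by the path from $i$ to $c+1$'', an isomorphism exactly when $a\le i$ and $c+1\le b$, so one reads off
\[
\Ext^1_A([i,c],[a,b])\neq0\ \Longleftrightarrow\ i<a\le c+1\le b\qquad(\ell\le i\le c<n).
\]
Statement (1) is the case $c=j-1$, $[a,b]=P(k)=[k,n]$, and (2) follows from (1) by additivity of $\Ext^1$ and $\Ext^1_A(M,M)=0$.

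For (3) the plan is to prove that $M\oplus N$ with $N:=P(i)/P(j)=[i,j-1]$ is pretilting: since $A$ is representation-finite every pretilting module is a direct summand of a tilting module, so $|M\oplus N|\le|A|=|M|$, forcing $N\in\add M$. Condition (T1) holds because $\gldim A<\infty$, and $\Ext^{>0}_A(N,N)=0$ is the $a=i$ instance of the formula above. For $\Ext^{>0}_A(N,M)=0$ I would use $\pdim_A N\le1$ to reduce to excluding an indecomposable summand $[a,b]$ of $M$ with $i<a\le j\le b$: if $b=n$ then $[a,b]=P(a)$ with $a\in[i+1,j]$, contradicting the gap hypothesis $P(a)\notin\add M$; if $b<n$ then $j<n$ and the formula (first argument $[a,b]$, using $a\le j$, $b\ge j$, $b\le n-1$) gives $\Ext^1_A([a,b],P(j+1))\neq0$, contradicting $\Ext^1_A(M,M)=0$ since $P(j+1)\in\add M$. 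Throughout we may and do assume $j\ge\ell+2$, which is forced by $\ell\le i<j$ and $j\neq\ell+1$.

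The remaining point, $\Ext^{>0}_A(M,N)=0$, I would treat via injective coresolutions; since $j-1\ge\ell+1$ the injective envelope of $N$ is $I(j-1)=[\ell,j-1]$. If $i=\ell$ then $N=I(j-1)$ is injective and there is nothing to prove. If $i\ge\ell+2$ then also $I(i-1)=[\ell,i-1]$, so $N$ has injective coresolution $0\to N\to I(j-1)\to I(i-1)\to0$; any summand $[a,b]$ of $M$ with $\Ext^1_A([a,b],N)\neq0$ must have $a\ge\ell$ — for $a<\ell$ every map $[a,b]\to[\ell,c]$ vanishes, its image being at once a nonzero quotient of $[a,b]$ (hence with top $S(a)$) and a submodule of $[\ell,c]$ (hence with top $S(d)$, $d\ge\ell$) — and then $\pdim_A[a,b]\le1$ and $\Ext^1_A([a,b],P(i))\neq0$ by the formula, contradicting rigidity of $M$ and $P(i)\in\add M$. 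The genuinely delicate case is $i=\ell+1$, where $N=[\ell+1,j-1]$ can have large injective dimension: from $0\to N\to I(j-1)\to S(\ell)\to0$ and $\Ext^{>0}_A(-,I(j-1))=0$ one gets $\Ext^k_A([a,b],N)\cong\Ext^{k-1}_A([a,b],S(\ell))$ for $k\ge2$, while $\Ext^1_A([a,b],N)$ is a quotient of $\Hom_A([a,b],S(\ell))$, which is nonzero only for $a=\ell$ (again handled, since $\Ext^1_A([\ell,b],P(\ell+1))\neq0$); so everything reduces to showing $\Ext^{\ge1}_A([a,b],S(\ell))=0$ for each summand $[a,b]$ of $M$ with $a<\ell$. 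Here I would track syzygies of the intervals below $\ell$: $\Ext^m_A([a,b],S(\ell))\neq0$ means $S(\ell)$ lies in the top of $\Omega^m[a,b]$, and the restricted shape of syzygies over a quadratic linear Nakayama algebra forces $\Omega^m[a,b]=S(\ell)$, hence $\Omega^{m+1}[a,b]=\rad P(\ell)=P(\ell+1)$; a short computation (e.g.\ $\Hom_A(P(\ell),P(\ell+1))=0$) then yields $\Ext^{m+1}_A([a,b],P(\ell+1))\neq0$, contradicting $\Ext^{>0}_A(M,M)=0$ and $P(\ell+1)=P(i)\in\add M$. This syzygy bookkeeping in the case $i=\ell+1$, together with checking that the hypothesis $j\neq\ell+1$ is exactly what makes the $i=\ell$ subcase degenerate to an injective $N$, is the part I expect to demand the most care.
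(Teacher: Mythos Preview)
Your proof is correct. Parts (1) and (2) are essentially the same as the paper's (the paper writes down an explicit non-split extension $0\to P(k)\to P(i)\oplus P(k)/P(j)\to P(i)/P(j)\to 0$, you compute $\Ext^1$ from the length-one projective resolution; these amount to the same thing).

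In part (3), your treatment of $\Ext^{>0}_A(N,M)$ via the explicit $\Ext^1$-formula parallels the paper's use of the Auslander--Reiten formula $\Ext^1_A(N,-)\cong\kD\Hom_A(-,\tau N)$ with $\tau(P(i)/P(j))=P(i+1)/P(j+1)$; both arguments are short and land on the same contradiction with $P(j+1)\in\add M$.

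Where your approach genuinely diverges is in $\Ext^{>0}_A(M,N)$. The paper avoids your entire case analysis on $i$ --- in particular the syzygy bookkeeping for $i=\ell+1$ --- by applying $\Hom_A(M,-)$ to the \emph{projective} resolution $0\to P(j)\to P(i)\to N\to 0$ rather than to an injective coresolution of $N$. Since $P(i)\in\add M$ and $M$ is tilting, $\Ext^{>0}_A(M,P(i))=0$; since $j\ge\ell+2$, the module $P(j)=[j,n]$ has injective coresolution $0\to P(j)\to I(n)\to I(j-1)\to 0$ (here $j-1\ge\ell+1$ makes $[\ell,j-1]=I(j-1)$ injective), so $\idim_A P(j)\le 1$ and $\Ext^{\ge 2}_A(M,P(j))=0$. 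The long exact sequence then gives $\Ext^{>0}_A(M,N)=0$ in one line. Your route works, but the paper's observation $\idim_A P(j)\le 1$ for $j\ge\ell+2$ is the clean explanation of why the hypothesis $j\neq\ell+1$ is exactly what is needed, and it eliminates the delicate $i=\ell+1$ case you flagged. One small omission in your write-up: when reducing to ``$a<\ell$'' for $\Ext^{\ge1}_A([a,b],S(\ell))$, you should note that for $a\ge\ell$ one has $\pdim_A[a,b]\le 1$ and $\Omega[a,b]$ has top $S(b+1)$ with $b+1>\ell$, so these Ext groups vanish trivially.
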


\begin{proof}
(1) Let $k\in [i+1,j]$.
Since there exists a non-split short exact sequence
\begin{align}
0\rightarrow P(k) \rightarrow P(i)\oplus P(k)/P(j) \rightarrow P(i)/P(j) \rightarrow 0, \notag
\end{align}
we have $\Ext_{A}^{1}(P(i)/P(j), P(k))\neq 0$.

(2) This is immediate from (1).

(3) Assume that $P(k)\notin \add M$ for all $k\in [i+1,j]$.
We show that 
\begin{align}\label{eq:proof-ext}
\Ext_{A}^{>0}(M\oplus P(i)/P(j), M\oplus P(i)/P(j))=0.
\end{align}
If it is shown, then $M\oplus P(i)/P(j)$ is a tilting $A$-module. 
By Proposition \ref{prop:property-tilting}(1), we have $|M|=|A|=|M\oplus P(i)/P(j)|$. 
Therefore $P(i)/P(j)\in \add M$.
In the following, we show the equality \eqref{eq:proof-ext}.
Since $M$ is tilting, $\Ext_{A}^{>0}(M,M)=0$ holds.
By $\pdim_{A} P(i)/P(j)=1$, we have $\Ext_{A}^{>1}(P(i)/P(j), M\oplus P(i)/P(j))=0$.
Moreover, by the Auslander--Reiten formula (see \cite[Corollary IV.2.14(1)]{ASS06}), we obtain
\begin{align}
\Ext_{A}^{1}(P(i)/P(j), M\oplus P(i)/P(j))
&\cong \kD\Hom_{A}(M\oplus P(i)/P(j), \tau(P(i)/P(j))) \notag\\
&\cong \kD\Hom_{A}(M\oplus P(i)/P(j), P(i+1)/P(j+1)) \notag\\
&\cong \kD\Hom_{A}(M, P(i+1)/P(j+1)),\notag
\end{align}
where $\tau$ is the Auslander--Reiten translate.
Suppose that $\Hom_{A}(M,P(i+1)/P(j+1))\neq 0$, that is, $\Hom_{A}(X,P(i+1)/P(j+1))\neq 0$ for some indecomposable $X$.
Then our assumption yields $X\cong P(m)/P(m')$ for $m\in [i+1, j]$ and $m'\in [j+1,n+1]$.
By (1), we have $\Ext_{A}^{1}(P(m)/P(m'), P(j+1))\neq 0$, a contradiction.
Hence we have $\Ext_{A}^{>0}(P(i)/P(j), M\oplus P(i)/P(j))=0$.
We show that $\Ext_{A}^{>0}(M,P(i)/P(j))=0$.
Applying $\Hom_{A}(M,-)$ to $0\rightarrow P(j)\rightarrow P(i)\rightarrow P(i)/P(j)\rightarrow 0$ yields an exact sequence 
\begin{align}
\Ext_{A}^{p}(M,P(i)) \rightarrow \Ext_{A}^{p}(M,P(i)/P(j)) \rightarrow \Ext_{A}^{p+1}(M,P(j)) \notag
\end{align}
for all $p>0$.
Since $P(i)\in \add M$ and $M$ is tilting, we have $\Ext_{A}^{p}(M,P(i))=0$ for all $p>0$.  
By $j\neq \ell+1$, the injective dimension of $P(j)$ is one.
Thus $\Ext_{A}^{p+1}(M,P(j))=0$ for all $p>0$.
Hence we have the desired property.
The proof is complete.
\end{proof}

\begin{lemma}\label{lem:tech-Pinf}
Fix $i\in [\ell+1,n-1]$.
Let $V$ be an $(f_{i}Af_{i})$-module with $\Ext_{f_{i}Af_{i}}^{\geq 0}(V,\sfe_{i}(S(n)))=0$. 
Then we have $\sfr_{i}(V)\in \calP^{\leq \infty}_{i}$.
In particular, $\Ext_{A}^{>0}(\sfr_{i}(V),U)=0$ holds for each $U\in \mod(A/Af_{i}A)$.
\end{lemma}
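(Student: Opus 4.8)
The plan is to build, out of a minimal projective resolution of $V$ over $f_{i}Af_{i}$, a projective resolution of $\sfr_{i}(V)$ all of whose terms lie in $\add(f_{i}A)$; by the definition of $\calP^{\leq\infty}_{i}=\calP^{\leq\infty}_{f_{i}}$ this is exactly what is needed. Once this is available, the ``in particular'' clause is immediate: such a resolution computes $\Ext^{>0}_{A}(\sfr_{i}(V),U)$ through the groups $\Hom_{A}(f_{i}A,U)=Uf_{i}$, and $Uf_{i}=0$ for every $U\in\mod(A/Af_{i}A)$.

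Two preliminary facts feed into the construction. First, $Af_{i}$ is projective over $f_{i}Af_{i}$ by Lemma \ref{lem:AfA-projective}, so $\sfr_{i}=\Hom_{f_{i}Af_{i}}(Af_{i},-)$ is an \emph{exact} functor. Second, writing $\Gamma:=\{1,\dots,i-1,n\}$ for the support of $f_{i}$, I would show that $\sfr_{i}$ carries the indecomposable projective $f_{i}Af_{i}$-module $\sfe_{i}(P(y))=P(y)f_{i}$ back to $P(y)$ for each $y\in\{1,\dots,i-1\}$. Since $\sfe_{i}$ and $\sfr_{i}$ are mutually quasi-inverse on $\calI^{\leq\infty}_{i}=\calI^{\leq 1}_{i}$ (Lemma \ref{lem:aus-apt}(2), using $\mod^{\rex}(f_{i}Af_{i})=\mod(f_{i}Af_{i})$), this reduces to checking $P(y)\in\calI^{\leq 1}_{i}$, i.e. $\Hom_{A}(S(x),P(y))=0=\Ext^{1}_{A}(S(x),P(y))$ for all $x\notin\Gamma$. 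Here one uses that $\soc P(y)=S(r(y))$ with $r(y)\in\Gamma$ (the bound $i-1\geq\ell=\max\relvx(A)$ forces $r(y)\leq\ell$ or $r(y)=n$), and that a non-split extension of $S(x)$ by $P(y)$ must be indecomposable uniserial, whence $x=y-1<i$.

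The crucial step is the reinterpretation of the hypothesis. Since $n$ is a sink of $A$, the module $\sfe_{i}(S(n))=S(n)f_{i}=e_{n}(f_{i}Af_{i})$ is the indecomposable projective $f_{i}Af_{i}$-module at $n$, and in particular it is \emph{simple}. Hence, for a minimal projective resolution $\cdots\to Q_{1}\to Q_{0}\to V\to 0$ over $f_{i}Af_{i}$, minimality makes all differentials of $\Hom_{f_{i}Af_{i}}(Q_{\bullet},\sfe_{i}(S(n)))$ vanish, so $\Ext^{k}_{f_{i}Af_{i}}(V,\sfe_{i}(S(n)))$ equals the multiplicity of $\sfe_{i}(P(n))$ as a direct summand of $Q_{k}$. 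Thus the assumption $\Ext^{\geq 0}_{f_{i}Af_{i}}(V,\sfe_{i}(S(n)))=0$ is equivalent to saying that every $Q_{k}$ lies in $\add\bigl(\bigoplus_{y=1}^{i-1}\sfe_{i}(P(y))\bigr)$. Applying the exact functor $\sfr_{i}$ and invoking the second fact turns the resolution into an exact sequence $\cdots\to\sfr_{i}(Q_{1})\to\sfr_{i}(Q_{0})\to\sfr_{i}(V)\to 0$ with each $\sfr_{i}(Q_{k})\in\add\bigl(\bigoplus_{y=1}^{i-1}P(y)\bigr)\subseteq\add(f_{i}A)$, which is the desired projective resolution of $\sfr_{i}(V)$.

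The step I expect to be the main obstacle is the verification that $P(y)\in\calI^{\leq 1}_{i}$ for $1\leq y\leq i-1$ (equivalently $\sfr_{i}\sfe_{i}(P(y))\cong P(y)$): this is precisely where the constraint $i\in[\ell+1,n-1]$ is essential, since it guarantees that $A$ has no relation near the sink $n$, so that all projectives $P(y)$ with $y<i$ have socle supported on $\Gamma$; one must also take care of the boundary case $y=1$ and keep track of whether $\relvx(A)$ meets $[y+1,n]$. The remaining ingredients---exactness of $\sfr_{i}$ and the minimal-resolution bookkeeping---are routine.
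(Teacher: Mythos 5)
Your proof is correct and follows essentially the same route as the paper: take a minimal projective resolution of $V$, use the hypothesis to see that no term involves $\sfe_i(P(n))$, verify $P(y)\in\calI^{\leq 1}_i$ for $y<i$ so that $\sfr_i\sfe_i(P(y))\cong P(y)$, and apply the exact functor $\sfr_i$. The only cosmetic difference is that you derive the constraint on the $Q_k$ directly from minimality and the fact that $\sfe_i(S(n))$ is simple projective, whereas the paper cites Lemma~\ref{lem:apt-pfin}, and you give a single unified argument (socle in $\Gamma$ plus the $x=y-1$ observation) for $P(y)\in\calI^{\leq 1}_i$ where the paper splits into the cases $h\leq\ell$ and $h>\ell$.
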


\begin{proof}
Take a minimal projective resolution in $\mod(f_{i}Af_{i})$
\begin{align}
0\rightarrow P_{m}\rightarrow P_{m-1}\rightarrow \cdots \rightarrow P_{0}\rightarrow V \rightarrow 0.\notag
\end{align}
By Lemma \ref{lem:AfA-projective}, the functor $\sfr_{i}$ is exact.
Applying $\sfr_{i}$ to the sequence above yields an exact sequence 
\begin{align}
0\rightarrow \sfr_{i}(P_{m})\rightarrow \sfr_{i}(P_{m-1})\rightarrow \cdots \rightarrow \sfr_{i}(P_{0})\rightarrow \sfr_{i}(V)\rightarrow 0. \notag 
\end{align}
It suffices to show that $\sfr_{i}(P_{j})\in \add(f_{i}A)$ for all $j\in [0,m]$ to finish the proof.
Since $\Ext_{f_{i}Af_{i}}^{\geq 0}(V, \sfe_{i}(S(n)))=0$ holds, it follows from Lemma \ref{lem:apt-pfin} that $P_{j}\in \add((f_{i}-e_{n})Af_{i})$ for all $j\in [0,m]$.
We show $P(h)\in \calI^{\leq 1}_{i}$ for all $h\in [1, i-1]$.
If $h\leq\ell$, then this is clear.
Assume $h\in [\ell+1,i-1]$.
Take a minimal injective presentation $0\rightarrow P(h)\rightarrow I(n)\rightarrow I(h-1)$.
This implies $P(h)\in \calI^{\leq 1}_{i}$. 
By Lemma \ref{lem:aus-apt}(2), we have $\sfr_{i}(P_{j})\in \add(f_{i}A)$. Hence $\sfr_{i}(V)\in \calP^{\leq \infty}_{i}$.
\end{proof}

Now, we are ready to prove Theorem \ref{thm:glue-bij-except-n}. 

\begin{proof}[Proof of Theorem \ref{thm:glue-bij-except-n}]
By Lemma \ref{lem:AfA-projective}, $Af_{i}$ is a projective $f_{i}Af_{i}$-module. Thus $\cotilt(f_{i}Af_{i})=\cotilt^{\rex}(f_{i}Af_{i})$. 
Note that linear Nakayama algebras are representation-finite with finite global dimension, and this class of algebras is preserved under idempotent truncation $eAe$ and quotient $A/AeA$.  
This means that for any $B\in\{A/Af_{i}A, f_{i}Af_{i}\mid i\in[\ell,n]\}$, we have $\tilt B=\cotilt B$, and $B$ satisfies the rank-(co)tilting-completion property.  It also follows that $\idim_{A}(\kD(A/Af_{i} A))<\infty$.  

Since $Af_{i}A$ is stratifying by Lemma \ref{lem:AfA-projective}, we now obtain the poset isomorphisms in Proposition \ref{prop:main-bijections}(2).
Now, $\calT_i$ is a subset of the upper set of \eqref{eq:cotilt-glue-bijn} given by intersecting with $\tilt_{P(i)}(A)$.  Since $\sfe_iP(i)=\sfe_iS(n)$, we have $P(i)\in \add(R_{\e}(T))$ and the poset isomorphisms restrict to the following mutually inverse poset isomorphisms
\begin{align}
\xymatrix{
\calT_{i} \ar@<1mm>[r]^-{\varphi_{i}} & \{ (U,V)\in \tilt (A/Af_{i}A)\times \tilt_{\sfe_{i}S(n)}(f_{i}Af_{i})\mid \Ext_{A}^{>0}(\sfr_{i}(V),U)=0\} \ar@<1mm>[l]^-{\psi_{i}}
}.\notag
\end{align}

To complete the proof of the first part of the claim, it is enough to show that $\Ext_{A}^{>0}(\sfr_{i}(V),U)=0$ for each $V\in \tilt_{\sfe_{i}S(n)}(f_{i}Af_{i})$ and $U\in \mod(A/Af_{i}A)$.
If $i=\ell$, then $\tilt(f_{\ell}Af_{\ell})=\{ e_n Ae_n \}$, and hence we have $\sfr_{\ell}(e_{n} A e_{n}) \cong I(n) = P(\ell)$. Thus the desire Ext-vanishing always holds and the claim follows immediately.
Assume now that $i\in [\ell+1, n-1]$. Decompose $V$ as $V=\sfe_{i}(S(n))\oplus V'$. Since $V$ is tilting and $\sfe_{i}(S(n))$ is simple projective, $\Ext_{f_{i}Af_{i}}^{\geq 0}(V', \sfe_{i}(S(n)))=0$ holds. 
It then follows from Lemma \ref{lem:tech-Pinf} that $\Ext_{A}^{>0}(\sfr_{i}(V'), U)=0$. 
On the other hand, by $\sfr_{i}\sfe_{i}(S(n))\cong P(i)$, we have $\Ext_{A}^{>0}(\sfr_{i}\sfe_{i}(S(n)), U)=0$.
This completes the proof for the first part of the claim.

We show (a).
Assume that $n-1\in \relvx(A)$ and $i=\ell$.
Let $T$ be a tilting $A$-module.
We can decompose $T$ as $T=K_{\ell}(T)\oplus R_{\ell}(T)$.
We have
\begin{align}
S(n)\in \add T \Leftrightarrow S(n)\in \add(R_{\ell}(T)) \Leftrightarrow R_{\ell}(T)\notin \calI^{\leq 1}_{\ell},\notag
\end{align}
where the first equivalence follows from $S(n)f_\ell = S(n)e_n\neq 0$, and the second equivalence follows from $\calI_\ell^{\le 1}=\add(I(n))$. 
Hence, we have $\calT_{\ell}=\tilt^{S(n)}A$.

We show (b). Assume that $n-1\notin \relvx(A).$
Let $T\in \calT_{i}$. 
For each $j\in [i+1,n]$, since $\sfe_{i}P(j)\neq 0$ and $P(j)\notin\calI^{\leq 1}_{i}$ hold, we have $P(j)\notin \add(K_{i}(T))$ and $P(j)\notin \add(R_{i}(T))$ respectively.
Thus $P(j)\notin\add T$.
By Lemma \ref{lem:nakayama-combs}(3), we obtain $P(i)/S(n)\in \add T$.
Conversely, let $T\in \tilt_{P(i)\oplus P(i)/S(n)}A$. 
It is enough to show $R_{i}(T)\in \calI^{\leq 1}_{i}$. 
If $i=\ell$, then $R_{\ell}(T)\in \add(\bigoplus_{j=\ell}^{n}P(j))$.
By Lemma \ref{lem:nakayama-combs}(2), $P(\ell)/S(n)\in \add T$ implies $P(\ell+1), \ldots, P(n)\notin \add T$.
Since $R_{\ell}(T)$ is basic, we have $R_{\ell}(T)\cong P(\ell)\in \calI^{\leq 1}_{\ell}$.
Assume that $i\in[\ell+1,n-1]$.
By Lemma \ref{lem:apt-pfin}, it is enough to show that $\Ext_{A}^{k}(S(j),R_{i}(T))=0$ for all $k\in \{ 0, 1\}$ and $j\in [i,n-1]$.
Suppose that $\Hom_{A}(S(j),R_{i}(T))\neq 0$ for some $j\in [i,n-1]$.
Then the socle of $R_{i}(T)$ contains $S(j)$ as a direct summand.
By $\sfe_{i}(R_{i}(T))\neq 0$, there exists $k\in [\ell,i-1]$ such that $P(k)/P(j+1)\in \add(R_{i}(T))$.
By Lemma \ref{lem:nakayama-combs}(1), we have $\Ext_{A}^{1}(P(k)/P(j+1),P(i))\neq 0$.
This contradicts $P(k)/P(j+1), P(i)\in \add T$.
Suppose that $\Ext_{A}^{1}(S(j),R_{i}(T))\neq 0$ for some $j\in [i,n-1]$.
Since $\pdim_{A}S(j)\leq 1$ holds, the Auslander--Reiten formula yields
\begin{align}
0\neq \Ext_{A}^{1}(S(j),R_{i}(T))\cong \mathbb{D}\Hom_{A}(R_{i}(T),\tau S(j))\cong \mathbb{D}\Hom_{A}(R_{i}(T),S(j+1)). \notag
\end{align}
Then there exists $X\in \add(R_{i}(T))$ such that $\top X\cong S(j+1)$.
By $\sfe_{i}(X)\neq 0$, we have $X(f_{i}-e_{n})=0$, and hence $Xe_{n}\neq 0$.
Thus $X$ must be $P(j+1)$. 
On the other hand, Lemma \ref{lem:nakayama-combs}(2) induces $P(j+1)\notin \add T$.
This is a contradiction.
Thus $R_{i}(T)\in \calI^{\leq 1}_{i}$.
\end{proof}

Using Theorems \ref{thm:glue-bij-n} and \ref{thm:glue-bij-except-n}, we have the following result.

\begin{theorem}\label{thm:decomp}
We have $\tilt A=\bigsqcup_{i=\ell}^{n}\calT_{i}$.
\end{theorem}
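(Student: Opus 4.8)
The plan is to reduce the decomposition to the explicit descriptions of the subsets $\calT_i\subseteq\tilt A$ furnished by Theorems~\ref{thm:glue-bij-n} and~\ref{thm:glue-bij-except-n}, and then to check disjointness and exhaustiveness by hand. At the outset I would dispose of the single degenerate instance $A\cong\Bbbk\vec{\bbA}_2$ by direct inspection: it has exactly two tilting modules, namely $P(1)\oplus P(2)$ and $P(1)\oplus S(1)$, which lie respectively in $\calT_2=\tilt_{S(2)}A$ and in $\calT_1=\tilt_{P(1)\oplus S(1)}A$, so the claim is clear there. For $A\not\cong\Bbbk\vec{\bbA}_2$, the condition $n-1\in\relvx(A)$ is equivalent to $\ell=n-1$ (using $\ell\in\relvx(A)$ and $\ell+1,\dots,n\notin\relvx(A)$), and I would split into the two cases $\ell=n-1$ and $\ell<n-1$.

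If $\ell=n-1$, then the index range is $\{n-1,n\}$, and $n-1\in\relvx(A)$, so Theorem~\ref{thm:glue-bij-except-n}(a) gives $\calT_{n-1}=\tilt^{S(n)}A$ while Theorem~\ref{thm:glue-bij-n} gives $\calT_{n}=\tilt_{S(n)}A$. Since $\tilt A=\tilt^{S(n)}A\sqcup\tilt_{S(n)}A$ by the very definition of this partition, the statement follows at once.

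For the main case $\ell<n-1$ we have $n-1\notin\relvx(A)$, so Theorem~\ref{thm:glue-bij-except-n}(b) identifies $\calT_i=\tilt_{P(i)\oplus P(i)/P(n)}A$ for every $i\in[\ell,n-1]$ (note that $P(i)$ has socle $S(n)$ for $i\geq\ell$, so this quotient makes sense), while $\calT_n=\tilt_{P(n)}A$ with $P(n)=S(n)$. For disjointness, suppose $T\in\calT_i\cap\calT_j$ with $\ell\leq i<j\leq n$. Then $P(i)/P(n)\in\add T$, so Lemma~\ref{lem:nakayama-combs}(2), applied to the tilting module $T$ with the pair $(i,n)$, forces $P(k)\notin\add T$ for all $k\in[i+1,n]$; but $P(j)\in\add T$ and $j\in[i+1,n]$, a contradiction. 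For exhaustiveness, let $T\in\tilt A$. If $S(n)\in\add T$, then $T\in\tilt_{S(n)}A=\calT_n$. Otherwise, by (T3) the summand $P(n)=e_nA$ of $A$ embeds into some $T_0\in\add T$, so $S(n)=P(n)$ occurs in the socle of an indecomposable summand of $T$; as the indecomposable $A$-modules are uniserial and the ones with $S(n)$ in their socle are exactly $P(\ell),P(\ell+1),\dots,P(n)$, and $S(n)=P(n)\notin\add T$, the set $\{a\in[\ell,n-1]\mid P(a)\in\add T\}$ is nonempty. Let $i$ be its largest element. Then $P(k)\notin\add T$ for all $k\in[i+1,n]$, by maximality of $i$ when $k\leq n-1$ and because $P(n)=S(n)\notin\add T$ when $k=n$. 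Applying Lemma~\ref{lem:nakayama-combs}(3) to $T$ with the pair $(i,n)$---whose hypotheses hold since $P(n+1)=0\in\add T$, $n\neq\ell+1$ as $\ell<n-1$, and $T$ is tilting---gives $P(i)/P(n)\in\add T$, hence $T\in\tilt_{P(i)\oplus P(i)/P(n)}A=\calT_i$. This proves $\tilt A=\bigsqcup_{i=\ell}^{n}\calT_i$.

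The conceptual content is already in Theorems~\ref{thm:glue-bij-n} and~\ref{thm:glue-bij-except-n}, so the only subtleties left are bookkeeping ones: matching the case split to the cases of Theorem~\ref{thm:glue-bij-except-n}, isolating the trivial algebra $\Bbbk\vec{\bbA}_2$, classifying the indecomposables whose socle is $S(n)$, and checking that the hypothesis $j\neq\ell+1$ of Lemma~\ref{lem:nakayama-combs}(3) holds---which is precisely what fails when $\ell=n-1$ and is the reason that case must be treated separately (and more easily). I expect the $\ell=n-1$ versus $\Bbbk\vec{\bbA}_2$ boundary bookkeeping, rather than any genuine mathematical difficulty, to be the main thing to get right.
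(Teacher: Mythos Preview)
Your proof is correct and follows essentially the same approach as the paper's: split into the cases $n-1\in\relvx(A)$ versus $n-1\notin\relvx(A)$, use Theorem~\ref{thm:glue-bij-except-n}(a),(b) to identify the $\calT_i$ explicitly, and then verify disjointness and exhaustiveness via Lemma~\ref{lem:nakayama-combs}(2),(3). The only differences are cosmetic: the paper obtains $P(i)\in\add T$ for some $i\in[\ell,n]$ directly from $P(\ell)$ being projective-injective rather than via (T3), and it does not single out $\Bbbk\vec{\bbA}_2$---your extra care there is justified, since in that case $j=n=\ell+1$ and the hypothesis of Lemma~\ref{lem:nakayama-combs}(3) is not literally satisfied, so your direct verification patches a harmless edge case the paper glosses over.
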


\begin{proof}
Assume that $n-1\in \relvx(A)$, that is, $\ell=n-1$.
By Theorem \ref{thm:glue-bij-except-n}(a), we have 
\begin{align}
\tilt A =\tilt^{S(n)}A\sqcup \tilt_{S(n)}A=\calT_{n-1}\sqcup\calT_{n}.\notag
\end{align}

Assume that $n-1\notin\relvx(A)$. 
First we show that $\tilt A =\bigcup_{i=\ell}^{n}\calT_{i}$.
Let $T$ be a basic tilting $A$-module.
Since $P(\ell)$ is projective-injective, we can take the maximum integer $i\in [\ell,n]$ satisfying $P(i)\in \add T$.
If $i=n$, then we have $P(n)\in \add T$, and hence $T\in \calT_{n}$.
If $i\in [\ell,n-1]$, then it follows from Lemma \ref{lem:nakayama-combs}(3) that $P(i)/S(n)\in \add T$.
Thus we have $T\in \calT_{i}$.
Next we show $\calT_{j}\cap\calT_{k}=\emptyset$ for all $j\neq k\in [\ell,n]$.
Suppose $\calT_{j}\cap \calT_{k}\neq \emptyset$ for some $j>k$.
Let $T\in \calT_{j}\cap \calT_{k}$.
Then we have $P(k)/S(n)\in \add T$.
By Lemma \ref{lem:nakayama-combs}(2), we obtain $P(j)\notin \add T$, a contradiction. 
Hence we have the assertion.
\end{proof}

As an application, we can inductively obtain the number of tilting modules over every quadratic linear Nakayama algebra. 
Let $A$ be an arbitrary quadratic linear Nakayama algebra whose Gabriel quiver is
\begin{align}
1\rightarrow 2\rightarrow \cdots \rightarrow j. \notag
\end{align}
For integers $k\geq -1$ and $m\geq 1$, let $A(k,m)$ be the quadratic linear Nakayama algebra whose Gabriel quiver is 
\begin{align}
1\rightarrow 2\rightarrow \cdots \rightarrow j\xrightarrow{\alpha_{j}} j+1 \xrightarrow{\alpha_{j+1}} \cdots \rightarrow j+k \xrightarrow{\alpha_{j+k}}  j+k+1 \xrightarrow{\alpha_{j+k+1}} j+k+2 \rightarrow \cdots \rightarrow j+k+m+1, \notag
\end{align}
and the relations are given by those of $A$ and $\{ \alpha_{p}\alpha_{p+1}\mid p\in [j,j+k] \}$.
Note that $\alpha_{j-1}\alpha_{j}$ is not contained in the relation. 
All quadratic linear Nakayama algebras can be obtained by this form. Namely, if $B$ is a quadratic linear Nakayama algebra, then there exists an algebra isomorphism $B\cong A(k,m)$ such that $A$ is a quadratic linear Nakayama algebra with $|A|<|B|$ and integers $k\geq -1$ and $m\geq 1$.

\begin{corollary}\label{cor:count-by-tilting}
Keep the notation above.
Then the following statements hold.
\begin{itemize}
\item[(1)] For each integer $k\geq 0$, we have 
\begin{itemize}
\item[(a)] $|\tilt_{S(n)}(A(k,1))|=|\tilt A|$, where $n:=j+k+2$, 
\item[(b)] $|\tilt(A(k,1))|=|\tilt(A(k-1,1))|+|\tilt A|
=|\tilt(A(-1,1))|+(k+1)|\tilt A|$.
\end{itemize}   
\item[(2)] For each integer $k\geq 0$ and $m\geq 1$, we have
\begin{align}
|\tilt(A(k,m))|
&=C_{m}|\tilt(A(k,1))|+(C_{m+1}-2C_{m})|\tilt A|\notag\\
&=C_{m}|\tilt(A(-1,1))|+(C_{m+1}+(k-1)C_{m})|\tilt A|,\notag
\end{align}
where $C_{i}:=\frac{1}{i+1}\binom{2i}{i}$ is the $i$-th Catalan number.
\end{itemize}
\end{corollary}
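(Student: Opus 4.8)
The plan is to obtain all formulas by combining the decomposition $\tilt A = \bigsqcup_{i=\ell}^{n}\calT_{i}$ of Theorem \ref{thm:decomp} with the poset isomorphisms of Theorems \ref{thm:glue-bij-n} and \ref{thm:glue-bij-except-n}, applied to the specific algebras $A(k,m)$. I would first treat the base case $m=1$, where $A(k,1)$ has Gabriel quiver $1\to\cdots\to j\to\cdots\to n$ with $n=j+k+2$, relations those of $A$ together with $\{\alpha_p\alpha_{p+1}\mid p\in[j,j+k]\}$, and crucially $\alpha_{j-1}\alpha_j\notin I$, so that $\ell=j+k+1=n-1$ in the notation preceding the decomposition. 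Hence Theorem \ref{thm:decomp} (or directly Theorem \ref{thm:glue-bij-except-n}(a)) gives $\tilt(A(k,1)) = \calT_{n-1}\sqcup\calT_{n} = \tilt^{S(n)}(A(k,1))\sqcup\tilt_{S(n)}(A(k,1))$. For part (1)(a), Theorem \ref{thm:glue-bij-except-n} identifies $\calT_{n-1}=\tilt(A(k,1)/A(k,1)e_nA(k,1))$; since removing the sink $n$ and its (relation-killed) incoming arrow leaves exactly the algebra $A(k-1,1)$ — and when $k=0$, the algebra $A(-1,1)$, whose quiver is $1\to\cdots\to j$ with the relations of $A$, i.e. $A$ itself... wait, one must be careful: $A(-1,1)$ has quiver $1\to\cdots\to j\to j+1$ with relation set that of $A$ plus $\{\alpha_p\alpha_{p+1}\mid p\in[j,j-1]\}=\emptyset$, so $A(-1,1)$ is the ``one extra non-relation arrow'' extension of $A$. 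I would record $|\calT_{n-1}(A(k,1))| = |\tilt(A(k-1,1))|$ for $k\ge 0$. For $\calT_n=\tilt_{S(n)}(A(k,1))$, Theorem \ref{thm:glue-bij-n} (with $f_n=e_1+\cdots+e_{n-1}$, so $A/Af_nA$ semisimple projective) gives $\calT_n\cong\tilt^{\lex}(f_nAf_n)$, and since $n-1=j+k+1\in\relvx(A(k,1))$ (as $\alpha_{j+k}\alpha_{j+k+1}\in I$) when $k\ge 0$... hmm, actually $n-1\in\relvx$ iff $\alpha_{n-2}\alpha_{n-1}\in I$, i.e. $\alpha_{j+k}\alpha_{j+k+1}\in I$, which holds; so by Theorem \ref{thm:glue-bij-n}(a), $\tilt^{\lex}(f_nAf_n)=\tilt_{\sfe_n S(n-1)}(f_nAf_n)$. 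I would then identify $f_nA(k,1)f_n$ with $A$ itself (truncating away the sink $n$ leaves $1\to\cdots\to n-1$ with relations those of $A$ plus $\{\alpha_p\alpha_{p+1}: p\in[j,j+k-1]\}$... this is $A(k-1,1)$, not $A$!). So the correct statement to prove is that the subset of $\tilt(f_nAf_n)$ whose members contain $\sfe_n S(n-1)$ is in bijection with $\tilt A$: this is where I would apply Corollary \ref{cor:application_bijection-5.1} or Theorem \ref{thm:glue-bij-except-n} recursively, peeling off vertices $n-1, n-2,\dots,j+1$ each carrying a relation, collapsing down to $\tilt A$. The cleanest route is induction on $k$: $|\tilt_{S(n)}(A(k,1))| = |\tilt_{S(n-1)}(A(k-1,1))| = \cdots = |\tilt_{S(j+1)}(A(-1,1))| = |\tilt A|$, using the reduction of Corollary \ref{cor:application_bijection-5.1}(1) (or Theorem \ref{thm:glue-bij-n}(a)) at each step since every vertex $j+1,\dots,n-1$ is in $\relvx$. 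Combining gives (1)(b) by telescoping, with base $|\tilt(A(-1,1))|$.

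For part (2), I would induct on $m$. The algebra $A(k,m)$ has quiver $1\to\cdots\to j\to\cdots\to j+k+1\to j+k+2\to\cdots\to j+k+m+1$ with relations those of $A$ together with $\{\alpha_p\alpha_{p+1}\mid p\in[j,j+k]\}$ — note the final stretch $j+k+1\to\cdots\to j+k+m+1$ is relation-free, so it is a path-algebra tail of length $m$ glued onto $A(k,1)$-like data. Here $\ell=j+k+1$ (the largest relation vertex), and $n=j+k+m+1$. The key geometric fact is that the ``relation-free tail'' $P(\ell)\to\cdots$ is exactly the linearly oriented $\Vec\bbA$ situation of Proposition \ref{prop:FKR-thm4.7}, so the Catalan numbers enter through the tilting modules of $\Bbbk\Vec\bbA_m$ or $\Bbbk\Vec\bbA_{m+1}$. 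Concretely, apply Theorem \ref{thm:decomp}: $\tilt(A(k,m)) = \bigsqcup_{i=\ell}^{n}\calT_i$, and Theorem \ref{thm:glue-bij-except-n}(b) gives $\calT_i\cong\tilt_{\sfe_i S(n)}(f_iAf_i)$ (for $\ell<i<n$), $\calT_\ell\cong\tilt(A/Ae_nA)$, while Theorem \ref{thm:glue-bij-n} handles $\calT_n\cong\tilt^{\lex}(f_nAf_n)$. In each summand the ``new'' algebra $f_iAf_i$ resp.\ $A/Af_iA$ is a quadratic linear Nakayama algebra that splits (up to the relevant tilting-set bijection) into an $A(k,1)$-type piece and a shorter relation-free tail $A(k',m')$ with $m'<m$; tracking these via the induction hypothesis, the count assembles into a sum $\sum$ of products of a Catalan-number factor (coming from the path-algebra tail and the classical identity $C_{m+1}=\sum_{a+b=m}C_aC_b$) with $|\tilt(A(k,1))|$ and $|\tilt A|$. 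The bookkeeping should yield $|\tilt(A(k,m))| = C_m|\tilt(A(k,1))| + (C_{m+1}-2C_m)|\tilt A|$ after using the Catalan recursion to collapse the double sum; the second displayed equality in (2) is then pure algebra, substituting the formula $|\tilt(A(k,1))| = |\tilt(A(-1,1))| + (k+1)|\tilt A|$ from (1)(b) and simplifying $C_m(k+1) + C_{m+1}-2C_m = C_{m+1}+(k-1)C_m$.

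The main obstacle is \emph{not} the Catalan bookkeeping — that is a routine application of the identity $C_{m+1}=\sum_{a=0}^{m}C_aC_{m-a}$ — but rather setting up the correct identifications of the auxiliary algebras $f_iA(k,m)f_i$ and $A(k,m)/A(k,m)f_iA(k,m)$ with members of the family $\{A(k',m')\}$, and confirming that the bijections of Theorems \ref{thm:glue-bij-n}, \ref{thm:glue-bij-except-n} restrict compatibly so that counting is multiplicative across the product-poset decompositions. In particular one must check that, in each $\calT_i$, the condition defining the subset of $\tilt(f_iAf_i)$ that occurs (namely ``contains $\sfe_i S(n)$ as a summand'') translates, under iterated application of Theorem \ref{thm:glue-bij-n}(a)/Corollary \ref{cor:application_bijection-5.1}, into a clean count in terms of $|\tilt(A(k,1))|$ and $|\tilt A|$. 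I would organize this by proving first a lemma identifying, for each $i\in[\ell,n]$, the value $|\calT_i|$ in terms of those two base quantities (this is where (1) gets used heavily), and only then assembling the sum.
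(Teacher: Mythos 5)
Your plan is essentially the paper's own proof: part (1) by iterating Theorem \ref{thm:glue-bij-n}(a) to peel off the vertices $n,\ldots,j+1$ and Theorem \ref{thm:glue-bij-except-n}(a) for the $\tilt_{S(n)}\sqcup\tilt^{S(n)}$ split, and part (2) by induction on $m$ using the decomposition $\tilt(A(k,m))=\bigsqcup_{i=\ell}^{n}\calT_i$ of Theorem \ref{thm:decomp}, the product descriptions from Theorems \ref{thm:glue-bij-n} and \ref{thm:glue-bij-except-n}, and the Catalan convolution identity. One small slip worth fixing when you write it up: for $\ell<i<n-1$ the isomorphism is the full product $\calT_i\cong\tilt(A/Af_iA)\times\tilt_{\sfe_i S(n)}(f_iAf_i)$ (with $A/Af_iA\cong\Bbbk\vec{\bbA}_{n-i}$, which is what supplies the Catalan factor $C_{n-i}$), not just the second factor as you displayed it, although you do account for both algebras in the surrounding prose.
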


\begin{proof}
(1) We show (a). 
By repeating Theorem \ref{thm:glue-bij-n}(a), we have
\begin{align}
\tilt_{S(n)}(A(k,1))\cong \tilt_{S(n-1)}(A(k-1,1))\cong\cdots\cong\tilt_{S(j+1)}(A(-1,1)).\notag
\end{align}
Since $\alpha_{j-1}\alpha_{j}\neq 0$ in $A(-1,1)$, it follows from Theorem \ref{thm:glue-bij-n}(b) that $\tilt_{S(j+1)}(A(-1,1))\cong \tilt A$.
Thus we have the desired equality.
We show the first equality in (b).
By (a) and Theorem \ref{thm:glue-bij-except-n}(a), we have 
\begin{align}
\tilt(A(k,1))\cong \tilt_{S(n)}(A(k,1))\sqcup \tilt^{S(n)}(A(k,1))
\cong \tilt A \sqcup \tilt(A(k-1,1)).\notag
\end{align}
By repeating this process, we obtain 
\begin{align}
|\tilt(A(k,1))|
=|\tilt(A(k-2,1))|+2|\tilt A|
=\cdots = |\tilt(A(-1,1))|+(k+1)|\tilt A|. \notag
\end{align}

(2) Note that $\ell=j+k+1$ and $n=\ell+m$. 
If $m=1$, then the assertion follows from (1-b).
In the following, we assume that $m\geq 2$.
By Theorem \ref{thm:decomp}, we have $\tilt(A(k,m))=\bigsqcup_{i=\ell}^{\ell+m}\calT_{i}$.
We show the first equality by induction on $m$. 
Assume $m=2$.
By Theorems \ref{thm:glue-bij-n} and \ref{thm:glue-bij-except-n}, we have 
$\calT_{\ell}\cong \tilt(A(k,1))$, $\calT_{\ell+1}\cong \tilt_{S(\ell+1)}(A(k,1))$, and $\calT_{\ell+2}\cong \tilt(A(k,1))$.
By (1-a), $\calT_{\ell+1}\cong \tilt A$.
Thus we have 
\begin{align}
|\tilt(A(k,2))|
&=2|\tilt(A(k,1))|+|\tilt A|\notag\\
&=C_{2}|\tilt A(k,1)|+(C_{3}-2C_{2})|\tilt A|. \notag
\end{align}

Assume $m\geq 3$. Let $b_{p}:=|\tilt(A(k,p))|$ for $p\geq 1$ and $b_{0}:=|\tilt A|$.
By Theorems \ref{thm:glue-bij-n} and \ref{thm:glue-bij-except-n}, and (1-a), we have
\begin{align}
&|\calT_{\ell}|=|\tilt(A(k,m-1))|=b_{m-1},\notag\\
&|\calT_{\ell+1}|=|\tilt (\Bbbk\Vec{\bbA}_{m-1})|\times|\tilt A|=C_{m-1}b_{0},\notag\\
&|\calT_{\ell+p}|=|\tilt(\Bbbk\Vec{\bbA}_{m-p})|\times |\tilt(A(k, p-1))|=C_{m-p}b_{p-1}\ \ (p\in[2,m-1]),\notag\\
&|\calT_{\ell+m}|=|\tilt(A(k,m-1))|=b_{m-1}.\notag
\end{align}
Thus
\begin{align}
b_{m}
&=|\tilt(A(k,m))|=\sum_{p=0}^{m}|\calT_{\ell+p}|=b_{m-1}+\sum_{p=1}^{m}C_{m-p}b_{p-1}\notag\\
&=b_{m-1}+C_{m-1}b_{0}+C_{m-2}b_{1}+\sum_{p=3}^{m}C_{m-p}b_{p-1}.\notag
\end{align}
Moreover, by the induction hypothesis, we obtain
\begin{align}
\displaystyle b_{m}
&=\{ C_{m-1}b_{1}+(C_{m}-2C_{m-1})b_{0}\}+C_{m-1}b_{0}+C_{m-2}b_{1}\notag\\
&\hspace{10mm}+\sum_{p=3}^{m}C_{m-p}\{ C_{p-1}b_{1}+(C_{p}-2C_{p-1})b_{0}\}\notag\\
&=(C_{m-1}+C_{m-2}+\sum_{p=3}^{m}C_{m-p}C_{p-1})b_{1}\notag\\
&\hspace{10mm}+(C_{m}+C_{m-1}+\sum_{p=3}^{m}C_{m-p}C_{p})b_{0}-2(C_{m-1}+\sum_{p=3}^{m}C_{m-p}C_{p-1})b_{0}\notag\\
&=(\sum_{p=1}^{m}C_{m-p}C_{p-1})b_{1}\notag\\
&\hspace{10mm}+\{(\sum_{p=0}^{m}C_{m-p}C_{p})-C_{m-2}C_{2}\}b_{0}-2\{ (\sum_{p=1}^{m}C_{m-p}C_{p-1})-C_{m-2}C_{1}\}b_{0}\notag\\
&=C_{m}b_{1}+C_{m+1}b_{0}-2C_{m-2}b_{0}-2C_{m}b_{0}+2C_{m-2}b_{0} \notag\\
&=C_{m}b_{1}+(C_{m+1}-2C_{m})b_{0},\notag
\end{align}
where $C_{0}=C_{1}=1$ and $C_{i}=\sum_{p=1}^{i}C_{i-p}C_{p-1}$.
\end{proof}

Finally, we characterise each $\calT_{i} \subset \tilt A = \IStilt A$ in terms of the associated quasi-hereditary structures.

\begin{proposition}\label{prop:qln-costd}
Let $A$ be a quadratic linear Nakayama algebra, $T$ be a characteristic tilting module corresponding to $\unlhd=\unlhd_{T}$, and $\costd(n)$ be the arising costandard module associated to the sink $n$ in the quiver of $A$. 
Then the following statements hold. 
\begin{itemize}
\item[(1)] We have $\costd(n)\in\add T$.
\item[(2)] Assume $n-1\notin\relvx(A)$. If $T\in\calT_{i}$ for some $i\in[\ell, n-1]$, then $j\unlhd n$ for all $j\in[i,n-1]$. 
\item[(3)] For $i\in[\ell,n]$, $T\in\calT_{i}$ if and only if $\costd(n)=P(i)$.
\end{itemize}
\end{proposition}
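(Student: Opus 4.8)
The plan is to extract one structural fact about the sink and then feed it into the general quasi-hereditary machinery and the gluing decomposition of Theorem~\ref{thm:decomp}.

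\textbf{Step 1 (the shape of $\costd(n)$, and (1)).} Since $P(\ell)=I(n)$ by the standing hypothesis, $I(n)$ is uniserial with composition factors $S(n),S(n-1),\dots,S(\ell)$ read from the socle upwards, and its nonzero submodules are $\rad^{k}P(\ell)$ for $0\le k\le n-\ell$. Because $P(\ell)=I(n)$ reaches the sink $n$ there is no relation $\alpha_{t-1}\alpha_{t}$ with $t\in[\ell+1,n]$, so each $\rad^{k}P(\ell)$ is cyclic of the same length as $P(\ell+k)$ and is a quotient of $P(\ell+k)$, hence $\rad^{k}P(\ell)\cong P(\ell+k)$. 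Therefore $\costd(n)$, being the largest submodule of $I(n)$ whose composition factors $S(j)$ all satisfy $j\unlhd n$, is isomorphic to $P(a)$ for the smallest $a\in[\ell,n]$ with $j\unlhd n$ for all $j\in[a,n]$; in particular $\costd(n)$ is projective. As $\costd(n)\in\filt(\costd)$ trivially and projective modules lie in $\filt(\std)$ over a quasi-hereditary algebra (iterate (qh2)), we conclude $\costd(n)\in\filt(\std)\cap\filt(\costd)=\add T$ by Definition-Proposition~\ref{prop:ringel}, which is (1).

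\textbf{Step 2 (identifying summands) and (2).} Call $k\in\Lambda$ a \emph{dominator} of a set $\Gamma\subseteq\Lambda$ if $j\unlhd k$ for all $j\in\Gamma$. If an indecomposable summand $T(k)$ of $T$ is isomorphic to a uniserial module with composition factors $S(a),S(a+1),\dots,S(b)$, then $[T(k):S(k)]=1$ forces $k\in[a,b]$, the defining property of $T(k)$ in Definition-Proposition~\ref{prop:ringel} says $k$ dominates $[a,b]$, and antisymmetry of $\unlhd$ makes such a $k$ unique. Applying this to $\costd(n)\cong P(a)$ — whose factors are $S(a),\dots,S(n)$ and which is dominated by $n$ by Step~1 — gives $\costd(n)=T(n)$. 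Now suppose $n-1\notin\relvx(A)$ and $T\in\calT_{i}$ with $i\in[\ell,n-1]$. By Theorem~\ref{thm:glue-bij-except-n}(b), $P(i)$ (factors $S(i),\dots,S(n)$) and $P(i)/S(n)$ (factors $S(i),\dots,S(n-1)$) are both summands of $T$; write $P(i)=T(k)$ and $P(i)/S(n)=T(k')$. If $k\neq n$ then $k\in[i,n-1]$, so $k$ is a dominator of $[i,n-1]$ lying in $[i,n-1]$, whence $k=k'$ by uniqueness and $T(k)=T(k')$, contradicting $P(i)\neq P(i)/S(n)$. So $k=n$, i.e.\ $T(n)=P(i)$, and the defining property of $T(n)$ yields $j\unlhd n$ for all $j\in[i,n]\supseteq[i,n-1]$, proving (2).

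\textbf{Step 3 (part (3)).} By Propositions~\ref{prop:isttilt-chtilt} and \ref{prop:tilt=IStilt} every $T\in\tilt A$ is characteristic, so $\costd(n)$ is defined for all of them, and by Theorem~\ref{thm:decomp} the sets $\calT_{\ell},\dots,\calT_{n}$ partition $\tilt A$; since $P(\ell),\dots,P(n)$ are pairwise non-isomorphic, it suffices to prove $T\in\calT_{i}\Rightarrow\costd(n)\cong P(i)$ for each $i\in[\ell,n]$, the converse then being automatic. If $i=n$, then $S(n)=P(n)\in\add T$, so $S(n)=T(m)$ forces $m=n$, giving $\costd(n)=T(n)=S(n)=P(n)$ by Step~2. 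If $i\in[\ell,n-1]$ with $n-1\notin\relvx(A)$, Step~2 gives $\costd(n)=T(n)=P(i)$. Finally, $n-1\in\relvx(A)$ forces $\ell=n-1$, so the only $i\in[\ell,n-1]$ is $i=\ell=n-1$; here Theorem~\ref{thm:glue-bij-except-n}(a) gives $\calT_{\ell}=\tilt^{S(n)}A$, so $S(n)\notin\add T$, and as $\costd(n)\cong P(a)$ with $a\in\{n-1,n\}$ and $a=n$ would give $\costd(n)=S(n)\in\add T$, we get $a=\ell$, i.e.\ $\costd(n)=P(\ell)=P(i)$.

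\textbf{Expected obstacle.} The only genuinely structural ingredient is Step~1 — that at the sink the injective hull is projective-serial with every submodule projective, so that $\costd(n)$ is forced to be some $P(a)$. Everything after that is bookkeeping with the defining properties of the $T(i)$'s and with Theorem~\ref{thm:decomp}; the mild care needed is in separating the regimes $n-1\in\relvx(A)$ (where $\ell=n-1$ and $\calT_{\ell}$ is described by Theorem~\ref{thm:glue-bij-except-n}(a)) and $n-1\notin\relvx(A)$ (Theorem~\ref{thm:glue-bij-except-n}(b)), and in being sure the uniqueness-of-dominator argument still applies when $P(i)/S(n)$ degenerates to a simple module at $i=n-1$.
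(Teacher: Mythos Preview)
Your proof is correct and, in several places, cleaner than the paper's.

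The paper proves (2) by first invoking Lemma~\ref{lem:nakayama-combs}(2) to exclude $P(i+1),\dots,P(n)$ from $\add T$, then observing that $P(i)$ is the shortest summand of $T$ containing $S(n)$, hence of length at most that of $T(n)$; this gives $i,\dots,n-1\unlhd n$ without pinning down $T(n)$ exactly. For (3) in the range $i\in[\ell+1,n-1]$ the paper then needs a separate argument (via Lemma~\ref{lem:nakayama-combs}(1), showing that any summand of $T$ containing $S(i-1)$ is projective, hence $T(i-1)=P(k)$ with $k\le i-1$ and so $n\unlhd i-1$) to push $\costd(n)$ down to $P(i)$ precisely; and for the converse in (3) it argues directly, again through Lemma~\ref{lem:nakayama-combs}. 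Your approach replaces all of this with two observations: the ``unique dominator'' fact identifies $\costd(n)=T(n)$ once and for all, and playing off $P(i)$ against $P(i)/S(n)$ forces $T(n)=P(i)$ on the nose, so (3) forward is immediate from (2); then the partition of Theorem~\ref{thm:decomp} makes the converse automatic. The net effect is that you avoid any direct appeal to Lemma~\ref{lem:nakayama-combs} (it is used only implicitly, inside Theorem~\ref{thm:glue-bij-except-n}(b)), at the cost of relying on Theorem~\ref{thm:decomp} for the converse, which the paper's argument does not need.
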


\begin{proof}
(1) Since $n$ is sink, there exists $i$ such that $\costd(n)=P(i)\in\filt(\std)\cap\filt(\costd)=\add T$, where the last equality follows from Definition-Proposition \ref{prop:ringel}. 
Hence the assertion follows. 

(2) Let $T\in\calT_{i}$. 
Then $P(i)\oplus P(i)/S(n)$ is a direct summand of $T$ by Theorem \ref{thm:glue-bij-except-n}(b). 
By Lemma \ref{lem:nakayama-combs}(2), $P(i+1), \ldots, P(n)\notin \add T$. This implies that $P(i)$ is the indecomposable direct summand of $T$ with minimal length among those having $S(n)$ as a composition factor. 
This condition means that $i, i+1, \ldots, n-1 \unlhd n$. 

(3) First consider the case when $i=n$. Assume that $T\in \calT_{n}$. Since $S(n)$ is a direct summand of $T$, we may assume that $n$ is minimal with respect to $\unlhd$. Hence $\costd(n)=P(n)$. By (1), the converse holds. 

Consider now the case when $n-1\notin\relvx(A)$ and $i\in[\ell, n-1]$. 
Assume $T\in\calT_{i}$. If $i=\ell$, then the claim follows from (2). Assume $i\in[\ell+1, n-1]$. Since $P(i)\in \add T$, an indecomposable direct summand of $T$ having $S(i-1)$ as a composition factor is projective by Lemma \ref{lem:nakayama-combs}(1). Thus we get that $T(i-1)\cong P(k)$ for some $k\in [\ell, i-1]$. 
This implies $n\unlhd i-1$, and hence, we have $\costd(n)=P(i)$ by (2). 
Conversely, we assume $\costd(n)=P(i)$. By (1), $P(i)$ is a direct summand of $T$. Hence it is enough to show $P(i+1), \ldots, P(n)\notin \add T$ by Lemma \ref{lem:nakayama-combs}(3). 
Suppose to the contrary that $P(j)\in \add T$ for some $j\in [i+1, n]$. 
Then there exists $k\in[j,n]$ such that $P(j)=T(k)$. This means $k\unrhd n$, and this contradicts to $\costd(n)=P(i)$. 

Consider now the case when $n-1\in\relvx(A)$ and $i=n-1$. 
By Theorem \ref{thm:glue-bij-except-n}(a), $\calT_{n-1}=\tilt^{S(n)}A$. Hence, we have
\begin{align}
T\in\calT_{n-1} \Leftrightarrow T(n)\neq S(n) \Leftrightarrow \costd(n) \supsetneq S(n) \Leftrightarrow \costd(n)=I(n)=P(n-1).\notag
\end{align}
This completes the proof.
\end{proof}

\begin{corollary}\label{prop:istilt-qhs}
Let $A$ be a quadratic linear Nakayama algebra.
Then the correspondence $\IStilt(A)\longleftrightarrow \qhs A$ in Theorem \ref{thm:bij-qh-IStilting} restricts to the following isomorphisms of subposets.
\begin{itemize}
\item[(1)] $\calT_{n} \cong \{ [\unlhd] \mid n\in \min \unlhd^{\rm m} \}$.
\item[(2)] If $n-1\notin \relvx(A)$, then $\calT_{i} \cong \{ [\unlhd] \mid i+j\unlhd^{\rm m} n \unlhd^{\rm m} i-1 \text{ for all }0\le j < n-i \}$ for $i\in[\ell+1,n-1]$.
\item[(3)] If $n-1\notin \relvx(A)$, then $\calT_{\ell} \cong \{[\unlhd] \mid n\in \max \unlhd^{\rm m}\}$.
\item[(4)] If $n-1\in \relvx(A)$, then $\calT_{\ell} =\calT_{n-1} \cong \{[\unlhd]\mid n\notin \min \unlhd^{\rm m}\}$.
\end{itemize}
\end{corollary}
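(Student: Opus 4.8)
The plan is to deduce each isomorphism from Proposition~\ref{prop:qln-costd}(3), which asserts that for the characteristic tilting module $T=T_{\unlhd}$ one has $T\in\calT_{i}$ if and only if $\costd(n)=P(i)$, where $n$ denotes the sink of the Gabriel quiver. Since $T\mapsto[\unlhd]_{T}$ is an isomorphism of posets $\IStilt A\xrightarrow{\ \sim\ }\qhs A$ (Theorem~\ref{thm:bij-qh-IStilting}) and $\tilt A=\IStilt A$ (Theorem~\ref{thm:tilt=IStit=>gentle-LNaka}), its restriction to any subset $\calT_{i}\subseteq\tilt A$ is automatically an isomorphism of posets onto its image, a subposet of $\qhs A$; so the whole content is to identify that image $\varphi(\calT_{i})$. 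As $\std(x)$, $\costd(x)$ and $T_{\unlhd}$ depend only on the class $[\unlhd]$, I may and will compute $\costd(n)$ using the minimal adapted order $\unlhd^{\mathrm{m}}$.

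First I would record the shape of $I(n)$: since $\ell+1,\dots,n\notin\relvx(A)$ (and $\ell=1$ when $\relvx(A)=\emptyset$), the module $I(n)=P(\ell)$ is uniserial with composition factors $S(\ell),S(\ell+1),\dots,S(n)$ from top to bottom, so its submodules are precisely $P(\ell)\supset P(\ell+1)\supset\cdots\supset P(n)=S(n)$, where $P(k)$ has composition factors $S(k),\dots,S(n)$. As $\costd(n)$ is the maximal submodule of $I(n)$ all of whose composition factors $S(j)$ satisfy $j\unlhd^{\mathrm{m}}n$, and $n\unlhd^{\mathrm{m}}n$ always, it follows that $P(i)\subseteq\costd(n)$ exactly when $j\unlhd^{\mathrm{m}}n$ for all $j\in[i,n-1]$; hence $\costd(n)=P(i)$ if and only if
\[
j\unlhd^{\mathrm{m}}n\ \text{ for all }j\in[i,n-1],\qquad\text{and}\qquad i=\ell\ \text{ or }\ i-1\not\unlhd^{\mathrm{m}}n,
\]
the second clause recording $P(i-1)\not\subseteq\costd(n)$. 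Together with Proposition~\ref{prop:qln-costd}(3) this is the description of $\varphi(\calT_{i})$ that I would then unpack case by case.

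For (1), where $i=n$ so $\costd(n)=P(n)=S(n)$, the displayed condition reads $n-1\not\unlhd^{\mathrm{m}}n$, which I would upgrade to $n\in\min\unlhd^{\mathrm{m}}$. Indeed $n$ being a sink gives $P(n)=S(n)$ and hence $\std(n)=S(n)$; so in any chain $j=i_{1}\lhd^{\ast_{1}}\cdots\lhd^{\ast_{l-1}}i_{l}=n$ witnessing $j\lhd^{\mathrm{m}}n$, the last step is neither of type $\dec$ (that needs $[\std(n):S(i_{l-1})]\neq0$, forcing $i_{l-1}=n$) nor of type $\inc$ (that needs $[\costd(n):S(i_{l-1})]\neq0$, impossible), so no such $j$ exists; conversely $j\unlhd^{\inc}n\Rightarrow j\unlhd^{\mathrm{m}}n$, so $n\in\min\unlhd^{\mathrm{m}}$ forces $\costd(n)=S(n)$. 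Statement (4) then follows at once: if $n-1\in\relvx(A)$ then $\ell=n-1$ and $\tilt A=\calT_{n-1}\sqcup\calT_{n}$ by Theorem~\ref{thm:decomp}, so $\varphi(\calT_{n-1})=\qhs A\setminus\varphi(\calT_{n})=\{[\unlhd]\mid n\notin\min\unlhd^{\mathrm{m}}\}$.

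For (2), with $n-1\notin\relvx(A)$ and $i\in[\ell+1,n-1]$, it remains to check that, given the first clause of the display, ``$i-1\not\unlhd^{\mathrm{m}}n$'' is equivalent to ``$n\unlhd^{\mathrm{m}}i-1$''; one direction is antisymmetry of $\unlhd^{\mathrm{m}}$ (note $i-1<n$), and for the other I would reuse the argument in the proof of Proposition~\ref{prop:qln-costd}(3): from $\costd(n)=P(i)\in\add T$ and Lemma~\ref{lem:nakayama-combs}(2) one gets $P(i+1),\dots,P(n)\notin\add T$, hence $T(i-1)$ is projective by Lemma~\ref{lem:nakayama-combs}(1), say $T(i-1)\cong P(k)$ with $k\in[\ell,i-1]$, and $[T(i-1):S(n)]=[P(k):S(n)]=1$ forces $n\unlhd^{\mathrm{m}}i-1$ by the defining property of the characteristic tilting module of $(A,\unlhd^{\mathrm{m}})$. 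For (3), with $n-1\notin\relvx(A)$ and $i=\ell$, I would prove $\costd(n)=I(n)\iff n\in\max\unlhd^{\mathrm{m}}$: for ``$\Leftarrow$'', maximality of $n$ gives $n\notin\min\unlhd^{\mathrm{m}}$ (otherwise (qh2${}^{\op}$) would force $I(n)=\costd(n)=S(n)$, impossible since $\ell<n$), so $\costd(n)=P(i)$ with $i\in[\ell,n-1]$, and if $i>\ell$ then (2) gives $n\unlhd^{\mathrm{m}}i-1$ with $i-1<n$, contradicting maximality, whence $i=\ell$; for ``$\Rightarrow$'', if $\costd(n)=I(n)$ then comparing $\Bbbk$-dimensions in a $\costd$-filtration of $I(n)$ yields $(I(n):\costd(m))=0$ for $m\neq n$, hence $[\std(m):S(n)]=0$ for $m\neq n$ by Brauer--Humphreys reciprocity (Proposition~\ref{prop:BH-recip}(2)), and then a chain $n=i_{1}\lhd^{\ast_{1}}i_{2}\lhd\cdots$ is impossible (its first step being neither $\dec$, which would need $[\std(i_{2}):S(n)]\neq0$, nor $\inc$, which would need $[\costd(i_{2}):S(n)]\neq0$ with $i_{2}\leq n$, forcing $i_{2}=n$), so $n\in\max\unlhd^{\mathrm{m}}$. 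The main obstacle throughout is the usual care with $\unlhd^{\mathrm{m}}$ being only the transitive closure of $\unlhd^{\dec}\cup\unlhd^{\inc}$: one cannot read a single relation $j\unlhd^{\mathrm{m}}n$ off the structure of one module, so in each case the route is to inspect only the first or last step of a witnessing chain, exploiting that $n$ is a sink (so $\std(n)=S(n)$) together with the explicit value of $\costd(n)$; the only non-elementary input is Brauer--Humphreys reciprocity in the forward direction of (3).
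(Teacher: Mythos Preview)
Your proposal is correct and follows exactly the paper's route: each item is reduced to Proposition~\ref{prop:qln-costd}(3) by verifying that the right-hand set coincides with $\{[\unlhd]\mid\costd(n)=P(i)\}$; the paper's own proof asserts this equivalence in a single sentence, whereas you supply the detailed verifications. One minor slip in case~(2): you invoke Lemma~\ref{lem:nakayama-combs}(2) with the \emph{projective} module $P(i)\in\add T$, but that clause needs a summand of the form $P(a)/P(b)$; the argument from the proof of Proposition~\ref{prop:qln-costd}(3) that you are reusing actually runs through Lemma~\ref{lem:nakayama-combs}(1), which from $P(i)\in\add T$ alone forces every indecomposable summand of $T$ containing $S(i-1)$ to be projective, whence $T(i-1)\cong P(k)$ and $n\unlhd^{\mathrm m} i-1$.
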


\begin{proof}
The set on the right-hand side of each item is equivalent to saying that $\costd(n)=P(i)$ when (1) $i=n$, (2) $i\in[\ell+1,n-1]$ and $n-1\notin\relvx(A)$, (3) $i=\ell$ and $n-1\notin \relvx(A)$, (4) $i=\ell$ and $n-1\in\relvx(A)$.  Thus, the claim follows immediately from Proposition \ref{prop:qln-costd}(3).
\end{proof}

\section{Description of the minimal adapted orders}\label{sec:qhs description}

To describe the quasi-hereditary structures of a quadratic linear Nakayama algebra explicitly, we consider a more general setting where one glues two algebras in a specific way, and study how to glue the quasi-hereditary structures from the two quotients.

\subsection{Nodal gluing of algebras induces gluing of minimal adapted order}\label{subsec:nodal gluing}

Recall that a \defn{node} in a basic algebra $A=\Bbbk Q/I$ is a vertex $v\in Q_{0}$ such that $pq=0$ for all length 2 paths $pq$ with $t(p)=v=s(q)$.
In the case of a linear Nakayama algebra $A$, this is the same as saying that $v\in \relvx(A)$ in our notation.

Suppose $A=\Bbbk Q_{A}/I_{A}, B=\Bbbk Q_{B}/I_{B}$ are basic algebras such that $A$ has a sink vertex (in $Q_{A}$) and $B$ has a source vertex.
Fix a choice of such a sink in $A$ and source in $B$, and call it $v$.
Denote by $Q_{A} \vee_{v} Q_{B}$ the quiver obtained by gluing $Q_{A}$ with $Q_{B}$ at $v$.
Then the \defn{nodal gluing} $A\vee_{v}B$ of $A$ and $B$ at $v$ is given by $\Bbbk (Q_{A}\vee_{v}Q_{B})/I_{A,B}^{v}$, where $I_{A,B}^{v}=(I_{A}, I_{B}, pq\mid t(p)=v=s(q) \,\forall p,q\in Q_{A} \vee_{v} Q_{B})$. Clearly, $v$ is a node in $A\vee_{v}B$.

\begin{center}
\begin{tikzpicture}
\node (v) at (-2.5,0) {$A\vee_{v} B\;\;=$};
\draw  (0,0) ellipse (1 and 0.8);
\draw  (4,0) ellipse (1 and 0.8);
\node (v) at (2,0) {$v$};
   \draw [decorate,decoration={brace,amplitude=10pt}] (-1,1) -- (2.5,1);
   \draw [decorate,decoration={brace,amplitude=10pt}] (5.5,-1) -- (1.5,-1);
\draw (0.5,0.5) edge[thick,->] node[midway](tov1){} (v); 
\draw (0.5,-0.5) edge[thick,->] node[midway](tov3){} (v);
\draw (v) edge[thick,->] node[midway](fromv1){} (3.5,0.5) ;
\draw (v) edge[thick,->] node[midway](fromv3){} (3.5,-0.5);

\draw (tov1) edge[dashed,bend left] (fromv1);
\draw (tov1) edge[dashed,bend left] (fromv3);
\draw (tov3) edge[dashed,bend right] (fromv1);
\draw (tov3) edge[dashed,bend right] (fromv3);
\node at (0.75,1.5) {$A$};
\node at (3.5,-1.5) {$B$};
\end{tikzpicture}
\end{center}
We can also write the nodal gluing as the matrix algebra
\begin{align}
A\vee_{v} B = \begin{bmatrix} eAe & eAe_{v} & 0 \\
0 & \Bbbk & f_{v}Bf \\ 0 & 0 & fBf
\end{bmatrix},\notag
\end{align}
where $e:=1_{A}-e_{v}$, $f:=1_{B}-f_{v}$, and $e_{v}, f_{v}$ are the primitive idempotents corresponding to $v$ in $A$ and $B$, respectively.
We omit the subscript $v$ if it is clear from context -- for example, when $A$ has a unique sink and $B$ has a unique source.

\begin{example}
Denote by $A_{n}=\Bbbk\vec{\bbA}_{n}$ the path algebra of a linearly oriented quiver $\vec{\bbA}_n$.
Every quadratic linear Nakayama algebra is obtained by nodal gluing algebras of $A_{n}$.
For example, if $A_{n}^{!}:=A_{n}/\rad^{2}(A_{n})$, then $A_{n}^{!}=(((A_{2}\vee A_{2})\vee A_{2})\cdots )\vee A_{2}$ with $n-1$ copies of $A_{2}$ here.
\end{example}

We give one observation of partial orders of quasi-hereditary algebras.

\begin{lemma}\label{lem:arrow-comparable}
Let $(A, \unlhd)$ be a quasi-hereditary algebra.
For each $(u\rightarrow v)\in (Q_{A})_{1}$, it holds that $u$ and $v$ are comparable with respect to $\unlhd$.  
In fact, either $v\lhd^{\dec} u$ or $u\lhd^{\inc} v$ holds.
\end{lemma}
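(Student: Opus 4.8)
The plan is to fix an arrow $\alpha : u\rightarrow v$ in $Q_A$ and analyse what its existence forces on the standard and costandard modules at $u$ and $v$. Since $\alpha$ is an arrow in the Gabriel quiver, it is non-vanishing in $A$, so in particular $\Ext^1_A(S(u),S(v))\neq 0$, equivalently $[\rad P(u) : S(v)]\neq 0$ and $[I(v)/\soc I(v) : S(u)]\neq 0$. These composition-factor statements are the raw material; the task is to upgrade them to comparability in $\unlhd$ by tracking whether $S(v)$ survives into $\std(u)$ or whether $S(u)$ survives into $\costd(v)$.

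First I would argue by contradiction: suppose $u$ and $v$ are incomparable with respect to $\unlhd$. In particular $v\ntrianglelefteq u$, so when forming $\std(u)=P(u)/\tr_P(P(u))$ with $P=\bigoplus_{j\ntrianglelefteq u}P(j)$, the projective $P(v)$ is one of the summands of $P$. The arrow $\alpha$ gives a nonzero map $P(v)\rightarrow \rad P(u)\subseteq P(u)$, so $S(v)=\top P(v)$ lies in the image of $P(v)\rightarrow P(u)$, hence $[\tr_P(P(u)):S(v)]\geq [\rad P(u):S(v)]$, and in particular $[\std(u):S(v)]=0$; i.e. $v$ is \emph{killed} when passing to $\std(u)$. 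Dually, using $v\ntrianglelefteq u$ does not immediately help on the costandard side, so here I would instead invoke the other half of incomparability, $u\ntrianglelefteq v$, together with the costandard module $\costd(v)=\rej_I(I(v))$ with $I=\bigoplus_{j\ntrianglelefteq v}I(j)$: since $u\ntrianglelefteq v$, $I(u)$ is a summand of $I$, and the map $I(v)\rightarrow I(u)$ dual to $\alpha$ (nonzero because $[I(v)/\soc I(v):S(u)]\neq 0$) shows that $S(u)$ gets rejected, so $[\costd(v):S(u)]=0$.

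The key step is then to derive a contradiction from "$S(v)$ killed in $\std(u)$" together with "$S(u)$ killed in $\costd(v)$". This is exactly the kind of statement the minimal adapted order is built to detect: by definition $v\lhd^{\dec}u$ means $[\std(u):S(v)]\neq 0$ and $u\lhd^{\inc}v$ means $[\costd(v):S(u)]\neq 0$, and Remark~\ref{rmk:covering relation => there is path} together with the discussion of $\unlhd^{\rm m}$ says that the covering relations of the minimal adapted order come precisely from such non-vanishing composition factors along quiver paths. The cleanest route I expect is: replace $\unlhd$ by its equivalent minimal adapted order $\unlhd^{\rm m}$ (allowed, since comparability claims for arrows and the statements $v\lhd^{\dec}u$, $u\lhd^{\inc}v$ only depend on $\std$ and $\costd$, which are unchanged within a quasi-hereditary structure), and then show directly that for an arrow $u\rightarrow v$ at least one of $[\std(u):S(v)]\neq 0$ or $[\costd(v):S(u)]\neq 0$ must hold. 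Concretely, if $[\std(u):S(v)]= 0$ then $P(v)\in\add\bigl(\bigoplus_{j\ntrianglelefteq u}P(j)\bigr)$ forced $v\ntrianglelefteq u$ to be consistent only if the $S(v)$ coming from $\alpha$ is absorbed, which requires the presence of some $P(j)$ with $j\ntrianglelefteq u$ mapping onto that copy — tracing this back, and using (qh1)+(qh2) to control the $\std$-filtration of $\rad P(u)$, one concludes $v\lhd u$ after all, contradicting incomparability; the symmetric argument on the injective side handles the case $[\costd(v):S(u)]=0$.

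The main obstacle is making the absorption argument precise: from $[\std(u):S(v)]=0$ I need to conclude $v\lhd u$ (not merely $v\ntrianglelefteq u$ is violated in some weak sense), and this requires knowing that the only way $S(v)$ disappears from $P(u)/\tr_P(P(u))$ is through the trace of projectives indexed by elements $\ntrianglelefteq u$, which by (qh2) are $\std(\rhd u)$-filtered — so $[\std(u'):S(v)]\neq 0$ for some $u'\rhd u$, and then an induction on the poset (or a direct appeal to how $\unlhd^{\rm m}$ is generated by $\unlhd^{\dec}\cup\unlhd^{\inc}$) yields $v\lhd^{\rm m} u' \lhd^{\rm m} u$, giving $v\lhd u$. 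I would organise this as a short induction on $|\Lambda|$, peeling off a maximal element, exactly paralleling the inductive structure already used in Lemma~\ref{lem:qh} and Proposition~\ref{prop:end-quasi-hered}, which should make the bookkeeping routine; the final sentence "either $v\lhd^{\dec}u$ or $u\lhd^{\inc}v$" then falls out of the dichotomy established along the way.
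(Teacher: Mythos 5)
The pivotal step of your plan --- ``if $[\std(u):S(v)]=0$ then \dots one concludes $v\lhd u$ after all'' --- is both wrongly oriented and not delivered by the mechanism you describe. From $[\std(u):S(v)]=0$ and the arrow you do get $[K(u):S(v)]\neq 0$ for the kernel $K(u)$ of $P(u)\twoheadrightarrow\std(u)$, and (qh2) gives $K(u)\in\filt(\std(\rhd u))$; but at the level of composition factors all this yields is $[\std(u'):S(v)]\neq 0$ for some $u'\rhd u$, i.e. the wedge $v\unlhd u'\unrhd u$, which imposes no relation between $u$ and $v$. In particular your chain ``$v\lhd^{\rm m}u'\lhd^{\rm m}u$'' cannot occur, since $u'\rhd u$. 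Worse, the implication ``$[\std(u):S(v)]=0\Rightarrow v\lhd u$'' is false: for $A=\Bbbk(1\rightarrow 2)$ with the quasi-hereditary order $1\lhd 2$ one has $\std(1)=S(1)$, so $[\std(1):S(2)]=0$, yet $2\rhd 1$; if anything, the correct conclusion from $[\std(u):S(v)]=0$ is $v\rhd u$, the opposite of what you wrote. Consequently neither comparability nor the refined statement ``either $v\lhd^{\dec}u$ or $u\lhd^{\inc}v$'' is actually established, and the refined statement certainly does not ``fall out'': a contradiction argument starting from incomparability can at best give comparability.

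What is missing is an input that ties the arrow to a specific filtration piece rather than to an arbitrary composition factor. The paper does this via Brauer--Humphreys reciprocity (Proposition \ref{prop:BH-recip}), which your proposal never uses: if $[\std(u):S(v)]=0$, then $(I(v):\costd(u))=0$, and since the arrow places $S(u)$ as a direct summand of $\soc(I(v)/S(v))$, this copy of $S(u)$ cannot be accounted for by a piece $\costd(u)$ in the $\costd$-filtration of $I(v)$, so it must lie in $\costd(v)$, i.e. $u\lhd^{\inc}v$ --- a short two-case argument with no induction on the poset. Alternatively, your standard-side idea can be repaired by tracking tops instead of composition factors: the arrow contributes $S(v)$ to $\top(\rad P(u))$, and when $[\rad\std(u):S(v)]=0$ this summand survives in $\top K(u)$; since the top of a module in $\filt(\std(\rhd u))$ is a quotient of a direct sum of the simples $S(w)$ with $w\rhd u$ (because $\top\std(w)=S(w)$), this forces $v\rhd u$ itself, and the dual argument on $I(v)$ forces $u\rhd v$ when $[\costd(v):S(u)]=0$; as these two conclusions are incompatible, the dichotomy follows. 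One of these repairs (reciprocity, or the top/socle refinement) is genuinely needed; as written, your absorption step does not close.
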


\begin{proof}
If $[\rad\std(u):S(v)]\neq 0$, then $v\lhd u$; otherwise by Lemma \ref{prop:BH-recip}, we have $(I(v):\costd(u)) = [\std(u):S(v)]=0$, hence $S(u)$ being a direct summand of $\soc (I(v)/S(v))$ implies that $[\costd(v):S(u)]\neq 0$, which gives $u\lhd v$.
\end{proof}

\begin{remark}
Note that this result applies to any quasi-hereditary partial order $\unlhd$.  In particular, for $u,v$ as in the lemma, and any equivalent $\unlhd_1, \unlhd_2\in[\unlhd]$,  $u \unlhd_1 v$ holds if and only if $u \unlhd_2 v$ holds; similarly, the same holds for $v\unlhd u$.
\end{remark}

To state the main theorem, we need to consider the following conditions.

\begin{definition}\label{def:gluing conditions}
For $([\unlhd_{A}],[\unlhd_{B}])\in \qhs A \times \qhs B$, consider the following conditions.
\begin{itemize}
\item[($\std$)] If there exists an arrow $x \rightarrow v$ in $Q_{A}$ with $x\lhd_{A}^{\rm m} v$, then $v\lhd_{B}^{\rm m}y$ holds for any arrow $v \rightarrow y$ in $Q_{B}$.
\item[($\costd$)] If there exists an arrow $v \rightarrow y$ in $Q_{B}$ with $y\lhd_{B}^{\rm m} v$, then $v\lhd_{A}^{\rm m}x$ holds for any arrow $x \rightarrow v$ in $Q_{A}$.
\end{itemize}
Denote by $\qhs(A , B; \std)$ and $\qhs(A , B; \costd)$ the subsets of $\qhs A \times \qhs B$ consisting of elements $([\unlhd_{A}], [\unlhd_{B}])$ satisfying the conditions $(\std)$ and $(\costd)$, respectively.
\end{definition}

Write $\Lambda_{C}:=(Q_{C})_{0}$ for $C\in\{A,B\}$; we will regard these as subsets of $\Lambda:=(Q_{A}\vee_{v}Q_{B})_{0}$.

\begin{theorem}\label{thm:pro-qhs-node-gluing}
Let $A \vee_{v} B$ be a nodal gluing of $A$ and $B$.
Then we have an injective map
\begin{align}
\Phi : \qhs(A \vee_{v} B) \longrightarrow \qhs A \times \qhs B, \quad [\unlhd] \mapsto ([\unlhd|_{\Lambda_{A}}], [\unlhd|_{\Lambda_{B}}]).\notag
\end{align}
Moreover, we have $\qhs(A , B; \std) = \Img \Phi = \qhs(A , B; \costd)$.
\end{theorem}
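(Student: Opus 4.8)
The plan is to establish the two set equalities $\Img\Phi = \qhs(A,B;\std)$ and $\Img\Phi = \qhs(A,B;\costd)$, after first checking that $\Phi$ is well-defined and injective. For well-definedness, given a quasi-hereditary partial order $\unlhd$ on $\Lambda$, I would show that $\unlhd|_{\Lambda_A}$ is quasi-hereditary for $A$ and $\unlhd|_{\Lambda_B}$ for $B$; this should follow from the inductive mechanism of quasi-hereditary algebras applied to the idempotents cutting out $A$ and $B$ from $A\vee_v B$. Concretely, $\Lambda_A$ and $\Lambda_B$ are up-/down-sets in a suitable sense and $e_{\Lambda_A}(A\vee_v B)e_{\Lambda_A} \cong A$, $e_{\Lambda_B}(A\vee_v B)e_{\Lambda_B}\cong B$; since both a heredity-ideal quotient and an idempotent truncation of a quasi-hereditary algebra along an (co)idempotent are again quasi-hereditary, and since the node $v$ makes the gluing ``transparent'' homologically (there are no length-two paths through $v$), the restricted orders inherit quasi-heredity. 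Injectivity is immediate: the equivalence class $[\unlhd]$ is determined by the minimal adapted order $\unlhd^{\mathrm m}$, which by Remark~\ref{rmk:covering relation => there is path} only has covering relations along paths of the quiver; since every arrow of $Q_A\vee_v Q_B$ lies entirely in $Q_A$ or in $Q_B$, and $\unlhd^{\mathrm m}$ is the transitive closure of $\unlhd^{\dec}\cup\unlhd^{\inc}$ whose generating relations also come from arrows (Lemma~\ref{lem:arrow-comparable}), the order $\unlhd^{\mathrm m}$ on $\Lambda$ is the transitive closure of $(\unlhd|_{\Lambda_A})^{\mathrm m} \cup (\unlhd|_{\Lambda_B})^{\mathrm m}$. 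Hence $([\unlhd|_{\Lambda_A}],[\unlhd|_{\Lambda_B}])$ determines $[\unlhd]$.

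For $\Img\Phi \subseteq \qhs(A,B;\std)$, I would take $[\unlhd]\in\qhs(A\vee_v B)$ and suppose there is an arrow $x\to v$ in $Q_A$ with $x\lhd^{\mathrm m} v$, and an arrow $v\to y$ in $Q_B$. By Lemma~\ref{lem:arrow-comparable}, $x$ and $v$ are comparable and $v$ and $y$ are comparable in $\unlhd^{\mathrm m}$. The key point is to rule out $y\lhd^{\mathrm m} v$: I would compute $\std(v)$ in $A\vee_v B$. Because $v$ is a node, $\rad\std(v)$ can only involve simples $S(z)$ with $z\to v$ an arrow; the hypothesis $x\lhd^{\mathrm m}v$, combined with $[\std(v):S(v)]=1$, forces $[\rad\std(v):S(x)]\neq 0$, i.e. $S(x)$ genuinely appears — and then (qh2) forces certain standard modules $\std(z)$ for $z$ a direct successor of $v$ in $Q_A\vee_v Q_B$ to be controlled. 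Dually, the costandard module $\costd(v)$ has $\rad(\costd(v)/S(v))$ built from successors; if $y\lhd^{\mathrm m}v$ then $[\costd(v):S(y)]\neq 0$ would hold, and Brauer--Humphreys reciprocity (Proposition~\ref{prop:BH-recip}) together with the node condition $pq=0$ for $p:x\to v$, $q:v\to y$ gives a contradiction — the simultaneous appearance of a predecessor below $v$ and a successor below $v$ is incompatible with $(A\vee_v B,\unlhd)$ being quasi-hereditary because it would create a non-vanishing composition series pattern around the node that violates (qh1)/(qh2). So $(\std)$ holds. The reverse inclusion $\qhs(A,B;\std)\subseteq\Img\Phi$ is the substantial construction: given $([\unlhd_A],[\unlhd_B])$ satisfying $(\std)$, I define $\unlhd$ on $\Lambda$ as the transitive closure of $\unlhd_A^{\mathrm m}\cup\unlhd_B^{\mathrm m}$ (one must check this is antisymmetric, which uses that the only shared vertex is $v$ and condition $(\std)$ prevents a cycle through $v$), and then verify $(A\vee_v B,\unlhd)$ is quasi-hereditary by computing its standard modules: for $z\in\Lambda_A\setminus\{v\}$ one gets $\std(z)=\std_A(z)$, for $z\in\Lambda_B\setminus\{v\}$ one gets $\std(z)=\std_B(z)$ (since no new composition factors can sneak in through the node), and $\std(v)$ is an extension built from $\std_A(v)$ and the node structure; then (qh1) and (qh2) for $A\vee_v B$ reduce to those for $A$ and $B$ plus condition $(\std)$ handling the interaction at $v$.

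The argument for $\qhs(A,B;\costd)$ is completely dual, using costandard modules, $(\mathrm{qh1}^{\op})$ and $(\mathrm{qh2}^{\op})$, and the left-right symmetry $\qhs C = \qhs(C^{\op})$; since $(A\vee_v B)^{\op} = B^{\op}\vee_v A^{\op}$ swaps the roles of sink and source, condition $(\costd)$ for $(A,B)$ becomes condition $(\std)$ for $(B^{\op}, A^{\op})$, so $\Img\Phi = \qhs(A,B;\costd)$ follows formally from the $(\std)$ case applied to the opposite algebras once one checks $\Phi$ commutes appropriately with the duality. In particular $\qhs(A,B;\std)=\qhs(A,B;\costd)$ is a corollary, not something to prove directly.

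I expect the main obstacle to be the reverse inclusion — showing that an arbitrary pair of quasi-hereditary structures on $A$ and $B$ satisfying $(\std)$ actually glues to a quasi-hereditary structure on $A\vee_v B$. The delicate part is the explicit identification of the standard module $\std(v)$ over the glued algebra and the verification of (qh2), namely that the kernel $K(v)$ of $P(v)\twoheadrightarrow\std(v)$ is filtered by $\std(z)$ with $z\rhd v$. One has to control how the indecomposable projective $P(v)$ of $A\vee_v B$ decomposes its radical into the ``$A$-part'' (governed by $I_A$) and the ``$B$-part'' (a sum of $P^B(y)$'s for arrows $v\to y$), and check that condition $(\std)$ is exactly what is needed to make the radical filtration of $P(v)$ compatible with the glued order. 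Handling antisymmetry of the glued order and the edge cases where $v$ is comparable to predecessors in one direction and successors in the other — precisely the scenario $(\std)$ legislates — is where the care is concentrated; everything away from $v$ is routine since the node kills all interaction.
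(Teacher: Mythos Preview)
Your overall architecture matches the paper's: establish well-definedness of $\Phi$, injectivity, then the two inclusions for $\Img\Phi$ and $\qhs(A,B;\std)$, with the $(\costd)$ case handled by duality. But there are two points where your sketch diverges from what actually works.

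\textbf{Well-definedness.} Your proposed argument---that idempotent truncation $e_{\Lambda_A}(A\vee_v B)e_{\Lambda_A}\cong A$ inherits quasi-heredity from $A\vee_v B$---has a gap. Idempotent truncation of a quasi-hereditary algebra along $e_\Gamma$ preserves quasi-heredity only when $\Gamma$ is an up-set (or down-set) of the partial order $\unlhd$, and there is no reason for $\Lambda_A$ to be such. The paper instead computes the standard modules directly (Lemma~\ref{lem:std-AvB}): for $i\in\Lambda_A\setminus\{v\}$ one has $\std^A(i)=\std(i)$ and $\std^A(v)=S(v)$, while $\std^B(i)=\std(i)$ for all $i\in\Lambda_B$; then (qh1) and (qh2) for $(A,\unlhd|_{\Lambda_A})$ and $(B,\unlhd|_{\Lambda_B})$ are verified by hand (Proposition~\ref{prop:glue-MAO}). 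Injectivity then follows immediately from Lemma~\ref{lem:std-AvB}, since the full set of standard modules of $A\vee_v B$ is recovered from those of $A$ and $B$.

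\textbf{The reverse inclusion $\qhs(A,B;\std)\subseteq\Img\Phi$.} You correctly identify this as the substantial step and define $\unlhd:=\unlhd_A\vee_v\unlhd_B$ as the paper does, but you misplace the difficulty. Antisymmetry is automatic (two partial orders sharing a single point glue to a partial order; condition $(\std)$ plays no role here). More importantly, the (qh2) check at $v$ itself is \emph{easy}: since $v$ is a sink in $Q_A$, the projective $P(v)$ in $A\vee_v B$ equals $P^B(v)$, so $\std(v)=\std^B(v)$ and the kernel is already $\std^B$-filtered. The genuinely delicate case is (qh2) for $i\in\Lambda_A\setminus\{v\}$: here $K(i)\in\filt(\std^A(\rhd i))$, and if $(K(i):\std^A(v))\neq 0$ one must show $\std^A(v)=\std(v)$, i.e.\ that $\std(v)=S(v)$ in $A\vee_v B$. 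The paper argues that $(K(i):\std^A(v))\neq 0$ forces (via Brauer--Humphreys) $\costd^A(v)\neq S(v)$, hence some arrow $x\to v$ with $x\lhd_A v$, and then condition $(\std)$ gives $v\lhd_B y$ for all $v\to y$, whence $\std(v)=\std^B(v)=S(v)$. This is exactly where $(\std)$ enters, not at $P(v)$.

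For $\Img\Phi\subseteq\qhs(A,B;\std)$, your idea is right but the paper's execution is crisper than invoking reciprocity for a contradiction: if $x\lhd_A v$ for some arrow $x\to v$, then $[\std(x):S(v)]=0$, so $S(v)$ (a direct summand of $\rad P(x)$, as $v$ is a sink) survives in $K(x)\in\filt(\std)$, forcing $\std(v)=S(v)$ directly. Your duality reduction for the $(\costd)$ case via $(A\vee_v B)^{\op}=B^{\op}\vee_v A^{\op}$ is a clean alternative to the paper's ``similarly''.
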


The proof of Theorem \ref{thm:pro-qhs-node-gluing} will be given after Proposition \ref{prop:Psi-qhs}.

First note that there exists a surjection from $A\vee_vB$ to $A$ (respectively, to $B$) with kernel $\langle e_i \mid i \in \Lambda_B\setminus\{v\}\rangle$ (respectively, $\langle e_i \mid i \in \Lambda_A\setminus\{v\}\rangle$).
As a result, we can regard $A$-modules and $B$-modules as $A\vee_vB$-modules, and that an $A\vee_vB$-module $X$ is an $A$-module (respectively, a $B$-module) if and only if $Xe_i=0$ for any $i\in\Lambda_B\setminus\{v\}$ (respectively, $i\in\Lambda_A\setminus\{v\}$).

Denote by $P(i), S(i)$ the projective and the simple $A\vee_v B$-modules, respectively.
For $C\in\{A, B\}$, we write $P^{C}(i)$ the projective $C$-module.
For $i\in\Lambda_{A}$, we have $P^{A}(i)=P(i)$ if $i\neq v$ and $P^{A}(v)=S(v)$.
For $i\in\Lambda_{B}$, we have $P^{B}(i)=P(i)$.
Likewise, for $C\in\{\emptyset,A,B\}$, we will use $\std^{C}(i), \costd^{C}(i)$ to denote the standard and costandard module associated to the appropriate partial order under consideration on $\Lambda_C$.

We start with the following simple observation.

\begin{lemma}\label{lem:std-AvB}
For the restrictions $\unlhd_A:=\unlhd|_{\Lambda_A}, \unlhd_B:=\unlhd|_{\Lambda_B}$ of a partial order $\unlhd$ on $\Lambda$, the associated standard modules are given by
\begin{align}
\std^{A}(i) = 
\begin{cases}
\std(i), & \text{if $i\in \Lambda_A\setminus\{v\}$};\\ 
S(v), & \text{if $i=v$},
\end{cases}
\;\; \text{ and }\;\;\std^{B}(i)=\std(i) \text{ for all }i\in\Lambda_B.\notag
\end{align}
Dually, the associated costandard modules satisfy
\begin{align}
\costd^{A}(i) = \costd(i)  \text{ for all }i\in\Lambda_A, \text{ and }\;\; \costd^{B}(i) = 
\begin{cases} \costd(i), & \text{if $i\in \Lambda_{B}\setminus\{v\}$};\\ 
S(v), & \text{if $i=v$}.
\end{cases}\notag
\end{align}
\end{lemma}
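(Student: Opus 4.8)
The plan is to prove the two halves separately, and within each half to handle the vertex $v$ separately from the other vertices, since the definitions of $\std^A(v)$ and $\costd^B(v)$ involve the relevant projective/injective modules which are ``cut down'' by the node condition. I will only write out the standard-module statement; the costandard statement follows by the left-right symmetry of quasi-heredity (passing to opposite algebras swaps the roles of $A$ and $B$, of sinks and sources, and of $\std$ and $\costd$).

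First consider $i \in \Lambda_A \setminus \{v\}$. Here $P^A(i)$ is literally $P(i)$ as an $A\vee_v B$-module, since the surjection $A\vee_v B \to A$ restricts to an isomorphism on $e_i(A\vee_v B)$ for $i \neq v$ (no path in $Q_A\vee_v Q_B$ starting at such an $i$ can pass through into $Q_B$ without crossing $v$, but once it hits $v$ it dies by the node relations; and anyway $i \in \Lambda_A$ means arrows out of $i$ stay in $Q_A$). More precisely, $P(i) = e_i(A\vee_v B)$ has all its composition factors indexed by $\Lambda_A$ because $v$ being a sink in $Q_A$ and a node in $A\vee_v B$ forces $e_j(A\vee_v B)e_i = 0$ for $j \in \Lambda_B \setminus\{v\}$. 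So $P(i) = P^A(i)$ and moreover, for any $j \ntrianglelefteq_A i$, we have $j \ntrianglelefteq i$ (as $\unlhd_A$ is the restriction of $\unlhd$), and conversely any $j \in \Lambda$ with $j \ntrianglelefteq i$ and $[P(i):S(j)] \neq 0$ must lie in $\Lambda_A$, hence $j \ntrianglelefteq_A i$. Therefore the trace $\tr_{\bigoplus_{j\ntrianglelefteq i}P(j)}(P(i))$ computed in $\mod(A\vee_v B)$ coincides with the one computed in $\mod A$ (only the $P(j)$ with $j \in \Lambda_A$ contribute, since $\Hom(P(j),P(i)) = e_i(A\vee_v B)e_j = 0$ for $j \in \Lambda_B\setminus\{v\}$), giving $\std(i) = \std^A(i)$.

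Next, $i = v$. Since $v$ is a sink in $Q_A$, we have $P^A(v) = S(v)$ — indeed $v$ is a minimal element nowhere, but more simply $e_v A e_v = \Bbbk$ and $e_j A e_v = 0$ for $j \neq v$ as $v$ is a sink, so $P^A(v)$ is simple. Then $\std^A(v) = S(v)$ trivially (it is a quotient of the simple module $P^A(v)$). For $i \in \Lambda_B$: here $P^B(i) = e_i(A\vee_v B) = P(i)$ on the nose (the surjection $A\vee_v B \to B$ is an iso on $e_i(A\vee_v B)$ for $i \in \Lambda_B$, including $i = v$, because $v$ being a source in $Q_B$ means arrows into $v$ come only from $Q_A$, but those contribute to $e_v(A\vee_v B)e_?$ on the wrong side; concretely $e_i(A\vee_v B)e_j = 0$ for $i \in \Lambda_B$, $j \in \Lambda_A\setminus\{v\}$ by the node relations killing length-$2$ paths through $v$). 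Thus $[P(i):S(j)] \neq 0$ forces $j \in \Lambda_B$, and as above the trace computation in $\mod(A\vee_v B)$ agrees with that in $\mod B$, yielding $\std(i) = \std^B(i)$. Note in particular $\std(v) = \std^B(v)$, consistent with $v$ being a source in $Q_B$ (so $\std^B(v)$ need not be simple).

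The main obstacle — really the only non-formal point — is verifying the two vanishing facts $e_j(A\vee_v B)e_i = 0$ for ($i \in \Lambda_A\setminus\{v\}$, $j\in\Lambda_B\setminus\{v\}$) and for ($i\in\Lambda_B$, $j\in\Lambda_A\setminus\{v\}$), i.e.\ that no nonzero path in $Q_A\vee_v Q_B$ crosses from one side to the other. This is exactly where the node relations $I^v_{A,B} \ni pq$ for all $t(p) = v = s(q)$ are used: any path crossing sides must factor through $v$, entering via an arrow $p$ of $Q_A$ into the sink $v$ and leaving via an arrow $q$ of $Q_B$ out of the source $v$, so it contains a subpath $pq$ with $t(p) = v = s(q)$, which is zero. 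Once this is in place everything else is bookkeeping about traces and restrictions of partial orders.
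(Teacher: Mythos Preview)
Your proof is correct and considerably more detailed than the paper's, which simply says ``This immediately follows from the definition of $A\vee_{v}B$.''  The approach is the same in spirit: identify which projectives coincide across the surjections $A\vee_v B\twoheadrightarrow A$ and $A\vee_v B\twoheadrightarrow B$, and then argue that the defining trace does not change.

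One small point deserves a sentence more.  In the case $i\in\Lambda_A\setminus\{v\}$, after restricting the trace to $j\in\Lambda_A$ you assert that it ``coincides with the one computed in $\mod A$''.  For $j\neq v$ this is clear since $P(j)=P^A(j)$, but for $j=v$ you are comparing $\tr_{P(v)}(P(i))$ with $\tr_{P^A(v)}(P(i))=\tr_{S(v)}(P(i))$, and $P(v)=P^B(v)$ is typically larger than $S(v)$.  The missing observation is that $P(i)$ is an $A$-module, so every map $P(v)\to P(i)$ factors through $P(v)\otimes_{A\vee_v B}A = e_vA = S(v)$; hence the two traces agree.  Alternatively, you can sidestep traces entirely by using the characterisation of $\std(i)$ as the maximal quotient of $P(i)$ whose composition factors $S(k)$ satisfy $k\unlhd i$: since all composition factors of $P(i)$ lie in $\Lambda_A$ and $\unlhd_A$ is the restriction of $\unlhd$, this maximal quotient is the same whether computed for $\unlhd$ or for $\unlhd_A$.
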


\begin{proof}
This immediately follows from the definition of $A\vee_{v}B$.
\end{proof}

The following lemma implies that $\Phi$ is a well-defined map.

\begin{proposition}\label{prop:glue-MAO}
If $(A\vee_{v} B,\unlhd)$ is a quasi-hereditary algebra, then $(A,\unlhd|_{\Lambda_{A}})$ and $(B,\unlhd|_{\Lambda_{B}})$ are also quasi-hereditary algebras. In particular, $\Phi$ is well-defined.
Moreover, these are minimal adapted orders if so is $\unlhd$.
\end{proposition}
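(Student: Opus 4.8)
The proof of Proposition \ref{prop:glue-MAO} splits naturally into two parts: first, that the restrictions of a quasi-hereditary order give quasi-hereditary algebras, and second, that the minimal adapted property is preserved. For the first part, I would verify the conditions (qh1) and (qh2) (say, for $A$; the case of $B$ is analogous, using the opposite-algebra symmetry of quasi-hereditary-ness, or else (qh1$^{\op}$)/(qh2$^{\op}$) directly). By Lemma \ref{lem:std-AvB}, the standard module $\std^{A}(i)$ equals $\std(i)$ for $i\neq v$, and equals $S(v)$ for $i=v$. So (qh1) for $\unlhd|_{\Lambda_{A}}$ at a vertex $i\neq v$ is inherited immediately from (qh1) for $\unlhd$; at $i=v$ it is trivial since $\std^{A}(v)=S(v)$. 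For (qh2), the key point is that the projective cover of $\std^{A}(i)$ within $\mod A$ is $P^{A}(i)$, which for $i\neq v$ is $P(i)=P^{A\vee_v B}(i)$, and the kernel $K^{A}(i)$ coincides with the kernel $K(i)$ of $P(i)\twoheadrightarrow\std(i)$ in $\mod(A\vee_v B)$, because $\std(i)=\std^{A}(i)$ already lives in $\mod A$ and $P(i)$ has all its composition factors in $\Lambda_A$ (no arrow leaves a vertex of $A\setminus\{v\}$ towards $B$, and $v$ is a sink in $Q_A$). Hence $K^{A}(i)=K(i)\in\filt(\std(\rhd i))$, and since all composition factors of $K(i)$ lie in $\Lambda_A$, the $\std$-filtration only involves $\std(j)$ with $j\in\Lambda_A$, so $K^{A}(i)\in\filt(\std^{A}(\rhd i))$ as required. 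For $i=v$: $\std^{A}(v)=S(v)=P^{A}(v)$, so $K^{A}(v)=0$ and (qh2) is trivial. This establishes that $(A,\unlhd|_{\Lambda_A})$ is quasi-hereditary; symmetrically (or by the dual argument with costandard modules, using the second half of Lemma \ref{lem:std-AvB}) the same holds for $(B,\unlhd|_{\Lambda_B})$. Consequently $\Phi$ is well-defined.

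\textbf{Minimal adapted part.} Now assume $\unlhd$ is the minimal adapted order in its class, i.e. $\unlhd=\unlhd^{\mathrm{m}}$, so $\unlhd$ is the transitive closure of $\unlhd^{\dec}\cup\unlhd^{\inc}$. I must show $\unlhd|_{\Lambda_A}$ is minimal adapted on $\Lambda_A$ (and symmetrically for $B$). Write $\leq_A$ for the minimal adapted order equivalent to $\unlhd|_{\Lambda_A}$; we always have $\leq_A\subseteq\unlhd|_{\Lambda_A}$ (the minimal adapted order is coarsest in its class), so it suffices to show $\unlhd|_{\Lambda_A}\subseteq\leq_A$, i.e. that every covering relation of $\unlhd|_{\Lambda_A}$ is forced. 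Since $\unlhd|_{\Lambda_A}\sim\leq_A$, the standard modules of $A$ with respect to the two orders agree; call them $\std^A(-)$ as before. By Lemma \ref{lem:std-AvB}, $\std^A(i)=\std(i)$ for $i\neq v$ and $\std^A(v)=S(v)$. Take a covering relation $x\lhd^{\bullet} y$ in $\unlhd|_{\Lambda_A}$. Because $\unlhd=\unlhd^{\mathrm m}$, by Remark \ref{rmk:covering relation => there is path} this covering (viewed in $\unlhd$) must be realized by $x\lhd^{\dec}y$ or $x\lhd^{\inc}y$ — that is, $[\std(y):S(x)]\neq 0$ or $[\costd(y):S(x)]\neq 0$. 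In the first case, if $y\neq v$, then $\std^A(y)=\std(y)$ and $x\in\Lambda_A$ gives $[\std^A(y):S(x)]\neq 0$, so $x\lhd_A^{\dec}y$, hence $x\leq_A y$. If $y=v$: then $\std(v)$ might be larger than $\std^A(v)=S(v)$, but we need $[\std(v):S(x)]\neq 0$ with $x\in\Lambda_A$, $x\neq v$; this composition factor $S(x)$ sits below $S(v)$ in $\std(v)=P(v)/(\text{trace})$, so there is a non-vanishing path in $A\vee_v B$ from $v$ landing on $x$ — but $v$ is a \emph{sink} of $Q_A$, so actually $\std^{A\vee_vB}(v)$ has composition factors only along arrows into $v$ in the \emph{whole} quiver; here one must be careful, and I would use the explicit matrix form to see $\std(v)$ has composition factors in $\Lambda_A$, so $[\std^A(v):S(x)]=[\std(v):S(x)]\neq0$ contradicting $\std^A(v)=S(v)$ unless $x=v$ — so this sub-case doesn't occur for a genuine covering with $x\neq y$, OR it shows the relation was not a covering of $\unlhd|_{\Lambda_A}$. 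The cleanest handling is: restrict attention to $x,y\in\Lambda_A$; when $y=v$ the relation $x\lhd v$ in $\unlhd$ forces $[\std(v):S(x)]\ne0$ or $[\costd(v):S(x)]\ne0$, and since $\costd^A(v)=\costd(v)$ by Lemma \ref{lem:std-AvB} while $\std^A(v)=S(v)$, the costandard alternative yields $x\lhd^{\inc}_A v$ directly; the standard alternative must be re-examined but one checks using the matrix form that $\std(v)$ and $\std^{A}(v)$ differ only in composition factors from $\Lambda_B$, which cannot equal $S(x)$ for $x\in\Lambda_A\setminus\{v\}$. In the $\inc$ case the argument is symmetric using $\costd^A(i)=\costd(i)$ for all $i\in\Lambda_A$ (again Lemma \ref{lem:std-AvB}), which is even cleaner: $[\costd(y):S(x)]\neq0$ with $x,y\in\Lambda_A$ directly gives $x\lhd^{\inc}_A y$ hence $x\leq_A y$. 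Assembling these, every cover of $\unlhd|_{\Lambda_A}$ is contained in $\leq_A$, so $\unlhd|_{\Lambda_A}=\leq_A$ is minimal adapted. The argument for $B$ is the mirror image, swapping the roles of standard/costandard and using $\std^B(i)=\std(i)$ for all $i\in\Lambda_B$.

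\textbf{Main obstacle.} The delicate point is the behaviour at the glued node $v$: $\std^A(v)$ and $\std^{A\vee_v B}(v)$ need not literally coincide (Lemma \ref{lem:std-AvB} says $\std^A(v)=S(v)$, but $\std^{A\vee_v B}(v)$ can have $S(v)$-only structure too actually, since $v$ is a node and $P(v)=S(v)$ on the $A$-side... wait, $P^{A\vee_v B}(v)$ has arrows into $B$). So one has to keep straight which modules are $A$-modules versus $A\vee_v B$-modules, and use the fact that $v$ is simultaneously a sink of $Q_A$ and a source of $Q_B$, together with the node condition (all length-$2$ paths through $v$ vanish). I would isolate this in a short computation with the explicit triangular matrix form of $A\vee_v B$ right after invoking Lemma \ref{lem:std-AvB}, showing that for $i\in\Lambda_A$ the projective $P(i)$ of $A\vee_v B$ has composition factors only in $\Lambda_A$ (because no path from $\Lambda_A\setminus\{v\}$ exits to $B$, and any path reaching $v$ cannot continue into $B$ by the node condition), and dually. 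Once that is in place, the inheritance of (qh1), (qh2) and of the covering relations of the minimal adapted order is a bookkeeping exercise, and the proposition follows.
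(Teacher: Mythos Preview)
Your quasi-heredity argument is correct and matches the paper's: both use Lemma~\ref{lem:std-AvB} together with the observation that $P(i)=P^{A}(i)\in\mod A$ for $i\in\Lambda_A\setminus\{v\}$ (precisely the content of your ``main obstacle'' paragraph). The paper makes explicit one step you gloss over in the (qh2) verification: when $\std(v)$ occurs in the $\std$-filtration of $K(i)\subseteq P(i)\in\mod A$, one needs $\std(v)=S(v)=\std^{A}(v)$; this holds because $\std(v)$ is then a subquotient of an $A$-module, whereas $\rad\std(v)$, being a quotient of $\rad P^{B}(v)$, has composition factors only in $\Lambda_B\setminus\{v\}$.

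Your minimal-adapted argument has a gap and some confusion. You take a cover $x\lhd^{\bullet}y$ in $\unlhd|_{\Lambda_A}$ and apply Remark~\ref{rmk:covering relation => there is path}, but that remark concerns covers in $\unlhd$; you never justify that a cover in the restriction remains a cover in the ambient poset (i.e.\ that no $z\in\Lambda_B\setminus\{v\}$ lies strictly between $x$ and $y$). Your treatment of the sub-case $y=v$ also misstates the composition factors of $\std(v)$: they lie in $\Lambda_B$, not $\Lambda_A$, since $P^{A\vee_v B}(v)=P^{B}(v)$; this is exactly why $x\lhd^{\dec}v$ with $x\in\Lambda_A\setminus\{v\}$ is vacuous, not a case to be ``re-examined''. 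The paper avoids the covering-relation detour entirely: it records directly from Lemma~\ref{lem:std-AvB} that $i\unlhd^{*}j\Leftrightarrow i\unlhd_{C}^{*}j$ for all $i,j\in\Lambda_{C}$ and $*\in\{\dec,\inc\}$, and concludes. Both approaches ultimately rest on the fact that a $\unlhd^{\dec}/\unlhd^{\inc}$-chain between two vertices of $\Lambda_C$ cannot leave $\Lambda_C$ (a step crossing between $\Lambda_A\setminus\{v\}$ and $\Lambda_B\setminus\{v\}$ would force one endpoint to be $v$, so leaving and re-entering $\Lambda_C$ yields $v\lhd\cdots\lhd v$); the paper leaves this implicit, while your route through Remark~\ref{rmk:covering relation => there is path} makes the missing step visible.
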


\begin{proof}
For simplicity, we write $\unlhd_A=\unlhd|_{\Lambda_{A}}$ and $\unlhd_B=\unlhd|_{\Lambda_{B}}$.

We show that $(B,\unlhd_B)$ is quasi-hereditary.
By Lemma \ref{lem:std-AvB}, $(B,\unlhd_B)$ satisfies (qh1).
For each $i\in\Lambda_B$, let $K(i)$ be the kernel of $P^{B}(i)\rightarrow \std(i)$. By Lemma \ref{lem:std-AvB}, we have 
\begin{align}
K(i)\in \filt(\std(j)\mid j\in \Lambda_{B}, i\lhd j)=\filt(\std^{B}(\rhd i)).\notag
\end{align}
Therefore $(B,\unlhd_{B})$ satisfies (qh2).

We show that $(A,\unlhd_{A})$ is quasi-hereditary.
By Lemma \ref{lem:std-AvB}, $(A,\unlhd_{A})$ satisfies (qh1).
We show that $(A, \unlhd_{A})$ satisfies (qh2).
Clearly, $P^{A}(v)=\std^{A}(v)=S(v)$ satisfies the condition of (qh2).
Let $i\in\Lambda_{A}\setminus\{v\}$.
Consider the canonical exact sequence $0 \rightarrow K(i) \rightarrow P(i) \rightarrow \std(i) \rightarrow 0$ in $\mod (A\vee_{v} B)$ with $K(i)\in\filt(\std(\rhd i))$.
Each term of this sequence belongs to $\mod A$ since $P(i)=P^{A}(i)$.
Thus we have $K(i)\in\filt(\std(j) \mid j\in\Lambda_{A}, i \lhd j)$.
If $(K(i):\std(v))=0$, then 
\begin{align}
K(i)\in\filt(\std(j) \mid j\in\Lambda_{A}\setminus\{v\}, i \lhd j) = \filt(\std^{A}(j) \mid j\in\Lambda_{A}\setminus\{v\}, i \lhd j)\notag
\end{align}
holds.
On the other hand, assume that $(P(i):\std(v))\neq 0$. 
Since $P(i)=P^{A}(i)$, we have $\std(v)e_{j}=0$ for all $j\in\Lambda_{B}\setminus\{v\}$.
Thus $\std(v)=S(v)=\std^{A}(v)$ holds.
Then we have $K(i) \in \filt(\std^{A}(\rhd i))$.
Therefore $(A,\unlhd_{A})$ satisfies (qh2).

Finally, by Lemma \ref{lem:std-AvB} the composition factors of the standard modules do not change except $\std^{A}(v)$.
Thus for any $i, j\in\Lambda_C$ with $C\in\{A, B\}$, $i\unlhd_{C}^{\ast}j$ if, and only if, $i\unlhd^{\ast}j$ for $\ast\in \{\dec,\inc \}$.
Therefore minimal adaptedness is preserved.
\end{proof}

To help the exposition of the proof of Theorem \ref{thm:pro-qhs-node-gluing}, we introduce a few notations as follows.
For posets $(\Lambda_{A}, \unlhd_{A})$ and $(\Lambda_{B}, \unlhd_{B})$, we define a partial order $\unlhd=\unlhd_{A} \vee_{v} \unlhd_{B}$ on $\Lambda=\Lambda_{A} \cup \Lambda_{B}$ as the transitive closure of the union of the binary relations $\unlhd_{A}$ and $\unlhd_{B}$.
In order words, for $x,y\in\Lambda$, we have $x\unlhd y$ if one of (i) (ii) holds: (i) $x\unlhd_{C} y$ if $x, y \in \Lambda_{C}$ for $C\in\{A,B\}$, (ii) $x \unlhd_{C} v \unlhd_{D}y$ if $x\in\Lambda_{C}, y\in\Lambda_{D}$ for $\{C, D\}=\{A, B\}$.
Note that, by construction, we have $\unlhd|_{\Lambda_{C}}=\unlhd_{C}$ for $C\in\{A,B\}$.
Hence, the relation between the standard modules over $A,B, A\vee_{v} B$ as described in Lemma \ref{lem:std-AvB}.
This means that $(\unlhd_{A}\vee_{v}\unlhd_{B})^{\rm m} = \unlhd_{A}^{\rm m}\vee_{v} \unlhd_{B}^{\rm m}$ and $ [\unlhd_{A}\vee_{v}\unlhd_{B}]=[\unlhd_{A}^{\rm m}\vee_{v} \unlhd_{B}^{\rm m}]$ hold.

\begin{proposition}\label{prop:Psi-qhs}
For $\lozenge\in\{\std,\costd\}$ and $([\unlhd_{A}], [\unlhd_{B}])\in\qhs (A,B;\lozenge)$, we have a quasi-hereditary algebra $(A\vee_{v} B, \unlhd_{A}\vee_{v}\unlhd_{B})$.
In particular, there are well-defined maps
\begin{align}
\Psi_{\lozenge} : \qhs(A,B;\lozenge) \longrightarrow \qhs(A\vee_{v} B), \quad \Psi_{\lozenge}([\unlhd_{A}], [\unlhd_{B}]):=[\unlhd_{A}\vee_{v}\unlhd_{B}].\notag
\end{align}
such that  $\Phi\circ\Psi_{\lozenge}=\id_{\qhs(A,B;\lozenge)}$.
\end{proposition}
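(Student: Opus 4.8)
The plan is to prove that $\Psi_{\lozenge}$ is well-defined by showing $(A\vee_v B, \unlhd_A\vee_v\unlhd_B)$ is quasi-hereditary, and then to verify the identity $\Phi\circ\Psi_\lozenge=\mathrm{id}$.  Since quasi-heredity is left-right symmetric and the conditions $(\std)$ and $(\costd)$ are dual to each other under $A\leftrightarrow B^{\op}$, $v\leftrightarrow v$, it suffices to treat the case $\lozenge=\std$; the case $\lozenge=\costd$ then follows by applying the $\std$-case to $(B^{\op},A^{\op})$ and using $\qhs C=\qhs(C^{\op})$.  So fix $([\unlhd_A],[\unlhd_B])\in\qhs(A,B;\std)$.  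We may replace $\unlhd_A,\unlhd_B$ by their minimal adapted representatives $\unlhd_A^{\rm m},\unlhd_B^{\rm m}$, since $[\unlhd_A\vee_v\unlhd_B]=[\unlhd_A^{\rm m}\vee_v\unlhd_B^{\rm m}]$ as noted before the statement, and the condition $(\std)$ is phrased entirely in terms of $\lhd_A^{\rm m},\lhd_B^{\rm m}$ and arrows, hence unchanged.  Write $\unlhd:=\unlhd_A\vee_v\unlhd_B$ on $\Lambda=\Lambda_A\cup\Lambda_B$.

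Next I would check the two axioms (qh1) and (qh2) for $(A\vee_vB,\unlhd)$.  For (qh1): by Lemma \ref{lem:std-AvB} the standard module $\std(i)$ coincides with $\std^A(i)$ for $i\in\Lambda_A\setminus\{v\}$, with $\std^B(i)$ for $i\in\Lambda_B\setminus\{v\}$, and with $S(v)$ for $i=v$ — wait, I must be careful, since $\std^A(v)=S(v)$ but $\std(v)$ over $A\vee_vB$ need not be simple.  Here is where $(\std)$ enters.  The key structural fact is: $\std(v)$ (over $A\vee_vB$) is the maximal quotient of $P(v)=e_vA\vee_vB\cong S(v)\oplus(\text{possible $B$-part from }f_vBf\text{ below }v)$ wait — $P(v)$ has radical supported on $\Lambda_B$, specifically $\rad P(v)$ is built from $\mathrm{ds}_1$-type successors of $v$ in $Q_B$.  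So $[\std(v):S(v)]=1$ automatically, giving (qh1) at $v$; and at other vertices $i$, $[\std(i):S(i)]=[\std^C(i):S^C(i)]=1$ by the quasi-heredity of $A$ or $B$.  Thus (qh1) holds unconditionally.  For (qh2) I would take the canonical sequence $0\to K(i)\to P(i)\to\std(i)\to 0$.  For $i\in\Lambda_B$, both $P(i)=P^B(i)$ and $\std(i)=\std^B(i)$ are $B$-modules, so $K(i)$ is a $B$-module and (qh2) over $B$ gives $K(i)\in\filt(\std^B(\rhd_B i))\subseteq\filt(\std(\rhd i))$, since $j\rhd_B i\Rightarrow j\rhd i$.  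For $i\in\Lambda_A\setminus\{v\}$: $P(i)=P^A(i)$, and the canonical surjection $P^A(i)\to\std^A(i)$ has kernel $K^A(i)\in\filt(\std^A(\rhd_A i))$; but over $A\vee_vB$ the module $\std(i)$ may differ from $\std^A(i)$ only if $\std^A(i)$ has $S(v)$ in its top-adjacent layers AND $v$ has a successor $y$ in $Q_B$ with $y\ntriangleright v$ — this is precisely what $(\std)$ forbids.  More carefully: $\std(i)$ over $A\vee_vB$ is the maximal quotient of $P^A(i)$ with composition factors $S(j)$, $j\unlhd i$; any such factor with $j\in\Lambda_B\setminus\{v\}$ forces a composition factor $S(v)$ with $v\unlhd i$ (by connectivity through $v$), but if $v$ appears in $\std^A(i)$ then some arrow $x\to v$ in $Q_A$ satisfies $x\lhd_A^{\rm m}v$ (this needs a short argument using Lemma \ref{lem:arrow-comparable} and the definition of $\std^A$), whence $(\std)$ gives $v\lhd_B^{\rm m}y$ for all arrows $v\to y$, so all of $P(v)/S(v)$ is $\rhd v\rhd i$-supported and $\std(i)=\std^A(i)$ after all.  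Hence $K(i)$ again lies in $\filt(\std(\rhd i))$.  At $i=v$: $\std(v)$ is the maximal quotient of $P(v)$ with factors $\unlhd v$; its kernel $K(v)$ is a submodule of $\rad P(v)$ supported on successors $y$ of $v$ in $Q_B$ with $y\ntriangleleft v$, and one checks $K(v)\in\filt(\std(\rhd v))$ using quasi-heredity of $B$ — this is the standard inductive fact that, over $B$, the radical of $P^B(v)$ filters by $\std^B(j)$ with $j\rhd_B v$ once one accounts for the $\std^B(v)=S(v)$ piece, which is exactly the $(\std)$-compatible situation.  So (qh2) holds, and $(A\vee_vB,\unlhd)$ is quasi-hereditary; $\Psi_\std$ is well-defined.

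Finally, $\Phi\circ\Psi_\std=\mathrm{id}$ is immediate: $\Phi(\Psi_\std([\unlhd_A],[\unlhd_B]))=\Phi([\unlhd_A\vee_v\unlhd_B])=([(\unlhd_A\vee_v\unlhd_B)|_{\Lambda_A}],[(\unlhd_A\vee_v\unlhd_B)|_{\Lambda_B}])=([\unlhd_A],[\unlhd_B])$, where the last equality uses $\unlhd|_{\Lambda_C}=\unlhd_C$ by construction of $\vee_v$ (noted just before the statement) together with the fact from Proposition \ref{prop:glue-MAO} that $\Phi$ sends $[\unlhd]$ to the equivalence classes of the restrictions, and that these restrictions only depend on $[\unlhd_A],[\unlhd_B]$ — here one uses that equivalence of partial orders is detected on standard modules and Lemma \ref{lem:std-AvB} shows $\std^C(i)$ is determined by $\std(i)$.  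The main obstacle I anticipate is the careful verification, in the (qh2) step at vertices $i\in\Lambda_A\setminus\{v\}$ and at $i=v$, that the condition $(\std)$ exactly guarantees the $\std$-filtration of the kernel does not acquire "wrong-way" factors at $v$; this requires spelling out how composition factors near $v$ in $P^A(i)$ propagate through the glued node and invoking $\lhd^{\dec}/\lhd^{\inc}$ together with Lemma \ref{lem:arrow-comparable} to identify when $S(v)$ forces the relevant arrow-comparability, and I would isolate this as a preparatory sublemma before the main induction.
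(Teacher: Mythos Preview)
Your overall strategy matches the paper's: reduce to $\lozenge=\std$ by duality, then verify (qh1) and (qh2) for $\unlhd=\unlhd_A\vee_v\unlhd_B$, and read off $\Phi\circ\Psi_\lozenge=\id$ from $\unlhd|_{\Lambda_C}=\unlhd_C$.  The cases (qh1), (qh2) for $i\in\Lambda_B$ (which already includes $i=v$, since $P(v)=P^B(v)$ and $\std(v)=\std^B(v)$ by Lemma~\ref{lem:std-AvB}), and the final identity are all fine.

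There is, however, a genuine gap in your treatment of (qh2) for $i\in\Lambda_A\setminus\{v\}$.  You spend the argument trying to show $\std(i)=\std^A(i)$, but this is already given unconditionally by Lemma~\ref{lem:std-AvB} (the node relations force $P(i)=P^A(i)$, so nothing from $\Lambda_B\setminus\{v\}$ can appear).  The actual obstruction is different: you know $K(i)=K^A(i)\in\filt(\std^A(\rhd_A i))$, and you need $K(i)\in\filt(\std(\rhd i))$.  Since $\std^A(j)=\std(j)$ for $j\neq v$, the only problem is when $\std^A(v)=S(v)$ occurs in this filtration while $\std(v)\neq S(v)$.  Your trigger for invoking $(\std)$ is ``$S(v)$ appears in $\std^A(i)$'', but note that $(K^A(i):\std^A(v))\neq 0$ forces $v\rhd_A i$, hence $[\std^A(i):S(v)]=0$; so your trigger \emph{never} fires in the case where $(\std)$ is actually needed.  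Moreover, your intermediate claim ``if $[\std^A(i):S(v)]\neq 0$ then some arrow $x\to v$ has $x\lhd_A^{\rm m} v$'' is false: take $Q_A=(1\to 2)$ with $v=2$ and $\unlhd_A=(2\lhd_A 1)$.

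The paper closes this gap with Brauer--Humphreys reciprocity (Proposition~\ref{prop:BH-recip}): if $(K^A(i):\std^A(v))\neq 0$ then $(P^A(i):\std^A(v))=[\costd^A(v):S(i)]\neq 0$, so $\costd^A(v)\supsetneq S(v)$; since $\soc(I^A(v)/S(v))$ is supported on the arrow-predecessors of $v$, this yields an arrow $x\to v$ with $x\lhd_A v$.  Condition $(\std)$ then gives $v\lhd_B y$ for every arrow $v\to y$, whence $\std(v)=\std^B(v)=S(v)=\std^A(v)$ and $K(i)\in\filt(\std(\rhd i))$.  Replacing your ``$S(v)$ in $\std^A(i)$'' trigger by ``$(K^A(i):\std^A(v))\neq 0$'' and inserting this reciprocity step fixes the proof.
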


\begin{proof}
Without loss of generality, we may assume that $\unlhd_{C}=\unlhd_{C}^ {\rm m}$ for each $C\in\{A, B\}$.
We first show that $(A\vee_{v}B, \unlhd)$ is quasi-hereditary for $\unlhd:=\unlhd_{A}\vee_{v} \unlhd_{B}$ by checking (qh1) and (qh2); the claim for $\Psi_{\costd}$ can be shown dually by checking (qh$1^{\op}$) and (qh$2^{\op}$). 

The condition (qh1) follows directly from Lemma \ref{lem:std-AvB}.
For (qh2), if $i\in\Lambda_{B}$, then we have $\filt(\std^{B}(\rhd i)) = \filt(\std(j)\mid j\in\Lambda_{B}, i\lhd j)$, which implies the desired condition.
It remains to show that (qh2) holds for $i\in\Lambda_{A}\setminus\{v\}$.
Let $0\rightarrow K(i) \rightarrow P(i) \rightarrow \std(i) \rightarrow 0$ be an exact sequence in $\mod A$ with $K(i) \in \filt(\std^{A}(\rhd i))$.
If $(K(i): \std^{A}(v))=0$, then $K(i)\in\filt(\std^{A}(j) \mid j\in\Lambda_{A}\setminus\{v\}, i\lhd_{A} j) \subset \filt(\std(\rhd i))$ holds; thus (qh2) holds.
Assume now that $(K(i):\std^{A}(v))\neq 0$.
By Lemma \ref{prop:BH-recip}(1), we have $0\neq(P^{A}(i):\std^{A}(v))=[\costd^{A}(v):S(i)]$. Thus $\costd^{A}(v)$ cannot be simple.
This implies that there exists an arrow $(x\rightarrow v) \in (Q_{A})_{1}$ with $x\lhd v$.
Since $([\unlhd_{A}], [\unlhd_{B}])\in \qhs(A,B,\std)$, $v\lhd_{B} y$ holds for any arrow $(v \rightarrow y)\in(Q_{B})_{1}$.
This implies that $\std(v)=\std^{B}(v)=S(v)=\std^{A}(v)$.
Thus, we have $K(i) \in \filt(\std^{A}(\rhd i))=\filt(\std(j) \mid j\in\Lambda_A, i \lhd j)$, i.e. (qh2) holds for $i\in \Lambda_{A}\setminus\{v\}$.

Finally, since $(\unlhd_{A}\vee_{v}\unlhd_{B})|_{\Lambda_{C}}=\unlhd_{C}$ for $C\in\{A, B\}$ by construction, we have $\Phi\Psi_\lozenge([\unlhd_A],[\unlhd_{B}]) = \Phi([\unlhd])=([\unlhd_{A}],[\unlhd_{B}])$; this completes the proof.
\end{proof}

We are ready to prove the theorem.

\begin{proof}[Proof of Theorem \ref{thm:pro-qhs-node-gluing}]
By Lemma \ref{lem:std-AvB} and Proposition \ref{prop:glue-MAO}, $\Phi$ is a well-defined injective map. 

We show $\Img\Phi =\qhs(A, B;\std)$. 
Since $\Img\Phi \supseteq\qhs(A, B;\std)$ follows from Proposition \ref{prop:Psi-qhs}, we prove the converse inclusion.
Let $([\unlhd_{A}], [\unlhd_{B}])\in\Img\Phi$. 
Without loss of generality, we assume that $\unlhd_{C}=\unlhd_{C}^{\rm m}$ for $C\in \{ A,B\}$. 
Then there exists $[\unlhd]\in\qhs(A\vee_{v}B)$ such that $([\unlhd_{A}], [\unlhd_{B}])=\Phi([\unlhd])=([\unlhd|_{\Lambda_{A}}],[\unlhd|_{\Lambda_{B}}])$.
Assume that there exists an arrow $x\rightarrow v$ in $Q_{A}$ such that $x\lhd_{A}v$. 
By $[\unlhd|_{\Lambda_{A}}]=[\unlhd_{A}]$, we have $[\std(x):S(v)]=0$. 
Since $v\in (Q_{A})_{0}$ is a sink, $S(v)$ is a direct summand of $\rad P(x)$. 
Thus the kernel $K(x)$ of a surjection $P(x) \rightarrow \std(x)$ has $S(v)$ as a direct summand. 
By $[\unlhd]\in\qhs(A\vee_{v}B)$, we obtain $K(x)\in\filt(\std)$, and hence $\std(v)=S(v)$. 
Thus $v\lhd^{\rm m}y$ holds for any arrow $v\rightarrow y$ in $Q_{B}$. 
By $[\unlhd|_{\Lambda_{B}}]=[\unlhd_{B}]$, we obtain $\Img\Phi \subseteq\qhs(A, B;\std)$. 
Similarly, we have $\Img\Phi =\qhs(A, B;\costd)$.
\end{proof}

Let us give an example before moving on.
The following observation is straightforward but useful.

\begin{lemma}\label{lem:cover-around-node}
Suppose $A\vee_v B$ is quasi-hereditary with respect to a minimal adapted order $\unlhd$.
For a covering relation $x\lhd^{\bullet} y$, if $x \in (Q_{A})_{0}$ and $y\in (Q_{B})_{0}$, then one of $x,y$ is $v$.
Similarly, the same holds for $x\in (Q_{B})_{0}$ and $y\in (Q_{A})_{0}$.
\end{lemma}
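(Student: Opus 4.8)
The claim is that when $A \vee_v B$ is quasi-hereditary with respect to a minimal adapted order $\unlhd$, a covering relation $x \lhd^\bullet y$ cannot connect a vertex of $Q_A$ strictly to a vertex of $Q_B$ unless one of the two endpoints is the node $v$. The plan is to argue by contradiction: suppose $x \in (Q_A)_0 \setminus \{v\}$ and $y \in (Q_B)_0 \setminus \{v\}$ with $x \lhd^\bullet y$ (the other orientation being symmetric). By Remark \ref{rmk:covering relation => there is path}, since $\unlhd$ is a minimal adapted order, the covering relation $x \lhd^\bullet y$ must be realised either by $x \lhd^{\dec} y$ or by $x \lhd^{\inc} y$; that is, either $[\std(y):S(x)] \neq 0$ or $[\costd(y):S(x)] \neq 0$. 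In the first case, $S(x)$ is a composition factor of $\std(y)$, which is a quotient of $P(y) = P^B(y)$; in the second, $S(x)$ is a composition factor of $\costd(y)$, a submodule of $I(y)$. Either way, $S(x)$ appears as a composition factor of a module that is supported on $(Q_B)_0$.

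The key structural fact to invoke is the description of $A\vee_v B$ as the matrix algebra with the middle $\Bbbk$-block at $v$ and zero in the corner, so that there are no paths through $v$: concretely, $e_x(A\vee_v B)e_y = 0$ whenever $x \in \Lambda_A \setminus \{v\}$ and $y \in \Lambda_B \setminus \{v\}$, and vice versa. This means that any indecomposable $A\vee_v B$-module whose support meets both $\Lambda_A \setminus \{v\}$ and $\Lambda_B \setminus \{v\}$ must have $v$ in its support, and in fact the only indecomposables straddling the node are built around $S(v)$. More precisely, for $y \in (Q_B)_0 \setminus \{v\}$, the projective $P(y) = P^B(y)$ is a $B$-module, so $P(y)e_x = 0$ for all $x \in \Lambda_A \setminus \{v\}$; hence every subquotient of $P(y)$, including $\std(y)$ and $\rad\std(y)$, has $[\,\cdot\,:S(x)] = 0$. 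This contradicts $[\std(y):S(x)] \neq 0$. Dually, $I(y) = I^B(y)$ is a $B$-module when $y \in (Q_B)_0 \setminus \{v\}$, so $\costd(y)$ cannot have $S(x)$ as a composition factor either, contradicting $[\costd(y):S(x)] \neq 0$. Either way we reach a contradiction, so one of $x, y$ must equal $v$.

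To be careful I should confirm that for $y \in \Lambda_B \setminus \{v\}$ the module $P(y)$ really is the $B$-module $P^B(y)$ (rather than something larger): this is recorded in the discussion preceding Lemma \ref{lem:std-AvB}, where it is noted that $P^B(i) = P(i)$ for all $i \in \Lambda_B$, and dually $I(y) = I^B(y)$ for $y \in \Lambda_B \setminus \{v\}$ follows from the matrix form (the injective at such a $y$ has no $A$-side contribution). The symmetric statement — for $x \in (Q_B)_0$, $y \in (Q_A)_0$ — is handled by the exact same argument with the roles of $A$ and $B$ interchanged, using $P^A(i) = P(i)$ for $i \in \Lambda_A \setminus \{v\}$ and $\costd^A(i) = \costd(i)$ for $i \in \Lambda_A$ from Lemma \ref{lem:std-AvB}. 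There is no real obstacle here; the only thing one needs to be mildly attentive to is that $\std(y)$ might be simple (i.e. $\std(y) = S(y)$), in which case $[\std(y):S(x)] \neq 0$ already forces $x = y$, contradicting $x \neq y$ — but this is subsumed by the general support argument, since $S(y)$ too is supported away from $\Lambda_A \setminus \{v\}$.

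\begin{proof}
By symmetry it suffices to treat the case $x \in (Q_A)_0$, $y \in (Q_B)_0$. Suppose, for contradiction, that $x \neq v \neq y$. Since $\unlhd$ is a minimal adapted order, Remark \ref{rmk:covering relation => there is path} shows that the covering relation $x \lhd^\bullet y$ is given by $x \lhd^{\dec} y$ or $x \lhd^{\inc} y$, i.e. $[\std(y):S(x)] \neq 0$ or $[\costd(y):S(x)] \neq 0$.

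As $y \in \Lambda_B \setminus \{v\}$, we have $P(y) = P^B(y)$, which is a $B$-module, so $P(y)e_x = 0$; hence every subquotient of $P(y)$, in particular $\std(y)$, satisfies $[\std(y):S(x)] = 0$. Dually, $I(y) = I^B(y)$ is a $B$-module, so $I(y)e_x = 0$, whence $[\costd(y):S(x)] = 0$ since $\costd(y)$ is a submodule of $I(y)$. Either way we contradict the conclusion of the previous paragraph. Therefore one of $x, y$ equals $v$. The case $x \in (Q_B)_0$, $y \in (Q_A)_0$ is identical after interchanging the roles of $A$ and $B$.
\end{proof}
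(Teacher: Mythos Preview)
Your proof is correct and takes essentially the same approach as the paper: both invoke Remark \ref{rmk:covering relation => there is path} to reduce the covering relation to $x\lhd^{\dec}y$ or $x\lhd^{\inc}y$, and then use the structure of the nodal gluing to see that no nonvanishing path (equivalently, no composition factor of $\std(y)$ or $\costd(y)$) can connect vertices on opposite sides of $v$. The paper's proof compresses this into two sentences by pointing directly to Remark \ref{rmk:covering relation => there is path}, while you spell out the support argument via $P(y)=P^B(y)$ and $I(y)=I^B(y)$; the content is the same.
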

\begin{proof}
Since $\unlhd$ is a minimal adapted order, $x\lhd^\bullet y$ implies that $x\lhd^\ast y$ for some $\ast\in\{\dec,\inc\}$.
Thus the claim follows from Remark \ref{rmk:covering relation => there is path}.
\end{proof}

Consider now the case when the node $v$ has only one incoming arrow $x\rightarrow v$ in $A$ and only one out-going arrow $v\rightarrow y$ in $B$.
By combining Lemma \ref{lem:cover-around-node} with Theorem \ref{thm:pro-qhs-node-gluing}, we have the following observation: for any $\unlhd \in \qhs(A\vee_{v}B)$, the covering relations involving $x,v, y$ can be described in one of the following three forms 
\begin{align}
{\rm(i) }\; x\lhd^{\bullet} v\lhd^{\bullet} y, \quad {\rm(ii) }\; x\rhd^{\bullet} v\lhd^{\bullet} y, \quad {\rm(iii) }\; x \rhd^{\bullet} v\rhd^{\bullet} y.\notag
\end{align}

\begin{example}\label{eg:qhsA_n^!}
Consider first the algebra $A:=\Bbbk(1\rightarrow 2)$. 
We have $\tilt A =\{A, \kD A\}$. The corresponding quasi-hereditary structures are given by $\unlhd_A = \sm{1\\\downarrow\\ 2}$ and $\unlhd_{DA} = \sm{2\\\downarrow\\1}$ as Hasse quivers.
Now consider $A_{3}^{!} = \Bbbk(1\rightarrow 2) \vee_{2} \Bbbk(2\rightarrow 3)$. 
Applying Theorems \ref{thm:bij-qh-IStilting} and \ref{thm:tilt=IStit=>gentle-LNaka} to Example \ref{eg:A_n^!}, we can write down all minimal adapted orders in $\qhs (A_3^!)$, namely,
\begin{align}
\sm{3\\\downarrow\\2\\\downarrow\\ 1}=(\sm{2\\\downarrow\\1})\vee(\sm{3\\\downarrow\\2}),\;\;
\sm{1 & & 3 \\ &\searrow\swarrow\\&2}=(\sm{1\\\downarrow\\2})\vee(\sm{3\\\downarrow\\2}),\;\;
\sm{1\\\downarrow\\2\\\downarrow\\ 3}=(\sm{1\\\downarrow\\2})\vee(\sm{2\\\downarrow\\3}).\notag
\end{align}
More generally, we have $A_{n+1}^{!} = A_{n}^{!}\vee_{n} \Bbbk(n\rightarrow n+1)$ for all $n\ge 2$, and so applying nodal gluing inductively, we get that $\qhs(A_{n+1}^{!}) = \{ [\unlhd^{(i)}] \mid 1\le i\le n+1\}$, where
\begin{align} 
\unlhd^{(i)}:=\sm{1 & & n+1 \\ \downarrow & & \downarrow \\ 2 & & n \\ \vdots & & \vdots \\  i-1 & & i+1 \\ &\searrow\;\swarrow\\&i} \text{ for any } 1\le i\le n+1.\notag
\end{align}
Note that $T_{\unlhd^{(i)}}$ is the tilting $A_{n+1}^{!}$-module containing $S(i)$ as a direct summand, and we have 
\begin{align}\label{eq:chtilt-An!}
T_{\unlhd^{(i)}}(j) = 
\begin{cases}
P(j), & \text{if $j<i$};\\
S(i), & \text{if $j=i$};
\\P(j-1), & \text{if $j>i$}.
\end{cases}
\end{align}
\end{example}

\subsection{Application: Recurrence relation on the number of quasi-hereditary structures}

Let $B$ be a bound quiver algebra with a sink $1\in Q_{0}$ in the Gabriel quiver $Q$ of $B$.
For each $k\geq 0$, let $A_{k+1}^{!}$ be the radical square zero Nakayama algebra given by the following quiver with relations 
\begin{align}
A^{!}_{k+1}=\Bbbk( 1\xrightarrow{\alpha_{1}} 2 \xrightarrow{\alpha_{2}} \cdots \xrightarrow{\alpha_{k}} k+1 ) /\langle \alpha_i\alpha_{i+1} \mid 1 \leq i < k \rangle. \notag
\end{align}
Moreover, for $m\ge 0$, let $A_{m+1}$ be the path algebra given as follows
\begin{align}
A_{m+1}=\Bbbk( k+1\rightarrow k+2 \rightarrow \cdots \rightarrow k+m+1). \notag
\end{align}
Note that any quadratic linear Nakayama algebra is isomorphic to $B\vee_{1} A_{k+1}^{!}\vee_{k+1}A_{m+1}$ for some $B$, $k$ and $m$. As an application of Theorem \ref{thm:pro-qhs-node-gluing}, we give a recurrence relation on the number of quasi-hereditary structures of this algebra.

\begin{proposition}\label{prop:recurrence-formula-qhs}
Under the notation as above, let $\calN:=\{[\unlhd]\in\qhs B \mid 1 \lhd^{\rm m} x \text{ for all }(x \rightarrow 1) \in Q_{1}\}$ and $N:=|\calN|$.
Then we have
\begin{align}
|\qhs(B\vee_{1} A_{k+1}^{!} \vee_{k +1} A_{m+1})| = |\qhs B |C_{m} + N(C_{m+1} + (k-1)C_{m}). \notag
\end{align}
Moreover, if $\idim_{B}S(1)\leq 1$, then we have $\calN\cong \qhs((1-e_1)B(1-e_1))$.
\end{proposition}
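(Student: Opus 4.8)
The plan is to apply the nodal-gluing description of Theorem \ref{thm:pro-qhs-node-gluing} to the factorisation $C:=B\vee_{1}A^{!}_{k+1}\vee_{k+1}A_{m+1}=B\vee_{1}D$, where $D:=A^{!}_{k+1}\vee_{k+1}A_{m+1}$ is a quadratic linear Nakayama algebra on $n:=k+m+1$ vertices $1\to 2\to\cdots\to n$, with source $1$, sink $n$, and $\relvx(D)=\{2,\dots,k+1\}$. First I would unwind condition $(\std)$ of Definition \ref{def:gluing conditions} for a pair $([\unlhd_{B}],[\unlhd_{D}])$: the unique arrow out of the source $1$ in $Q_{D}$ being $1\to 2$, it reads ``if there is an arrow $x\to 1$ in $Q_{B}$ with $x\lhd_{B}^{\mathrm{m}}1$, then $1\lhd_{D}^{\mathrm{m}}2$''. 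Since $1$ is a sink of $B$ we have $\std_{B}(1)=S(1)$, and a short argument using Lemma \ref{lem:arrow-comparable} (every arrow-predecessor of $1$ is comparable with $1$) together with the shape of $I_{B}(1)$ shows $\calN=\{[\unlhd_{B}]\in\qhs B\mid 1\in\min\unlhd_{B}^{\mathrm{m}}\}$, and that $[\unlhd_{B}]\in\calN$ holds exactly when the hypothesis of $(\std)$ fails. Hence Theorem \ref{thm:pro-qhs-node-gluing} identifies $\qhs C$ with $(\calN\times\qhs D)\sqcup((\qhs B\setminus\calN)\times\calN_{D})$, where $\calN_{D}:=\{[\unlhd_{D}]\in\qhs D\mid 1\lhd_{D}^{\mathrm{m}}2\}$, so that $|\qhs C|=N\,|\qhs D|+(|\qhs B|-N)\,|\calN_{D}|$.

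It remains to evaluate the two cardinalities on the right. Since $D$ is a quadratic linear Nakayama algebra, $\qhs D\cong\tilt D$ by Theorems \ref{thm:tilt=IStit=>gentle-LNaka} and \ref{thm:bij-qh-IStilting}; moreover $D$ is, over the base algebra $\Bbbk$, the algebra $A(k-1,m)$ of Corollary \ref{cor:count-by-tilting} (and $D=A^{!}_{k+1}$ when $m=0$), so that corollary — together with Example \ref{eg:A_n^!} and Gabriel's theorem for the boundary values $k=0$ or $m=0$ — gives $|\qhs D|=|\tilt D|=C_{m+1}+kC_{m}$. For $\calN_{D}$, arguing as for $B$ but now with $\std_{D}(1)=S(1)$ (vertex $1$ is the source of $D$) shows $\calN_{D}=\{[\unlhd_{D}]\mid 1\in\min\unlhd_{D}^{\mathrm{m}}\}$. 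As $1$ is the source of $D$ it is the sink of $D^{\op}$, and $\qhs D=\qhs D^{\op}$; after relabelling vertices we may regard $D^{\op}$ as a quadratic linear Nakayama algebra $E$ whose sink $n$ corresponds to vertex $1$ of $D$, with $\relvx(E)=\{m+1,\dots,m+k\}$ (and $E=\Bbbk\Vec{\bbA}_{m+1}$ when $k=0$). By Corollary \ref{prop:istilt-qhs}(1) we get $\calN_{D}\cong\calT_{n}^{E}$, and since the largest index $\ell$ with $P_{E}(\ell)$ injective equals $n-1\in\relvx(E)$, Theorem \ref{thm:glue-bij-n}(a) identifies $\calT_{n}^{E}$ with $\tilt_{S(n-1)}(E')$, where $E'$ is $E$ with its sink deleted — again a quadratic linear Nakayama algebra, with $\relvx$ of size $k-1$ and sink $n-1$. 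Iterating $k$ times reaches the path algebra $\Bbbk\Vec{\bbA}_{m+1}$, where Theorem \ref{thm:glue-bij-n}(b) gives $\calT_{m+1}^{\Bbbk\Vec{\bbA}_{m+1}}\cong\tilt(\Bbbk\Vec{\bbA}_{m})$, whence $|\calN_{D}|=C_{m}$ by Gabriel's theorem. Substituting, $|\qhs C|=N(C_{m+1}+kC_{m})+(|\qhs B|-N)C_{m}=|\qhs B|\,C_{m}+N(C_{m+1}+(k-1)C_{m})$. Finally, for the ``moreover'' statement: $\calN=\{[\unlhd]\in\qhs B\mid 1\in\min\unlhd^{\mathrm{m}}\}$ as just shown, and since $1$ is a sink of $B$ we have $\pdim_{B}S(1)=0$; so if $\idim_{B}S(1)\le 1$ then Corollary \ref{cor:application_bijection-5.1}(2), applied at $i=1$, yields $\calN\cong\qhs\bigl((1-e_{1})B(1-e_{1})\bigr)$.

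The step I expect to be the main obstacle is the evaluation $|\calN_{D}|=C_{m}$: translating the node-adjacency condition ``$1\lhd_{D}^{\mathrm{m}}2$'' into ``$1\in\min\unlhd_{D}^{\mathrm{m}}$'', passing to $D^{\op}$ with the correct relabelling, and verifying that the iteration stays inside case (a) of Theorem \ref{thm:glue-bij-n} at each stage — this needs careful bookkeeping of how $\relvx$ and the index $\ell$ change when the sink is deleted. The degenerate cases $k=0$ and $m=0$ (where $D$ or the intermediate algebras collapse to path algebras, or to $\Bbbk$), and the precise matching of $D$ with the algebra $A(k-1,m)$ in the indexing of Corollary \ref{cor:count-by-tilting}, also require separate attention.
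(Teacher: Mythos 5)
Your proposal is correct, and it reaches the formula by a genuinely different decomposition from the paper's. The paper splits $\qhs(B\vee_{1}A^{!}_{k+1}\vee_{k+1}A_{m+1})$ according to whether $k+1\lhd^{\rm m}k+2$ or $k+2\lhd^{\rm m}k+1$, applying the gluing theorem (Theorem \ref{thm:pro-qhs-node-gluing}) twice — once at the node $k+1$ and once at the node $1$ — and feeding in the elementary counts of Lemma \ref{lem:B2B3-number} for $\qhs(A^{!}_{k+1})$ and $\qhs(A_{m+1})$; along the way it establishes $|\qhs(B\vee_{1}A^{!}_{k+1})|=|\qhs B|+Nk$ and treats $k=0$ and $k>0$ separately. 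You instead glue only once, at the vertex $1$, against the whole quadratic linear Nakayama tail $D=A^{!}_{k+1}\vee_{k+1}A_{m+1}$, and your unwinding of condition $(\std)$ via Lemma \ref{lem:arrow-comparable} correctly gives $|\qhs C|=N|\qhs D|+(|\qhs B|-N)|\calN_{D}|$; you then import the two numbers from the tilting side: $|\qhs D|=|\tilt D|=C_{m+1}+kC_{m}$ from Corollary \ref{cor:count-by-tilting} (with $D\cong A(k-1,m)$ over the base $\Bbbk$, which I checked is the right indexing), and $|\calN_{D}|=C_{m}$ via $\calN_{D}=\{[\unlhd_{D}]\mid 1\in\min\unlhd_{D}^{\rm m}\}$, passage to $D^{\op}$, Corollary \ref{prop:istilt-qhs}(1), and $k$ iterations of Theorem \ref{thm:glue-bij-n}(a) followed by one use of (b); your bookkeeping of $\relvx$ under sink-deletion is correct. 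What your route buys is that the arithmetic is absorbed into the already-proved tilting recursion of Section \ref{sec:tilt(qLNaka)}, avoiding the case split $k=0$ versus $k>0$ and the explicit analysis of admissible pairs; what it costs is the extra care you yourself flag: the compatibility of minimal adapted orders with taking opposite algebras (this is exactly what Remark \ref{rem:eAe-reduction-on-cotilt} uses, and rests on equivalent quasi-hereditary orders having the same costandard modules, which you should state explicitly), the boundary cases $m=0$ (where Corollary \ref{cor:count-by-tilting} needs $m\geq 1$, so one falls back on Example \ref{eg:A_n^!}) and $k=0$, and the check that each intermediate algebra in your iteration is non-semisimple so that Theorem \ref{thm:glue-bij-n} applies. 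The ``moreover'' statement is handled exactly as in the paper, via $\calN=\{[\unlhd]\mid 1\in\min\unlhd^{\rm m}\}$ and Corollary \ref{cor:application_bijection-5.1}(2).
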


We need the following lemma before proving the proposition.

\begin{lemma}\label{lem:B2B3-number}
Under the notation as above, we have the following equations.
\begin{itemize}
\item[(1)] $|\{[\unlhd] \in \qhs(A_{k+1}^{!}) \mid 1 \lhd^{\rm m} 2 \}|=1$ and $|\{[\unlhd] \in \qhs(A_{k+1}^!) \mid 2 \lhd^{\rm m} 1 \}|=k$.
\item[(2)] $|\qhs(A_{m+1})|=C_{m+1}$ and $|\{ [\unlhd] \in \qhs(A_{m+1}) \mid k+1 \lhd^{\rm m} k+2 \}| = C_{m}$.
\end{itemize}
\end{lemma}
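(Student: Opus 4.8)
The plan is to establish Lemma~\ref{lem:B2B3-number} by directly translating the question to counting tilting modules via the main correspondence of Theorem~\ref{thm:bij-qh-IStilting}, and then reading off the answers from the explicit descriptions already available in the paper.

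First I would treat part (1). Here $A_{k+1}^{!}$ is the radical square zero linear Nakayama algebra of Example~\ref{eg:A_n^!} (with $n=k+1$), so by Theorem~\ref{thm:tilt=IStit=>gentle-LNaka} all tilting modules are IS-tilting, and Theorem~\ref{thm:bij-qh-IStilting} gives a bijection $\qhs(A_{k+1}^{!})\cong \tilt(A_{k+1}^{!})$. Moreover, the minimal adapted orders were worked out explicitly in Example~\ref{eg:qhsA_n^!}: they are the orders $\unlhd^{(i)}$ for $1\le i\le k+1$, with $T_{\unlhd^{(i)}}$ the unique tilting module containing $S(i)$ as a summand. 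So I need to decide, for each $i$, whether $1\lhd^{\rm m}2$ or $2\lhd^{\rm m}1$ holds in $\unlhd^{(i)}$. Looking at the shape of $\unlhd^{(i)}$ in Example~\ref{eg:qhsA_n^!}: for $i=1$ the Hasse quiver has $1$ at the top with $1\lhd^{\bullet}2\lhd^{\bullet}\cdots$, so $1\lhd^{\rm m}2$; for $i\ge 2$, the left branch reads $1\lhd^{\bullet}2\lhd^{\bullet}\cdots\lhd^{\bullet}(i-1)$ going \emph{down}, wait — I must be careful with the convention, but in all cases $i\ge2$ we have $2\unlhd i$ along one branch and then $2$ sits above $1$ is false; rather $2\rhd 1$ fails and instead $2\lhd^{\rm m}1$ holds precisely because the node $i$ is a common minimum and $2$ lies on the branch above $i$ together with $1$. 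Concretely: $1\lhd^{\rm m}2$ holds only for $\unlhd^{(1)}$, giving the count $1$; and $2\lhd^{\rm m}1$ holds for $\unlhd^{(i)}$ with $2\le i\le k+1$, giving the count $k$. (The two sets are disjoint and exhaust $\qhs(A_{k+1}^{!})$ because $1,2$ are joined by an arrow, hence comparable by Lemma~\ref{lem:arrow-comparable}.) I would phrase this cleanly using the explicit summand formula \eqref{eq:chtilt-An!}: $[\costd(1):S(2)]\ne0$ iff $i\ge2$, which by the definition of $\unlhd^{\inc}$ (and $1$ being a sink, so $\costd(2)$ involves $S(1)$ only when $\costd(2)\supsetneq S(2)$) pins down the direction.

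Next, part (2). The algebra $A_{m+1}=\Bbbk\vec{\bbA}_{m+1}$ is the path algebra of the linearly oriented $\vec{\bbA}_{m+1}$, so by Proposition~\ref{prop:FKR-thm4.7} we have $\qhs(A_{m+1})\cong\tilt(A_{m+1})$, and Gabriel's theorem (recalled in the introduction) says $|\tilt(A_{m+1})|=C_{m+1}$, the $(m+1)$-st Catalan number. For the second equation, I want to count quasi-hereditary structures with $k{+}1\lhd^{\rm m}k{+}2$, i.e.\ (relabelling the vertices of $\vec{\bbA}_{m+1}$ as $1\to2\to\cdots\to m+1$ with $1\leftrightarrow k+1$ the source) those $[\unlhd]$ with the source vertex $s$ satisfying $s\lhd^{\rm m}(s{+}1)$. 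Since $s$ is a source, $\pdim S(s)=1$ and $\idim S(s)=0$, and $s\lhd^{\rm m}s'$ for $s'$ the unique successor happens exactly when $\std(s)$ has $S(s{+}1)$ as composition factor, i.e.\ when $\std(s)\supsetneq S(s)$, i.e.\ $\std(s)=P(s)$. By Proposition~\ref{prop:qln-costd}(3) applied to $A^{\op}$ (or directly: this is the decomposition $\tilt A=\bigsqcup\tilt_{P(i)\oplus P(i)/P(n)}A$ mentioned in the introduction after Proposition~\ref{prop:FKR-thm4.7}), the tilting modules $T$ with $\std(s)=P(s)$ as a summand are exactly those in $\tilt_{P(s)}(A_{m+1})$, and by Theorem~\ref{thm:glue-bij-n}(b) — or more simply by the classical fact that $\tilt_{P(1)}(\Bbbk\vec{\bbA}_{m+1})\cong\tilt(\Bbbk\vec{\bbA}_{m})$ — this set has cardinality $C_{m}$.

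I expect the main obstacle to be purely bookkeeping: getting the orientation conventions consistent between Example~\ref{eg:qhsA_n^!}, the statement $1\lhd^{\rm m}2$ versus $2\lhd^{\rm m}1$, and the labelling of $A_{m+1}$'s vertices as $k{+}1,\dots,k{+}m{+}1$, so that "the source satisfies $s\lhd^{\rm m}(s{+}1)$" is correctly identified with "$\std(\text{source})=P(\text{source})$" and hence with membership in $\tilt_{P(\text{source})}$. Once the conventions are nailed down, each of the four counts is an immediate citation: two invocations of Theorem~\ref{thm:bij-qh-IStilting} plus the explicit lists in Examples~\ref{eg:A_n^!} and~\ref{eg:qhsA_n^!} for part (1), and Proposition~\ref{prop:FKR-thm4.7} together with Gabriel's Catalan count and the $\tilt_{P(1)}$-decomposition for part (2). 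No genuinely hard step is involved; the lemma is a packaging of facts already proved.
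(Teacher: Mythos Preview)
Your treatment of part (1) is essentially the paper's own argument: both read off the answer directly from the explicit list $\{[\unlhd^{(i)}]\mid 1\le i\le k+1\}$ in Example~\ref{eg:qhsA_n^!}. Your exposition wobbles in the middle, but the conclusion is right.

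Part (2), however, contains a genuine error. You write that ``$s\lhd^{\rm m}s'$ for $s'$ the unique successor happens exactly when $\std(s)$ has $S(s{+}1)$ as composition factor, i.e.\ when $\std(s)\supsetneq S(s)$, i.e.\ $\std(s)=P(s)$.'' This is backwards. By the definition of $\lhd^{\dec}$, having $[\std(s):S(s{+}1)]\neq 0$ means $s{+}1\lhd^{\dec} s$, hence $s{+}1\lhd^{\rm m} s$, \emph{not} $s\lhd^{\rm m} s{+}1$. The correct characterisation (and the one the paper uses) is that $k{+}1\lhd^{\rm m} k{+}2$ forces $\std(k{+}1)=S(k{+}1)$; combined with $\costd(k{+}1)=S(k{+}1)$ (since $k{+}1$ is a source), this says $k{+}1\in\min\unlhd^{\rm m}$. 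The paper then invokes Remark~\ref{rem:eAe-reduction-on-cotilt} to identify this set with $\qhs((1-e_{k+1})A_{m+1}(1-e_{k+1}))=\qhs(A_m)$, which has $C_m$ elements.

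Your fallback ``$\tilt_{P(1)}(\Bbbk\vec{\bbA}_{m+1})\cong\tilt(\Bbbk\vec{\bbA}_{m})$'' is also wrong as stated: $P(1)$ is projective-injective in $\Bbbk\vec{\bbA}_{m+1}$, so $\tilt_{P(1)}(\Bbbk\vec{\bbA}_{m+1})$ is all of $\tilt(\Bbbk\vec{\bbA}_{m+1})$, of size $C_{m+1}$. What you presumably meant is the set of tilting modules with $S(k{+}1)$ as a summand --- and indeed that is what the minimality condition $k{+}1\in\min\unlhd^{\rm m}$ translates to on the IS-tilting side. Once you correct the direction, your overall strategy (translate via Theorem~\ref{thm:bij-qh-IStilting} and reduce to a smaller algebra) aligns with the paper's.
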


\begin{proof}
(1) By Example \ref{eg:qhsA_n^!}, $\qhs(A_{k+1}^{!})$ is given by the $k+1$ elements $[\unlhd^{(i)}]$ for $1\le i\le k+1$.  The claim follows immediately.

(2) It was shown in \cite[Theorem 4.7]{FKR22} that there exists a bijection between $\qhs(A_{m+1})$ and binary trees with $m+1$ nodes, which implies that $|\qhs(A_{m+1})|=C_{m+1}$.
We may assume that $\unlhd = \unlhd^{\rm m}$.
If $k+1\lhd k+2$, then we have $\std(k+1)=S(k+1)$ and $K(k+1)\cong P(k+2)$.
This implies that $\pdim S(k+1)\leq 1$. Since $\costd(k+1)=S(k+1)$, we have $k+1\in \min \unlhd$, which means that we can apply Remark \ref{rem:eAe-reduction-on-cotilt} to get that
\begin{align*}
\{[\unlhd] \in \qhs(A_{m+1}) \mid k+1 \lhd k+2 \} &=  \{[\unlhd] \in \qhs(A_{m+1}) \mid k+1 \in \min \unlhd \} \\
&= \qhs( (1-e_{k+1}) A_{m+1} (1-e_{k+1}) ) = \qhs (A_{m}).
\end{align*}
The claim now follows as $|\qhs(A_{m})|=C_{m}$.
\end{proof}

\begin{proof}[Proof of Proposition \ref{prop:recurrence-formula-qhs}]
In this proof, let $B_1:=B$, $B_2:=A_{k+1}^!$, and $B_3:=A_{m+1}$. 
Let $\Lambda_i$ be the corresponding indexing set of simple modules for all $1\le i\le 3$.
Let $A:=B_{1}\vee_{1} B_{2} \vee_{k+1} B_{3}$.
By Lemma \ref{lem:arrow-comparable}, we have a disjoint union
\begin{align}\label{eq:qhsA-sqcup-empty}
\qhs A = \{[\unlhd] \in \qhs A  \mid k +1 \lhd^{\rm m} k+2\} \sqcup \{[\unlhd] \in \qhs A  \mid k+1 \rhd^{\rm m} k+2\}.
\end{align}
We calculate the number of elements of the two sets on the right-hand side of this equation.
By Theorem \ref{thm:pro-qhs-node-gluing}, restricting to $\Lambda_{12}:=\Lambda_{1}\cup \Lambda_{2}$ and $\Lambda_{3}$ defines a bijection
\begin{align}\label{eq:qhs-A-Phi}
\Phi : \qhs A  \longrightarrow \qhs(B_{1}\vee_{1}B_{2}, B_{3}; \std).
\end{align}
This $\Phi$ restricts to the following bijection
\begin{align}
\Phi^{\lhd}: \{[\unlhd] \in \qhs A \mid k +1 \lhd^{\rm m} k+2\} \longrightarrow &\{([\unlhd_{12}], [\unlhd_3])\} \mid k+1 \lhd_{3}^{\rm m} k +2 \}\notag\\
&=\qhs(B_{1}\vee_{1}B_{2}) \times \{[\unlhd_{3}] \mid k+1 \lhd_{3}^{\rm m} k +2\}\notag
\end{align}
By Lemma \ref{lem:B2B3-number}(2), we have $|\{[\unlhd_{3}] \mid k+1 \lhd_{3}^{\rm m} k +2\}|=C_{m}$.

Assume that $k=0$.
We have $B_{1}\vee_{1} B_{2} = B_{1}$ and $A=B_{1}\vee_{1} B_{3}$.
By the bijection $\Phi^{\lhd}$ and Lemma \ref{lem:B2B3-number}(2), we have
\begin{align}\label{eq:A12k=0}
|\{[\unlhd] \in \qhs A \mid 1 \lhd^{\rm m} 2\}| = |\qhs(B_{1})| C_{m}.
\end{align}
The bijection $\Phi : \qhs A \rightarrow \qhs(B_{1}, B_{3}; \std)$ is restricted to the following bijection
\begin{align}
\{[\unlhd] \in \qhs A \mid 1 \rhd^{\rm m} 2\} \longrightarrow & \{([\unlhd_{1}], [\unlhd_3]) \mid 1 \lhd_{1}^{\rm m} x \text{ for all }(x \rightarrow 1) \in Q_{1}, \, \text{and} \, 1\rhd_{3}^{\rm m} 2\} \notag \\ 
& = \calN \times \{[\unlhd_{3}] \mid 1\rhd_{3}^{\rm m} 2\}. \notag
\end{align}
By Lemma \ref{lem:B2B3-number}(2), we have 
\begin{align}\label{eq:A21k=0}
|\{[\unlhd] \in \qhs A \mid 1 \rhd^{\rm m} 2\}| = N (C_{m+1}-C_{m}).
\end{align}
Therefore, by combining \eqref{eq:qhsA-sqcup-empty}, \eqref{eq:A12k=0} and \eqref{eq:A21k=0}, we have the desired equality.

Assume that $k>0$.
Applying Theorem \ref{thm:pro-qhs-node-gluing} to $\qhs(B_{1}\vee_{1}B_{2})$ yields a bijection 
\begin{align}\label{eq:qhs-B1B2-Phi}
\qhs(B_{1}\vee_{1}B_{2}) \longrightarrow \qhs(B_{1},B_{2};\std).
\end{align}
By Lemma \ref{lem:arrow-comparable}, the vertices $1$ and $2$ must be comparable under $\unlhd_{2}$; this implies that the right-hand set in \eqref{eq:qhs-B1B2-Phi} can be rewritten as 
\begin{align}
& \left\{([\unlhd_{1}], [\unlhd_{2}]) \mid 1 \lhd_{2}^{\rm m} 2\right\}  \sqcup \left\{([\unlhd_{1}], [\unlhd_{2}]) \mid 1\lhd_{1}^{\rm m} x\text{ for all } (x\rightarrow 1)\in Q_{1},\text{ and }1 \rhd_{2}^{\rm m} 2\right\} \notag \\
&=  \left( \qhs(B_{1}) \times \{[\unlhd_{2}] \mid 1 \lhd_{2}^{\rm m} 2 \} \right) \sqcup \left( \calN \times \{[\unlhd_{2}] \mid 1 \rhd_{2}^{\rm m} 2\} \right). \notag
\end{align}
Hence, applying Lemma \ref{lem:B2B3-number}(1) we get that
\begin{align}
|\qhs(B_{1}\vee_{1}B_{2})| = |\qhs(B_{1})| + Nk. \notag
\end{align}
Combining these with the bijection $\Phi^{\lhd}$ yields
\begin{align}\label{eq:qhsA-sqcup-left}
|\{[\unlhd] \in \qhs A \mid k +1 \lhd^{\rm m} k+2\}| = (|\qhs(B_{1})| + Nk)C_{m}.
\end{align}

On the other hand, the bijection \eqref{eq:qhs-A-Phi} restricts to the another bijection:
\begin{align}
\Phi^{\rhd}: \{[\unlhd] \in \qhs A \mid k+1 \rhd^{\rm m} k+2\} \longrightarrow &
\{([\unlhd_{12}], [\unlhd_3]) \mid k \rhd_{12}^{\rm m} k+1\text{ and }k+1 \rhd_{3}^{\rm m} k+2\}\notag \\ 
& = \{[\unlhd_{12}] \mid k \rhd_{12}^{\rm m} k+1 \} \times \{[\unlhd_3] \mid k+1 \rhd_{3}^{\rm m} k+2\}.\notag
\end{align}
From Example \ref{eg:qhsA_n^!}, we have $
\{[\unlhd_2] \mid k \rhd_{2}^{\rm m} k +1\} = \{ [\unlhd^{(k+1)}] \}$.
This implies that the bijection \eqref{eq:qhs-B1B2-Phi} restricts to the following bijection
\begin{align}
\{[\unlhd_{12}] \mid k \rhd_{12}^{\rm m} k+1 \} \longrightarrow & \{([\unlhd_1], [\unlhd_2]) \mid \mbox{ $1 \lhd_{1}^{\rm m} x$ for all $(x\to1)\in Q_{1}$ and $k \rhd_{2}^{\rm m} k+1$ }\}\notag \\
&= \calN \times \{ [\unlhd^{(k+1)}] \in\qhs(A_{k+1}^!) \}.\notag
\end{align}
Recall also from Lemma \ref{lem:B2B3-number}(2) that $|\{[\unlhd_3] \mid k+1 \rhd_{3}^{\rm m} k+2\}|=C_{m+1}-C_{m}$.
Now we can apply the above calculations to $\Phi^\rhd$ and get that
\begin{align}\label{eq:qhsA-sqcup-right}
|\{[\unlhd] \in \qhs A \mid k +1 \rhd^{\rm m} k+2\}| = &  N \times |\{ [\unlhd_2] \mid k \rhd_{2}^{\rm m} k+1 \}| \times |\{[\unlhd_3] \mid k+1 \rhd_{3}^{\rm m} k+2\}| \notag \\
= &  N(C_{m+1}-C_m).
\end{align}
Finally, by \eqref{eq:qhsA-sqcup-empty}, \eqref{eq:qhsA-sqcup-left} and \eqref{eq:qhsA-sqcup-right}, we have
\begin{align}
|\qhs A|=|\qhs(B_{1})|C_{m} + N\left(C_{m+1} + (k-1)C_{m} \right). \notag
\end{align}

It remains to show that $\calN\cong\qhs((1-e_1)B_{1}(1-e_{1}))$.
We claim that
\begin{align}
\calN:=\{[\unlhd] \in \qhs(B_{1}) \mid \mbox{ $1 \lhd^{\rm m} x$ for all $(x\rightarrow 1)\in Q_{1}$}\} = \{[\unlhd] \in \qhs(B_{1}) \mid 1 \in \min\unlhd^{\rm m} \}.\notag
\end{align}
Indeed, by Lemma \ref{lem:arrow-comparable}, the left-hand side contains the right-hand side.
Conversely, let $[\unlhd]$ be an element in the left-hand set and assume that, without loss of generality, $\unlhd=\unlhd^{\rm m}$.
Then we have $\std(1)=S(1)=\costd(1)$ for the (co)standard modules associated to $\unlhd$.
Hence, $\unlhd$ being minimal adapted order implies that $1\in \min \unlhd$.
Since $\idim_{B} S(1)\leq 1$, it follows from Corollary \ref{cor:application_bijection-5.1}(2) that $\calN \cong \qhs((1-e_{1})B(1-e_{1}))$.
This finishes the proof.
\end{proof}

\subsection{Application: Quasi-hereditary structures of quadratic linear Nakayama algebras}

Let $A$ be a quadratic linear Nakayama algebra.
Throughout this subsection, to simplify exposition, we assume that every partial order $\unlhd$ appearing in the expression `$[\unlhd]\in\qhs A$' is a minimal adapted order $\unlhd=\unlhd^{\rm m}$.
By Theorems \ref{thm:bij-qh-IStilting} and \ref{thm:tilt=IStit=>gentle-LNaka}, we have a one-to-one correspondence
\begin{align}
\tilt A  \longleftrightarrow \qhs A \quad \text{ given by } T\mapsto \unlhd_{T} \text{ with inverse } \unlhd\mapsto T_{\unlhd}. \notag
\end{align}
We give a more explicit description of this bijection and, in particular, the quasi-hereditary structures (minimal adapted orders) of $A$ in terms of gluing of binary trees by combining \cite[Theorem 4.7]{FKR22} and Theorem \ref{thm:pro-qhs-node-gluing}.

We will use Theorem \ref{thm:pro-qhs-node-gluing} with the above algorithm and Example \ref{eg:qhsA_n^!} to describe all quasi-hereditary structures of a quadratic linear Nakayama algebra $A$.
Take $n_0:=1, n_{r+1}:= |A|$, and the minimal subset of the relational vertices $\{ n_1 < n_2 < \cdots < n_r\} \subset \relvx(A)$ so that 
\begin{align}\label{eq:nodal decompose A}
A \cong B_1 \vee_{n_1} B_2 \vee_{n_2} \cdots  \vee_{n_r} B_{r+1} 
\end{align}
with $B_i\cong A_{n_i-n_{i-1}+1}^*$ for $*\in\{\emptyset,!\}$.
Note that the minimality condition means that $A_{m}^{!} \vee_{v} A_{n}^{!}$ does not appear anywhere in the decomposition above.

From the previous subsections, we knew that each $[\unlhd]\in \qhs A$ can be obtained by gluing the quasi-hereditary structures of $B_i$'s at each node.
Recall from Example \ref{eg:qhsA_n^!} that $\qhs(A_{m}^{!}) = \{ [\unlhd^{(i)}]\mid 1\le i\le m\}$, where $\unlhd^{(i)}$ denotes a `V-shaped partial order' with a unique minimum $i$.
To give an explicit description of quasi-hereditary structures of $A$, we consider the following sequences of partial orders.

\begin{definition}
In the setup of \eqref{eq:nodal decompose A}, a sequence $(\unlhd_i)_{1\le i\le r+1}$ of partial orders is \defn{$A$-admissible} if the following conditions are satisfied.
\begin{itemize}
\item[(A0)] $[\unlhd_i] \in \qhs (B_{i})$ for all $1\le i\le r+1$ and $\unlhd_{i} =\unlhd_{i}^{\rm m}$.
\item[(A1)] If $B_i \cong A_{n_i-n_{i-1}+1}^!$, then (exactly) one of the following is satisfied; see Figure \ref{fig:adm seq} for their pictorial form.
\begin{align}
{\rm (i) }\begin{cases}
n_{i-1}-1 \lhd_{i-1} n_{i-1},\\
\unlhd_{i} = \unlhd^{(n_{i-1})},\\
n_i \lhd_{i+1} n_i+1;
\end{cases} \;\; {\rm (ii) }\begin{cases}
n_{i-1}-1 \rhd_{i-1} n_{i-1},\\
\unlhd_{i} = \unlhd^{(j)}\text{ some }n_{i-1}\le j \le n_i,\\
n_i \lhd_{i+1} n_i+1;
\end{cases}\;\; {\rm (iii) }\begin{cases}
n_{i-1}-1 \rhd_{i-1} n_{i-1},\\
\unlhd_{i} = \unlhd^{(n_i)},\\
n_i \rhd_{i+1} n_i+1.
\end{cases}\notag
\end{align}
\item[(A2)] If $B_{i-1}\cong A_{n_{i-1}-n_{i-2}+1}$ and $B_i\cong A_{n_i-n_{i-1}+1}$, then $n_{i-1}-1 \lhd_{i-1} n_{i-1}$ implies that $ n_{i-1}\lhd_i n_{i-1}+1$.
\end{itemize}
\end{definition}

\begin{figure}[!htbp]
\centering
\begin{tikzpicture}[scale=.7]
\tikzset{every node/.style={fill=white,font=\small}}

\begin{scope}[shift={(0,-0.7071)}]
\draw[rotate=45]  (0,0) ellipse (1.6 and 1.8);
\end{scope}
\begin{scope}[shift={(3.9,3.2929)}]
\draw  (0,0) ellipse (1.4 and 1.6);
\end{scope}
\node at (-0.4,0.8929) {$\unlhd_{i-1}$};\node at (3.8,4.8) {$\unlhd_{i+1}$};

\node (v4) at (0,-1.5071) {$n_{i-1}-1$};\node (v3) at (1,0.2929) {$n_{i-1}$};
\node (v2) at (3,2.2929) {$n_i$};\node (v1) at (4,3.7929) {$n_i+1$};
\node[inner sep=0] (v5) at (2.1,1.3929) {};\node[inner sep=0] (v6) at (1.8,1.0929) {};

\draw  (v1) edge[->,decorate,decoration=snake] (v2);\draw  (v3) edge[->,decorate,decoration=snake] (v4);\draw  (v2) edge[->] (v5);\draw  (v6) edge[->] (v3);\draw  (v5) edge[dotted] (v6);

\begin{scope}[shift={(8,2.9)}]
\draw[rotate=-45]  (0,0) ellipse (1.8 and 1.4);
\end{scope}
\begin{scope}[shift={(11.9,5.5)}]
\draw[rotate=45]  (-1.5,-1.5) ellipse (1.6 and 1.4);
\end{scope}

\node (v7) at (7.6,3.5) {$n_{i-1}-1$};
\node (v8) at (9,1.5) {$n_{i-1}$};
\node (v9) at (9,-0.5) {$j-1$};
\node (v10) at (10,-1.5) {$j$};
\node (v11) at (11,-0.5) {$j+1$};
\node (v12) at (11,1) {$n_i-1$};
\node (v13) at (11,2) {$n_i$};
\node (v14) at (12.3,4) {$n_i+1$};
\draw  (v7) edge[->,decorate,decoration=snake] (v8);
\draw  (v9) edge[->] (v10);
\draw  (v11) edge[->] (v10);
\draw  (v13) edge[->] (v12);
\draw  (v14) edge[->,decorate,decoration=snake] (v13);
\node (v21) at (9,0.5) {};
\draw  (v8) edge[->] (v21);
\draw  (v21) edge[dotted] (v9);
\draw  (v12) edge[dotted] (v11);

\begin{scope}[shift={(17,3.8)}]
\draw[rotate=-40]  (-0.5,-1) ellipse (1.8 and 1.4);
\end{scope}
\begin{scope}[shift={(16.2,0)}]
\draw  (4,-1) ellipse (1.4 and 1.8);
\end{scope}
\node at (15.5,4.8) {$\unlhd_{i-1}$};\node at (20.3,.8) {$\unlhd_{i+1}$};
\node (v15) at (15.5,3.5) {$n_{i-1}-1$};
\node (v16) at (17,2) {$n_{i-1}$};
\node (v4) at (20.5,-1.5) {$n_{i}+1$};
\node[inner sep=0] (v17) at (17.8,1.2) {};
\node[inner sep=0] (v18) at (18.2,0.8) {};
\node (v19) at (19,0) {$n_i$};
\draw  (v15) edge[->,decorate,decoration=snake] (v16);
\draw  (v16) edge[->] (v17);
\draw  (v18) edge[->] (v19);
\draw  (v17) edge[dotted] (v18);
\draw  (v19) edge[->,decorate,decoration=snake] (v4);

\node at (-1,5) {(i)};
\node at (6.5,5) {(ii)};
\node at (14,5) {(iii)};
\node at (7.5,4.5) {$\unlhd_{i-1}$};\node at (12,4.8) {$\unlhd_{i+1}$};
\end{tikzpicture}
\caption{Local picture of an $A$-admissible sequence}
\label{fig:adm seq}
\end{figure}
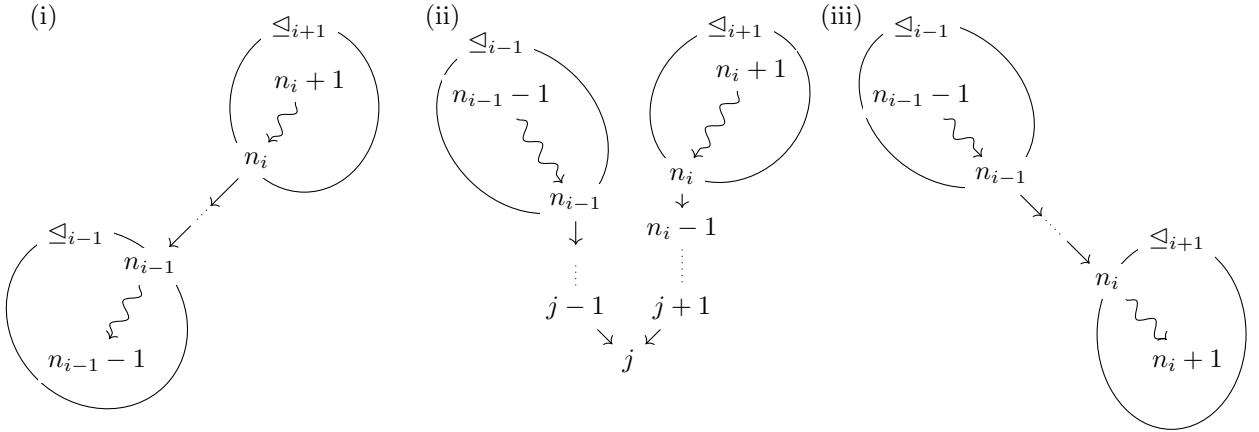

Note that the condition on the points $n_{i-1}-1, n_{i-1}, \ldots, n_{i+1}$ comes from the condition ($\std$) of Definition \ref{def:gluing conditions} in case (i); likewise, from ($\costd$) in case (iii).
For case (ii), the condition on $n_{i-1}-1, \ldots, j$ comes from  ($\std$), whereas that on $j,\ldots, n_i+1$ comes from ($\costd$).
It is now straightforward to see that the definition is designed so that we have the following consequence of Theorem \ref{thm:pro-qhs-node-gluing}.

\begin{corollary}\label{cor:qhs-qLNaka-tilt}
For $A=B_1\vee_{n_1} B_2 \vee_{n_2} \cdots \vee_{n_k} B_{k+1}$ as above, we have bijections
\begin{align}
\{ A\text{-admissible sequence } (\unlhd_{i})_{i}  \} \longleftrightarrow \qhs A \longleftrightarrow \tilt A.\notag
\end{align}
\end{corollary}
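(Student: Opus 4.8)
The plan is to establish the two bijections in Corollary \ref{cor:qhs-qLNaka-tilt} by peeling off one algebra $B_i$ at a time from the nodal decomposition $A=B_1\vee_{n_1}\cdots\vee_{n_k}B_{k+1}$ and inducting on the number $k$ of gluings. The second bijection $\qhs A\longleftrightarrow\tilt A$ is already available: it is exactly the composite of Theorem \ref{thm:bij-qh-IStilting} (giving $\qhs A\cong\IStilt A$) with Theorem \ref{thm:tilt=IStit=>gentle-LNaka} (giving $\IStilt A=\tilt A$ since $A$ is quadratic linear Nakayama). So the content is entirely in the first bijection, between $A$-admissible sequences and $\qhs A$.

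First I would treat the base case $k=0$, where $A=B_1$ is either $A_n^!$ (handled by Example \ref{eg:qhsA_n^!}, which lists $\qhs(A_n^!)=\{[\unlhd^{(i)}]\}_{1\le i\le n}$) or $A_n$ (handled by \cite[Theorem 4.7]{FKR22}, the binary-tree description). In both cases the set of $A$-admissible sequences is a single-term sequence $(\unlhd_1)$ with $[\unlhd_1]\in\qhs B_1$ and no gluing conditions to impose, so the bijection is the identity. For the inductive step I would write $A=A'\vee_{n_k}B_{k+1}$ with $A'=B_1\vee_{n_1}\cdots\vee_{n_{k-1}}B_k$, and apply Theorem \ref{thm:pro-qhs-node-gluing} at the node $v=n_k$: this gives $\qhs A\cong\qhs(A',B_{k+1};\std)\subseteq\qhs A'\times\qhs B_{k+1}$. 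By induction, $\qhs A'$ is in bijection with $A'$-admissible sequences $(\unlhd_i)_{1\le i\le k}$, and $\qhs B_{k+1}$ is described by Example \ref{eg:qhsA_n^!} (if $B_{k+1}=A_m^!$) or \cite[Theorem 4.7]{FKR22} (if $B_{k+1}=A_m$). The remaining task is to show that the pairs $([\unlhd']\text{-data},[\unlhd_{k+1}])$ lying in the image of $\Phi$ — i.e. satisfying condition $(\std)$ at the node $n_k$ — are precisely those for which appending $\unlhd_{k+1}$ to the $A'$-admissible sequence yields an $A$-admissible sequence of length $k+1$.

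The key verification is a careful unravelling of what condition $(\std)$ of Definition \ref{def:gluing conditions} says at the node $n_k$. Since the Gabriel quiver near $n_k$ has a unique incoming arrow $n_k-1\to n_k$ inside $A'$ and a unique outgoing arrow $n_k\to n_k+1$ inside $B_{k+1}$, Lemma \ref{lem:cover-around-node} and the discussion following it say the covering relations around $n_k$ must be one of the three V-shapes (i) $n_k-1\lhd^\bullet n_k\lhd^\bullet n_k+1$, (ii) $n_k-1\rhd^\bullet n_k\lhd^\bullet n_k+1$, (iii) $n_k-1\rhd^\bullet n_k\rhd^\bullet n_k+1$. I would match these one-to-one with the three clauses of (A1) (when $B_{k+1}=A_m^!$, using that $\qhs(A_m^!)$ is the V-shaped orders $\unlhd^{(j)}$ with $n_k-1<\unlhd^{(j)}$-minimum... wait, the V-shape has minimum $j$ and the incoming comparison $n_k-1$ vs $n_k$ etc., so clause (i) forces $j=n_k$-end, clause (iii) forces $j$=other end, clause (ii) allows $j$ anywhere), or with (A2) (when $B_{k+1}=A_m$, where the condition $(\std)$ reduces to the implication `$n_k-1\lhd n_k$ implies $n_k\lhd n_k+1$'). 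Then I would invoke the computation after Proposition \ref{prop:glue-MAO} that $(\unlhd'_A\vee_v\unlhd'_B)^{\rm m}=\unlhd'^{\rm m}_A\vee_v\unlhd'^{\rm m}_B$, so that restricting back and forth is compatible with minimal adaptedness, guaranteeing the maps are mutually inverse.

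The main obstacle I expect is the bookkeeping in case (A1)(ii): when $B_{k+1}=A_m^!$ with $\unlhd_{k+1}=\unlhd^{(j)}$ for an interior $j$, one must check that condition $(\std)$ at the node $n_{k-1}$ (with the $A'$-side) and condition $(\costd)$-type constraint at the node $n_k$ (with the $B_{k+2}$-side, once a further algebra is glued) interact correctly — this is why the definition of $A$-admissible sequence imposes conditions on the triple $(\unlhd_{i-1},\unlhd_i,\unlhd_{i+1})$ rather than just consecutive pairs. Concretely, I would need to verify that an $A_m^!$-block sits between two blocks in exactly the compatible way, i.e. that the three local pictures in Figure \ref{fig:adm seq} exhaust and do not overlap the valid gluings. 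Once the node-by-node translation is pinned down, assembling the full bijection is a routine induction, and the second bijection $\qhs A\longleftrightarrow \tilt A$ is immediate from the cited theorems; I would also remark that under this correspondence one recovers the explicit form of $T_{\unlhd}$ (as in \eqref{eq:chtilt-An!} for the $A_n^!$ pieces) by tracking the characteristic tilting module through each application of Proposition \ref{prop:Psi-qhs}.
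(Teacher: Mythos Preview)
Your approach is correct and matches the paper's: the paper does not give a formal proof but simply observes (just before the corollary) that the three clauses of (A1) are designed to encode the $(\std)$ and $(\costd)$ conditions of Definition \ref{def:gluing conditions} at the two nodes adjacent to an $A_m^!$-block, and that (A2) encodes the same at a node between two path-algebra blocks, so that the first bijection is a direct consequence of Theorem \ref{thm:pro-qhs-node-gluing} applied iteratively; the second bijection is, as you say, immediate from Theorems \ref{thm:bij-qh-IStilting} and \ref{thm:tilt=IStit=>gentle-LNaka}. Your inductive peeling-off argument is exactly the natural way to make this ``straightforward'' step precise, and your identification of the triple-constraint bookkeeping in (A1)(ii) as the only delicate point is accurate.
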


To better describe the elements in the left-hand side of Corollary \ref{cor:qhs-qLNaka-tilt}, we recall the description of $\qhs(A_{m})$ for $A_m:=\Bbbk \vec{\bbA}_m$ shown in \cite[Theorem 4.7]{FKR22}.  They showed that there is a bijection
\begin{align}
\qhs (\Bbbk \vec{\bbA}_{m}) \longleftrightarrow \{\text{binary trees with }m\text{ nodes}\}\notag
\end{align}
given by sending $\unlhd$ to its Hasse diagram (and forgetting the node labelling), which we denote by $\mathfrak{t}_\unlhd$.
Here, we view a binary tree with the unique root on top, representing the maximum element of $\unlhd$.
Denote by $\unlhd_{\mathfrak{t}}$ the quasi-hereditary partial order corresponding to a binary tree $\mathfrak{t}$.
This is the partial order whose Hasse diagram is given by labelling nodes of $\mathfrak{t}$ via in-order transversal; see \cite[subsection 4.1]{FKR22} for details.

In this way, the tilting module corresponding to $\unlhd$ (or the labelled tree $\mathfrak{t}$) has indecomposable (IS-tilting) decomposition
\begin{align}\label{eq:chtilt-An-FKR}
T_{\unlhd} = \bigoplus_{j=1}^{m} T(j) \text { with } [T(j):S(k)]\neq 0 \Leftrightarrow k \text{ is a descendant node of $j$ in $\mathfrak{t}$.} 
\end{align}

For completeness, let us also recall the inverse construction, i.e. for $T=T_\unlhd\in\tilt(\Bbbk\vec{\bbA}_{m})$, the algorithm to obtain $\mathfrak{t}=\mathfrak{t}_{\unlhd}$.
This is basically the same construction as applying $T\mapsto \unlhd_{T}$ for general IS-tilting modules over an arbitrary algebra but enhanced in the way so that it matches the labelling convention on the nodes of the resulting tree.  

\begin{algorithm}\label{alg:tilt-poset}
Start with $\ell=1$ and an empty forest $\mathfrak{t}_{0} = \emptyset$.
For each $\ell\ge 1$, we construct $\mathfrak{t}_{\ell}$ from $\mathfrak{t}_{\ell-1}$ as follows.
\begin{enumerate}
\item Insert one new node for each indecomposable direct summand $X$ of $T$ that satisfies
\begin{itemize}
\item $X$ is of length $\ell$ with $[X:S(i)]=1$ and $i\notin \mathfrak{t}_{\ell-1}$; and
\item $[X:S(j)]\neq 0$ and $j\neq i$ implies that $j\in \mathfrak{t}_{\ell-1}$.
\end{itemize}
We index such an $X$ by $T(i)$, and insert a new node $i$ into $\mathfrak{t}_{\ell-1}$ according to the following rule (2).
\item If $T(i)=S(i)$, then insert $i$ as a new leaf.  Otherwise, insert $i$ as the parent root of the root $j$ of every component in $\mathfrak{t}_{\ell-1}$ that satisfies $[T(i):S(j)]\neq 0$.
There will be at most two such $j$, and we place $j$ in the left (respectively, right) branch of $i$ if $j<i$ (respectively, $j>i$). 
Note that now $i$ is a root of some component in $\mathfrak{t}_\ell$.
\item Increase $\ell$ and repeat (1) until $\mathfrak{t}_\ell$ has $m$ nodes.
\end{enumerate}
\end{algorithm}

Note that the indexing of indecomposable direct summands coincides with the indexing when $T$ is viewed as a characteristic tilting module (equivalently, IS-tilting).

\begin{example}\label{eg:FKR-binary-tree}
Consider a path algebra $\Bbbk(1\rightarrow 2\rightarrow 3\rightarrow 4\rightarrow 5)$ and a tilting module
\begin{align}
T = 1\oplus 3 \oplus 5 \oplus \sm{1\\2\\3} \oplus \sm{1\\2\\3\\4\\5}
=T(1)\oplus T(3)\oplus T(5)\oplus T(2)\oplus T(4). \notag
\end{align}
Applying Algorithm \ref{alg:tilt-poset} to $T$, we have
\[
\mathfrak{t}=
\begin{tikzcd}[row sep=tiny,column sep=tiny,font = \scriptsize]
&& 4\ar[ld]\ar[rd]&\\
&2\ar[ld]\ar[rd]&&5.\\
1&&3&\end{tikzcd}
\]
\end{example}

An element in the left-hand set in Corollary \ref{cor:qhs-qLNaka-tilt} is given by a sequence of (in-order-transversal-labelled) binary trees and V-shaped posets satisfying $A$-admissibility.

Going from a tilting $A$-module $T$ to such an admissible sequence (hence, also to $\unlhd_{T}$) is no-different from applying Theorem \ref{thm:bij-qh-IStilting}.
This is similar to running Algorithm \ref{alg:tilt-poset}, as we demonstrate below.

\begin{example}
Consider $A$ given as follows.
\[
\begin{tikzpicture}
\draw (6.5,-0.4) edge[decorate,decoration={brace,amplitude=5}] node[midway,below=4] {$B_{2}$} (3.5,-0.4);
\draw (9.5,-0.4) edge[decorate,decoration={brace,amplitude=5}] node[midway,below=4] {$B_{4}$} (7.5,-0.4);
\draw (0,0.5) edge[decorate,decoration={brace,amplitude=5}] node[midway,above=4] {$B_{1}$} (4.5,0.5);
\draw (5.5,0.5) edge[decorate,decoration={brace,amplitude=5}] node[midway,above=4] {$B_{3}$} (8.5,0.5);
\node (v1) at (0,0) {1};\node (v2) at (1,0) {2};\node (v3) at (2,0) {3};
\node (v4) at (3,0) {4};\node (v5) at (4,0) {5};\node (v6) at (5,0) {6};
\node (v7) at (6,0) {7};\node (v8) at (7,0) {8};\node (v9) at (8,0) {9};
\node (v10) at (9,0) {10.};

\draw  (v1) edge[->] (v2);
\draw  (v2) edge[->]  (v3);
\draw  (v3) edge[->]  (v4);
\draw  (v4) edge[->] node[midway](r1){} (v5);
\draw  (v5) edge[->] node[midway](r2){}(v6);
\draw  (v6) edge[->] node[midway](r3){} (v7);
\draw  (v7) edge[->] node[midway](r4){} (v8);
\draw  (v8) edge[->] node[midway](r5){} (v9);
\draw  (v9) edge[->] node[midway](r6){}(v10);
\draw (r1) edge[bend left=50,dashed] (r2);
\draw (r2) edge[bend left=50,dashed] (r3);
\draw (r3) edge[bend left=50,dashed] (r4);
\draw (r5) edge[bend left=50,dashed] (r6);
\node at (-1,0) {$A$:};
\end{tikzpicture}
\]
Then we have $B_{1}:=\e_{1} A\e_{1}\cong \Bbbk\vec{\bbA}_{5}$, $B_{2}:=\e_{2}A\e_{2} = A_{3}^{!}$, $B_{3}:= \e_{3}A\e_{3}\cong \Bbbk\vec{\bbA}_3$, $B_{4}:=\e_{4}A\e_{4} \cong \Bbbk\vec{\bbA}_2$, where
\begin{align}
\e_{1}:= e_{1}+e_{2}+\cdots+e_{5},\; \e_{2}:= e_{5}+e_{6}+e_{7},\; \e_{3}:=e_{7}+e_{8}+e_{9},\; \e_{4}:=e_{9}+e_{10}.\notag
\end{align}
Take the following tilting module $T$ and index the indecomposable direct summands accordingly.
\begin{align}
\arraycolsep=1.4pt
\begin{array}{rccccccccccccccccccc} T=&  1 &\oplus& \sm{1\\2\\3} &\oplus& 3  &\oplus& \sm{1\\2\\3\\4\\5} &\oplus& \sm{5\\6} &\oplus& 6 &\oplus& \sm{6\\7} &\oplus& \sm{7\\8\\9} &\oplus& \sm{9\\10}  &\oplus& 10\\
 =& T(1)&\oplus& T(2) &\oplus& T(3) &\oplus& T(4) &\oplus& T(5) &\oplus&  T(6)&\oplus& T(7)&\oplus& T(8) &\oplus& T(9) &\oplus& T(10).
\end{array}\notag
\end{align}

Applying $(-)\e_{i}$ to $T(j)$ whenever $\e_{i}e_{j}\neq 0$, we get a tilting $B_{i}$-module $T_{i}:=\bigoplus_{j}T(j)\e_{i}$ for each $1\le i\le 4$:
\begin{align}
T_{1} = 1\oplus \sm{1\\2\\3} \oplus 3 \oplus \sm{1\\2\\3\\4\\5} \oplus 5, \quad T_{2} =\sm{5\\6}\oplus6\oplus \sm{6\\7}, \quad T_{3} = \sm{7}\oplus \sm{7\\8\\9} \oplus 9, \quad T_{4} = \sm{9\\10}\oplus 10.\notag
\end{align}
Applying Algorithm \ref{alg:tilt-poset} (for $T_{1}$, this is just Example \ref{eg:FKR-binary-tree}) and the analogous calculation for $A_r^!$ (Example \ref{eg:qhsA_n^!}) we obtain the following respective local partial orders:
\[
\begin{tikzcd}[row sep=tiny,column sep=tiny,font = \scriptsize]
&& 4\ar[ld]\ar[rd]&\\
&2\ar[ld]\ar[rd]&&5\\
1&&3&\end{tikzcd},\quad 
\begin{tikzcd}[row sep=tiny,column sep=tiny,font = \scriptsize]
5\ar[rd]& &7\ar[ld]\\&6&\end{tikzcd},\quad 
\begin{tikzcd}[row sep=tiny,column sep=tiny,font = \scriptsize]
&8\ar[ld]\ar[rd]& \\ 7& &9\\\end{tikzcd}, \quad
\begin{tikzcd}[row sep=tiny,column sep=tiny,font = \scriptsize]
9\ar[rd]&\\ &10\end{tikzcd}.
\]
Finally, we get the (Hasse quiver of) $\unlhd_{T}$ by gluing these partial orders at vertices $5, 7, 9$, which gives :
\[
\begin{tikzcd}[row sep=tiny,column sep=tiny,font = \scriptsize]
&& 4\ar[ld]\ar[rd]&&&&8\ar[ld]\ar[rd]&&\\
&2\ar[ld]\ar[rd]&&5\ar[rd]&&7\ar[ld]&&9\ar[rd]&\\
1&&3&&6&&&&10.
\end{tikzcd}
\]
\end{example}

We can observe the following from the construction.

\begin{proposition}
Every minimal adapted order of a quadratic linear Nakayama algebra is a tree.
\end{proposition}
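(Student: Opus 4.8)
The plan is to argue by induction on the number of nodal factors in the decomposition \eqref{eq:nodal decompose A}, using Theorem~\ref{thm:pro-qhs-node-gluing} to pass from the factors $B_i$ to $A$. The base cases are the factors themselves: a factor of the form $A_m = \Bbbk\vec{\bbA}_m$ has every minimal adapted order given by a binary tree (hence a tree) by \cite[Theorem~4.7]{FKR22}, and a factor of the form $A_m^!$ has every minimal adapted order of the `V-shape' $\unlhd^{(i)}$ described in Example~\ref{eg:qhsA_n^!}, whose Hasse quiver is visibly a tree (two disjoint chains meeting at the unique minimum $i$). So for the inductive step I would take $A \cong C \vee_v B$ with $C = B_1\vee_{n_1}\cdots\vee_{n_{r-1}} B_r$ and $B = B_{r+1}$, assume every minimal adapted order of $C$ and of $B$ is a tree, and show the same for $A$.

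\textbf{Key steps.} Fix $[\unlhd]\in\qhs(A\vee_v B)$ with $\unlhd = \unlhd^{\rm m}$. By Theorem~\ref{thm:pro-qhs-node-gluing} (well-definedness of $\Phi$, i.e.\ Proposition~\ref{prop:glue-MAO}), the restrictions $\unlhd_C := \unlhd|_{\Lambda_C}$ and $\unlhd_B := \unlhd|_{\Lambda_B}$ are minimal adapted orders of $C$ and $B$ respectively, so by the induction hypothesis their Hasse quivers $H_C$ and $H_B$ are trees. The discussion preceding Proposition~\ref{prop:Psi-qhs} shows that $\unlhd = \unlhd_C \vee_v \unlhd_B$, so the Hasse quiver $H$ of $\unlhd$ is obtained from $H_C$ and $H_B$ by identifying the two copies of the vertex $v$. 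Concretely: the underlying set $\Lambda_C \cap \Lambda_B = \{v\}$, and by Lemma~\ref{lem:cover-around-node} every covering relation of $\unlhd$ crossing between $\Lambda_C\setminus\{v\}$ and $\Lambda_B\setminus\{v\}$ must have $v$ as one of its endpoints. Hence $H$ is exactly the union $H_C \cup H_B$ glued along the single common vertex $v$; the number of edges of $H$ equals $|E(H_C)| + |E(H_B)| = (|\Lambda_C|-1) + (|\Lambda_B|-1) = |\Lambda|-1$ (using $|\Lambda| = |\Lambda_C| + |\Lambda_B| - 1$), and $H$ is connected because $H_C$ and $H_B$ are connected and share the vertex $v$. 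A connected graph on $|\Lambda|$ vertices with $|\Lambda|-1$ edges is a tree, which is the claim. One must also check there are no covering relations of $\unlhd$ between $\Lambda_C\setminus\{v\}$ and $\Lambda_B\setminus\{v\}$ beyond those through $v$ and no `extra' edges created by the transitive closure; this is precisely the content of $\unlhd = \unlhd_C\vee_v\unlhd_B$ together with Lemma~\ref{lem:cover-around-node} (and Remark~\ref{rmk:covering relation => there is path}, since $\unlhd$ is minimal adapted).

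\textbf{Main obstacle.} The delicate point is verifying that gluing the two Hasse quivers at $v$ really produces the Hasse quiver of $\unlhd_C\vee_v\unlhd_B$ without spurious edges or missing edges — i.e.\ that a covering relation $x\lhd^\bullet y$ in $\unlhd$ with $x\in\Lambda_C$, $y\in\Lambda_B$ forces $x = v$ or $y = v$, and conversely that covering relations internal to $\Lambda_C$ (resp.\ $\Lambda_B$) are exactly those of $\unlhd_C$ (resp.\ $\unlhd_B$). The first is Lemma~\ref{lem:cover-around-node}; the second follows because $\unlhd|_{\Lambda_C} = \unlhd_C$ as posets and a covering relation inside a down-set (or up-set) is detected on the subposet — but one should be slightly careful that $\Lambda_C$ need not be an up-set or down-set of $\Lambda$ in general, so the cleanest route is to note that any relation $x\unlhd z\unlhd y$ with $x,y\in\Lambda_C$ must have $z\in\Lambda_C$ (since a path through $v$ and back would force $x\unlhd v\unlhd y$, contradicting that $x\lhd^\bullet y$ is a cover unless $z\in\{x,y\}$, or more directly from the explicit description of $\unlhd_C\vee_v\unlhd_B$ above), so restriction to $\Lambda_C$ reflects covers. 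Once this bookkeeping is settled, the edge-count argument closes the induction immediately.
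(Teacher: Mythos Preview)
Your proposal is correct and follows essentially the same approach as the paper: both argue that the minimal adapted orders of the base pieces $\Bbbk\vec{\bbA}_m$ and $A_m^!$ are trees, and that gluing trees at a single vertex via Theorem~\ref{thm:pro-qhs-node-gluing} again yields a tree. The paper's proof is two sentences invoking Corollary~\ref{cor:qhs-qLNaka-tilt}; you have unpacked the graph-theoretic verification (Lemma~\ref{lem:cover-around-node} plus the edge count) more explicitly, which is fine. One small remark: the identity $\unlhd^{\rm m} = \unlhd_C^{\rm m}\vee_v\unlhd_B^{\rm m}$ that you cite as ``the discussion preceding Proposition~\ref{prop:Psi-qhs}'' is not stated there directly for an arbitrary $\unlhd$, but it follows immediately from $\Psi_\lozenge\circ\Phi=\id$ (injectivity of $\Phi$ plus $\Phi\circ\Psi_\lozenge=\id$) together with Proposition~\ref{prop:glue-MAO}.
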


\begin{proof}
We have explained that every $[\unlhd]\in\qhs A$ for $A \in \{ \Bbbk\vec{\bbA}_m, A_{m}^{!} \mid m\ge 1\}$ has $\unlhd^{\rm m}$ being a tree.  Generally, for a quadratic linear Nakayama algebra $A$ and any $[\unlhd]\in \qhs A$, since $\unlhd^{\rm m}$ is given by gluing these local pieces at the end points by Corollary \ref{cor:qhs-qLNaka-tilt}, the resultant must also be a tree.
\end{proof}

\end{document}